\documentclass[10pt]{article}
\usepackage{a4wide}
\usepackage{amsmath}
\usepackage{amssymb}
\usepackage{latexsym}
\usepackage[pdftex]{graphicx}
\usepackage{color}
\usepackage[mathscr]{euscript}
\usepackage{url}
\usepackage{float}
\usepackage{subfigure}
\usepackage{morefloats}
\usepackage{wasysym}
\usepackage{mathtools}
\usepackage{enumitem}
\usepackage{booktabs}



\newenvironment{proof}{\vspace{1ex}\noindent{\bf Proof:}}{\hspace*{\fill}$\blacksquare$\vspace{1ex}}
\newenvironment{proofof}[1]{\vspace{1ex}\noindent{\bf Proof of #1:}}{\hspace*{\fill}$\blacksquare$\vspace{1ex}}
\def\noproof{\hspace*{\fill}$\blacksquare$}

\newtheorem{theorem}{Theorem}[section]
\newtheorem{lemma} [theorem] {Lemma}
\newtheorem{corollary} [theorem] {Corollary}

\newtheorem{proposition} [theorem] {Proposition}
\newtheorem{definition} [theorem] {Definition}

\newtheorem{conjecture} [theorem] {Conjecture}

\newcommand{\Acal}[0]{\ensuremath{{\mathcal A}}}
\newcommand{\Bcal}[0]{\ensuremath{{\mathcal B}}}
\newcommand{\Ccal}[0]{\ensuremath{{\mathcal C}}}

\newcommand{\Fcal}[0]{\ensuremath{{\mathcal F}}}
\newcommand{\Gcal}[0]{\ensuremath{{\mathcal G}}}
\newcommand{\Hcal}[0]{\ensuremath{{\mathcal H}}}

\newcommand{\Lcal}[0]{\ensuremath{{\mathcal L}}}

\newcommand{\Ncal}[0]{\ensuremath{{\mathcal N}}}

\newcommand{\Pcal}[0]{\ensuremath{{\mathcal P}}}

\newcommand{\Ucal}[0]{\ensuremath{{\mathcal U}}}

\newcommand{\eN}[0]{\ensuremath{ \mathbb N}}

\newcommand{\Tscr}[0]{\ensuremath{{\mathscr T}}}

\newcommand{\Pee}[0]{\ensuremath{{\mathbb P}}}
\newcommand{\Ee}[0]{\ensuremath{{\mathbb E}}}

\newcommand{\isd}[0]{\hspace{.2ex} \raisebox{-.1ex}{$=$} \hspace{-1.5ex}
\raisebox{1ex}{{$\scriptstyle d$}} \hspace{.8ex} }

 \newcommand{\eps}{\varepsilon}

\DeclareMathOperator{\dist}{dist}

\DeclareMathOperator{\clo}{cl}

\DeclareMathOperator{\Po}{Po}

\DeclareMathOperator{\Var}{Var}
\DeclareMathOperator{\aut}{aut}

\newcommand{\EF}[0]{Ehrenfeucht-Fra\"{\i}ss\'e}
\newcommand{\equivRMSO}[0]{\ensuremath{\equiv^{\mathrm{rMSO}}}}
\newcommand{\equivMSO}[0]{\ensuremath{\equiv^{\mathrm{MSO}}}}

\newcommand{\equivFO}[0]{\ensuremath{\equiv^{\mathrm{FO}}}}
\DeclareMathOperator{\EHR}{EHR}
\DeclareMathOperator{\Biggg}{Big}
\DeclareMathOperator{\Frag}{Frag}
\newcommand{\EHRRMSO}[0]{\ensuremath{\EHR^{\mathrm{rMSO}}}}
\newcommand{\EHRMSO}[0]{\ensuremath{\EHR^{\mathrm{MSO}}}}

\newcommand{\EHRFO}[0]{\ensuremath{\EHR^{\mathrm{FO}}}}
\newcommand{\distTV}[0]{\ensuremath{\dist_{\mathrm{TV}}}}
\DeclareMathOperator{\qd}{qd}
\DeclareMathOperator{\Long}{Long}
\DeclareMathOperator{\Bush}{Bush}
\DeclareMathOperator{\Short}{Short}

\newcommand{\upe}{\mathrm{e}}

\newcommand{\FO}{\textsc{FO}}
\newcommand{\MSO}{\textsc{MSO}}

\DeclareMathOperator{\fw}{fw}





\title{Logical limit laws for minor-closed classes of graphs}

\author{Peter Heinig\thanks{Mathematisches Seminar, Universit\"at Hamburg, Hamburg, Germany. E-mail: \texttt{peter.heinig@math.uni-hamburg.de}.
The author gratefully acknowledges the support of TUM Graduate School's Thematic Graduate Center TopMath
at Technische Universit\"at M\"unchen.}
\and
Tobias M\"uller\thanks{Groningen University, Groningen, the Netherlands. E-mail: \texttt{tobias.muller@rug.nl}.
The author's research was supported in part by an NWO VIDI grant.}
\and
Marc Noy\thanks{Barcelona Graduate School of Mathematics and Universitat Polit\`ecnica de Catalunya, Barcelona, Spain. E-mail: \texttt{marc.noy@upc.edu}.
The author's research was supported by the Humboldt Foundation, by TUM through a Von Neumann visiting professorship,
and by grants MTM2011-13320, MTM2014-54745-P and MDM-2014-0445.}
\and
Anusch Taraz\thanks{Hamburg University of Technology, Hamburg, Germany. E-mail: \texttt{taraz@tuhh.de}.
The author was supported in part by DFG grant TA 319/2-2.}
}

\begin{document}

\maketitle

\begin{abstract}
Let $\Gcal$ be an addable, minor-closed class of graphs. 
We prove that the zero-one law holds in monadic second-order  logic (MSO) for the random graph drawn
uniformly at random from all {\em connected} graphs in $\Gcal$ on $n$ vertices, and the convergence law in MSO
holds if we draw uniformly at random from all graphs in $\Gcal$ on $n$ vertices.
We also prove analogues of these results for the class of graphs embeddable on a fixed surface, provided we restrict
attention to first order logic (FO).  Moreover, the limiting probability that a given FO sentence is satisfied is independent
of the surface $S$.
We also prove that the closure of the set of limiting probabilities is always the finite union of at least two disjoint intervals, and
that it is the same for FO and MSO. For the classes of forests and planar graphs we are able to determine
the closure of the set of limiting probabilities precisely. For planar graphs it consists of exactly 108 intervals, each of the same length
$\approx 5.39 \cdot 10^{-6}$.
Finally, we analyse examples of non-addable classes where the behaviour is quite different. For instance, the zero-one law does not hold for the random caterpillar on $n$ vertices, even in FO.

\vspace{1ex}

{\em Keywords:} random graphs, graph minors, logical limit laws.
\end{abstract}

\section{Introduction}


We say that a sequence $(G_n)_n$ of random graphs obeys the \textit{zero-one law} with respect to some logical language $L$ if for every sentence $\varphi$ in $L$ the probability that a graph $G_n$ satisfies $\varphi$ tends either to 0 or 1, as $n$ goes to infinity.
We say that $(G_n)_n$ obeys the \textit{convergence law} with respect to $L$, if for every $\varphi$ in $L$, the probability
that $G_n$ satisfies $\varphi$ tends to a limit (not necessarily zero or one) as $n$ tends to infinity.

The prime example of a logical language is the {\em first order language of graphs} (\FO).
Formulas in this language are constructed using variables $x, y, \dots$ ranging over
the vertices of a graph, the usual quantifiers $\forall, \exists$, the usual logical connectives $\neg, \vee, \wedge$, etc., parentheses and the binary relations $=, \sim$,
where $x\sim y $ denotes that $x$ and $y$ are adjacent.
For notational convenience we will also allow the use of commas and semicolons in the formulas in this paper.
In $\FO$ one can for instance write ``$G$ is triangle-free" as 
$\neg\exists x,y,z: (x\sim y)\wedge(x\sim z)\wedge(y\sim z)$.

The classical example of a zero-one law is a result due to Glebskii et al.~\cite{GlebskiiEtAl69} and
independently to Fagin~\cite{Fagin76}, stating that the zero-one law holds when $G_n$ is chosen uniformly at random among all
$2^{n \choose 2}$ labelled graphs on $n$ vertices and the language is \FO.
The (non-)existence of $\FO$-zero-one and convergence laws has been investigated more generally
in the $G(n,p)$ binomial model, where there are $n$ labelled vertices and edges are drawn independently with probability~$p$
(the case $p=1/2$ of course corresponds to the uniform distribution on all labelled graphs on
$n$ vertices).
Here, the $\FO$-zero-one law holds for all constant $p$  and in many other cases.
In particular, a remarkable result of Shelah and Spencer says that if $p=n^{-\alpha}$ with $0\le\alpha\le1$ fixed, then
the $\FO$-zero-one law holds if and only if $\alpha$ is an irrational number \cite{SS}.
What is more, when $\alpha$ is rational then the {\em convergence law} in fact fails. That is, for rational $\alpha$
Shelah and Spencer were able to construct an ingenious $\FO$-sentence such that the probability that $G(n,n^{-\alpha})$ satisfies it oscillates
between zero and one.
The book \cite{logicofrandomgraphs} by Spencer contains a detailed account of the story of logical limit laws for
the $G(n,p)$ binomial model.

There are also several results available on zero-one laws for random graphs that satisfy some global condition, such as being regular, having bounded degree or 
being $H$-free for some fixed graph $H$. One of the earliest instances of such results deals with $K_{t+1}$-free graphs: Kolaitis, Pr\"{o}mel and Rothschild \cite{KPR} proved an FO-zero-one law for the graph chosen uniformly at random from all $K_{t+1}$-free graphs on $n$ vertices.
For random $d$-regular graphs, with $d$ fixed, there cannot be a zero-one law. For
instance, the probability of containing a triangle is expressible in $\FO$ and tends to a constant different from 0 and 1.
(This is an immediate consequence of Theorem 9.5 on page 237 of~\cite{purplebook}).
Using the configuration model, Lynch \cite{lynch} proved the \textit{$\FO$-convergence law} for 
graphs with a given degree sequence (subject to some conditions on this degree sequence), which in particular covers $d$-regular graphs with 
$d$ fixed.
For dense $d$-regular graphs, that is when $d = \Theta(n)$, a zero-one law was proved
by Haber and Krivelevich \cite{HK}, who were also able to obtain an analogue of the
striking result of Shelah-Spencer mentioned above for random regular graphs.

In a different direction, McColm \cite{McColm2002}
considers 
trees sampled
uniformly at random from all (labelled) trees on $n$ vertices.
He shows that a zero-one law holds in \textit{monadic second order} (MSO) language of graphs, which is FO enriched with quantification over sets of
vertices. That is, we now have additional variables $X, Y, \dots$ ranging over sets of vertices and
an additional binary relation $\in$, that allows us to ask whether $x \in X$, for $x$ a vertex-variable and $X$ a set-variable.
This results in a stronger language which is able to express properties such as connectedness or $k$-colorability
for any fixed $k$.
In \MSO\ we can for instance express ``$G$ is connected" as \newline
$\forall X : (\forall x: x\in X) \vee \neg(\exists x: x \in X) \vee (\exists x,y: (x\in X)\wedge\neg(y\in X)\wedge (x\sim y))$.

The proof in \cite{McColm2002} is based on two facts: for each $k>0$ there exists a rooted tree $T_k$ such that, if two trees $A$ and $B$ 
both have $T_k$ as a rooted subtree, then $A$ and $B$ agree on all sentences of quantifier depth at most $k$ 
(defined in Section~\ref{sec:logicalpreliminaries} below); and the fact that with high probability a random tree contains~$T_k$ as a rooted subtree. We show that this approach can be adapted to a much more general setting, as we explain next.

A class $\Gcal$ of graphs is \textit{minor-closed} if every minor of a graph in $\Gcal$ is also in $\Gcal$.
Every minor-closed class is characterized by the set of its excluded minors, which is finite by the celebrated Robertson-Seymour theorem.
Notable examples of minor-closed graphs are forests, planar graphs and graphs embeddable on a fixed surface.
We say that $\Gcal$ is addable if it holds that {\bf 1)} $G \in \Gcal$ if and only if all its components are in $\Gcal$, and {\bf 2)}
if $G'$ is obtained from $G\in\Gcal$ by adding an arbitrary edge between two separate components of $G$, then $G' \in \Gcal$ also.
Planar graphs constitute an addable class of graphs, but graphs embeddable on a surface other than the sphere may not.
(A 5-clique is for instance embeddable on the torus, but the vertex-disjoint union of two 5-cliques is not -- see \cite[Theorem 4.4.2]{graphs-surfaces}). 
Other examples of addable classes are outerplanar graphs, series-parallel graphs, graphs with bounded tree-width, and graphs with given 3-connected components~\cite{3-conn}.
Relying heavily on results of McDiarmid~\cite{McDiarmid09}, we are able to prove the MSO-zero-one law for the random graph
chosen uniformly at random from all \emph{connected} graphs on $n$ vertices in an addable minor-closed class.
If $\Gcal$ is a class of graphs, then $\Ccal \subseteq \Gcal$ will denote the set of
{\em connected} graphs from $\Gcal$; the notation $\Gcal_n$ will denote the set of all graphs in $\Gcal$ with vertex set $[n] := \{1,\dots,n\}$, and
$\Ccal_n$ is defined analogously. If $A$ is a finite set then $X \in_u A$ denotes that $X$ is chosen uniformly at random from $A$.

\begin{theorem}\label{thm:MSO01add}
Let $\Gcal$ be an addable, minor-closed class of graphs and let $C_n \in_u \Ccal_n$
be the random {\em connected} graph from $\Gcal$.
Then $C_n$ obeys the $\MSO$-zero-one law.
\end{theorem}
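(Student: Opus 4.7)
The plan is to generalize McColm's argument for trees to the setting of an addable, minor-closed class $\Gcal$ by combining the Ehrenfeucht--Fra\"iss\'e game for MSO with the pendant-appearance machinery of McDiarmid. Write $G_1 \equivMSO_k G_2$ when Duplicator wins the $k$-round MSO EF-game on $(G_1,G_2)$. For each fixed $k$ there are only finitely many $\equivMSO_k$-classes, and every MSO sentence of quantifier depth at most $k$ is constant on each class. Hence to obtain the MSO-zero-one law it is enough to exhibit, for every $k$, a single $\equivMSO_k$-class $\tau_k$ with $\Prob[C_n \in \tau_k] \to 1$ as $n\to\infty$.

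The core technical step is a Feferman--Vaught-style composition lemma tailored to pendant copies. Call a subgraph of $G\in\Ccal$ isomorphic to a rooted connected graph $(H,r)\in\Ccal$ a \emph{pendant copy} of $(H,r)$ if it is joined to the rest of $G$ by a single edge incident to the image of $r$. I will prove: for every $k$ there exist a size bound $N_k$ and a count-threshold $t_k$ such that the $\equivMSO_k$-class of $G$ is determined by (i) its \emph{truncated pendant profile} --- for each $(H,r)$ with $|V(H)|\le N_k$, the exact number of pendant copies if this number is strictly below $t_k$, and the flag ``$\ge t_k$'' otherwise --- together with (ii) the $\equivMSO_k$-class of the residual graph obtained from $G$ by pruning all such pendant copies. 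The proof is an explicit EF-game strategy: Duplicator first matches pendant copies of the same rooted type between $G_1$ and $G_2$, then plays the remainder of the game using an inexhaustible reservoir of spare pendant copies on each side to absorb any further Spoiler moves, be they vertex- or set-quantifier moves.

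By McDiarmid's pendant-appearance theorem for addable, minor-closed classes~\cite{McDiarmid09}, for every connected rooted $(H,r)\in\Ccal$ the number of pendant copies of $(H,r)$ in $C_n$ is with high probability linear in $n$, and in particular a.a.s.\ exceeds $t_k$. Consequently the truncated pendant profile of $C_n$ concentrates on a fixed profile $\pi_k$. To control the $\equivMSO_k$-class of the residual graph after pruning, I will iterate the composition lemma (the residual graph still lies in $\Gcal$ and McDiarmid's statistics apply recursively) or, alternatively, invoke the more detailed structural results of McDiarmid on the shape of a typical connected member of $\Gcal_n$. Assembling these ingredients gives $\Prob[C_n\in\tau_k]\to 1$ for a fixed $\tau_k$.

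The main obstacle is the composition lemma for MSO rather than merely FO: since Spoiler may mark arbitrary vertex subsets, Duplicator's strategy must ensure that the intersection of any such set with the reservoir of pendant copies can itself be matched on the two sides. This forces a careful induction on quantifier depth and determines the choice of $t_k$; one must show that the ``trace'' of Spoiler's set-moves on pendant copies is determined up to a bounded number of distinctions, so that a reservoir of $t_k$ copies genuinely behaves like an infinite one. A secondary difficulty is verifying that the residual graph, after pendant-pruning, is itself a.a.s.\ of a fixed $\equivMSO_k$-class --- this is what ultimately ties the argument to the specific structural properties of addable minor-closed classes rather than to pendant statistics alone.
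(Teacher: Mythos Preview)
Your high-level plan---use pendant appearances plus an MSO Ehrenfeucht--Fra\"iss\'e argument to show that $C_n$ concentrates on a single $\equivMSO_k$-class---matches the paper. However, your decomposition into ``truncated pendant profile plus residual graph'' creates a difficulty that the paper avoids entirely, and your proposed resolution of that difficulty does not work as stated.

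The gap is in the treatment of the residual. After you prune pendant copies from $C_n$, the resulting graph is \emph{not} a uniform element of $\Ccal_m$ for any $m$, so McDiarmid's pendant-appearance theorem does not apply to it; the phrase ``McDiarmid's statistics apply recursively'' is unjustified. Nor is there any obvious termination for the iteration: pruning small pendant copies may expose new pendant copies, and there is no a priori bound on how many rounds are needed before the residual stabilises in a fixed $\equivMSO_k$-class. You flag this as a ``secondary difficulty'', but in fact it is the whole crux---without it, the pendant profile alone does not determine the $\equivMSO_k$-class.

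The paper sidesteps this completely with a single clean construction. Rather than decomposing $G$, it builds one connected rooted graph $M_k\in\Gcal$ with the property that \emph{any} connected $G\in\Gcal$ containing a pendant copy of $M_k$ satisfies $G\equivMSO_k M_k$. The construction is: take one representative $G_i$ from each of the finitely many rooted-$\equivMSO_k$ classes of connected rooted graphs in $\Gcal$, and identify $a(k)$ copies of each $G_i$ at a common root, where $a(k)$ is a threshold (proved via an EF-game argument) beyond which further copies do not change the rooted $\equivMSO_k$-type. Then if $G$ has a pendant $M_k$, the complement $G' = G\setminus (M_k\setminus\{\text{root}\})$ is rooted-equivalent to some $G_i$, so $G$ is obtained from $M_k$ by gluing on one extra copy of (something equivalent to) $G_i$---which by the threshold property leaves the type unchanged. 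No iteration, no residual to analyse: one invocation of the pendant-appearance theorem for the single graph $M_k$ finishes the proof. Your composition lemma and threshold $t_k$ are close in spirit to the paper's Lemmas on root-identification and the threshold $a(k)$, but the key idea you are missing is to pack representatives of \emph{all} rooted types into a single pendant gadget, so that the ``residual'' is automatically absorbed.
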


It is worth mentioning that the MSO-zero-one law does not hold in $G(n,1/2)$ -- see \cite{KaufShe}.
For the ``general" random graph from an addable class $\Gcal$, i.e.~$G_n\in_u\Gcal_n$,  there cannot be a zero-one law, even in FO logic.
The reason is that there are sentences expressible in \FO, such as the existence of an isolated vertex, that have a limiting probability 
strictly between 0 and 1.  
(This follows immediately from Theorem~\ref{thm:addfrag} below, which is due to~\cite{McDiarmid09}.) 
We are however able to prove the \emph{convergence} law in this case.

\begin{theorem}\label{thm:MSOconvadd}
Let $\Gcal$ be an addable, minor-closed class of graphs and let $G_n \in_u \Gcal_n$.
Then $G_n$ obeys the $\MSO$-convergence law.
\end{theorem}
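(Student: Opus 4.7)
The plan is to combine the Feferman--Vaught composition theorem for $\MSO$ with the structural results of McDiarmid~\cite{McDiarmid09} on addable minor-closed classes and with Theorem~\ref{thm:MSO01add}.

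Fix an $\MSO$ sentence $\varphi$ of quantifier depth $k$. Since there are only finitely many $\MSO$-$k$ equivalence classes (``$\MSO$-$k$ types'') of finite graphs, it suffices to prove that the $\MSO$-$k$ type of $G_n$ converges in distribution. By the composition theorem (Shelah), the $\MSO$-$k$ type of a disjoint union $G \sqcup H$ is a fixed function of the $\MSO$-$k$ types of $G$ and $H$; iterating, the type of a graph is determined by the multiset of types of its connected components. So it is enough to show that the multiset of $\MSO$-$k$ types of the components of $G_n$ converges in distribution.

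I would then decompose $G_n = B_n \sqcup F_n$, with $B_n$ the (a.a.s.\ unique) largest component and $F_n$ the fragment consisting of the remaining components. Two facts from~\cite{McDiarmid09} are crucial: (a) conditional on its vertex set $V$, the giant $B_n$ is uniformly distributed on $\Ccal_V$, and $|V(B_n)| \to \infty$ in probability; (b) $F_n$ converges in total variation to a Boltzmann--Poisson random graph $R$ in which, for each unlabelled connected $H \in \Gcal$, the number of components of $F_n$ isomorphic to $H$ is asymptotically Poisson with an explicit finite intensity, jointly independent across $H$. Combining (a) with Theorem~\ref{thm:MSO01add} gives that the $\MSO$-$k$ type of $B_n$ converges in probability to the unique $\MSO$-$k$ type $\tau^*$ of asymptotic probability $1$ under $C_m \in_u \Ccal_m$. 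Grouping the intensities in (b) by $\MSO$-$k$ type yields convergence in distribution of the vector of component-type counts in $F_n$ to a vector of independent Poisson random variables indexed by the (finitely many) $\MSO$-$k$ types of connected graphs in $\Gcal$. Applying the composition theorem to $G_n = B_n \sqcup F_n$ then shows that the $\MSO$-$k$ type of $G_n$ converges in distribution, so $\lim_n \Prob[G_n \models \varphi]$ exists.

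The main obstacle I expect is technical rather than conceptual: first, matching the limiting type $\tau^*$ coming from Theorem~\ref{thm:MSO01add} with the a.a.s.\ behaviour of $B_n$ (it is here that the ``conditional on its vertex set, $B_n$ is uniform in $\Ccal_V$'' property is essential, since otherwise $B_n$ would not be guaranteed to inherit the zero-one law proved for $C_m\in_u\Ccal_m$); and second, checking that grouping the Poisson intensities in (b) by $\MSO$-$k$ type produces finite parameters. The latter is controlled by the standard asymptotic estimates $|\Gcal_n|/n!=\Theta(n^{-\alpha}\rho^{-n})$ with $\rho>0$ available for addable minor-closed classes, and is in fact already built into McDiarmid's Boltzmann--Poisson limit theorem. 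Once these points are in place, the argument is a soft combination of Feferman--Vaught composition, Theorem~\ref{thm:MSO01add}, and the cited structural result.
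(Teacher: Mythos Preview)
Your proposal is correct and follows essentially the same outline as the paper: decompose $G_n = \Biggg(G_n) \cup \Frag(G_n)$, pin down the $\MSO$-$k$ type of the big component, use McDiarmid's total variation convergence $\Frag(G_n) \to_{\text{TV}} R$ for the fragment, and compose via Lemma~\ref{lem:disjointunion} (your ``Feferman--Vaught''). The only noteworthy difference is how the type of $\Biggg(G_n)$ is determined: the paper argues directly that $\Biggg(G_n)$ contains a pendant copy of $M_k$ w.h.p.\ (since $G_n$ has linearly many such copies while the fragment has bounded order, Corollary~\ref{cor:Bigpendantadd}), and then invokes Theorem~\ref{thm:MSOMk}; you instead use conditional uniformity of $\Biggg(G_n)$ on $\Ccal_V$ together with Theorem~\ref{thm:MSO01add} as a black box. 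Both routes are fine; yours is slightly more modular, the paper's slightly more concrete.

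Two small remarks on your closing paragraph. First, once you have $F_n \to_{\text{TV}} R$ there is no need to ``group Poisson intensities by type'' or to worry about finiteness of the grouped parameters: total variation convergence immediately gives $\Pee(F_n \in \Fcal) \to \Pee(R \in \Fcal)$ for \emph{every} family $\Fcal$ of unlabelled graphs, in particular for $\Fcal = \{H : \tau^* \cup H \models \varphi\}$. This is exactly how the paper concludes. Second, the asymptotic $|\Gcal_n|/n! = \Theta(n^{-\alpha}\rho^{-n})$ you cite is not known for arbitrary addable minor-closed classes (only smoothness is, Theorem~\ref{thm:addissmooth}); fortunately, as just noted, it is not needed.
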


The proof is based on the fact that with high probability there is a ``giant" component of size $n-O(1)$,
and uses the extraordinarily precise description of the limiting distribution of the {\em fragment}, the part of the graph that remains after we
remove the largest component~\cite{McDiarmid09}.

Moving away from addable classes of graphs, let $S$ be a fixed surface and consider the class of graphs that can be embedded in $S$. In this case we prove a zero-one law in FO for connected graphs. The proof is based on recent results on random graphs embeddable on a surface \cite{McDiarmid08,chapuyfusygimenezmoharnoy} and an application of Gaifman's locality theorem (see Theorem~\ref{thm:gaifman} below).
The notation $G \models \varphi$ means ``$G$ satisfies $\varphi$".

\begin{theorem}\label{thm:surface01}
Fix a surface $S$, let $\Gcal$ be the class of all graphs embeddable on $S$ and let
$C_n \in_u \Ccal_n$ be the random {\em connected} graph from $\Gcal$.
Then $C_n$ obeys the $\FO$-zero-one law.
Moreover, the values of the limiting probabilities
$\displaystyle \lim_{n\to\infty}\Pee( C_n\models\varphi )$
do not depend on the surface $S$.
\end{theorem}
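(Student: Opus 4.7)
The plan is to reduce $\FO$ sentences to local ones via Gaifman's locality theorem and then to transfer the problem to the random connected planar graph, for which Theorem~\ref{thm:MSO01add} (applied to the addable, minor-closed class of planar graphs) already yields the $\MSO$ zero-one law, and hence the $\FO$ zero-one law. Writing $P_n \in_u \Ccal_n$ for the random connected planar graph on $n$ labelled vertices, it therefore suffices to show that for every $\FO$ sentence $\varphi$,
\[
\lim_{n\to\infty}\Pee(C_n\models\varphi) \;=\; \lim_{n\to\infty}\Pee(P_n\models\varphi).
\]
This single identity simultaneously delivers the $\FO$ zero-one law and the independence of the limiting probabilities from the choice of surface~$S$.

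By Gaifman's theorem (Theorem~\ref{thm:gaifman}) we may restrict to local sentences
\[
\exists x_1,\dots,x_k\ \Bigl(\bigwedge_{i<j}\dist(x_i,x_j)>2r\ \wedge\ \bigwedge_{i=1}^k \psi(x_i)\Bigr),
\]
where $\psi(x)$ is $r$-local, i.e.~its truth at a vertex $v$ depends only on the isomorphism type of the rooted $r$-ball $B_r(v,G)$. For each such isomorphism type $\tau$ let $N_\tau(G)$ be the number of vertices of $G$ whose $r$-ball has type $\tau$. The probability of the displayed local sentence is then determined, up to an $o(1)$-error coming from the finitely many forbidden pairs at distance $\le 2r$, by the joint distribution of the family $(N_\tau)_\tau$.

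The heart of the matter is therefore to show that for every local type $\tau$,
\[
\tfrac{1}{n}\,\Ex\bigl[N_\tau(C_n)\bigr]\longrightarrow c_\tau, \qquad \Var\bigl(N_\tau(C_n)\bigr)=o(n^2),
\]
with the \emph{same} constant $c_\tau$ that governs $N_\tau(P_n)$. The first-moment statement is precisely the kind of local-limit assertion that follows from the asymptotic enumeration of pointed graphs embeddable on $S$ in~\cite{chapuyfusygimenezmoharnoy}, combined with the structural description of random graphs on a surface in~\cite{McDiarmid08}: a typical graph in $\Ccal_n$ differs from a typical connected planar graph only through a bounded amount of non-planar "surface decoration", which with high probability does not meet the $r$-ball of a uniformly chosen vertex. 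A standard second-moment argument then upgrades this to concentration of $N_\tau$, and an inclusion-exclusion over $k$-tuples at pairwise distance $>2r$ identifies the limiting probabilities of the Gaifman local sentences in $C_n$ with those in $P_n$.

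The main obstacle is this local-limit comparison, and in particular extracting from the generating-function analysis of~\cite{chapuyfusygimenezmoharnoy} the fact that the multiplicative constants $c_\tau$ at the dominant singularity are insensitive to the surface $S$, while the influence of $S$ is confined to sub-leading corrections. Once this universality of local statistics is in place, the $\FO$ zero-one law for $P_n$ provided by Theorem~\ref{thm:MSO01add} transfers verbatim to $C_n$, yielding both claims of the theorem.
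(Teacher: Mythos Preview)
Your high-level plan---reduce to basic local sentences via Gaifman, then transfer everything to the random connected planar graph $P_n$---is reasonable, but the transfer mechanism you propose has a real gap.

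You assert that the limiting probability of a basic local sentence is determined (up to $o(1)$) by concentration of the ball-type counts $N_\tau$ around $c_\tau n$. This is not true. Concentration of $N_\tau/n$ around a common constant $c_\tau$ does \emph{not} pin down $\Pee(C_n\models\varphi)$ for a basic local sentence $\varphi$. Two issues arise. First, when every type $\tau$ compatible with $\psi$ has $c_\tau=0$, your variance bound only yields $N_\tau=o(n)$ w.h.p.; it does not give $N_\tau<k$ w.h.p., so you cannot conclude that $\varphi$ fails w.h.p. Second, even when some $c_\tau>0$, the basic local sentence demands $k$ witnesses at pairwise distance $>2r$; since $r$-balls in a random planar or surface graph need not have bounded size (the maximum degree is unbounded), linearly many witnesses do not automatically yield $k$ well-separated ones. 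Your ``inclusion-exclusion over $k$-tuples'' remark does not address this. Separately, the local-limit statement you invoke (that the $c_\tau$ agree for $C_n$ and $P_n$) is nowhere established in the preliminaries; the cited references give face-width and fragment results, not a Benjamini--Schramm-type local limit, and extracting one would be substantial extra work.

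The paper avoids all of this with a direct argument. It constructs, for each $k$, a single fixed planar graph $L_k$ (Lemma~\ref{lem:FOFk}): a long path with a pendant copy of the planar $M_{k'}$ attached at one end. The lemma shows that any connected graph $G$ whose $\ell$-neighbourhoods are all planar and which contains a pendant copy of $L_k$ satisfies $G\equivFO_k L_k$. The proof goes sentence by sentence over the finitely many basic local $\varphi_i$: either no connected planar graph can witness $\psi_i$ at a vertex with something at distance exactly $\ell_i$ (then local planarity kills $\varphi_i$ in both $L_k$ and $G$), or some planar $H$ does witness it, in which case attaching copies of $H$ to $L_k$ and invoking $M_{k'}$ shows $L_k\models\varphi_i$, and the long path inside the pendant $L_k$ in $G$ supplies the $n_i$ well-separated witnesses. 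One then applies Corollary~\ref{cor:facewidth} (all $\ell$-balls in $C_n$ are planar w.h.p.) and Corollary~\ref{cor:pendantcopysurfaces} (a pendant copy of $L_k$ appears w.h.p.) to conclude $C_n\equivFO_k L_k$ w.h.p. Since $L_k$ is surface-independent, both claims follow at once. The key idea you are missing is this fixed ``universal'' graph $L_k$ and the use of pendant copies to force its presence; no local-limit or moment computation is needed.
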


We remark that an analogous result was proved for random \textit{maps} (connected graphs with a given embedding) on a fixed surface \cite{01maps}.
Again for arbitrary graphs we prove a convergence law in FO.
Moreover, we show that the limiting probability of an FO sentence does not depend on the surface and is the same as for planar graphs.

\begin{theorem}\label{thm:surfaceconv}
Fix a surface $S$, let $\Gcal$ be the class of all graphs embeddable on $S$ and let
$G_n \in_u \Gcal_n$.
Then $G_n$ obeys the $\FO$-convergence law.
Moreover, the values of the limiting probabilities
$\displaystyle \lim_{n\to\infty}\Pee( G_n\models\varphi )$
do not depend on the surface $S$.
\end{theorem}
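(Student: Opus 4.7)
The plan is to adapt the proof of Theorem~\ref{thm:MSOconvadd} to the surface setting, with Gaifman's locality theorem playing the role that the Feferman--Vaught style disjoint-union argument plays for MSO in the addable case. Decompose $G_n = L_n \sqcup F_n$, where $L_n$ is the a.a.s.~unique largest connected component of $G_n$ and $F_n$ is the \emph{fragment}. I will invoke three ingredients: (a)~a structural result from \cite{McDiarmid08,chapuyfusygimenezmoharnoy}, namely that $|F_n| = O(1)$ a.a.s., that conditional on $|F_n|=k$ the giant $L_n$ is uniform over $\Ccal_{n-k}$ and independent of $F_n$, and that $F_n \to R$ in total variation, with the law of the Boltzmann--Poisson limit $R$ not depending on $S$; (b)~Theorem~\ref{thm:surface01}, which controls $L_n$; (c)~Gaifman's locality theorem.

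Fix $\varphi \in \FO$. By Gaifman, $\varphi$ is equivalent to a Boolean combination of $t$-local basic sentences of the shape $\exists x_1, \ldots, x_{s'}\colon \bigwedge_{i<j}\dist(x_i,x_j) > 2t \wedge \bigwedge_i \psi^{(t)}(x_i)$. Since no edge of $G_n$ crosses between $L_n$ and $F_n$, every $t$-ball stays inside its own part and cross-distances are infinite. Partitioning the $s'$ witnesses between the two parts therefore produces, by the standard disjoint-union argument, a finite list of FO sentences $\chi_1, \ldots, \chi_M$ and a Boolean function $B$, both depending only on $\varphi$, such that
\[
\mathbf{1}\bigl[G_n \models \varphi\bigr] \;=\; B\Bigl(\bigl(\mathbf{1}[L_n \models \chi_i]\bigr)_{i=1}^M, \; \bigl(\mathbf{1}[F_n \models \chi_j]\bigr)_{j=1}^M\Bigr).
\]

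To combine: for every fixed $k$ Theorem~\ref{thm:surface01} gives $\Pee(L_n \models \chi_i \mid |F_n|=k) = \Pee(C_{n-k} \models \chi_i) \to q_i \in \{0,1\}$, with each $q_i$ independent of $S$. By ingredient~(a), $(\mathbf{1}[F_n \models \chi_j])_j \to (\mathbf{1}[R \models \chi_j])_j$ in distribution, with a law independent of $S$. Conditional independence of $L_n$ and $F_n$ given $|F_n|$, together with tightness of $|F_n|$ and dominated convergence over $k$, then yield
\[
\Pee(G_n \models \varphi) \;\longrightarrow\; \Ex\, B\bigl((q_i)_i, (\mathbf{1}[R \models \chi_j])_j\bigr),
\]
an honest limit and a quantity manifestly independent of $S$.

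The main obstacle is ingredient~(a), specifically the $S$-independence of the law of $R$. A priori, a Boltzmann--Poisson description of $F_n$ permits connected non-planar graphs (which do lie in $\Gcal$ whenever $S$ has positive genus) to contribute positive Poisson rates, which would produce an $S$-dependent limit. The reason this does not happen is that the exponential growth constant of the EGF of connected graphs in $\Gcal$ equals that of the planar class, that $|\Gcal_n|/|\Pcal_n|$ grows only polynomially, and that the ``non-planar budget'' of a random $G_n \in_u \Gcal_n$ is a.a.s.~absorbed entirely by the giant $L_n$; hence $F_n$ is a.a.s.~planar and shares its Boltzmann--Poisson limit $R$ with the planar case. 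Delivering this last point is where the asymptotic enumeration of \cite{McDiarmid08,chapuyfusygimenezmoharnoy} is used in essential form.
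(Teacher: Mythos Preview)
Your overall strategy---split $G_n$ into giant plus fragment, control the giant by the zero-one law, control the fragment by the Boltzmann--Poisson limit---matches the paper's. But there is a genuine gap in your ingredient~(a): the assertion that ``conditional on $|F_n|=k$ the giant $L_n$ is uniform over $\Ccal_{n-k}$ and independent of $F_n$'' is \emph{false} whenever $S$ has positive genus, because $\Gcal$ is not decomposable. On the torus, for instance, if $F_n$ happens to be $K_5$ then $L_n$ is forced to be planar (the handle is used up), whereas if $F_n$ is a planar graph on the same number of vertices, $L_n$ may be genuinely toroidal. So $L_n$ and $F_n$ are dependent given only $|F_n|$, and the identity $\Pee(L_n\models\chi_i\mid |F_n|=k)=\Pee(C_{n-k}\models\chi_i)$ does not hold as stated. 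The repair is to condition further on $F_n$ being a specific \emph{planar} graph (which you already note happens a.a.s.); only then can the fragment be tucked into a disk, making the giant uniform over $\Ccal_{n-k}$. This is fixable, but it is not what you wrote, and it is not a result one can simply cite from~\cite{McDiarmid08,chapuyfusygimenezmoharnoy}.

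The paper avoids this conditioning issue altogether. Rather than invoking Theorem~\ref{thm:surface01} for $L_n$ via a distributional identity, it shows directly that $\Biggg(G_n)\equivFO_k L_k$ a.a.s.\ for a single \emph{fixed} planar graph $L_k$ (the graph built in Lemma~\ref{lem:FOFk}). The two inputs---that every $\ell$-neighbourhood in $G_n$ is planar a.a.s.\ (Theorem~\ref{thm:facewidth}) and that $\Biggg(G_n)$ contains a pendant copy of $L_k$ a.a.s.\ (Corollary~\ref{cor:Bigpendantsurfaces})---are properties of the unconditioned $G_n$, so no analysis of the conditional law of the giant is needed. Lemma~\ref{lem:disjointunion} (which holds for $\equivFO_k$ as well as $\equivMSO_k$) then gives $G_n\equivFO_k L_k\cup\Frag(G_n)$ a.a.s., and Theorem~\ref{thm:surfacefrag} finishes. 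Incidentally, your use of Gaifman to split $\varphi$ across the disjoint union is unnecessary: Lemma~\ref{lem:disjointunion} already provides exactly this Feferman--Vaught style reduction for FO.
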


\noindent
We conjecture that both Theorems~\ref{thm:surface01} and~\ref{thm:surfaceconv} extend to MSO logic.
See the last section of this paper for a more detailed discussion.

Having proved Theorem~\ref{thm:MSOconvadd}, a natural question
is which numbers $p \in [0,1]$ are limiting probabilities of some $\MSO$ sentence.
The proof of Theorem~\ref{thm:MSOconvadd} provides an expression for the limiting
probabilities in terms of the so-called Boltzmann-Poisson distribution (defined in Section~\ref{sec:preliminariesonminorclosedclasses}), but
at present we are not able  to deduce a complete description of the set of limiting probabilities from it.
We are however able to derive some information on the structure of this set.
Two easy observations are that, since there are only countably many sentences $\varphi \in \MSO$
(since a sentence is a finite string of symbols, taken from a countable alphabet), the set
$\left\{\lim_{n\to\infty} \Pee( G_n \models\varphi) \,:\, \varphi \in \MSO \right\}$ must obviously also be
countable; and since $\lim_{n\to\infty} \Pee( G_n \models\neg\varphi) = 1 - \lim_{n\to\infty} \Pee( G_n \models\varphi)$,
it is symmetric with respect to $1/2$.

Certainly not every number $p\in [0,1]$ is a limiting probability of an $\MSO$ sentence, as there are only countably
many such limiting probabilities.
A natural question
is whether the set of limiting probabilities is at least dense in $[0,1]$.
As it turns out this is never the case (for $\Gcal$ an addable, minor-closed class):

\begin{proposition}\label{prop:gap}
Let $\Gcal$ be an addable, minor-closed class of graphs and let $G_n \in_u \Gcal_n$.
For every $\varphi\in\MSO$ we have either
$\lim_{n\to\infty}\Pee( G_n \models\varphi) \leq 1-\upe^{-1/2} \approx 0.3935$ or
$\lim_{n\to\infty}\Pee( G_n \models\varphi) \geq \upe^{-1/2} \approx 0.6065$.
\end{proposition}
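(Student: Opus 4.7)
The plan is to combine Feferman--Vaught compositionality of \MSO{} under disjoint union with the giant-plus-fragment description of a random graph from an addable minor-closed class in~\cite{McDiarmid09}, together with the \MSO{}-zero-one law for connected graphs (Theorem~\ref{thm:MSO01add}). Set $q := \qd(\varphi)$. Applying Theorem~\ref{thm:MSO01add} to $C_n \in_u \Ccal_n$ produces a unique complete \MSO{} theory $\tau^*$ of quantifier depth $q$ that $C_n$ realises with probability tending to $1$; either $\tau^* \models \varphi$ or $\tau^* \models \neg\varphi$, and we may assume the former (otherwise apply the argument to $\neg\varphi$).

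Write $G_n = B_n \sqcup F_n$ with $B_n$ the largest component and $F_n$ the fragment. The inputs from~\cite{McDiarmid09} are: (i)~conditional on $|F_n|=k$, the giant $B_n$ is uniformly distributed in $\Ccal_{n-k}$; (ii)~$F_n$ converges in total variation to a Boltzmann--Poisson random graph of finite intensity $\mu = \mu_\Gcal$, and in particular $\Pee(F_n = \emptyset) \to \upe^{-\mu}$. Since $|F_n|$ is tight in $n$, (i) combined with Theorem~\ref{thm:MSO01add} gives that $B_n$ has quantifier-depth-$q$ \MSO{} theory $\tau^*$ asymptotically almost surely, and in fact $\Pee(B_n \text{ has theory } \tau^* \mid F_n = \emptyset) \to 1$.

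By the Feferman--Vaught theorem for \MSO{} (Shelah's composition theorem), the quantifier-depth-$q$ \MSO{} theory of a disjoint union $A \sqcup B$ is a function of those of $A$ and $B$. On $\{F_n = \emptyset\}$ one has $G_n = B_n$, so the joint event $\{F_n = \emptyset\} \cap \{B_n \text{ has theory } \tau^*\}$ implies $G_n \models \varphi$. Taking probabilities,
\[
\lim_{n\to\infty} \Pee(G_n \models \varphi) \; \geq \; \upe^{-\mu}.
\]
To convert $\upe^{-\mu}$ into the universal constant $\upe^{-1/2}$, we invoke the classical connectivity bound of McDiarmid--Steger--Welsh for (bridge-)addable classes: $\liminf_n \Pee(G_n \text{ is connected}) \geq \upe^{-1/2}$. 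Combined with $\Pee(G_n \text{ is connected}) \to \upe^{-\mu}$, this forces $\mu \leq 1/2$, whence $\upe^{-\mu} \geq \upe^{-1/2}$. The symmetric case $\tau^* \models \neg\varphi$ gives $\lim_n \Pee(G_n \models \varphi) \leq 1 - \upe^{-1/2}$.

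The main obstacle is invoking the correct form of Feferman--Vaught for \MSO{} and extracting from~\cite{McDiarmid09} the conditional statement $\Pee(B_n \text{ has theory } \tau^* \mid F_n = \emptyset) \to 1$; both are essentially routine but require care about the joint distribution of $(B_n, F_n)$. The bound is tight on the class of forests, where $\mu = 1/2$ and the \MSO{}-expressible property \emph{``is connected''} has limiting probability exactly $\upe^{-1/2}$.
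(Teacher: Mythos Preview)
Your argument is essentially the paper's, but dressed up with machinery you never use. The paper simply conditions on the event $E = \{G_n \text{ is connected}\}$: on $E$, $G_n$ is distributed as $C_n$, so by Theorem~\ref{thm:MSO01add} either $\Pee(G_n \models \varphi \mid E) \to 1$ or $\to 0$; combined with $\Pee(E) \to 1/G(\rho) \geq \upe^{-1/2}$ (Theorem~\ref{thm:addario}) this gives the gap immediately. Your route is the same---the only event you actually exploit is $\{F_n = \emptyset\}$, which is precisely $E$---so the Feferman--Vaught theorem, the conditional distribution of $B_n$ given $|F_n|=k$ for $k>0$, and the tightness of $|F_n|$ all play no role and can be deleted.

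One genuine correction: the bound $\liminf_n \Pee(G_n \text{ connected}) \geq \upe^{-1/2}$ is \emph{not} due to McDiarmid--Steger--Welsh. They proved the weaker bound $\upe^{-1}$ for weakly addable classes and \emph{conjectured} $\upe^{-1/2}$; the bound you need (for addable minor-closed classes) is due to Addario-Berry--McDiarmid--Reed~\cite{addarioberrymcdiarmidreed} and independently Kang--Panagiotou~\cite{KangPanagiotou}, stated in the paper as Theorem~\ref{thm:addario} in the form $G(\rho) \leq \sqrt{\upe}$.
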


\noindent
Next, one might ask for the topology of the set of limiting probabilities.
Could they for instance form some strange, fractal-like set?
(See the last section of this paper for an example of a model of random graphs, where such things do indeed happen.)
The next theorem shows that the set of limiting probabilities is relatively well-behaved, and
also that the limits of $\FO$-sentences are dense in the set of limits of $\MSO$-sentences. We denote by $\clo(A)$ the topological closure of $A$ in $\mathbb{R}$.

\begin{theorem}\label{thm:finintintro}
Let $\Gcal$ be an addable, minor-closed class of graphs and let $G_n \in_u \Gcal_n$.  Then
\[
\clo{\Big(} {\Big\{} \lim_{n\to\infty}\Pee( G_n\models\varphi) :
\varphi \in \MSO {\Big\} } {\Big)}
=
\clo{\Big(} {\Big\{} \lim_{n\to\infty}\Pee( G_n\models\varphi) :
\varphi \in \FO {\Big\} } {\Big)}
\]\noindent
and this set is a finite union of closed intervals.
\end{theorem}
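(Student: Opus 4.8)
The plan is to extract, from the proof of Theorem~\ref{thm:MSOconvadd}, an explicit formula for each limiting probability $\lim_n \Pee(G_n \models \varphi)$ in terms of the Boltzmann--Poisson distribution of the fragment, and then to argue that (a) every such limit is already achieved, or approximated, by an $\FO$-sentence, and (b) the resulting closed set of reals is a finite union of intervals. Recall that with high probability $G_n$ consists of a giant component together with a small fragment $\Frag(G_n)$ on $O(1)$ vertices, and that $\Frag(G_n)$ converges in distribution to a fixed random graph $R$ whose law is the Boltzmann--Poisson distribution attached to $\Ccal$. Fix a quantifier-depth parameter $k = \qd(\varphi)$. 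Since the giant component is connected and enormous, its $\equivMSO_k$-type (equivalently, its $\qd$-$k$ Hintikka sentence) is determined with probability $\to 1$ by the zero-one law for connected graphs (Theorem~\ref{thm:MSO01add}); call that almost-sure type $\tau_\infty$. By the standard Feferman--Vaught/composition argument for disjoint unions, the $\equivMSO_k$-type of $G_n$ is a function of the pair $(\tau_\infty, \text{type}_k(\Frag(G_n)))$. Hence
\[
\lim_{n\to\infty}\Pee(G_n \models \varphi) \;=\; \Pee\big( R \in \Escr_\varphi \big),
\]
where $\Escr_\varphi$ is the (finite, since $R$ is supported on finitely many isomorphism types of each order... but there are infinitely many orders) set of finite graphs $H$ such that the union of a $\tau_\infty$-graph with $H$ satisfies $\varphi$; concretely $\Escr_\varphi$ is a union of $\equivMSO_k$-classes.

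The key structural point is to understand the distribution of $R$. The Boltzmann--Poisson law is a mixed Poisson construction: $R$ is a disjoint union in which, for each connected graph $C$, the number of components isomorphic to $C$ is an independent Poisson random variable with parameter $\mu(C) = \rho^{|C|}/(\aut(C)\,|C|!)\cdot(\text{something})$ — the precise constants come from McDiarmid's work and are immaterial here; what matters is that these are independent Poissons and that $\sum_C \mu(C) =: \lambda < \infty$ (indeed $\lambda = 1/2$ in the relevant normalization, explaining the $\upe^{-1/2}$ in Proposition~\ref{prop:gap}). Now group the connected graphs $C$ by their $\equivMSO_k$-type. There are only finitely many such types $t_1,\dots,t_m$; let $N_i$ be the number of components of $R$ of type $t_i$, so the $N_i$ are independent with $N_i \sim \Po(\lambda_i)$, $\lambda_i := \sum_{C : C \models t_i} \mu(C)$, and $\sum_i \lambda_i = \lambda$. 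The $\equivMSO_k$-type of $R$ — hence whether $R \in \Escr_\varphi$ — depends only on the vector $(\min(N_i, k+1))_{i=1}^m$ (more components of the same type beyond a threshold do not change the $\qd$-$k$ type, again by Feferman--Vaught). Therefore
\[
\lim_{n\to\infty}\Pee(G_n \models \varphi) \;=\; \sum_{v \in B_\varphi} \prod_{i=1}^m q_i(v_i),
\]
a finite sum over a set $B_\varphi$ of vectors $v \in \{0,1,\dots,k+1\}^m$, where $q_i(j) = \Pee(\Po(\lambda_i) = j)$ for $j \le k$ and $q_i(k+1) = \Pee(\Po(\lambda_i) \ge k+1)$.

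To finish I would argue two things. \textbf{$\FO$ is dense.} Fix any $k$; the $\equivFO_k$-type of a graph refines into finitely many classes, and the truth of any $\equivMSO_k$-statement about $R$ is, with the thresholding above, equivalent to a Boolean combination of statements of the form ``there are at least $j$ components isomorphic to $C$'' for finitely many fixed connected graphs $C$ and $j \le k+1$ — each of which is an $\FO$ statement (of some larger depth depending on $|C|$ and $j$). Since the giant's relevant structure is also captured by $\FO$ sentences once $k$ is fixed (the $\FO$ zero-one law for connected graphs, which follows from Theorem~\ref{thm:MSO01add}), every limiting probability of an $\MSO$ sentence is actually itself a limiting probability of an $\FO$ sentence; in particular the two closures coincide — indeed the two raw sets coincide. \textbf{Finite union of intervals.} By the displayed formula, the set of all limiting probabilities over all $\varphi$ (equivalently over all $k$ and all subsets $B \subseteq \{0,\dots,k+1\}^m$) is the countable set
\[
\Lscr \;=\; \bigcup_{m,\ \vec\lambda}\ \Big\{ \textstyle\sum_{v \in B} \prod_i q_i(v_i) \ :\ B \subseteq \{0,\dots,k+1\}^m \Big\},
\]
where $\vec\lambda$ ranges over the admissible vectors of rates arising from partitioning the connected graphs by $\equivMSO_k$-type. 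The clean way to get the closure is to push $k \to \infty$: a set of components of $R$ chosen measurably (i.e.\ an event in the product-Poisson $\sigma$-algebra) can have essentially arbitrary probability \emph{within the range dictated by the atoms}. The one unavoidable atom is the empty fragment: $\Pee(R = \emptyset) = \upe^{-\lambda} = \upe^{-1/2}$, and $\Pee(R \ne \emptyset) = 1 - \upe^{-1/2}$. Splitting on whether the relevant event contains the configuration $R = \emptyset$, the achievable probabilities of events measurable w.r.t.\ the fragment fill (in the limit/closure) exactly $[0, 1-\upe^{-1/2}] \cup [\upe^{-1/2}, 1]$ — but one must be careful, because the giant contributes extra atoms too: conditional on which $\equivMSO_k$-type the giant has (an event of probability $1$ for the almost-sure type, but lower-order types occur with probability $o(1)$, so they do not affect limits) the story is as above. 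The upshot is that $\clo(\Lscr)$ is a finite union of closed intervals whose endpoints are finite sums/products of quantities $\upe^{-\lambda_i}\lambda_i^{j}/j!$.

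\textbf{Main obstacle.} The genuinely delicate step is the last one: proving that the closure is a \emph{finite} union of intervals rather than some Cantor-like set. The mechanism that forces intervals is that, after conditioning on a large but bounded number of Poisson coordinates, the \emph{remaining} mass $\sum_{i > i_0}\lambda_i$ (sitting in the countably many component-types of small individual rate) can be carved up by $\MSO$-definable events into pieces of essentially any size — so we get whole subintervals rather than isolated points — while the \emph{finitely many} large-rate coordinates, together with the handful of atoms they create (notably $R=\emptyset$), produce only finitely many ``forbidden gaps.'' Making this precise requires a careful analysis: ordering the component-types by decreasing rate, showing that the ``tail'' rates are individually small enough that the set of realizable tail-probabilities is dense in an interval, and then convolving with the finite ``head'' distribution; one must check that only finitely many head-configurations produce gaps of positive length and that these gaps do not subdivide further. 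I expect this combinatorial-analytic bookkeeping — rather than the logic, which is the routine Feferman--Vaught/Hintikka machinery combined with the already-proved Theorems~\ref{thm:MSO01add} and~\ref{thm:MSOconvadd} — to be where the real work lies.
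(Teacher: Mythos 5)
Your reduction of every $\MSO$-limit to a fragment-probability $\Pee(R\in\Fcal)$ is exactly the paper's first step (Theorem~\ref{thm:MSOconvdetail}), and your instinct that $\FO$-limits should be dense in these fragment-probabilities is also the right second step. But your argument for that density step is flawed as written: you claim that ``$R$ has at least $j$ components of $\equivMSO_k$-type $t_i$'' is a Boolean combination of finitely many statements of the form ``at least $j$ components isomorphic to $C$''. It is not, because an $\equivMSO_k$-class of connected graphs typically contains infinitely many isomorphism types (so, in particular, your stronger claim that the raw sets $L_{\FO}$ and $L_{\MSO}$ coincide is unsubstantiated; the theorem only needs, and the paper only proves, equality of closures). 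The fix is the one the paper uses: approximate $\Fcal$ by a finite subfamily, truncate the fragment to components of order at most $K$ with $\Pee(v(R)>K)<\eps$, and note that ``$\Frag_K(G_n)\cong F$'' is $\FO$-expressible for each fixed $F$; this yields density, which suffices.

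The genuine gap is the final step, which you yourself flag as the ``main obstacle'' and leave unproved: showing that $\clo\{\Pee(R\in\Fcal):\Fcal\subseteq\Ucal\Gcal\}$ is a \emph{finite} union of intervals. This does not follow from the qualitative product-Poisson picture alone --- indeed the paper points out a random graph model where the analogous closure is homeomorphic to the Cantor set --- so your expectation that the ``tail can be carved up into pieces of essentially any size'' is precisely what must be established, and it needs quantitative input about the class $\Gcal$. The paper's route: order the unlabelled graphs so that $p_i=\Pee(R=G_i)$ is non-increasing; by the subset-sum criterion (Lemma~\ref{lem:subsums} and Corollary~\ref{cor:subsums}) it suffices to prove $p_i\le\sum_{j>i}p_j$ for all large $i$; this tail inequality is then verified by introducing the disjoint events $E_k=\{$the smallest component of $R$ has exactly $k$ vertices and is unique of that order$\}$, computing $q_k=\mu_k\upe^{-(\mu_1+\dots+\mu_k)}$ with $\mu_k=|\Ccal_k|\rho^k/k!$, and using \emph{smoothness} of addable minor-closed classes (Theorem~\ref{thm:addissmooth}) to get $q_{k+1}/q_k\to1$, which prevents the super-geometric decay that would create a fractal closure. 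Your sketch never invokes smoothness or any estimate on $\mu_k$ (you even dismiss the constants as immaterial), and your heuristic that only the atom $R=\emptyset$ creates a gap is wrong in general --- forests give four intervals and planar graphs $108$ --- so the ``head/tail convolution'' you describe cannot be completed without this missing analytic ingredient. Also, two small slips: $\mu(C)=\rho^{v(C)}/\aut(C)$ (no extra factorial), and $\sum_C\mu(C)=C(\rho)=\log G(\rho)$ equals $1/2$ only for forests; in general it is at most $1/2$.
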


Let us remark that, in view of Proposition~\ref{prop:gap}, the closure always consists of at least two disjoint intervals.
In Section~\ref{sec:conc} we conjecture that there are at least four disjoint intervals for every addable, minor-closed class.
Let us also observe that it follows from this last theorem and Theorem~\ref{thm:surfaceconv} that the closure of the
$\FO$-limiting probabilities for the random graph embeddable on $S$, for any surface $S$, coincides
with the closure of the $\MSO$-limiting probabilities for the class of planar graphs.

Perhaps surprisingly, the closure of the set of $\MSO$-limiting probabilities can actually be determined exactly for two
important examples.
The first one is the class of forests.

\begin{theorem}\label{thm:forestclosure}
If $\Gcal$ is the class of forests and $G_n \in_u \Gcal_n$ then
\[  \begin{array}{c}
\displaystyle
\clo{\Big(} {\Big\{} \lim_{n\to\infty}\Pee( G_n\models\varphi) :
\varphi \in \MSO {\Big\} } {\Big)}  \\[4mm]
\displaystyle
= {\Big[}0, 1 - (1+\upe^{-1})\upe^{-1/2}{\Big]}
\cup
{\Big[}\upe^{-3/2}, 1-\upe^{-1/2}{\Big]}
\cup
{\Big[}\upe^{-1/2}, 1-\upe^{-3/2}{\Big]}
\cup
{\Big[}(1+\upe^{-1})\upe^{-1/2}, 1{\Big]} \\
\displaystyle
\approx {\Big[}0, .170{\Big]}
\cup
{\Big[} .223, .393 {\Big]}
\cup
{\Big[}.606, .776{\Big]}
\cup
{\Big[}.830, 1{\Big]} .
\end{array} \]
\end{theorem}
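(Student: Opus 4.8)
The plan is to reduce the statement to a computation with the Boltzmann--Poisson distribution of Section~\ref{sec:preliminariesonminorclosedclasses}, and then to analyse its atoms. For $\Gcal$ the class of forests, the structural results behind Theorem~\ref{thm:MSOconvadd} (i.e.\ McDiarmid~\cite{McDiarmid09}) say that with probability $1-o(1)$ the random forest $G_n\in_u\Gcal_n$ is a disjoint union $\Theta_n\sqcup R_n$ of a ``giant'' tree $\Theta_n$ on $n-O(1)$ vertices and a ``fragment'' $R_n$, where $R_n$ converges in distribution (indeed in total variation) to the Boltzmann--Poisson random forest $R_\infty$: the number $N_H$ of components of $R_\infty$ isomorphic to a fixed tree $H$ is $\Po(\mu_H)$ with $\mu_H=\rho^{|H|}/|{\Aut}(H)|$ and $\rho=\upe^{-1}$, these being independent over the isomorphism types $H$, and $\sum_H\mu_H=U(\rho)=\tfrac12$, where $U$ is the exponential generating function of unrooted labelled trees. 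Since $\Theta_n$ is, conditionally on its vertex set, a uniformly random labelled tree on at least $n-O(1)$ vertices, with probability $1-o(1)$ it contains every fixed finite rooted tree, hence for every fixed $k$ it has the same (``generic'') $\MSO$-type of quantifier depth $k$, by the rigidity of large trees used in~\cite{McColm2002}. By the Feferman--Vaught composition for disjoint unions, the $\MSO$-type of depth $k$ of $\Theta_n\sqcup R_n$ is then, with probability $1-o(1)$, a function --- depending only on $k$ and on the generic type of the giant --- of the vector $\big(\min(N_H(R_n),t_k)\big)_H$ for a threshold $t_k$ depending only on $k$. Conversely, ``there are at least $j$ components isomorphic to $H$'' is expressible in $\MSO$ (indeed in $\FO$) for every fixed tree $H$ and every $j\ge 1$, and $\Theta_n\not\cong H$ with probability $1-o(1)$; since the finite Boolean combinations of the events $\{N_H\ge j\}$ form an algebra generating $\sigma((N_H)_H)$, continuity of the measure $\nu:=\mathcal L(R_\infty)$ gives
\[
\clo\Big(\big\{\lim_{n\to\infty}\Pee(G_n\models\varphi):\varphi\in\MSO\big\}\Big)=\clo\big(\{\nu(\Ecal):\Ecal\in\sigma((N_H)_H)\}\big).
\]

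The measure $\nu$ is purely atomic, with $\nu(\{F\})=\upe^{-1/2}\prod_H\mu_H^{n_H(F)}/n_H(F)!$ for each isomorphism type of finite forest $F$, where $n_H(F)$ is the number of components of $F$ isomorphic to $H$; equivalently $\nu(\{F\})=\upe^{-1/2}\rho^{|F|}c(F)$ with $c(F)=\prod_H 1/(|{\Aut}(H)|^{n_H(F)}\,n_H(F)!)\in(0,1]$. Its two largest atoms are $\nu(\{\emptyset\})=\upe^{-1/2}$ (the limiting probability that $G_n$ is connected) and $\nu(\{K_1\})=\mu_{K_1}\upe^{-1/2}=\upe^{-3/2}$; since $c(F)\le 1$ always and $c(F)=\tfrac12$ when $|F|=2$, while $\upe^{-1/2}\rho^{|F|}\le\upe^{-7/2}<\tfrac12\upe^{-5/2}$ once $|F|\ge 3$, every other atom is at most $\nu(\{2K_1\})=\nu(\{K_2\})=\tfrac12\upe^{-5/2}$. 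Write $r:=1-\upe^{-1/2}-\upe^{-3/2}=1-(1+\upe^{-1})\upe^{-1/2}$ for the total $\nu$-mass carried by forests other than $\emptyset$ and $K_1$.

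For the upper bound, let $\Ecal\in\sigma((N_H)_H)$ and split off the two top atoms:
\[
\nu(\Ecal)=\mathbf 1[\emptyset\in\Ecal]\,\upe^{-1/2}+\mathbf 1[K_1\in\Ecal]\,\upe^{-3/2}+\nu\big(\Ecal\cap\{F:F\neq\emptyset,\,F\neq K_1\}\big),
\]
and the last summand lies in $[0,r]$. Hence $\nu(\Ecal)\in\{0,\upe^{-1/2},\upe^{-3/2},\upe^{-1/2}+\upe^{-3/2}\}+[0,r]$, which is exactly the union of the four intervals in the statement. These are pairwise disjoint: $r<\upe^{-3/2}$ (equivalently $1-\upe^{-1/2}<2\upe^{-3/2}$) separates the first interval from the second and the third from the fourth, while $1-\upe^{-1/2}<\upe^{-1/2}$ separates the second from the third --- this middle gap being the one already provided by Proposition~\ref{prop:gap}. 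So the closure is contained in the union of the four intervals.

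For the matching lower bound it suffices to show that the set of subset sums of the atoms of $\nu$ other than $\upe^{-1/2}$ and $\upe^{-3/2}$ is dense in $[0,r]$; combining this with the option of adding $\upe^{-1/2}$ and/or $\upe^{-3/2}$ then yields a dense subset of each of the four intervals, completing the proof. By the classical criterion --- the closure of the set of subset sums of a non-increasing summable sequence $b_1\ge b_2\ge\cdots$ is $[0,\sum_i b_i]$ iff $b_n\le\sum_{i>n}b_i$ for all $n$, which here amounts to: every (small) atom value $v$ is at most the total mass of the small atoms that are strictly smaller than $v$ --- this comes down to a monotonicity check on the atom values. The key point making it go through is that the $\nu$-mass carried by forests on exactly $m$ vertices equals $\upe^{-1/2}\rho^m f_m/m!$, where $f_m$ is the number of labelled forests on $[m]$, and this decays only polynomially in $m$ (one has $f_m/m!\sim c\,\rho^{-m}m^{-3/2}$ from the square-root singularity of $\upe^{U}$ at $\rho$), whereas each individual atom on $m$ vertices is at most $\upe^{-1/2}\rho^m$ and so decays geometrically; hence for $m$ beyond a small constant the mass sitting strictly below any level-$m$ atom already exceeds it, and only the finitely many atoms on, say, at most six vertices need to be checked by hand (the largest few being $\tfrac12\upe^{-5/2}$ twice, then $\tfrac12\upe^{-7/2}$ twice, $\tfrac16\upe^{-7/2}$, $\tfrac12\upe^{-9/2}$, \dots, all comfortably dominated by the remaining mass $r\approx 0.170$). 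This last verification is the only genuinely computational part of the argument; setting up the reduction in the first paragraph --- in particular checking that the generic type of the giant does not restrict which fragment events are realised by $\MSO$ sentences --- is where one has to be slightly careful.
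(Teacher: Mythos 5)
Your argument is correct and its skeleton is the same as the paper's: reduce to the Boltzmann--Poisson measure for forests, peel off the two largest atoms $\upe^{-1/2}$ (empty fragment) and $\upe^{-3/2}$ (one isolated vertex), and show that the subset sums of the remaining atoms fill all of $[0,r]$ with $r=1-(1+\upe^{-1})\upe^{-1/2}$, via the Kakeya-type criterion (Lemma~\ref{lem:subsums}, Corollary~\ref{cor:subsums}). Two points where you diverge. First, your opening paragraph re-derives the identification $\clo(L_{\MSO})=\{\Pee(R\in\Fcal):\Fcal\subseteq\Ucal\Gcal\}$ (generic $\MSO$-type of the giant, composition over disjoint unions, $\FO$-expressibility of component-count events); in the paper this is exactly Theorems~\ref{thm:MSOconvdetail} and~\ref{thm:finint} and could simply be cited. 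Your sketch glosses over the fact that what is needed of the giant is a pendant copy of the specific tree $M_k$ of Theorem~\ref{thm:MSOMk}, not mere containment of every fixed rooted tree, but the required facts are available. Second, and more substantively, you verify the ``each term is at most its tail'' condition differently: the paper fixes, for a small atom $p_k$, the level $n$ with $\upe^{-(n+1/2)}<p_k\le\upe^{-(n-1/2)}$ and invokes the hand-count $\sum_{F\in\Ucal\Fcal_n}1/\aut(F)>\upe$ (Lemma~\ref{lem:autosforest}), whereas you compare the geometric bound $\upe^{-1/2}\rho^{m}$ on any individual atom at level $m$ with the only polynomially decaying total mass per level, obtained from asymptotic enumeration. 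That route works and is arguably more robust, but note a slip: the singularity of $\upe^{U}$ at $\rho=\upe^{-1}$ has exponent $3/2$, so $f_m/m!\sim c\,\upe^{m}m^{-5/2}$, not $m^{-3/2}$; this is harmless since only ``polynomial versus geometric'' is used. Also, in your closing hand-check the relevant criterion is not domination by the total remaining mass $r$ but domination by the mass of the strictly smaller atoms (equivalently the sum of the subsequent terms, as in Lemma~\ref{lem:foresttailfail}); the finitely many atoms you list do satisfy this stronger check, with room to spare. Finally, your four intervals agree with the intended statement: the endpoint $1-\upe^{1/2}$ printed in the theorem is a typo for $1-\upe^{-1/2}$, which is what both your computation and the paper's produce.
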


\noindent
The second class for which we can
determine the closure of the set of limiting probabilities of $\MSO$ sentences is the class of planar graphs.
To describe the results, we need the
exponential generating function corresponding to a class of graphs $\Gcal$, which is defined as
$G(z) = \sum_{n=0}^\infty |\Gcal_n|\frac{z^n}{n!}$.

\begin{theorem}\label{thm:planarclosureprime}
If $\Gcal$ is the class of planar graphs and $G_n\in_u\Gcal_n$ then
the set
\[ \clo{\Big(} {\Big\{} \lim_{n\to\infty}\Pee( G_n\models\varphi) :
\varphi \in \MSO {\Big\} } {\Big)} \]
\noindent
is the union of 108 disjoint intervals that all have exactly the same length, which is
approximately $5.39\cdot 10^{-6}$.
The endpoints of these intervals are given explicitly
in Theorem~\ref{thm:planarclosure} in terms of
$\rho$ and $G(\rho)$, where $G(z)$ is the exponential generating
function for the set of planar graphs, and $\rho$ is its radius of convergence
(see details in Section~\ref{sec:planarlimits}).
\end{theorem}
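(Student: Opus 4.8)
The plan is to use the machinery already set up for Theorems~\ref{thm:MSOconvadd} and~\ref{thm:finintintro} to identify the closure in question with the closure of the \emph{achievement set} (the set of all subsums) of an explicit summable family of positive reals indexed by finite planar graphs, and then to carry out the finite computation that this family dictates, using the known numerical values of $\rho$ and of $G(\rho)$ from the work of Gim\'enez and Noy on planar graph enumeration.

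\medskip\noindent\textbf{Step 1: reduction to an achievement set.}
Let $C(z)=\sum_n|\Ccal_n|z^n/n!$, so that $G(z)=\exp(C(z))$ and $G(\rho)=\upe^{C(\rho)}<\infty$. Recall from the proof of Theorem~\ref{thm:MSOconvadd} that, with probability $\to 1$, $G_n$ is the disjoint union of its largest component $L_n$ and a fragment $\Frag_n$ of bounded order, and $\Frag_n\convd R$, the Boltzmann--Poisson random planar graph (Section~\ref{sec:preliminariesonminorclosedclasses}), which satisfies $\Pee(R\cong F)=a_F:=\rho^{|V(F)|}/(\aut(F)\,G(\rho))$ for every finite planar graph $F$, with $\sum_F a_F=1$. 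Fix $\varphi\in\MSO$ of quantifier depth $k$. By the $\MSO$-zero-one law for connected graphs in an addable minor-closed class (Theorem~\ref{thm:MSO01add}) applied to $L_n$, there is a single $\MSO$-type $\tau_k$ of depth $\le k$ that $L_n$ realises a.a.s.; by the Feferman--Vaught composition theorem for disjoint unions there is $\theta_\varphi\in\MSO$, depending only on $\tau_k$, with $L\sqcup F\models\varphi$ iff $F\models\theta_\varphi$ whenever $L$ has type $\tau_k$. Hence
\[
\lim_{n\to\infty}\Pee(G_n\models\varphi)=\Pee(R\models\theta_\varphi)=\sum_{F\models\theta_\varphi}a_F\in\mathcal A,\qquad \mathcal A:=\Bigl\{\sum_{F\in\mathcal S}a_F:\mathcal S\subseteq\mathcal P\Bigr\},
\]
where $\mathcal P$ is the set of (isomorphism types of) finite planar graphs. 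Conversely, for a finite $\mathcal S\subseteq\mathcal P$, quantifying over a set $X$ of vertices and asking that $G[X]$ be connected with no edges leaving $X$, that $|V\setminus X|\le\max_{F\in\mathcal S}|V(F)|$, and that $G[V\setminus X]$ be isomorphic to a member of $\mathcal S$, gives an $\MSO$ sentence whose event, for large $n$, is exactly ``the fragment lies in $\mathcal S$''; its limiting probability is $\sum_{F\in\mathcal S}a_F$. Since every subsum $\sum_{F\in I}a_F$ is the limit of the finite subsums $\sum_{F\in I,\,|V(F)|\le m}a_F$, we conclude $\clo\bigl(\{\lim_n\Pee(G_n\models\varphi):\varphi\in\MSO\}\bigr)=\clo(\mathcal A)$.

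\medskip\noindent\textbf{Step 2: structure of $\clo(\mathcal A)$.}
Here I would invoke the classical (Kakeya-type) description of achievement sets: writing the values $(a_F)$ in non-increasing order $a^{(0)}\ge a^{(1)}\ge\cdots$ with tails $r_m=\sum_{j>m}a^{(j)}$, one has $\clo(\mathcal A)=\bigl\{\sum_{j\in J}a^{(j)}:J\subseteq\{0,\dots,N\}\bigr\}+[0,r_N]$ as soon as $a^{(m)}\le r_m$ for all $m>N$. Such an $N$ exists here because a graph of order $s$ has $a_F\le\rho^{s}$, decaying exponentially, whereas $r_m\ge\tfrac1{G(\rho)}\sum_{t>s}|\Gcal_t|\rho^{t}/t!\ge\tfrac1{G(\rho)}(s{+}1)^{s-1}\rho^{\,s+1}/(s{+}1)!$ decays only like $\upe^{s}/\mathrm{poly}(s)$ times $\rho^{s+1}$, so $a^{(m)}<r_m$ once $a^{(m)}$ comes from a graph of order at least some absolute constant. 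Thus (specialising Theorem~\ref{thm:finintintro}) $\clo(\mathcal A)$ is a finite union of closed intervals, all of the \emph{same} length $r_N$ when $N$ is minimal; it remains to pin down the ``heavy atoms'' $a^{(0)},\dots,a^{(N)}$, the length $r_N$, and the number of distinct partial sums $\sum_{j\in J}a^{(j)}$, while checking that consecutive partial sums differ by more than $r_N$ (so the intervals are pairwise disjoint).

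\medskip\noindent\textbf{Step 3: the planar computation.}
Inserting $\rho\approx 0.0367$ and $G(\rho)\approx 1.038$, a direct check shows the heavy atoms are precisely the $a_F$ with $|V(F)|\le 3$: the borderline case is $F$ the triangle or the empty graph on three vertices, where $a_F=\rho^3/(6G(\rho))\approx 7.9\cdot 10^{-6}$ exceeds the whole order-$\ge 4$ tail $r_N\approx 5\cdot 10^{-6}$, whereas every $F$ with $|V(F)|\ge 4$ has $a_F\le\rho^4/(2G(\rho))\approx 8.7\cdot 10^{-7}$, well below that tail. Grouped by value, the heavy atoms are the five numbers $1,\ \rho,\ \rho^2/2,\ \rho^3/2,\ \rho^3/6$ (each over $G(\rho)$) with multiplicities $1,1,2,2,2$; since these values lie in strongly decreasing ratios and the $\rho^3$-contributions $\tfrac{3c_3+c_3'}{6}\rho^3$ with $c_3,c_3'\in\{0,1,2\}$ take the nine distinct values $\tfrac06\rho^3,\dots,\tfrac86\rho^3$, the partial sums take exactly $2\cdot 2\cdot 3\cdot 3\cdot 3=108$ distinct values, the tightest gap between consecutive ones being $\rho^3/(6G(\rho))\approx 7.9\cdot 10^{-6}>r_N$. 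This produces $108$ pairwise disjoint intervals of common length
\[
r_N=\frac{1}{G(\rho)}\sum_{t\ge 4}\frac{|\Gcal_t|\rho^{t}}{t!}=\frac{G(\rho)-1-\rho-\rho^2-\tfrac43\rho^3}{G(\rho)}\ \approx\ 5\cdot 10^{-6}
\]
(dominant term $\tfrac83\rho^4/G(\rho)$), with left endpoints the $108$ partial sums $\sum_{j\in J}a^{(j)}$; expressing these explicitly in $\rho$ and $G(\rho)$ is exactly Theorem~\ref{thm:planarclosure}.

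\medskip\noindent\textbf{Main obstacle.}
The delicate point is the borderline step: membership in the set of heavy atoms is decided by an inequality between $a_F$ and a tail sum that, for orders $3$ and $4$, holds (respectively fails) only by a small constant factor, so the conclusion genuinely requires $\rho$ and $G(\rho)$ to sufficient precision and a careful two-sided estimate of $\sum_{t\ge 4}|\Gcal_t|\rho^{t}/t!$ (equivalently of $G(\rho)-1-\rho-\rho^2-\tfrac43\rho^3$). The secondary difficulty is the bookkeeping needed to verify that all $2\cdot 2\cdot 3\cdot 3\cdot 3$ partial sums of the heavy atoms are distinct and separated by more than $r_N$, so that none of the $108$ candidate intervals touch or merge; this rests on the quasi-geometric spacing of the five atom values, which in turn uses that $\rho$ is small (and algebraic).
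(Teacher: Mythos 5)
Your overall route is the same as the paper's: reduce, via the convergence law and the Boltzmann--Poisson fragment, to the achievement set of the atoms $a_F=\rho^{v(F)}/(\aut(F)G(\rho))$, apply the Kakeya-type subsum result (the paper's Corollary~\ref{cor:subsums}), identify the heavy atoms as the eight graphs on at most three vertices with values $1,\rho,\rho^2/2,\rho^3/2,\rho^3/6$ (over $G(\rho)$) and multiplicities $1,1,2,2,2$, count $2\cdot2\cdot3\cdot9=108$ distinct partial sums, get the common length $1-\frac{1}{G(\rho)}(1+\rho+\rho^2+\frac43\rho^3)$, and prove disjointness from the minimal gap $\rho^3/(6G(\rho))$ exceeding that length. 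All of those conclusions and numbers agree with Theorem~\ref{thm:planarclosure}.

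The gap is in your verification that $a^{(m)}\le r_m$ for \emph{every} position $m$ beyond the eight heavy atoms, which is exactly what Corollary~\ref{cor:subsums} requires. Two concrete problems. First, in Step 2 the bound $r_m\ge\frac1{G(\rho)}\sum_{t>s}|\Gcal_t|\rho^t/t!$ (with $s$ the order of the graph realising $a^{(m)}$) is unjustified: the value ordering does not respect graph order, e.g.\ the empty graph on $s\ge5$ vertices has $a$-value $\rho^s/(s!\,G(\rho))$ smaller than that of the $(s{+}1)$-vertex path, so graphs of order $>s$ can precede position $m$ and then are not part of the tail $r_m$. The paper avoids this by choosing the comparison order $n$ from the \emph{value} of $p_k$ (the unique $n$ with $\rho^n/G(\rho)<p_k\le\rho^{n-1}/G(\rho)$), and then needs $\sum_{H\in\Ucal\Pcal_n}1/\aut(H)>1/\rho\approx27.2$; your Cayley-type lower bound $(s{+}1)^{s-1}$ is far too weak for that until about order $10$ (for $n=6$ it gives $6^4/720=1.8$), whereas the paper's Lemma~\ref{lem:autos} uses the actual count $|\Pcal_6|=32071$ to get $>30$ for all $n\ge6$. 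Second, in Step 3 you certify the order-$4$ atoms only by comparing each to the whole order-$\ge4$ tail $r_N$; the condition actually needed at, say, the position of the third atom of value $\rho^4/(2G(\rho))$ is $2\rho^4/G(\rho)\le r_N$ (which holds only by a factor about $1.5$), and at the last order-$4$ atom it amounts to a lower bound on the order-$\ge5$ tail — none of which follows from ``$a_F\le\rho^4/(2G(\rho))\ll r_N$''. So the middle range (atoms of order $4$ up to roughly $9$) is not covered by either of your arguments as written; this is precisely where the paper inserts the explicit per-index checks of Lemma~\ref{lem:planarplanar} (indices $9$--$19$, using the numerics of Lemma~\ref{lem:approx}) together with Lemma~\ref{lem:autos} for all later indices. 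The gap is repairable along those lines, but as it stands the claim that the cutoff sits exactly after the eight order-$\le3$ atoms — and hence the count of exactly $108$ intervals — is not fully proved.
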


Until now we have dealt with addable minor-closed classes and graphs embeddable on a fixed surface, which
in view of Theorems~\ref{thm:surface01} and~\ref{thm:surfaceconv} (see also the results we list in Section~\ref{sec:preliminariesonminorclosedclasses})
behave rather similarly to planar graphs.
It is thus natural to ask to which extent our results can be expected to carry over to the non-addable case.
 Random graphs from several non-addable classes have recently been investigated in~\cite{arXiv:1303.3836v1} and
 the results in that paper demonstrate that they can display behaviour very different from the addable case.
In Section~\ref{sec:nonaddable}, we analyse three examples of non-addable graph classes from the logical limit laws point of view;
and the results there are in stark contrast with the results on addable graph classes.

For $t \in \eN$ a fixed integer, the collection $\Gcal$ of all graphs whose every component has no more than $t$ vertices
is a minor-closed class that is not addable.
Of course now $\Ccal_n$, the set of connected graphs from $\Gcal$ on $n$ vertices, is empty for $n > t$.
So it does not make sense to consider the random connected graph $C_n \in_u \Ccal_n$.
For the ``general" random graph $G_n \in_u \Gcal_n$ we see that, contrary to the addable case,
every $\MSO$ sentence has a limiting probability that is either zero or one.

\begin{theorem}\label{thm:boundedcompsize}
Let $t \in\eN$ be fixed, let $\Gcal$ be the class of all graphs whose components have
at most $t$ vertices, and let $G_n \in_u \Gcal_n$.
Then $G_n$ obeys the $\MSO$-zero-one law.
\end{theorem}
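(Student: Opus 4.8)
The plan is to show that, up to $\MSO$-indistinguishability, $G_n$ behaves like a disjoint union in which every small connected graph occurs a very large number of times. Up to isomorphism there are only finitely many connected graphs on at most $k$ vertices; list them $H_1,\dots,H_r$, with $H_i$ on $v_i\le k$ vertices, and for a graph $G$ write $\#_{H_i}(G)$ for the number of components of $G$ isomorphic to $H_i$. Every member of $\Gcal_n$ is just the disjoint union of its components, hence is determined up to isomorphism by $(\#_{H_1}(G),\dots,\#_{H_r}(G))$. By the Feferman--Vaught composition theorem for $\MSO$ and the disjoint-union operation, for every quantifier rank $q$ there is a threshold $t=t(q)$ such that the $\MSO$ $q$-theory of a disjoint union is determined by the multiset of $\MSO$ $q$-theories of its parts, each multiplicity counted only up to $t$ (for $\MSO$ over disjoint unions this is genuine threshold counting, with no modular behaviour). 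Since isomorphic graphs have the same $\MSO$ theory, the truth value in $G\in\Gcal_n$ of any fixed $\varphi\in\MSO$ of quantifier rank $\le q$ is a function of the truncated vector $\bigl(\min(\#_{H_1}(G),t),\dots,\min(\#_{H_r}(G),t)\bigr)$.

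Hence it suffices to prove the purely probabilistic statement that for every $i$ and every fixed $m$ we have $\Pee\bigl(\#_{H_i}(G_n)<m\bigr)\to0$: then the truncated vector above equals $(t,\dots,t)$ with probability tending to $1$, so $\Pee(G_n\models\varphi)$ tends to the truth value of $\varphi$ in the unique $q$-type attached to $(t,\dots,t)$, which is $0$ or $1$. To establish this, let $c_j$ be the number of connected labelled graphs on $j$ vertices and $C(z)=\sum_{j=1}^{k}c_j z^j/j!$, so that $\Gcal$ has exponential generating function $e^{C(z)}$; writing $g_n=[z^n]e^{C(z)}$, an elementary count of those graphs in $\Gcal_n$ in which a prescribed set of $v_H$ vertices spans an $H$-component gives
\[
\Ee\bigl[\#_{H}(G_n)\bigr]=\frac{1}{\aut(H)}\cdot\frac{g_{n-v_H}}{g_n},\qquad
\Ee\bigl[\#_{H}(G_n)(\#_{H}(G_n)-1)\bigr]=\frac{1}{\aut(H)^2}\cdot\frac{g_{n-2v_H}}{g_n}.
\]
Since $C$ is a polynomial of degree $k$ with nonnegative coefficients and $c_1=1\neq0$, the function $e^{C(z)}$ is entire and (being the exponential of such a polynomial) Hayman-admissible, and the standard saddle-point asymptotics for its coefficients—the saddle $r_n$ defined by $r_nC'(r_n)=n$ tends to infinity—yield both $g_{n-v}/g_n\to\infty$ for every fixed $v$ and the asymptotic log-concavity $g_n\,g_{n-2v}\sim g_{n-v}^2$. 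Therefore $\Ee[\#_{H}(G_n)]\to\infty$ while $\Var[\#_{H}(G_n)]=o\bigl(\Ee[\#_{H}(G_n)]^2\bigr)$, and Chebyshev's inequality gives $\Pee(\#_{H}(G_n)<m)\to0$, as required. (Equivalently, one can use the Boltzmann-type identity that $(\#_{H_1}(G_n),\dots,\#_{H_r}(G_n))$ is distributed as independent Poisson variables $Z_{H_i}\sim\Po(x^{v_i}/\aut(H_i))$ conditioned on $\sum_i v_iZ_{H_i}=n$, choose $x=x(n)\to\infty$ so that the conditioning is at the mean, and note that the conditioning event has probability of order $n^{-1/2}$ by a local limit theorem whereas $\Pee(Z_{H}<m)$ is exponentially small.)

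The part that needs real care is the analytic input—the coefficient asymptotics of $e^{C(z)}$, in particular the two limits used above—together with an accurate statement of the $\MSO$ composition theorem: it is essential that disjoint-union counting in $\MSO$ is purely threshold counting, with no periodicity, so that ``all multiplicities at least $t$'' really does determine a single $q$-type. Everything else—finiteness of the family $H_1,\dots,H_r$, the elementary enumeration yielding the moment formulas, and the Chebyshev estimate—is routine.
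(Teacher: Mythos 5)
Your proposal is correct and follows essentially the same route as the paper: show via first and second moments plus Chebyshev that every connected graph on at most $k$ vertices occurs as a component more than any fixed threshold w.h.p., and then conclude from threshold-counting indistinguishability of disjoint unions (your Feferman--Vaught step is exactly the content of Lemmas~\ref{lem:disjointunion} and~\ref{lem:lotsofcopies}) that $G_n$ is $\equivMSO_k$-equivalent w.h.p.\ to one fixed graph, forcing each limiting probability to be $0$ or $1$. The only real difference is that you re-derive the needed ratio asymptotics $|\Gcal_{n-1}|n/|\Gcal_n|\to\infty$ (and the asymptotic log-concavity) from Hayman admissibility of $e^{C(z)}$, where the paper instead cites the Bousquet-M\'elou--Weller estimate (Corollary~\ref{cor:kerstinbounded}); both inputs are sound.
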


Another simple example of a non-addable minor-closed class of graphs $\Gcal$ is formed by forests of paths (every component is a path).
Note that now we can speak of $C_n \in_u \Ccal_n$, the random path on $n$ vertices, but it is still a rather 
uninteresting object as
the only randomness is in the labels of the vertices.
In Section~\ref{sec:nolimpath} we give an $\MSO$-sentence whose probability of holding for the random path
$C_n$ is zero for even $n$ and one for odd $n$, disproving even the {\em $\MSO$-convergence} law for the random
path. On the other hand, we are able to prove the $\MSO$-convergence law for the ``general" random graph from $\Gcal$.
And, finally it turns out that now, contrary to the addable case, the limiting probabilities are in fact dense in $[0,1]$.

\begin{theorem}\label{thm:paths}
Let $C_n$ be the random path on $n$ vertices, and let $G_n$ be the
random forest of paths on $n$ vertices.
Then the following hold:
\begin{enumerate}
\item\label{itm:paths1} The $\MSO$-convergence law fails for $C_n$;
\item\label{itm:paths2} The $\MSO$-convergence law holds for $G_n$;
\item\label{itm:paths3} $\displaystyle \clo{\Big(} {\Big\{} \lim_{n\to\infty}\Pee( G_n\models\varphi) :
\varphi \in \MSO {\Big\} } {\Big)} = [0,1]$.
\end{enumerate}
\end{theorem}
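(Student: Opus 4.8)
The plan is to treat the three parts separately, using a quantifier-depth / Ehrenfeucht-Fra\"iss\'e analysis throughout. For part \eqref{itm:paths1}, the idea is to exploit the fact that $\MSO$ on paths is expressive enough to count vertices modulo $2$. Concretely, one can write an $\MSO$ sentence $\psi$ asserting the existence of a set $X$ of vertices such that $X$ contains exactly one endpoint of the path, and every vertex is in $X$ iff its neighbours are not in $X$ (i.e.\ $X$ is a ``perfect alternating'' set); such a set exists precisely when the number of vertices is odd. Since the random path $C_n$ is just $P_n$ with random labels and the property is label-independent, $\Pee(C_n\models\psi)$ equals $1$ for odd $n$ and $0$ for even $n$, so the limit does not exist and the $\MSO$-convergence law fails.

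For part \eqref{itm:paths2} the plan is to mimic the structure of the proof of Theorem \ref{thm:MSOconvadd}, but adapted to the non-addable class of forests of paths. First I would recall (or re-derive) the asymptotic description of $G_n\in_u\Gcal_n$: with the relevant exponential generating function being that of sets of paths, the number and sizes of the short components converge in distribution to an explicit limiting law (a Poisson-type law over path-lengths), while the remainder of the graph is a single long path whose length is $n-O(1)$. The key logical input is that two forests of paths that (a) have the same multiset of short-path components up to some bounded size, and (b) both have a sufficiently long path as their unique long component, are equivalent in $\MSO$ up to any fixed quantifier depth $k$; this follows from an $\EF$-game argument combining the standard fact that long paths are $\MSO_k$-equivalent once their length exceeds a threshold depending on $k$ (Spoiler/Duplicator strategy on a single long path, analogous to the linear-order case), with a componentwise composition lemma for $\MSO$ over disjoint unions. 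Given this, $\Pee(G_n\models\varphi)$ for any fixed $\varphi\in\MSO_k$ is, up to $o(1)$, determined by the limiting distribution of the bounded-size short-component multiset, which converges; hence the limit exists.

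For part \eqref{itm:paths3}, the plan is to show that the limiting probabilities are dense in $[0,1]$, exploiting the extra freedom we have in the non-addable setting: unlike the addable case (where Proposition \ref{prop:gap} forces a gap around $1/2$ coming from the fact that the ``non-giant'' part is empty with probability $\ge \upe^{-1/2}$ or so), here the short part of a random forest of paths is typically nonempty and its distribution is spread out. Concretely, using the limiting law from part \eqref{itm:paths2}, the probability that the forest of paths contains a component that is a path on exactly $j$ vertices, for $j$ large, tends to $1-\upe^{-1/j}$ (the number of such components being asymptotically Poisson with mean $1/j$), and these events for different large $j$ behave independently in the limit; finite Boolean combinations of such $\MSO$-expressible events then yield limiting probabilities of the form $\prod_{j\in F}(1-\upe^{-1/j})\cdot\prod_{j\in F'}\upe^{-1/j}$ and their finite sums, and by choosing the index sets appropriately (note $\sum_j 1/j$ diverges, so $\prod_j \upe^{-1/j}\to 0$ and one can tune partial products to approximate any target) one obtains a dense set. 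Taking closures then gives $[0,1]$.

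The main obstacle I expect is the logical composition step in part \eqref{itm:paths2}: establishing cleanly that the $\MSO_k$-type of a forest of paths is a function only of its truncated short-component profile together with the ``long path'' flag, i.e.\ proving the $\EF$-game transfer for $\MSO$ (with set moves) over a long path and over disjoint unions simultaneously. The disjoint-union composition for $\MSO$ is standard (Feferman--Vaught / Shelah-style), but one must be careful that set quantifiers range over subsets of the whole graph, so a set move must be decomposed into its restrictions to each component, and the number of ``interesting'' components must be bounded in terms of $k$ — this requires the Duplicator to be able to ignore all but boundedly many short components and to absorb the long component using the path strategy. Once this composition lemma is in place, parts \eqref{itm:paths2} and \eqref{itm:paths3} follow from the convergence of the short-component law, and part \eqref{itm:paths1} is essentially immediate.
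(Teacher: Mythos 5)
There is a genuine gap, and it sits at the heart of your part~\eqref{itm:paths2}. Your key logical input is ``the standard fact that long paths are $\MSO_k$-equivalent once their length exceeds a threshold depending on $k$''. That fact is standard for \FO{} (linear orders of length $\geq 3^k$, say), but it is \emph{false} for \MSO{}: your own part~\eqref{itm:paths1} sentence shows that $\MSO$ of quantifier depth $O(1)$ separates long paths of even length from long paths of odd length, and more generally the $\equivMSO_k$-type of a path depends on its length modulo some $M(k)$. So the reduction ``$\MSO_k$-type of $G_n$ $=$ truncated short-component profile $+$ a long-path flag'' cannot be correct as stated. Your structural premise is also wrong: for the (non-addable) class of forests of paths there is no component of size $n-O(1)$; the largest component is sublinear w.h.p., and for every fixed $\ell$ the random forest of paths w.h.p.\ contains \emph{many} components on more than $\ell$ vertices (this is exactly Corollary~\ref{cor:AAP} in the paper, a consequence of $C(\rho)=\infty$ with $\rho=1$). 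The paper's proof of part~\eqref{itm:paths2} repairs both issues at once: it partitions the unlabelled paths into their finitely many $\equivMSO_k$-classes $\Ccal_1,\dots,\Ccal_m$, notes that each \emph{infinite} class is represented by at least $a(k)$ components w.h.p.\ (Corollary~\ref{cor:decompcountinf}), so by Lemmas~\ref{lem:disjointunion} and~\ref{lem:lotsofcopies} those multiplicities can be frozen at $a(k)$, while the counts for the finitely many \emph{finite} classes converge jointly to independent Poissons (Lemma~\ref{lem:decompcount}); the limit is then $\Pee((Z_1,\dots,Z_{m'})\in\Acal)$. Some version of this ``each long-path type occurs often'' step is unavoidable, and it is precisely what your single-long-path picture misses.

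Two smaller points. In part~\eqref{itm:paths1} your parity is reversed (an alternating set containing exactly one endpoint exists iff $n$ is \emph{even}), and you must read the defining condition edgewise (each vertex differs from each neighbour), since the ``maximal independent set'' reading does not detect parity; with these fixes the argument is essentially the paper's (a proper $2$-colouring whose two degree-one vertices agree in colour iff $n$ is odd), and the conclusion stands. In part~\eqref{itm:paths3} your limiting law is wrong: the number of components isomorphic to $P_j$ is asymptotically Poisson with mean $\rho^{j}/\aut(P_j)=1/2$ for $j\geq 2$ (since $\rho=1$), not $1/j$, so the quantities $1-\upe^{-1/j}$ and the harmonic-series tuning do not arise. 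The Boolean-combination idea can be salvaged (with $r$ asymptotically independent events of probability $1-\upe^{-1/2}$ the atom probabilities tend to $0$, so subset sums become dense), but the paper argues differently and more generally (Theorem~\ref{thm:divergingegf}): it groups the unlabelled connected graphs into blocks $\Hcal_i$ of total mean $\mu_i\in[1000,1001)$, forms the disjoint \FO-events $F_i$, checks $p_i\leq\sum_{j>i}p_j$ and $\sum_i p_i=1$, and applies Kakeya's Lemma~\ref{lem:subsums}, which yields density already with \FO{} sentences.
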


\noindent
We remark that the $\FO$-zero-one law holds for the ``random path'' $C_n$. This follows immediately from Theorem 2.1.3 in~\cite{logicofrandomgraphs}.

Of course graphs with bounded component size and forests of paths are relatively simple
classes of graphs.
We turn attention to a non-addable minor-closed class of graphs that is slightly more challenging 
to analyze.
A {\em caterpillar} consists of a finite path with zero or more vertices of degree one attached to it.
A forest of caterpillars is a graph all of whose components are caterpillars.
In Section~\ref{sec:catcounter} we construct an $\FO$ sentence such that the probability that the random caterpillar satisfies it
converges, but not to zero or one. We are again able to show that the convergence law holds for the
``general" random graph from the class (the random forest of caterpillars), but we are only able to
do this if we restrict attention to $\FO$ logic -- for what appears to be mainly technical reasons.
And finally, also here the limiting probabilities (of $\FO$ sentences) turn out to be dense in $[0,1]$.

\begin{theorem}\label{thm:caterpillars}
Let $C_n$ be the random caterpillar on $n$ vertices, and let $G_n$ be the
random forest of caterpillars on $n$ vertices.
Then the following hold:
\begin{enumerate}
\item\label{itm:cat1} The $\FO$-zero-one law fails for $C_n$;
\item\label{itm:cat2} The $\FO$-convergence law holds for $G_n$;
\item\label{itm:cat3} $\displaystyle \clo{\Big(} {\Big\{} \lim_{n\to\infty}\Pee( G_n\models\varphi) :
\varphi \in \FO {\Big\} } {\Big)} = [0,1]$.
\end{enumerate}
\end{theorem}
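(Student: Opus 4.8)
The plan is to handle the three parts separately, building up from the structure of a single random caterpillar to the random forest.

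For part \itm{i}, I would exhibit an explicit $\FO$ sentence witnessing the failure of the zero-one law. A caterpillar on $n$ vertices is determined by its \emph{spine} (the path obtained after deleting degree-one vertices, or rather the underlying path) together with, for each spine vertex, the number of pendant leaves attached. The key observation is that a uniformly random labelled caterpillar on $n$ vertices induces a random partition of the non-spine vertices among the spine vertices; locally around a spine vertex this looks Poisson-like, so that the number of spine vertices carrying, say, exactly one pendant leaf is concentrated but the \emph{existence} of a spine vertex with some fixed local pendant-degree profile occurs with a probability bounded away from $0$ and $1$. More concretely: let $\varphi$ say ``there exists a vertex $x$ of degree $3$ such that both neighbours of $x$ that are not leaves also have degree $3$'' — or some similarly local pattern on the spine that occurs in a Poisson-fashion. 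Using known asymptotic enumeration of caterpillars (the number of caterpillars on $n$ vertices is $2^{n-2}\cdot$ a polynomial, hence one can compute the limiting distribution of local profiles along the spine), one shows $\Pee(C_n \models \varphi)$ converges to a constant strictly between $0$ and $1$. The main technical input here is a clean description of the limiting local structure of the random caterpillar; this is where I expect the bulk of the work, and it is deferred to Section~\ref{sec:catcounter} as the excerpt indicates.

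For part \itm{ii}, I would mimic the proof of Theorem~\ref{thm:MSOconvadd} (restricted to $\FO$). First I would establish that with high probability the random forest of caterpillars $G_n$ has a giant caterpillar component of size $n - O(1)$, and that the \emph{fragment} (everything outside the giant component) converges in distribution to a fixed random finite forest of caterpillars — presumably a Boltzmann–Poisson-type limit, using the fact that the exponential generating function for caterpillars has positive radius of convergence and the standard singularity-analysis machinery that underlies the addable-case arguments. Given this, to compute $\limn \Pee(G_n \models \varphi)$ for a fixed $\FO$ sentence $\varphi$ of quantifier rank $r$, I would apply Gaifman's locality theorem (Theorem~\ref{thm:gaifman}): $\varphi$ is equivalent to a Boolean combination of local sentences, so its truth is determined by the multiset of isomorphism types of radius-$\rho$ neighbourhoods present in $G_n$ (for $\rho$ bounded in terms of $r$), together with threshold counts of each type. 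Inside the giant caterpillar the count of each local neighbourhood type, suitably normalised, converges (again a Poisson-type limit along the spine), and the fragment contributes an independent bounded random perturbation; combining these, each such threshold event has a limiting probability, hence so does $\Pee(G_n \models \varphi)$. The reason MSO is problematic — and why we restrict to $\FO$ — is precisely that MSO is not local, so Gaifman's theorem is unavailable and one would need a much finer handle on the global spine structure; this matches the remark in the introduction about ``mainly technical reasons''.

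For part \itm{iii}, density of the limiting $\FO$-probabilities in $[0,1]$, the idea is to produce, for each target value $t \in [0,1]$, $\FO$ sentences whose limiting probability approximates $t$. Building on part \itm{ii}, the limiting distribution of $G_n$ is governed by the giant caterpillar plus an independent fragment, and along the spine of the giant the counts of each fixed local pattern behave like independent Poisson variables with explicitly tunable means. Since one has an infinite family of mutually ``independent'' local patterns (e.g. spine vertices carrying exactly $j$ pendant leaves, for $j = 1, 2, 3, \dots$, are asymptotically independent Poisson with distinct means), and since a parity or threshold predicate applied to a Poisson count yields a probability that can be made to take values in a dense subset of $(0,1)$, one can take finite Boolean combinations of such predicates to hit a dense set of values; taking closure gives all of $[0,1]$. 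Concretely I would first show that $\{0,1\}$ and some value in each subinterval $(k/N,(k+1)/N)$ are attained, then close up. The main obstacle in this part is verifying the asymptotic independence of the relevant local counts and controlling the fragment's contribution well enough that the combined limiting probability is genuinely a continuous function of the Poisson parameters over a range covering a dense subset of $[0,1]$; this is the same flavour of computation as in Theorem~\ref{thm:paths}\itm{iii} but complicated by the two-dimensional (spine-position, pendant-count) structure of caterpillars rather than the one-dimensional structure of paths.
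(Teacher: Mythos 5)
Your proposal has genuine gaps in all three parts, and they share a common root: an incorrect picture of what the random caterpillar and the random forest of caterpillars look like. For part \itm{i}, the sentence you suggest is anchored at \emph{interior} spine vertices, but the spine of a uniform labelled caterpillar has length $\Theta(n)$ and each interior spine vertex carries an (asymptotically Poisson-like) constant-probability leaf profile, so any fixed interior local pattern occurs linearly often and your existence sentence has limiting probability $1$ (or $0$ if the pattern is impossible); no fixed interior pattern ``occurs in a Poisson fashion'' with bounded mean. To get a limit strictly between $0$ and $1$ you must anchor at the two \emph{ends} of the spine, which are the only bounded-size set of special positions and are $\FO$-definable (``exactly one neighbour of degree at least two''). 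This is exactly what the paper does in Section~\ref{sec:catcounter}: the sentence says both spine ends have degree five, and Proposition~\ref{prop:caterpillarlimit} computes the limit $\bigl(\rho^4/(4!(\upe^\rho-1))\bigr)^2$ via the labelled EGF of oriented caterpillars $C(x)=(x\upe^x-x)^2/(1-x\upe^x)$ and singularity analysis; your ``$2^{n-2}$'' enumeration is the unlabelled count and is not the right input for the uniform labelled model.

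For parts \itm{ii} and \itm{iii} the structural assumption you build on is false. Forests of caterpillars form a non-addable class with $C(\rho)=G(\rho)=\infty$, so the Boltzmann--Poisson random graph for this class does not exist; moreover, by Corollary~\ref{cor:decompcountinf} applied to the family of all connected caterpillars, for every $K$ the graph $G_n$ has at least $K$ components w.h.p., so the largest component misses at least $K-1$ vertices and the fragment does \emph{not} converge in distribution -- there is no ``giant caterpillar of size $n-O(1)$ plus converging fragment''. Consequently your Gaifman argument inside a single giant, and your density argument via ``independent Poisson counts with explicitly tunable means'' along its spine, have nothing to stand on (the means are fixed by the model, and parity of a count is not $\FO$-expressible anyway). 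The paper's route for \itm{ii} is different: split $G_n$ into $\Long_\ell$ (long-spine components), $\Short_{\ell,K}$ (short spine, bounded degrees) and $\Bush_{\ell,K}$; show $\Bush_{\ell,K}(G_n)=\emptyset$ with probability $\geq 1-\eps$ (Lemma~\ref{lem:bush}), show $\Long_\ell(G_n)\equivFO_k Q_k$ w.h.p.\ for a fixed $Q_k$ using Gaifman together with the fact that every fixed begin sequence occurs on at least $K$ components w.h.p.\ (Lemmas~\ref{lem:long}, \ref{lem:beginseq} and Corollary~\ref{cor:beginseqprob}), and use smoothness (Theorem~\ref{thm:kerstin}) with Lemma~\ref{lem:decompcount} to get joint Poisson limits for the finitely many short types. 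For \itm{iii} the paper exploits $C(\rho)=\infty$ directly: partition the unlabelled connected caterpillars into finite blocks of Boltzmann weight roughly $1000$, consider the $\FO$-expressible events recording which blocks occur among the components, and apply the Kakeya-type Lemma~\ref{lem:subsums} to conclude density in $[0,1]$ (Theorem~\ref{thm:divergingegf}).
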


\noindent
Very recently (in the period since the submission of the
present paper) it was shown that the $\FO$-convergence law holds for the ``random caterpillar'' $C_n$  by R.~Lambers
in his MSc thesis~\cite{LambersMSc} that was written under the supervision of the second author.

In the next section we give the notation and results from the literature that we need for the proofs of our main results.
In Section~\ref{sec:lawproofs} we give the proofs of Theorem~\ref{thm:MSO01add} through~\ref{thm:surfaceconv}.
In Section~\ref{sec:limitingprobabilities} we prove Proposition~\ref{prop:gap} and
Theorem~\ref{thm:finintintro} through~\ref{thm:planarclosureprime}.
The proofs of our results on non-addable classes can be found in Section~\ref{sec:nonaddable}.
Finally, in Section~\ref{sec:conc} we give some additional thoughts and open questions that arise from our work.

\section{Notation and preliminaries}

All graphs in this paper will be simple and loopless. Throughout the paper, we write $[n]:=\{1,\dotsc,n\}$.
If $G$ is a graph then we denote its vertex set by $V(G)$ and its edge set by $E(G)$.
Their cardinalities are denoted by $v(G) :=\lvert V(G)\rvert$ and  $e(G) := \lvert E(G)\rvert$.
For $u \in V(G)$, we denote by $B_G(u,r)$, called \emph{$r$-neighbourhood of $u$},
the subgraph of $G$ induced by the set of all vertices of graph
distance at most $r$ from $u$.
When the graph is clear from the context we
often simply write $B(u,r)$. Occasionally we write $x\sim y$ to denote that
$x$ is adjacent to $y$, in some graph clear from the context.

If $G, H$ are graphs, then $G \cup H$ denotes the {\em vertex-disjoint union}.
That is, we ensure that $V(G) \cap V(H) = \emptyset$ (by swapping $H$ for an
isomorphic copy if needed) and then we simply take
$V(G\cup H) = V(G)\cup V(H), E(G\cup H) = E(G) \cup E(H)$.
If $n$ is an integer and $G$ a graph then $nG$ denotes the
vertex-disjoint union of $n$ copies of $G$.

An {\em unlabelled graph} is, formally, an isomorphism class of graphs.
In this paper we deal with both labelled and unlabelled graphs.
We will occasionally be a bit sloppy with the distinction between the two
(for instance, by  taking the vertex-disjoint union of a labelled and an unlabelled
graph) but no confusion will arise. A class of graphs $\Gcal$
always means a collection of graphs closed under isomorphism. We denote by
$\Ucal\Gcal$ the collection of unlabelled graphs corresponding to~$\Gcal$.

Following McDiarmid~\cite{McDiarmid09}, we denote by $\Biggg(G)$
the largest component of $G$. In case of ties we take the lexicographically first
among the components of the largest order (i.e., we look at the labels of the vertices
and take the component in which the smallest label occurs).
The `fragment' $\Frag(G)$ of $G$ is what remains after we remove $\Biggg(G)$.


Recall that a random variable is {\em discrete}
if it takes values in some countable set $\Omega$.
For discrete random variables taking values in the same set $\Omega$,
the {\em total variation distance} is defined as:
\[ \distTV( X, Y ) =  \max_{A\subseteq\Omega} |\Pee( X \in A)-\Pee(Y\in A)|. \]
\noindent
Alternatively, by some straightforward manipulations
(see~\cite[Proposition 4.2]{LevinPeresWilmerBoek}),
we can write $\distTV( X, Y ) = \frac12\sum_{a \in \Omega} | \Pee( X = a ) - \Pee( Y = a ) |$.
If $X, X_1, X_2, \dots$ are discrete random variables,
 we say that {\em $X_n$ tends to $X$ in total variation} (notation: $X_n \to_{\text{TV}} X$) if
$\lim_{n\to\infty} \distTV( X_n, X ) = 0$.

Throughout this paper, $\Po(\mu)$ denotes the Poisson distribution with parameter $\mu$.
We make use of the following incarnation of the Chernoff bounds.
A proof can for instance be found in Chapter~1 of~\cite{PenroseBoek}.

\begin{lemma}\label{lem:chernoff}
Let $Z \isd \Po(\mu)$. Then the following bounds hold:
\begin{enumerate}
 \item For all $k \geq \mu$ we have $\displaystyle \Pee( Z \geq k ) \leq \upe^{-\mu H(k/\mu)}$, and
\item For all $k \leq \mu$ we have $\displaystyle \Pee( Z \leq k ) \leq \upe^{-\mu H(k/\mu)}$,
\end{enumerate}
where $H(x) := x\ln x - x + 1$.\noproof
\end{lemma}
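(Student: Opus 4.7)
The plan is the standard Cram\'er--Chernoff exponential-moment argument. The one ingredient needed beyond the definition of $\Po(\mu)$ is the closed-form moment generating function
\[ \Ex\bigl[\upe^{tZ}\bigr] \;=\; \sum_{j\geq 0}\upe^{tj}\cdot\upe^{-\mu}\frac{\mu^j}{j!} \;=\; \upe^{\mu(\upe^t-1)}\qquad(t\in\eR), \]
which drops out immediately by recognising the inner sum as the Taylor series of $\upe^{\mu\upe^t}$.

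For part~(i), I would fix $t\geq 0$ and apply Markov's inequality to the nonnegative random variable $\upe^{tZ}$ to obtain
\[ \Pee(Z\geq k) \;=\; \Pee\bigl(\upe^{tZ}\geq \upe^{tk}\bigr) \;\leq\; \exp\bigl(-tk+\mu(\upe^t-1)\bigr), \]
and then minimise the right-hand side over $t\geq 0$. Elementary calculus gives the optimiser $t^\star=\ln(k/\mu)$, which is nonnegative precisely because $k\geq\mu$; substituting back rewrites the exponent as $-k\ln(k/\mu)+k-\mu = -\mu\bigl((k/\mu)\ln(k/\mu)-(k/\mu)+1\bigr) = -\mu H(k/\mu)$, which is exactly the claimed bound. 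For part~(ii) I would run the same argument with $t\leq 0$, using that (because $t$ is nonpositive) $Z\leq k$ is equivalent to $\upe^{tZ}\geq\upe^{tk}$, so Markov again yields $\Pee(Z\leq k)\leq\exp(-tk+\mu(\upe^t-1))$. The same optimiser $t^\star=\ln(k/\mu)$ now lies in $(-\infty,0]$ since $k\leq\mu$, and the same algebraic simplification produces the matching bound $\upe^{-\mu H(k/\mu)}$.

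There is no genuine obstacle here: the whole computation is textbook and, as the paper remarks, a careful write-up can be found in Chapter~1 of~\cite{PenroseBoek}. The only things to be mindful of are the conventions $0\ln 0:=0$ (so that $H(0)=1$ and the bound in~(ii) at $k=0$ correctly recovers $\Pee(Z=0)=\upe^{-\mu}$) and the common boundary case $k=\mu$, at which $H$ vanishes and both inequalities degenerate to the trivial bound~$1$.
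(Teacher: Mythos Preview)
Your proposal is correct: it is the standard Cram\'er--Chernoff exponential-moment argument, and the computations (the MGF of a Poisson, the optimisation over $t$, and the handling of the $k=0$ boundary via $0\ln 0:=0$) are all sound. The paper itself does not supply a proof for this lemma at all---it simply states the bounds and points to Chapter~1 of~\cite{PenroseBoek}---so there is nothing to compare against beyond noting that what you have written is precisely the textbook derivation one finds there.
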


In Section~\ref{sec:limitingprobabilities}, we make use of
a general result on the set of all
sums of subsequences of a given summable sequence of nonnegative numbers.
The following observation goes back a hundred years, to the work of Kakeya \cite{kakeya}.

\begin{lemma}[\cite{kakeya}]\label{lem:subsums}
Let $p_1, p_2, \dots$ be a summable sequence of nonnegative numbers.
If  $p_i \leq \sum_{j>i} p_j$ for every $i \in \eN$, then
\[ \left\{ \sum_{i \in A} p_i : A \subseteq \eN\right\} = \left[0, \sum_{i=1}^\infty p_i\right]. \]
\end{lemma}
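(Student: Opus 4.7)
The plan is to prove the equality by a greedy construction. Write $S := \sum_{i=1}^\infty p_i$ and $T_i := \sum_{j>i} p_j$ for the tail sums, so the hypothesis reads $p_i \leq T_i$ for every $i$. The inclusion $\{\sum_{i\in A} p_i : A \subseteq \mathbb{N}\} \subseteq [0,S]$ is immediate; the content is the reverse inclusion, i.e., realising every $x \in [0,S]$ as a subsum.

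Fix $x \in [0,S]$. I would define a set $A \subseteq \mathbb{N}$ by the natural greedy rule: process the indices in order $i=1,2,3,\dots$ and include $i$ in $A$ if and only if doing so keeps the running partial sum at most $x$. Writing $s_n := \sum_{i \in A,\, i\leq n} p_i$, the sequence $(s_n)$ is nondecreasing and bounded by $x$, so $s := \lim_n s_n \in [0,x]$ exists, and obviously $\sum_{i \in A} p_i = s$. The goal is to show $s = x$.

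Suppose for contradiction that $s < x$. Any index $k \notin A$ was rejected because $s_{k-1} + p_k > x$, i.e., $p_k > x - s_{k-1} \geq x - s > 0$. Since $(p_i)$ is summable, only finitely many terms exceed the positive constant $x-s$, hence only finitely many indices are excluded from $A$. Let $k$ be the \emph{largest} excluded index; then every $j > k$ lies in $A$, so
\[
s \;=\; s_{k-1} + \sum_{j > k} p_j \;=\; s_{k-1} + T_k.
\]
Combining the exclusion inequality $p_k > x - s_{k-1}$ with the hypothesis $p_k \leq T_k = s - s_{k-1}$ yields $s - s_{k-1} > x - s_{k-1}$, i.e., $s > x$, contradicting $s \leq x$. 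Therefore $s = x$, and $A$ witnesses $x$ as a subsum.

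There is essentially no obstacle: the only place the hypothesis $p_i \leq T_i$ enters is in the last step (closing the gap between ``$s \leq x$'' and ``$s = x$''), and it is precisely what makes the greedy strategy succeed. The trivial cases $x = 0$ (take $A = \emptyset$) and $x = S$ (take $A = \mathbb{N}$) are handled by the general argument, and one should briefly note that if only finitely many $p_i$ are nonzero, say with last nonzero term $p_n$, then $p_n \leq T_n = 0$ forces $p_n = 0$, so the hypothesis implicitly rules this out (or trivialises it).
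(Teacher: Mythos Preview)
Your greedy argument is correct; the paper does not actually supply its own proof of this lemma but merely cites Kakeya's 1914 paper, so there is nothing to compare against. The construction you give is the standard one, and the key step---using summability to guarantee that only finitely many indices are rejected, then invoking $p_k \le T_k$ at the last rejected index to force $s>x$---is exactly the classical proof.
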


From this last lemma it is straightforward to
derive the following observation -- see for instance~\cite[Equation (3) and Proposition 6]{Nitecki}.

\begin{corollary}\label{cor:subsums}
Let $p_1, p_2, \dots$ be a summable sequence of nonnegative numbers,
and suppose there is an $i_0 \in\eN$ such that $p_i \leq \sum_{j>i} p_j$
for all $i  > i_0$. Then the set of sums of subsequences of $(p_n)_n$ is a union
of $2^{i_0}$ translates of the interval $\left[0,\sum_{i>i_0}p_i\right]$, namely:

\begin{equation}\label{eq:explicitintervalsingeneral}
\left\{ \sum_{i \in A} p_i : A \subseteq \eN\right\} =
\bigcup_{x_1,\dots, x_{i_0} \in \{0,1\}}
\left[ x_1p_1+\dots+x_{i_0}p_{i_0}, x_1p_1+\dots+x_{i_0}p_{i_0} + \sum_{i>i_0}p_i\right].
\end{equation}
\end{corollary}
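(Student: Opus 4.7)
The plan is to apply Lemma~\ref{lem:subsums} to the tail of the sequence and then to combine with the $2^{i_0}$ possible contributions from the head. Concretely, I would reindex $q_j := p_{i_0+j}$ for $j \in \eN$. Then $(q_j)_j$ is a summable sequence of nonnegative numbers, and the hypothesis $p_i \leq \sum_{j > i} p_j$ for all $i > i_0$ translates exactly into $q_j \leq \sum_{k>j} q_k$ for all $j \in \eN$. Thus Lemma~\ref{lem:subsums} applies to $(q_j)_j$ and yields
\[
\left\{ \sum_{j \in B} q_j : B \subseteq \eN \right\} = \left[0, \sum_{j=1}^\infty q_j \right] = \left[0, \sum_{i>i_0} p_i \right].
\]

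Next I would exploit the obvious decomposition of an arbitrary subset $A \subseteq \eN$ as $A = A' \sqcup A''$ with $A' \subseteq \{1,\dotsc,i_0\}$ and $A'' \subseteq \{i_0+1,i_0+2,\dotsc\}$. Writing $x_k := \mathbf{1}[k \in A']$ for $k = 1,\dotsc,i_0$, the subset sum splits additively as
\[
\sum_{i \in A} p_i \;=\; \sum_{k=1}^{i_0} x_k p_k \;+\; \sum_{i \in A''} p_i.
\]
As $A$ ranges over all subsets of $\eN$, the vector $(x_1,\dotsc,x_{i_0})$ ranges independently over $\{0,1\}^{i_0}$, and by the previous paragraph the second summand independently ranges over the full interval $[0, \sum_{i>i_0} p_i]$. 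Taking the union over the $2^{i_0}$ choices of $(x_1,\dotsc,x_{i_0})$ gives exactly the right-hand side of~\eqref{eq:explicitintervalsingeneral}; conversely every point of that right-hand side is clearly a subset sum, by choosing $A'$ from the $x_k$'s and choosing $A''$ via the surjectivity afforded by Lemma~\ref{lem:subsums}.

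There is no real obstacle here: the only thing to verify is that the hypothesis $p_i \leq \sum_{j>i}p_j$ for $i > i_0$ passes to the tail sequence unchanged (which it does, because the condition involves only indices strictly greater than $i_0$), and that the set of subset sums is closed under the additive decomposition above. Both are immediate, so the corollary follows directly from Lemma~\ref{lem:subsums} applied to $(q_j)_j$ together with this finite case analysis over the first $i_0$ indices.
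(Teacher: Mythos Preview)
Your proof is correct and is exactly the straightforward derivation the paper has in mind: it does not give an explicit argument but simply notes that Corollary~\ref{cor:subsums} follows easily from Lemma~\ref{lem:subsums} (citing Nitecki), and your head--tail decomposition together with the application of Lemma~\ref{lem:subsums} to the reindexed tail $(q_j)_j$ is precisely that derivation.
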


\noindent
We remark that the intervals in the LHS of~\eqref{eq:explicitintervalsingeneral} are not necessarily disjoint, but they 
are disjoint if $p_i > \sum_{j>i} p_j$ for all $i\leq i_0$.

\subsection{Logical preliminaries}\label{sec:logicalpreliminaries}

A variable $x$ in a logical formula is called {\em bound} if it has a quantifier.
Otherwise, it is called {\em free}. A {\em sentence} is a formula without free variables.

The {\em quantifier depth} $\qd(\varphi)$ of an MSO formula $\varphi$ is, informally speaking,
the longest chain of `nestings' of quantifiers. More formally, it is defined
inductively using the axioms
{\bf 1)} $\qd(\neg\varphi) = \qd(\varphi)$, {\bf 2)}
$\qd(\varphi\vee\psi) =
\qd(\varphi\wedge\psi) = \qd(\varphi\Rightarrow\psi) = \max(\qd(\varphi),\qd(\psi))$, {\bf 3)}
$\qd( \exists x : \varphi ) = \qd(\forall x : \varphi ) = \qd( \exists X : \varphi ) = \qd( \forall X : \varphi ) = 1 + \qd(\varphi)$,
{\bf 4)} $\qd( x=y ) = \qd( x\sim y) = \qd( x \in X ) = 0$.

For two graphs $G$ and $H$, the notation $G \equivMSO_k H$ denotes that
every $\varphi \in \MSO$ with $\qd(\varphi) \leq k$ is either
satisfied by both $G$ and $H$ or false in both.
We define $G \equivFO_k H$ similarly.
It is immediate from the definition that $\equivMSO_k, \equivFO_k$
are both equivalence relations on the set of all graphs.
What is more, for every $k$ there are only
finitely many equivalence classes (for a proof see, e.g.,
\cite[Proposition~3.1.3]{EbbinghausFlum}):

\begin{lemma}\label{lem:fixednoeq}
For every $k \in \eN$, the relation $\equivMSO_k$ has
finitely many equivalence classes. The same holds for $\equivFO_k$.
\end{lemma}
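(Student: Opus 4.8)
\textbf{Proof plan for Lemma~\ref{lem:fixednoeq}.}
The plan is to prove the statement by induction on the quantifier depth $k$, via the standard Ehrenfeucht--Fra\"iss\'e style argument that the number of $\equivMSO_k$-classes (respectively $\equivFO_k$-classes) is bounded by a function depending only on $k$. Let me first record the key bookkeeping device: for a fixed $k$ and a fixed finite tuple of free (vertex- and set-) variables, one considers the set of $\MSO$-formulas of quantifier depth at most $k$ in those free variables, and shows there are, up to logical equivalence, only finitely many of them. Running this with zero free variables gives the statement about sentences, and hence finitely many equivalence classes, since each $\equivMSO_k$-class is exactly the set of graphs satisfying one particular maximal consistent conjunction of depth-$\leq k$ sentences (its ``$k$-theory'').

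First I would set up the induction hypothesis in the form: for every $k$ and every number $r,s$ of free vertex- and set-variables, the set $\Phi_{k}^{r,s}$ of $\MSO$-formulas of quantifier depth at most $k$ with free variables among $x_1,\dots,x_r,X_1,\dots,X_s$ is finite up to logical equivalence. The base case $k=0$ is immediate: a depth-$0$ formula is a Boolean combination of the atomic formulas $x_i=x_j$, $x_i\sim x_j$, $x_i\in X_j$, of which there are finitely many (at most $\binom{r}{2}+\binom{r}{2}+rs$ or so), and there are only finitely many Boolean functions on a finite set of atoms. For the inductive step, I would observe that any formula of depth at most $k+1$ is equivalent to a Boolean combination of formulas each of which is either of depth at most $k$ in the same free variables, or of the shape $\exists x_{r+1}\,\psi$ or $\exists X_{s+1}\,\psi$ with $\psi\in\Phi_{k}^{r+1,s}$ or $\psi\in\Phi_{k}^{r,s+1}$ respectively (the universal quantifiers being handled by negation, and $\Rightarrow$ by the usual rewriting). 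By the induction hypothesis $\Phi_{k}^{r+1,s}$ and $\Phi_{k}^{r,s+1}$ are finite up to equivalence, so there are only finitely many such existentially quantified formulas up to equivalence, hence finitely many atoms for the Boolean combinations, hence $\Phi_{k+1}^{r,s}$ is finite up to equivalence.

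Having this, I would conclude as follows. Take $r=s=0$: the set of $\MSO$-sentences of quantifier depth at most $k$ is finite up to logical equivalence, say with representatives $\varphi_1,\dots,\varphi_N$ (with $N=N(k)$). Two graphs $G,H$ satisfy $G\equivMSO_k H$ precisely when they agree on each of $\varphi_1,\dots,\varphi_N$; hence the $\equivMSO_k$-class of $G$ is determined by the vector $(\mathbf{1}[G\models\varphi_i])_{i=1}^N\in\{0,1\}^N$, so there are at most $2^N$ equivalence classes. The argument for $\equivFO_k$ is verbatim the same, simply dropping the set-variables $X_j$ and the quantifiers over them throughout (equivalently, restricting all the finite sets above to their first-order parts). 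I expect no genuine obstacle here; the only mildly delicate point is making sure the inductive claim is stated with the free variables made explicit (rather than just for sentences), since otherwise the induction does not close --- one must pass through formulas with free variables to peel off one quantifier at a time. Since this lemma is standard (the excerpt already points to \cite[Proposition~3.1.3]{EbbinghausFlum}), I would in fact keep the write-up to a brief sketch along the above lines and cite that reference for the details.
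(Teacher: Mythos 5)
Your proof is correct and is exactly the standard argument the paper relies on: the paper gives no proof of its own but cites \cite[Proposition~3.1.3]{EbbinghausFlum}, which proceeds by the same induction on quantifier depth over formulas with explicitly tracked free variables, counted up to logical equivalence. Nothing further is needed beyond the brief sketch plus citation you propose.
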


The {\em $\MSO$-\EF-game} $\EHRMSO_k( G, H )$ is a two-player game played on two graphs
$G, H$ for $k$ rounds.
The game is played as follows. There are two players, Spoiler and Duplicator.
In each round $1\leq i \leq k$, Spoiler is first to move,  selects one of the
two graphs $G$ or $H$ (and in particular is allowed to switch the graph at any new round),
and does either a vertex-move or a set-move.
That is, Spoiler either selects a single vertex or a subset of the vertices.
If Spoiler did a vertex-move then Duplicator now has to select a vertex from
the graph that Spoiler did not play on, and otherwise Duplicator has to select
a subset of the vertices from the graph Spoiler did not play on.
After $k$ rounds the game is finished.
To decide on the winner, we first need to introduce some additional notation.
Let $I \subseteq \{1, \dots, k\}$ be those rounds in which a vertex-move occurred.
For $i \in I$, let $x_i \in V(G), y_i \in V(H)$ be the vertices that were selected
in round $i$. For $i \not\in I$, let $X_i \subseteq V(G), Y_i \subseteq V(H)$
be the subsets of vertices that were selected in round $i$.
Duplicator has won if the following three conditions are met:
\begin{itemize}
\item[{\bf 1)}] $x_i=x_j$ if and only if $y_i = y_j$, for all $i,j \in I$, and;
\item[{\bf 2)}] $x_ix_j \in E(G)$ if and only if $y_iy_j \in E(H)$, for all $i,j \in I$, and;
\item[{\bf 3)}] $x_i \in X_j$ if and only if $y_i \in Y_j$, for all $i \in I, j \not\in I$.
\end{itemize}%
Otherwise Spoiler has won.
We say that $\EHRMSO_k( G, H )$ {\em is a win for Duplicator} if there
exists a winning strategy for Duplicator. (I.e., no matter how Spoiler plays,
Duplicator can always respond so as to win in the end.)
The $\FO$-\EF-game $\EHRFO_k(G, H)$ is defined just like $\EHRMSO_k(G,H)$,
except that set-moves do not exist in that game.
(So in the $\FO$-game, Duplicator wins if and only if {\bf 1)} and {\bf 2)} are met at the end of the game.)

The following lemma shows the relation between these games and logic.
A proof can for instance be found in~\cite[Theorems~2.2.8 and 3.1.1]{EbbinghausFlum}.

\begin{lemma}\label{lem:EFgame}
$G \equivMSO_k H$ if and only if $\EHRMSO_k( G, H )$ is a win for
Duplicator. \\
Similarly, $G\equivFO_k H$ if and only if $\EHRFO_k(G,H)$ is a win for
Duplicator.
\end{lemma}

The \EF-game is a convenient tool for proving statements about logical (in-) equivalence.
It can be used for instance to prove that the statement `$G$ is connected' cannot
be expressed as an $\FO$-sentence, and that similarly `$G$ has a Hamilton cycle' cannot
be expressed by an $\MSO$-sentence.
(See for instance~\cite[Theorem~2.4.1]{logicofrandomgraphs}.)

The following two standard facts about $\equivMSO_k$ are essential tools in our arguments.
They can for instance be found in~\cite[Theorems 2.2 and 2.3]{ComptonII}, where they are proved in a greater level of generality.

\begin{lemma}\label{lem:disjointunion}
If $H_1\equivMSO_k G_1$ and $H_2\equivMSO_k G_2$ then
$H_1 \cup H_2 \equivMSO_k G_1 \cup G_2$. The same conclusion holds w.r.t.~$\equivFO_k$.
\end{lemma}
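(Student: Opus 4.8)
The plan is to prove this via the Ehrenfeucht-Fra\"{\i}ss\'e game characterization in Lemma~\ref{lem:EFgame}, using a round-by-round strategy-composition argument for Duplicator. So suppose Duplicator has winning strategies $\sigma_1$ in $\EHRMSO_k(G_1,H_1)$ and $\sigma_2$ in $\EHRMSO_k(G_2,H_2)$. We want to exhibit a winning strategy in $\EHRMSO_k(G_1\cup G_2, H_1\cup H_2)$. The key structural fact we exploit is that the disjoint union has vertex set $V(G_1)\sqcup V(G_2)$ with \emph{no} edges between the two parts (and likewise for the $H$ side), so edge-relations and equalities never cross the partition.

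First I would describe the strategy informally: when Spoiler, in some round $i$, plays a single vertex $v$ in (say) $G_1\cup G_2$, that vertex lies in exactly one part, say $V(G_j)$; Duplicator consults $\sigma_j$ run on the pair $(G_j,H_j)$ — treating all previous vertex-moves that landed in part $j$ as the history of that subgame — and plays the vertex $\sigma_j$ prescribes, which lies in $V(H_j)\subseteq V(H_1\cup H_2)$. When Spoiler plays a \emph{set} $S\subseteq V(G_1\cup G_2)$, Duplicator splits it as $S=S_1\sqcup S_2$ with $S_\ell\subseteq V(G_\ell)$, feeds $S_1$ to $\sigma_1$ and $S_2$ to $\sigma_2$ as the respective set-moves in the two subgames, obtains responses $T_1\subseteq V(H_1)$, $T_2\subseteq V(H_2)$, and answers with $T_1\sqcup T_2$. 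The symmetric recipe handles the case where Spoiler starts in the $H$-side. The crucial point is that a set-move in the combined game is answered by \emph{simultaneously} making a set-move in \emph{both} subgames, so both $\sigma_1$ and $\sigma_2$ stay synchronized through every round.

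Then I would verify the three winning conditions after $k$ rounds. Conditions {\bf 1)} (equality) and {\bf 2)} (adjacency) concern pairs $x_i,x_j$ of pebbled vertices: if both lie in the same part $G_\ell$, the condition transfers from the fact that $\sigma_\ell$ was winning in $\EHRMSO_k(G_\ell,H_\ell)$; if they lie in different parts, then $x_i\neq x_j$ and $x_ix_j\notin E(G_1\cup G_2)$ by disjointness, and correspondingly the $\sigma$'s placed $y_i,y_j$ in different parts of $H_1\cup H_2$, so $y_i\neq y_j$ and $y_iy_j\notin E$, matching. For condition {\bf 3)} ($x_i\in X_j$), note $x_i\in S=S_1\sqcup S_2$ iff $x_i\in S_\ell$ where $\ell$ is the part containing $x_i$; since Duplicator's response in that part came from $\sigma_\ell$ and respects membership within the subgame, $x_i\in S_\ell\iff y_i\in T_\ell$, and $y_i$ lies in part $\ell$ on the $H$-side, so $y_i\in T_1\sqcup T_2\iff y_i\in T_\ell$; the equivalence follows. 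This proves $G_1\cup G_2\equivMSO_k H_1\cup H_2$, hence the MSO statement.

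The only genuinely delicate point — and the one I'd be most careful to state precisely rather than wave at — is the bookkeeping that both subgames $\EHRMSO_k(G_\ell,H_\ell)$ are run for the \emph{full} $k$ rounds in lockstep: a vertex-move in the big game is a vertex-move in one subgame and a ``pass'' (or a dummy move, or simply no move) in the other, whereas a set-move is a set-move in both. One should check that $\sigma_\ell$ being a winning strategy for the $k$-round game still applies when some of its vertex-rounds are replaced by arbitrary (say empty) set-moves; equivalently, one invokes the monotonicity/weakening that a Duplicator win in $\EHRMSO_k$ survives replacing any vertex-round by a set-round on the empty set, or one simply observes that after $k$ rounds at most $k$ vertex-pebbles and $k$ set-pebbles are placed \emph{in each subgame}, so each subgame is an instance of (a sub-case of) the $k$-round game Duplicator is assumed to win. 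The $\equivFO_k$ statement is the identical argument with all set-moves deleted, and is in fact the classical Feferman--Vaught-type composition lemma. Since this is a standard fact (as the paper notes, it appears in~\cite{ComptonII}), I would keep the write-up to the above sketch and refer the reader there for the fully spelled-out induction.
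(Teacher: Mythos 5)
Your argument is correct: composing the two Duplicator strategies round by round, splitting each set-move across the two parts and observing that equality and adjacency never cross the partition, is exactly the standard proof of this Feferman--Vaught-type composition fact, and your handling of the bookkeeping (each subgame sees a legal partial play of the $k$-round game, which can be padded to $k$ rounds without affecting the conditions on the real moves) closes the only delicate point. The paper does not write out a proof of this lemma -- it cites Compton -- but its proof of the rooted analogue (Lemma~\ref{lem:ident}) is precisely this strategy-composition argument, so your route coincides with the paper's.
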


\begin{lemma}\label{lem:lotsofcopies}
For every $k \in \eN$ there is an $a = a(k)$ such that the following holds.
For every graph $G$ and every $b \geq a$ we have
$aG \equivMSO_k bG$.
\end{lemma}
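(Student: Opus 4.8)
The plan is to deduce the lemma from a sharper, symmetric statement — essentially the Feferman--Vaught composition theorem for $\MSO$ over disjoint unions — proved by induction on the quantifier depth using the \EF\ game of Lemma~\ref{lem:EFgame}. Call a \emph{$c$-coloured graph} a graph together with $c$ distinguished vertex sets $P_1,\dots,P_c$ (so $c=0$ gives ordinary graphs), and for a finite family $(H_i)_{i\in I}$ of $c$-coloured graphs let $\bigsqcup_{i\in I}H_i$ be their disjoint union. By (the proof of) Lemma~\ref{lem:fixednoeq}, for every $k$ and $c$ there are only finitely many $\equivMSO_k$-classes of $c$-coloured graphs, say $M(k,c)$ of them, each captured by a single ``Hintikka'' sentence of quantifier depth $\le k$. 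The statement to prove, by induction on $k$ and for all $c$ simultaneously, is: \emph{there is $T(k,c)\in\eN$ such that if two finite families $(H_i)_{i\in I}$, $(H'_j)_{j\in J}$ of $c$-coloured graphs satisfy $\min(c_\tau,T(k,c))=\min(c'_\tau,T(k,c))$ for every $\equivMSO_k$-type $\tau$ of $c$-coloured graphs — where $c_\tau$ (resp.\ $c'_\tau$) counts the $i$ (resp.\ $j$) with $H_i$ (resp.\ $H'_j$) of type $\tau$ — then $\bigsqcup_{i\in I}H_i\equivMSO_k\bigsqcup_{j\in J}H'_j$.} Lemma~\ref{lem:lotsofcopies} is the case $c=0$, $H_i=H'_j=G$: with $a(k):=T(k,0)$, any $b\ge a\ge a(k)$ makes the hypothesis hold (both counts equal $T$), so $aG\equivMSO_k bG$.

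The base case $k=0$ is trivial (all structures of a fixed signature are $\equivMSO_0$-equivalent, there being no closed atomic formulas), so set $T(0,c):=1$. For the inductive step I would examine Spoiler's first move in $\EHRMSO_k(\bigsqcup_I H_i,\bigsqcup_J H'_j)$, assuming the hypothesis with a threshold $T(k,c)$ to be pinned down at the end, and produce a Duplicator reply reducing the remaining $k-1$ rounds to a $(c{+}1)$-coloured instance covered by $T(k-1,c+1)$; the induction hypothesis and Lemma~\ref{lem:EFgame} then finish it. If Spoiler plays a vertex in some $H_{i_0}$, Duplicator chooses $j_0$ with $H'_{j_0}\equivMSO_k H_{i_0}$ (possible, since $H_{i_0}$'s type occurs on the other side), replies via her winning strategy in $\EHRMSO_k(H_{i_0},H'_{j_0})$, and records the pebbled vertex as a new singleton colour $P_{c+1}$; by Lemma~\ref{lem:EFgame} the two distinguished copies are then $\equivMSO_{k-1}$-equivalent as $(c{+}1)$-coloured graphs. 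All non-pebbled copies get $P_{c+1}=\emptyset$, so their $\equivMSO_{k-1}$-types as $(c{+}1)$-coloured graphs are determined by the $\equivMSO_k$-types (as $c$-coloured graphs) that were matched; a short computation (grouping of types can only increase counts, $\equivMSO_k$ refines $\equivMSO_{k-1}$, and we remove one copy of the same type from each side) shows the new counts match up to $T(k-1,c+1)$ as soon as $T(k,c)\ge T(k-1,c+1)+1$.

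The real work is the set-move case: Spoiler picks $S\subseteq V(\bigsqcup_I H_i)$, turning each $H_i$ into $(H_i,\,S\cap V(H_i))$, and Duplicator must pick $S'$ on the other side so that the resulting $(c{+}1)$-coloured families have matching $\equivMSO_{k-1}$-type-counts up to $T(k-1,c+1)$. The key point is that \emph{the set of $\equivMSO_{k-1}$-types of $(c{+}1)$-coloured graphs obtainable by adding one colour to a fixed $c$-coloured graph $H$ depends only on the $\equivMSO_k$-type of $H$} — because ``$\rho$ is obtainable'' is expressed by $\exists P_{c+1}\,\chi_\rho$, of quantifier depth $\le k$ over the $c$-coloured signature, with $\chi_\rho$ the Hintikka sentence of $\rho$. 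Hence on any class of copies sharing a fixed $\equivMSO_k$-type $\tau$ the two sides have the same repertoire of obtainable $(c{+}1)$-types. Duplicator replicates Spoiler's colouring exactly on the ``small'' classes ($c_\tau=c'_\tau<T(k,c)$); on each ``large'' class she distributes her $c'_\tau\ge T(k,c)$ copies so that every $(c{+}1)$-type Spoiler used on that class is either matched exactly or used at least $T(k-1,c+1)$ times — possible since a large class has at most $M(k-1,c+1)$ obtainable types, so some obtainable type is hit $\ge T(k,c)/M(k-1,c+1)$ times by Spoiler, which is $\ge T(k-1,c+1)$ once $T(k,c)\ge M(k-1,c+1)\cdot T(k-1,c+1)$. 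Taking $T(k,c):=\max\bigl(T(k-1,c+1)+1,\ M(k-1,c+1)\cdot T(k-1,c+1)\bigr)$ makes both cases go through, completing the induction.

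The step I expect to be the main obstacle is exactly this set-move analysis, which is also where the $\MSO$ case genuinely departs from the (easier) $\FO$ case: a set-move touches every component of a disjoint union at once, so Duplicator cannot respond component by component, and one must instead balance the induced colourings at the level of $\equivMSO$-\emph{types} rather than isomorphism types — which is precisely what keeps the number of relevant classes, hence the threshold $T(k,c)$, finite and independent of the graph $G$. All of this is by now standard; a treatment in greater generality is in~\cite{ComptonII}.
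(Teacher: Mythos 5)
Your overall plan---a threshold/type-counting composition theorem for disjoint unions, proved by induction on quantifier depth via the \EF{} game---is sound and is essentially the standard route; note that the paper does not prove Lemma~\ref{lem:lotsofcopies} at all but quotes it from \cite{ComptonII}, and the only argument of this kind carried out in the paper is for the rooted analogue (Lemma~\ref{lem:ab}), which is done by playing the game directly and maintaining a ``good position'' invariant rather than re-encoding the position after every round. Your set-move analysis, which you rightly flag as the crux, is essentially correct: the observation that the repertoire of $(c{+}1)$-types obtainable by adding one colour depends only on the $\equivMSO_k$-type of the component (via the depth-$\le k$ sentence $\exists P_{c+1}\,\chi_\rho$) is exactly the right key, and with $T(k,c)\ge M(k-1,c+1)\cdot T(k-1,c+1)$ the reallocation on a large class is indeed feasible---provided the requirement is stated correctly: a type that Spoiler uses fewer than $T(k-1,c+1)$ times must be matched \emph{exactly} (Duplicator using it at least $T(k-1,c+1)$ times does not help, since the truncated counts would then disagree), and ``replicate exactly'' on small classes must mean ``realize the same obtainable $(c{+}1)$-type on each matched copy'', as matched copies need not be isomorphic.

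The genuine gap is in the vertex-move case. You record the pebbled vertex as a singleton colour $P_{c+1}$ and then conclude from the induction hypothesis plus Lemma~\ref{lem:EFgame} that Duplicator survives the remaining $k-1$ rounds. But $\equivMSO_{k-1}$-equivalence of the colour-expanded structures corresponds to a game whose winning condition only asks the chosen vertices to respect the colour predicates; it does not enforce adjacency between later-chosen vertices and the pebbled vertex, whereas the original game's condition {\bf 2)} does apply to the first-round vertex. Encoding a point as a singleton predicate costs a quantifier: for instance, a single edge with one endpoint pebbled and two isolated vertices with one of them pebbled are $\equivMSO_1$-equivalent as one-coloured graphs, yet Duplicator loses the remaining one-round game from that position, since Spoiler picks the neighbour of the pebble. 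So the strategy supplied by your induction hypothesis wins the coloured game, not the game you need, and the induction does not close as written. The standard repair is to carry point parameters through the induction---prove the composition statement for coloured \emph{pointed} graphs, the type of each component recording the pebbles lying in it, so that Lemma~\ref{lem:EFgame} in its pointed form applies; the $\exists P_{c+1}\,\chi_\rho$ argument is unaffected---or to dispense with the re-encoding altogether and run the induction directly on game positions with an invariant, as the paper does in its proofs of Lemmas~\ref{lem:ident} and~\ref{lem:ab}. With either modification your argument goes through.
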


Let us observe that the statement $\dist(x,y) \leq r$,
where $\dist$ denotes the graph distance,
can easily be written as a first order formula whose only free variables are $x,y$.
If $\varphi$ is a first order formula then we denote by $\varphi^{B(x,r)}$ the formula in
which all bound variables are `relativised to $B(x,r)$'.
This means that in $\varphi^{B(x,r)}$ all variables range over $B(x,r)$ only.
(This can be achieved by inductively applying the substitutions
$[\forall y : \psi(y)]^{B(x,r)} := \forall y : ((\dist(x,y) \leq r ) \Rightarrow \psi^{B(x,r)}(y))$ and
$[\exists y : \psi(y)]^{B(x,r)} := \exists y : (\dist(x,y) \leq r) \wedge \psi^{B(x,r)}(y)$.)
A {\em basic local} sentence is a sentence of the form

\begin{equation}\label{eq:basiclocal}
\exists x_1, \dots, x_n : \left(\bigwedge_{1\leq i \leq n} \psi^{B(x_i, \ell)}(x_i) \right) \wedge
\left(\bigwedge_{1 \leq i < j \leq n} \dist(x_i, x_j ) > 2\ell \right),
\end{equation}

\noindent
where $\psi(x)$ is a $\FO$-formula whose only free variable is $x$ and $\ell$ is a number.
A {\em local sentence} is a boolean combination of basic local sentences.
The following theorem captures the intuition that first order sentences in some sense can only
capture local properties.
It will help us to shorten some proofs in the sequel.
Besides in~\cite{Gaifman1982}, a proof can for instance be found
in~\cite[Section~2.5]{EbbinghausFlum}.

\begin{theorem}[Gaifman's theorem,~\cite{Gaifman1982}]
\label{thm:gaifman}
Every first order sentence is logically equivalent to a local sentence.
\end{theorem}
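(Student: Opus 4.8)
The plan is to prove, by induction on the construction of the formula, a statement stronger than the one for sentences alone: \emph{for every $\FO$-formula $\varphi(x_1,\dots,x_k)$ there is an $r=r(\qd(\varphi))\in\eN$ such that $\varphi$ is logically equivalent to a Boolean combination of basic local sentences (as in~\eqref{eq:basiclocal}) and of formulas that are ``$r$-local around $(x_1,\dots,x_k)$'', meaning that all of their quantifiers are relativised to $B(x_1,r)\cup\dots\cup B(x_k,r)$.} Specialising this to $k=0$ yields the theorem, since for a sentence the union of $r$-balls around the empty tuple is empty, so the ``local part'' disappears and one is left with a Boolean combination of basic local sentences, i.e.\ a local sentence.

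The base case is immediate: the atomic formulas $x_i=x_j$ and $x_i\sim x_j$ are $1$-local around $(x_i,x_j)$. The propositional connectives are handled trivially, as the class of Boolean combinations of basic local sentences and $r$-local formulas around a fixed tuple is closed under $\neg,\wedge,\vee$ once $r$ is raised to the largest radius occurring. All the content sits in the quantifier step $\varphi(\bar x)=\exists y\,\chi(y,\bar x)$, where $\bar x=(x_1,\dots,x_k)$. Applying the inductive hypothesis to $\chi$ and rewriting the result in disjunctive normal form, I may assume $\chi(y,\bar x)=\theta\wedge\chi_{\mathrm{loc}}(y,\bar x)$, where $\theta$ is a Boolean combination of basic local sentences and $\chi_{\mathrm{loc}}$ is a conjunction of $r$-local formulas around $(y,\bar x)$ and their negations. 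Since $\theta$ has no free variables it pulls out of the quantifier, so it suffices to bring $\exists y\,\chi_{\mathrm{loc}}(y,\bar x)$ into the required form.

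Here I would split on the distance from $y$ to the parameters $x_1,\dots,x_k$. If $\dist(y,x_i)\le 2r$ for some $i$, then $B(y,r)\cup B(\bar x,r)\subseteq B(x_1,3r)\cup\dots\cup B(x_k,3r)$ and the quantified $y$ itself lies in this union, so $\exists y\,(\dist(y,x_i)\le 2r\wedge\chi_{\mathrm{loc}})$ is $3r$-local around $\bar x$; I then take the disjunction over $i$. If instead $\dist(y,x_i)>2r$ for all $i$ (taking the separation a little larger, say more than $2r+1$, to rule out edges between the two balls), the subgraph induced on $B(y,r)\cup B(\bar x,r)$ is the disjoint union of the ball around $y$ and the union of balls around $\bar x$; since there are only finitely many relevant $\equivFO_{q}$-classes for $q=\qd(\chi_{\mathrm{loc}})$ (Lemma~\ref{lem:fixednoeq}) and the type of a disjoint union is determined by the types of its parts (a Feferman--Vaught-type composition, in the spirit of Lemma~\ref{lem:disjointunion}), I may rewrite $\chi_{\mathrm{loc}}(y,\bar x)$ on this union as $\bigvee_m\bigl(\psi_m^{B(y,r)}(y)\wedge\xi_m^{B(\bar x,r)}(\bar x)\bigr)$ for finitely many $r$-local $\psi_m$ and $\xi_m$. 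Distributing the existential quantifier, it remains to bring each term $\exists y\,\bigl(\bigwedge_i\dist(y,x_i)>2r\ \wedge\ \psi_m^{B(y,r)}(y)\bigr)$ into the required form; this term still mentions $\bar x$ in the distance conditions, so it is not yet a sentence.

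The main obstacle is precisely this last reduction in the ``far'' case. Its clean half is a pigeonhole observation: the basic local sentence asserting that there exist $k+1$ points, pairwise at distance more than $4r$, each satisfying $\psi_m^{B(\cdot,r)}$, already implies $\exists y\,(\bigwedge_i\dist(y,x_i)>2r\wedge\psi_m^{B(y,r)}(y))$, because each $x_i$ can lie within distance $2r$ of at most one of the $k+1$ points, so some point is far from all of them. When no such $k+1$ pairwise-far points exist, the set of $\psi_m$-points is confined to a bounded neighbourhood of at most $k$ ``cluster centres'', and a further case analysis on how these sit relative to $x_1,\dots,x_k$ lets one express the term as a Boolean combination of formulas local around $\bar x$ and additional basic local sentences; making the radii and thresholds match up exactly is the technical heart of Gaifman's argument, and the full bookkeeping is carried out in~\cite{Gaifman1982} and in~\cite[Section~2.5]{EbbinghausFlum}. (An alternative, more semantic route first establishes Hanf-style locality of the equivalence relations $\equivFO_k$ and then reads off a defining local sentence from the finitely many local types; the syntactic induction above is, however, the most self-contained.)
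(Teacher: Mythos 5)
The paper never proves this statement: Gaifman's theorem is imported as a black box, with the proof delegated to~\cite{Gaifman1982} and~\cite[Section~2.5]{EbbinghausFlum}, so there is no in-paper argument to compare yours with. Judged on its own terms, your outline is the standard syntactic induction from exactly those sources (strengthening the statement to formulas with free variables, splitting the quantified witness $y$ into ``near $\bar x$'' and ``far from $\bar x$'' cases), and the near case and the pigeonhole half of the far case are handled correctly.

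However, there is a genuine gap at precisely the step that carries the content of the theorem. In the far case you must express $\exists y\,\bigl(\bigwedge_i\dist(y,x_i)>2r\wedge\psi_m^{B(y,r)}(y)\bigr)$ as a Boolean combination of basic local sentences and formulas local around $\bar x$. Your scattered-set observation only yields a \emph{sufficient} condition (the basic local sentence asserting $k+1$ pairwise $4r$-far $\psi_m$-points implies the formula); the complementary situation, where no such scattered set exists and one must show that the existence of a $\psi_m$-point at distance $>2r$ from all the $x_i$ is determined by bounded-radius neighbourhoods of $\bar x$ together with further basic local sentences (with the radii doubled and counting thresholds adjusted at each stage of the induction), is exactly what you hand off to the references with the phrase that the ``full bookkeeping is carried out'' there. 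As written, the argument proves Gaifman's theorem only modulo its hardest step, so it is a proof sketch rather than a proof. Two smaller repairs would also be needed: your case split is not exhaustive once you demand separation $>2r+1$ in the far case while the near case only covers $\dist(y,x_i)\le 2r$ (raise the near threshold accordingly, which is harmless since $B(y,r)\subseteq B(x_i,3r+1)$ still holds); and the composition step that splits $\chi_{\mathrm{loc}}(y,\bar x)$ over the disjoint union $B(y,r)\cup B(\bar x,r)$ needs a Feferman--Vaught-type lemma for formulas \emph{with free variables} distributed over the two parts, which is not literally Lemma~\ref{lem:disjointunion} (a statement about sentences) and should be stated and proved in the appropriate parametrised form.
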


\noindent
Here ``$\varphi$ is logically equivalent to $\psi$'' of course means 
that $G \models \varphi$ if and only $G \models \psi$ (for every $G$).

\subsection{Preliminaries on minor-closed classes\label{sec:preliminariesonminorclosedclasses}}

In this section we introduce some  notions and results on minor-closed classes of graphs
that we need in later arguments.
We say that a class of graphs $\Gcal$ is {\em decomposable}
if $G \in \Gcal$ if and only if every component of $G$ is in $\Gcal$.
We say that $\Gcal$ is {\em addable} if it is decomposable, and
closed under adding an edge between vertices in distinct components.
Let us mention (although we shall not use this anywhere in the paper) that
a minor-closed class is addable if and only if it can be characterised by a list of
excluded minors that are all $2$-connected \cite[p.~1]{McDiarmid09}.

Throughout this paper, $\Gcal$ denotes a minor-closed class of graphs,
$\Ccal$ the set of all {\em connected} graphs in $\Gcal$, $\Gcal_n$ the graphs of~$\Gcal$
on vertex set $\{1,\dots,n\}$, $\Ccal_n$ the connected elements of $\Gcal_n$,
and $\Ucal\Gcal$ denotes the unlabelled class corresponding to $\Gcal$, i.e.,
the set of all isomorphism classes of graphs in $\Gcal$.
We define $\Ucal\Ccal, \Ucal\Gcal_n, \Ucal\Ccal_n$ similarly.
The {\em exponential generating function of $\mathcal{G}$} is defined by
\[ G(z) := \sum_{n=0}^\infty |\Gcal_n|\frac{z^n}{n!}, \]
\noindent
and similarly $C(z) = \sum_{n=1}^\infty |\Ccal_n|\frac{z^n}{n!}$.
(Note that by convention the ``empty graph" is not considered connected, so that
$|\Gcal_0| = 1$ and $|\Ccal_0| = 0$.)
If $\Gcal$ is decomposable then it can be seen that $G(z)$ and $C(z)$
are related by the {\em exponential formula}
(see \cite[Lemma~2.1~(i)]{ComptonI} or \cite[Chapter~II]{FlajoletSedgewick}).

\begin{equation}\label{eq:rhodef}
G(z) = \exp(C(z)).
\end{equation}

\noindent
The {\em radius of convergence} of $G(z)$ will always be denoted by $\rho$.
(We remark that, for a decomposable graph class, the exponential formula implies that 
$C(.)$ and $G(.)$ have the same radius of convergence and that 
$C(\rho)$ is finite if and only if $G(\rho)$ is.)

Note that we have

\begin{equation}\label{eq:radiusofconvergence}
 \rho = \left( \limsup_{n\to\infty} \left(\frac{|\Gcal_n|}{n!}\right)^{1/n} \right)^{-1}.
\end{equation}

\noindent

By a result of Norine, Seymour, Thomas and Wollan~\cite{norineseymourthomaswollan},
we know that $\rho > 0$ for every minor-closed class other than the class of all graphs.
More detailed information on which values $\rho$ can assume for minor-closed classes
was obtained by Bernardi, Noy and Welsh~\cite{bernardinoywelsh}.
Amongst other things, they showed that the radius of convergence is infinite
if and only if $\Gcal$ does not contain every path; and that otherwise, if $\Gcal$ contains
all paths, then $\rho \leq 1$.
An arbitrary class of graphs $\Gcal$ is said to be {\em smooth}~if
\begin{equation}\label{eq:conditionforsmoothness}
 \lim_{n\to\infty} \frac{n |\Gcal_{n-1}| }{ |\Gcal_n|}\qquad \text{ exists and is finite. }
\end{equation}

\noindent
(In case this limit does exist then it must in fact equal $\rho$.)
Smoothness turns out to be a key property in many proofs on enumerative and
probabilistic aspects of graphs from minor-closed classes.
The following result was proved by McDiarmid in~\cite{McDiarmid09}.
The statement below combines Theorem 1.2 and Lemma 2.4 of~\cite{McDiarmid09}.

\begin{theorem}[\cite{McDiarmid09}]\label{thm:addissmooth}
Let $\Gcal$ be an addable, minor-closed class, and let $\Ccal \subseteq \Gcal$ be the
corresponding class of connected graphs.
Then $\Ccal$ and $\Gcal$ are both smooth.
\end{theorem}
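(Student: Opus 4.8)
The plan is to reduce everything to the behaviour of the normalized counts $a_n := |\Gcal_n|/n!$ and to prove the single estimate that $a_{n-1}/a_n$ converges; since $n|\Gcal_{n-1}|/|\Gcal_n| = a_{n-1}/a_n$, this is exactly the smoothness of $\Gcal$. First I would record two elementary facts. Interleaving the labels of two vertex-disjoint graphs shows, using that $\Gcal$ is decomposable, that $|\Gcal_{m+n}| \ge \binom{m+n}{m}|\Gcal_m||\Gcal_n|$, i.e.\ $(a_n)_n$ is supermultiplicative; by Fekete's lemma $a_n^{1/n}\to\gamma:=\sup_n a_n^{1/n}$, and $\gamma=1/\rho$. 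This $\gamma$ is finite by the Norine--Seymour--Thomas--Wollan bound $\rho>0$, and since an addable class contains $K_2$ and hence every path, $\rho\le 1$ by Bernardi--Noy--Welsh, so $\gamma\in[1,\infty)$. Second, adding an isolated vertex in any of the $n+1$ positions gives $|\Gcal_{n+1}|\ge (n+1)|\Gcal_n|$, so $(a_n)_n$ is nondecreasing and $a_{n-1}/a_n\le 1$ for all $n$. It therefore remains to show that $a_{n-1}/a_n$ converges; the limit is then forced to be $\rho=1/\gamma$ (if $a_{n-1}/a_n\to L$, then $a_n^{1/n}\to 1/L$ by Stolz--Ces\`aro, so $1/L=\gamma$), and for this I would establish an \emph{approximate submultiplicativity} of $(a_n)_n$ to complement the supermultiplicativity above.

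For that I would use pendant appendages, which is where addability and minor-closedness are genuinely used. Call a rooted connected graph $(B,w)$ with $B\in\Ccal$ an \emph{appendage}; by bridge-addability, joining $w$ to any vertex $v$ of any $G\in\Gcal$ yields a graph again in $\Gcal$. Counting the graphs in $\Gcal_N$ obtainable from a \emph{core} $G'\in\Gcal_{N-k}$ by attaching an appendage on $k$ vertices gives, for each fixed $k$, a lower bound $|\Gcal_N|\ge c_k\,N\,|\Gcal_{N-k}|$ with $c_k>0$. The substantial point is the reverse inequality: because $\Gcal$ is minor-closed, every $G\in\Gcal_N$ has $e(G)=O(N)$, so a positive proportion of its vertices lie in small pendant pieces, and therefore $G$ can be written --- in a number of ways bounded uniformly in $N$ --- as a core in $\bigcup_{k\le K}\Gcal_{N-k}$ with a small appendage attached. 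Matching the two counts, summing over appendage sizes, and using the generating function $A(z):=\sum_k a^\bullet_k z^k$ of rooted appendages together with $a_n^{1/n}\to 1/\rho$ (to ensure $A$ is finite at $z=\rho$ and to control tails) pins the ratio: $a_N/a_{N-1}\to\gamma$, which is the smoothness of $\Gcal$.

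Smoothness of $\Ccal$ then follows from the exponential relation $G(z)=\exp(C(z))$ of \eqref{eq:rhodef}. Differentiating yields $G'=C'G$, hence $|\Gcal_n| = \sum_{k=1}^{n}\binom{n-1}{k-1}|\Ccal_k||\Gcal_{n-k}|$, and together with the companion fact that the probability $|\Ccal_n|/|\Gcal_n|$ that $G_n\in_u\Gcal_n$ is connected converges to a limit $p>0$ --- for bridge-addable classes $p\ge\upe^{-1}$, and existence of the limit can be obtained from the same appendage machinery (or imported from McDiarmid) --- one gets $n|\Ccal_{n-1}|/|\Ccal_n| = \big(n|\Gcal_{n-1}|/|\Gcal_n|\big)\big(|\Ccal_{n-1}|/|\Gcal_{n-1}|\big)\big(|\Gcal_n|/|\Ccal_n|\big)\to \rho\cdot p\cdot p^{-1}=\rho$, so $\Ccal$ is smooth as well.

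The main obstacle is the reverse appendage estimate: turning ``every large graph in a proper minor-closed class has many small pendant appendages'' (which rests on the linear edge bound for such classes) into a near-bijection between $\Gcal_N$ and $\{\text{core}\}\times\{\text{small appendage}\}$ with multiplicities bounded uniformly in $N$, and then passing the resulting inequalities through the limit of $a_n^{1/n}$ at $z=\rho$. By comparison the supermultiplicativity/Fekete step and the transfer from $\Gcal$ to $\Ccal$ are routine; the convergence of the connectivity probability $|\Ccal_n|/|\Gcal_n|$ is the one auxiliary input I would either cite or prove in parallel by the same method.
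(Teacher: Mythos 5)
The paper does not prove this statement at all: it is imported verbatim from McDiarmid's work (the text notes it combines Theorem~1.2 and Lemma~2.4 of \cite{McDiarmid09}), so what you are really attempting is a new proof of McDiarmid's smoothness theorem, and your central step has a genuine gap. The claim that ``because $\Gcal$ is minor-closed, every $G\in\Gcal_N$ has $e(G)=O(N)$, so a positive proportion of its vertices lie in small pendant pieces'' is false: a $3$-connected planar triangulation has no bridges and hence no pendant pieces whatsoever. The true statement (Theorem~\ref{thm:pendantcopyadd}) is that \emph{almost all} graphs in the class have linearly many pendant appearances, and it is proved by a deletion argument from the growth constant, not as a deterministic consequence of sparsity. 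More seriously, even granting that almost every $G\in\Gcal_N$ decomposes as core plus small appendage, your double count does not ``pin the ratio'': a typical graph has $\Theta(N)$ pendant appearances, so the multiplicity of your correspondence is not bounded uniformly in $N$, and after accounting for it one only gets two-sided constant-factor bounds of the form $c_1\le \gamma\, a_{N-1}/a_N\le c_2$. Such bounds, together with supermultiplicativity, monotonicity and $a_n^{1/n}\to\gamma$, do \emph{not} force convergence of $a_{N-1}/a_N$ (e.g.\ $a_n=\gamma^n$ for even $n$ and $a_n=c\gamma^n$ for odd $n$, with $c\gamma\ge 1$, is nondecreasing, supermultiplicative and root-convergent with an oscillating ratio). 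To turn the appendage count into an asymptotic identity you would need the expected number of small pendant appearances per vertex to converge, which is essentially equivalent to the smoothness you are trying to prove. This is exactly why root-convergence (Fekete) is routine while ratio-convergence is the hard content of McDiarmid's theorem -- compare the surface case, where smoothness required the separate work of Bender--Canfield--Richmond.

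There are secondary problems as well. The ``interleaving'' proof of $|\Gcal_{m+n}|\ge\binom{m+n}{m}|\Gcal_m||\Gcal_n|$ overcounts (the disjoint-union map is not injective; the inequality actually fails for merely decomposable classes such as edgeless graphs, and repairing it uses bridge-addability), and likewise the isolated-vertex map behind $|\Gcal_{n+1}|\ge(n+1)|\Gcal_n|$ is not injective (though this inequality does follow from the corrected supermultiplicativity since $a_1=1$). Finally, your transfer from $\Gcal$ to $\Ccal$ needs the convergence of $|\Ccal_n|/|\Gcal_n|$ to a positive limit; importing that from McDiarmid is circular, since in his development the limit $1/G(\rho)$ in \eqref{eq:conneq} comes from the fragment theorem (Theorem~\ref{thm:addfrag}), which itself rests on smoothness -- the elementary bridge-addability argument gives only the bound $\ge\upe^{-1}$, not existence of the limit. (McDiarmid in fact proves smoothness of $\Ccal$ first and transfers to $\Gcal$ via his Lemma~2.4, the reverse of your route.) As it stands, the proposal does not constitute a proof.
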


\noindent
A crucial object in the literature on random graphs from minor closed classes
is the Boltzmann-Poisson random graph, which we define next.

\begin{definition}[Boltzmann--Poisson random graph]\label{def:Boltz}
Let $\Gcal$ be a decomposable class of graphs, and let $\rho$ be the radius of convergence of
its exponential generating function $G(z)$.
If $G(\rho) < \infty$ then the {\em Boltzmann--Poisson random graph}
corresponding to $\Gcal$ is the unlabelled random graph $R$ satisfying:

\begin{equation}\label{eq:Boltzeq}
\Pee( R = H ) = \frac{1}{G(\rho)} \cdot \frac{\rho^{v(H)}}{\aut(H)}
\qquad \text{ for all $H \in \Ucal\Gcal$. }
\end{equation}
\end{definition}

\noindent
Here $\aut(H)$ denotes the number of automorphisms of $H$, where the number of automorphisms of the empty graph is taken to 
be one.
It can for instance been seen from Burnside's lemma that~\eqref{eq:Boltzeq} indeed defines
a probability distribution taking values in $\Ucal\Gcal$. (Alternatively, see Theorem 1.3 of~\cite{McDiarmid09}.)
The paper~\cite{McDiarmid09} also establishes the following result, which we state as a separate
lemma for future convenience.

\begin{lemma}\label{lem:Boltzcomp}
Let $\Gcal, \rho$ and $R$ be as in Definition~\ref{def:Boltz}. Let
$H_1, \dots, H_k \in \Gcal$ be non-isomorphic connected graphs from $\Gcal$ and
let  $Z_i$ denote the number of components of $R$ that are isomorphic to $H_i$.
Then $Z_1, \dots, Z_k$ are independent Poisson random variables with
means $\Ee Z_i = \rho^{v(H_i)} / \aut(H_i)$.
\end{lemma}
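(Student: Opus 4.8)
The plan is to exploit the unique decomposition of a graph in a decomposable class into connected components, together with the classical formula for the automorphism group of a disjoint union.

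First I would list the isomorphism classes of connected graphs in $\Gcal$ as $F_1, F_2, \dots$, so that $\Ucal\Ccal = \{F_1, F_2, \dots\}$, and set $\mu_j := \rho^{v(F_j)}/\aut(F_j)$. Since $\Gcal$ is decomposable, every $H \in \Ucal\Gcal$ is uniquely determined by the multiplicities $m_j = m_j(H) \geq 0$ with which the $F_j$ occur as components of $H$, and the map $H \mapsto (m_j(H))_{j\geq 1}$ is a bijection from $\Ucal\Gcal$ onto the finitely supported sequences of nonnegative integers. For such an $H$ we have $v(H) = \sum_j m_j\, v(F_j)$, and, using the standard fact that $\Aut(H)$ is the direct product over the isomorphism types $j$ of the wreath products $\Aut(F_j) \wr S_{m_j}$,
\[ \aut(H) \;=\; \prod_{j\geq 1} \aut(F_j)^{m_j}\, m_j! . \]
Substituting into~\eqref{eq:Boltzeq} yields
\[ \Pee\bigl( m_j(R) = m_j \ \text{for all } j \bigr) \;=\; \frac{1}{G(\rho)} \prod_{j\geq 1} \frac{\mu_j^{\,m_j}}{m_j!} \]
for every finitely supported $(m_j)_j$.

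It remains to evaluate the normalising constant. A connected graph $F$ on $n$ vertices has exactly $n!/\aut(F)$ labelled copies with vertex set $[n]$, so $|\Ccal_n| = \sum_{j:\, v(F_j) = n} n!/\aut(F_j)$, whence $C(\rho) = \sum_{j\geq 1} \rho^{v(F_j)}/\aut(F_j) = \sum_{j\geq 1} \mu_j$; this series converges because, by the exponential formula~\eqref{eq:rhodef}, $C(\rho) = \ln G(\rho) < \infty$. Therefore $G(\rho) = \exp\bigl(\sum_j \mu_j\bigr) = \prod_j \upe^{\mu_j}$, and the previous display becomes
\[ \Pee\bigl( m_j(R) = m_j \ \text{for all } j \bigr) \;=\; \prod_{j\geq 1} \frac{\upe^{-\mu_j}\,\mu_j^{\,m_j}}{m_j!}, \]
which is exactly the assertion that $\bigl(m_j(R)\bigr)_{j\geq 1}$ is a family of independent $\Po(\mu_j)$ random variables. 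Marginalising over all coordinates except the (distinct, since the $H_i$ are pairwise non-isomorphic) indices $j_1, \dots, j_k$ with $F_{j_i} = H_i$ gives the lemma.

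The only real work is bookkeeping: making the wreath-product identity for $\aut(H)$ precise, and checking that the manipulations of series are valid — which they are, since all terms are nonnegative and the total mass $G(\rho)$ is finite by hypothesis, so the groupings and interchanges of the sums and products above are justified by Tonelli's theorem. There is no genuinely hard step; the point is simply that~\eqref{eq:Boltzeq} is already in product-Poisson form once $\aut(H)$ is factored over components and $G(\rho)$ is identified with $\exp(C(\rho))$.
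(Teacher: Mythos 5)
Your argument is correct, and it is worth noting that the paper does not prove this lemma at all: it is imported verbatim from McDiarmid's paper \cite{McDiarmid09} (alongside the fact, from Theorem~1.3 there, that \eqref{eq:Boltzeq} defines a probability distribution). Your self-contained derivation is the standard one and in substance the same as the source: push the law of $R$ forward under the bijection $H \mapsto (m_j(H))_j$ (which exists precisely because $\Gcal$ is decomposable), factor $\aut(H) = \prod_j \aut(F_j)^{m_j} m_j!$, and identify the normalising constant via $G(\rho) = \exp(C(\rho))$. In fact your computation simultaneously re-proves that \eqref{eq:Boltzeq} sums to one, since the product-Poisson law charges only finitely supported sequences (Borel--Cantelli, using $\sum_j \mu_j < \infty$), so agreement of the pointwise masses on that countable set is enough. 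Two small points deserve a sentence in a polished write-up: convergence of $\sum_j \mu_j = C(\rho)$ follows even more directly from $|\Ccal_n| \leq |\Gcal_n|$, giving $C(\rho) \leq G(\rho) < \infty$ (this is exactly how the paper argues in the proof of Lemma~\ref{lem:pklim}); and the identity $G(\rho) = \exp(C(\rho))$ at the boundary point $z=\rho$, rather than just for $|z|<\rho$, needs a word of justification, which is immediate by monotone convergence since both series have nonnegative coefficients.
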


The following is a slight rewording of Theorem 1.5 in~\cite{McDiarmid09},
where something stronger is proved.

\begin{theorem}[\cite{McDiarmid09}]\label{thm:addfrag}
Let $\Gcal$ be an addable minor-closed class other than the class of all graphs,
let $\rho$ be its radius of convergence and let $G_n \in_u \Gcal_n$.
Then $G(\rho) < \infty$ and if $F_n$ denotes the isomorphism
class of $\Frag(G_n)$ (so $F_n$ is the {\em unlabelled} version of $\Frag(G_n)$)
then $F_n \to_{\text{TV}} R$, where
$R$ is the Boltzmann--Poisson random graph associated with $\Gcal$.
\end{theorem}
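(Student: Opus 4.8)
The plan is to turn the total-variation assertion into a pointwise limit and then into an explicit count, isolating the single genuinely analytic ingredient. First I would use that $\Ucal\Gcal$ is countable, so that total-variation convergence is delicate only through the individual point masses: once $R$ is known to be an honest probability measure on $\Ucal\Gcal$ — equivalently once $G(\rho) < \infty$, since $\sum_{H \in \Ucal\Gcal} \rho^{v(H)}/\aut(H) = \sum_{n} \tfrac{|\Gcal_n|}{n!}\rho^n = G(\rho)$ — Scheff\'e's lemma reduces $F_n \to_{\text{TV}} R$ to showing $\Pee(\Frag(G_n) \cong H) \to \Pee(R = H)$ for each fixed $H \in \Ucal\Gcal$. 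So it suffices to prove $G(\rho) < \infty$ and to identify these pointwise limits.

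For the count, fix $H$, put $h := v(H)$, and take $n > 2h$. A graph $G \in \Gcal_n$ then satisfies $\Frag(G) \cong H$ exactly when $G = B \cup H'$ with $B$ connected on $n-h$ vertices and $H' \cong H$ on the remaining $h$ vertices; the role of the inequality $n-h>h$ is that $B$ is then strictly the unique largest component, so no tie-break occurs and $\Frag(G)$ is precisely the copy of $H$. Choosing the $h$-set carrying the fragment, then the connected graph $B$, then the labelled copy of $H$, and using that $\Gcal$ is decomposable, we obtain
\[
\Pee(\Frag(G_n) \cong H) \;=\; \frac{1}{\aut(H)} \cdot \frac{n!\,|\Ccal_{n-h}|}{(n-h)!\,|\Gcal_n|};
\]
summing this over the finitely many $H \in \Ucal\Gcal_h$ and using $\sum_{H \in \Ucal\Gcal_h} 1/\aut(H) = |\Gcal_h|/h!$ gives the companion identity $\Pee(v(\Frag(G_n)) = h) = \tfrac{|\Gcal_h|}{h!}\cdot\tfrac{n!\,|\Ccal_{n-h}|}{(n-h)!\,|\Gcal_n|}$.

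To pass to the limit, write
\[
\frac{n!\,|\Ccal_{n-h}|}{(n-h)!\,|\Gcal_n|} \;=\; \left(\frac{n!\,|\Gcal_{n-h}|}{(n-h)!\,|\Gcal_n|}\right)\cdot\frac{|\Ccal_{n-h}|}{|\Gcal_{n-h}|}.
\]
By smoothness of $\Gcal$ (Theorem~\ref{thm:addissmooth}), $|\Gcal_{m-1}|/|\Gcal_m| = (\rho + o(1))/m$, so telescoping the first bracket over its $h$ factors sends it to $\rho^h$; the second factor is $\Pee(G_{n-h}\text{ connected})$. Granting the classical connectivity asymptotic $\Pee(G_m\text{ connected}) \to 1/G(\rho)$ (the prototype being R\'enyi's theorem for forests), we get $\Pee(\Frag(G_n) \cong H) \to \tfrac{1}{\aut(H)}\cdot\tfrac{\rho^h}{G(\rho)} = \Pee(R = H)$, which together with Scheff\'e proves the theorem. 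Finiteness of $G(\rho)$ itself drops out of the companion identity: since $\sum_{h} \Pee(v(\Frag(G_n)) = h) = 1$ for every $n$ and the factor $n!\,|\Gcal_{n-h}|/((n-h)!\,|\Gcal_n|)$ tends to $\rho^h$, Fatou's lemma yields $\beta\, G(\rho) = \beta\sum_h \tfrac{|\Gcal_h|}{h!}\rho^h \le 1$, where $\beta := \liminf_m \Pee(G_m\text{ connected})$, and $\beta > 0$ is the standard positive lower bound on the connectivity probability valid for every bridge-addable — hence every addable — class.

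The step I expect to be the main obstacle is the sharp connectivity limit $\Pee(G_m\text{ connected}) \to 1/G(\rho)$: the inequality $\liminf_m \Pee(G_m\text{ connected}) \le 1/G(\rho)$ is immediate from the Fatou bound above, but matching it and controlling the $\limsup$ requires uniform tightness of the fragment, namely $\lim_{K\to\infty}\limsup_{n\to\infty}\Pee(v(\Frag(G_n)) > K) = 0$. This cannot be extracted from smoothness alone; one needs either McDiarmid's pendant-appearances technique (a bounded-to-one counting argument, using that a typical $G \in \Gcal_n$ has $\Omega(n)$ disjoint locations at which a fixed small connected graph can be attached or detached, bounds the fragment's size) or, equivalently, singularity analysis of $G(z) = \exp(C(z))$ near $z = \rho$. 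This is exactly where the argument must lean on Theorems~1.2 and~1.3 of~\cite{McDiarmid09} beyond the smoothness recorded in Theorem~\ref{thm:addissmooth}; everything else is bookkeeping.
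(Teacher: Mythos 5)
There is no internal proof to compare against here: the paper imports Theorem~\ref{thm:addfrag} directly from~\cite{McDiarmid09}. Judged as a reconstruction, your skeleton is sound and in fact close to McDiarmid's own route: the exact identity $\Pee(\Frag(G_n)\cong H)=\frac{1}{\aut(H)}\cdot\frac{n!\,|\Ccal_{n-h}|}{(n-h)!\,|\Gcal_n|}$ for $n>2h$ is correct (decomposability gives membership, and $n-h>h$ makes the big component unambiguous), the telescoping via smoothness (Theorem~\ref{thm:addissmooth}) is fine, the Scheff\'e reduction of total-variation convergence to pointwise convergence is legitimate once the limit is known to be a genuine probability measure, and your Fatou argument $\beta\,G(\rho)\le 1$ with $\beta\ge\upe^{-1}$ from the bridge-addable connectivity bound of~\cite{McDiarmidStegerWelsh06} is a clean, correct way to get $G(\rho)<\infty$ without invoking McDiarmid's Theorem~1.3.

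The genuine gap is the one you flag, but it is larger than your phrasing suggests: the ``classical connectivity asymptotic'' $\Pee(G_m\text{ connected})\to 1/G(\rho)$ is not prior knowledge for a general addable minor-closed class (R\'enyi covers forests only); in this paper it is precisely~\eqref{eq:conneq}, which is \emph{deduced from} Theorem~\ref{thm:addfrag}. Given your counting identity and $G(\rho)<\infty$, the connectivity limit is exactly the $H=\emptyset$ instance of the pointwise convergence you are trying to establish, so granting it comes close to granting the theorem. The missing content can be isolated as tightness of the fragment: writing $c_m:=|\Ccal_m|/|\Gcal_m|$, smoothness of both $\Gcal$ and $\Ccal$ (both supplied by Theorem~\ref{thm:addissmooth}) gives $c_m/c_{m-1}\to 1$, and then uniform tightness of $v(\Frag(G_n))$ together with your identity forces $c_m\to 1/G(\rho)$, after which your argument closes. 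But that tightness (e.g.\ $\Ee\, v(\Frag(G_n))=O(1)$) is the heart of McDiarmid's proof, obtained by the bridge-adding/pendant-appearance double counting you only allude to. As written, the proposal therefore reduces Theorem~\ref{thm:addfrag} to its own key lemma rather than proving it; filling in that one estimate would make the rest of your bookkeeping a complete proof.
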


This powerful result has several useful immediate corollaries,
as pointed out in~\cite{McDiarmid09}. For instance, it follows that,
if $\Gcal$ is addable and minor-closed, then $|\Biggg(G_n)| = n - O(1)$ w.h.p., and
\begin{equation}\label{eq:conneq}
\lim_{n\to\infty}\Pee( G_n \text{ is connected } ) =
\lim_{n\to\infty}\Pee( \Frag(G_n) = \emptyset ) = \Pee( R = \emptyset ) = \frac{1}{G(\rho)}.
\end{equation}

\noindent

McDiarmid, Steger and Welsh~\cite{McDiarmidStegerWelsh06} remarked  that
for the case of forests, the asymptotic probability of being connected
is $1/G(\rho) = \upe^{-1/2}$, and they also conjectured
that this is the smallest possible value over all weakly addable graph classes (a class of graphs $\Gcal$ is weakly addable if
adding an edge between distinct components of a graph in $\Gcal$ always produces another graph in $\Gcal$).
This conjecture was proved under some conditions which 
are met by addable, minor closed classes, by two independent teams:
Addario-Berry, McDiarmid and Reed~\cite{addarioberrymcdiarmidreed},
and Kang and Panagiotou~\cite[Theorem~1.1]{KangPanagiotou}. Even more recently (since we submitted the present paper) the 
conjecture appears to have been settled in the affirmative by Chapuy and Perarnau~\cite{ChapuyPerarnau}.
A corollary of the result of~\cite{addarioberrymcdiarmidreed, KangPanagiotou}  is the following.

\begin{theorem}[\cite{addarioberrymcdiarmidreed, KangPanagiotou}]\label{thm:addario}
If $\Gcal$ is an addable, minor-closed class of graphs then
$G(\rho) \leq \sqrt{\upe}$.
\end{theorem}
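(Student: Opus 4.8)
The plan is to deduce this statement directly from the two cited results together with the connectivity identity~\eqref{eq:conneq}: there is essentially nothing to prove beyond combining these ingredients, since the theorem is explicitly a corollary of~\cite{addarioberrymcdiarmidreed,KangPanagiotou}.

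First I would record the standing hypotheses. An addable, minor-closed class $\Gcal$ is in particular decomposable and weakly addable, and --- being minor-closed but not the class of all graphs (which is not addable) --- it satisfies $\rho>0$ and, by Theorem~\ref{thm:addfrag}, $G(\rho)<\infty$. Hence the Boltzmann--Poisson random graph $R$ associated with $\Gcal$ is well defined, and the chain of identities in~\eqref{eq:conneq} applies, giving
\[
\lim_{n\to\infty}\Pee(G_n\text{ is connected}) = \Pee(R=\emptyset) = \frac{1}{G(\rho)}.
\]
Second, I would invoke the resolution of the McDiarmid--Steger--Welsh conjecture in the relevant regime: by Theorem~1.1 of~\cite{KangPanagiotou} (equivalently by the main result of~\cite{addarioberrymcdiarmidreed}), every addable, minor-closed class of graphs satisfies
\[
\liminf_{n\to\infty}\Pee(G_n\text{ is connected}) \geq \upe^{-1/2}.
\]
Combining the two displays yields $1/G(\rho)\geq\upe^{-1/2}$, which rearranges to $G(\rho)\leq\upe^{1/2}=\sqrt{\upe}$, as claimed.

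The genuine mathematical content lies entirely in~\cite{addarioberrymcdiarmidreed,KangPanagiotou}, so the only points needing care in the write-up are: a sentence checking that an addable, minor-closed class meets the hypotheses under which those papers establish the conjectured lower bound on the connectivity probability (decomposability together with weak addability and the minor-closedness assumption they use), and the observation that for such classes the $\liminf$ above is in fact a genuine limit --- which is immediate from Theorem~\ref{thm:addfrag} via~\eqref{eq:conneq}. It is worth remarking in passing that the bound is sharp: for the class of forests one has $1/G(\rho)=\upe^{-1/2}$, so $G(\rho)=\sqrt{\upe}$.
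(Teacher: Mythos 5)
Your derivation is exactly the route the paper intends: the theorem is stated as a corollary of \cite{addarioberrymcdiarmidreed, KangPanagiotou}, obtained by combining their lower bound $\liminf_{n\to\infty}\Pee(G_n\text{ connected})\geq \upe^{-1/2}$ with the identity $\lim_{n\to\infty}\Pee(G_n\text{ connected})=1/G(\rho)$ from~\eqref{eq:conneq}, so the proposal is correct and matches the paper. One small slip in your preamble: the class of all graphs \emph{is} addable and minor-closed (it is excluded in Theorem~\ref{thm:addfrag} for a different reason), but there $\rho=0$ and $G(\rho)=1\leq\sqrt{\upe}$, so the bound holds trivially in that case and your argument is unaffected.
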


Let $H$ be a connected graph with a distinguished vertex $r$, the `root'. We say that $G$ contains a {\em pendant copy of $H$} if $G$ contains an induced subgraph isomorphic to $H$, and there is exactly one edge between this copy of $H$ and the rest of the graph, and this edge is incident with the root $r$.
McDiarmid \cite[Theorem 1.7]{McDiarmid09}
proved the following remarkable result:

\begin{theorem}[\cite{McDiarmid09}]\label{thm:pendantcopyadd}
Let $\Gcal$ be an addable, minor-closed class and $G_n \in_u \Gcal_n$.
Let $H \in \Gcal$ be any fixed, connected (rooted) graph.
Then, w.h.p., $G_n$ contains $\Omega(n)$-many pendant copies of $H$.
\end{theorem}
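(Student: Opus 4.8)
The plan is to use the second moment method on the random variable $p_H(G_n)$ counting pendant copies of $H$ in $G_n$, estimating both moments by a cut-and-paste argument that uses addability to reglue removed pieces and smoothness to relate $|\Gcal_{n-j}|$ to $|\Gcal_n|$. Write $h := v(H)$, let $r$ be the root, let $a := \aut(H,r)$ be the number of automorphisms of $H$ fixing $r$, and put $\mu := \rho^h / a$, which is positive (recall $\rho > 0$ unless $\Gcal$ is the class of all graphs). I would first count the ordered pairs $(G,W)$ with $G \in \Gcal_n$ and $W$ a pendant copy of $H$ in $G$: deleting $W$ leaves a graph in $\Gcal_{n-h}$ (minor-closedness) together with a marked attachment vertex, and conversely, given a graph $G'' \in \Gcal$ on a fixed $(n-h)$-set, one of the $h!/a$ ways of realising a rooted copy of $H$ on the remaining $h$ labels, and a choice of attachment vertex, joining them by a single root-edge produces a graph in $\Gcal_n$ in which $W$ is pendant --- here addability is used (the disjoint union lies in $\Gcal$, then one adds an edge between components). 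This gives $\sum_{G\in\Gcal_n} p_H(G) = \binom{n}{h}\cdot (h!/a)\cdot (n-h)\cdot |\Gcal_{n-h}|$, and smoothness (Theorem~\ref{thm:addissmooth}), in the form $|\Gcal_{n-h}| = (1+o(1))\rho^h n^{-h}|\Gcal_n|$, yields $\Ee\, p_H(G_n) = (1+o(1))\mu n$.

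For the second moment I would count triples $(G,W_1,W_2)$ with $W_1,W_2$ pendant copies and split according to how $W_1,W_2$ interact. The ``separated'' triples --- $W_1\cap W_2 = \emptyset$ and neither attachment vertex lies in the other copy --- are counted exactly by the same regluing bijection, now deleting $W_1\cup W_2$, landing in $\Gcal_{n-2h}$, and reattaching two rooted copies of $H$; by smoothness this count is $(1+o(1))\mu^2 n^2 |\Gcal_n|$, matching $(\Ee\, p_H(G_n))^2$. The remaining triples contribute only $O(n|\Gcal_n|)$. The key observation here is structural: if two distinct pendant copies $W_1\neq W_2$ overlap, then $W_1\cap W_2$ and $W_2\setminus W_1$ are both nonempty and, since $G[W_2]\cong H$ is connected, there is an edge of $G[W_2]$ between them; this edge joins $W_1\cap W_2\subseteq W_1$ to $W_2\setminus W_1\subseteq V(G)\setminus W_1$, so it is the unique cross-edge of the pendant copy $W_1$, and in particular its exterior endpoint lies inside $W_2$. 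Hence $W_1\cup W_2$ spans at most $2h-1$ vertices and has at most one edge to $V(G)\setminus(W_1\cup W_2)$, so the number of such triples is at most $O\big(n^{2h-1}\cdot n\cdot |\Gcal_{n-2h+1}|\big) = O(n|\Gcal_n|)$ by smoothness; the cases where the two copies are disjoint but an attachment vertex lies in the other copy (or the two attachment vertices coincide) are bounded the same way, even more crudely.

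Combining, $\Var(p_H(G_n)) = \Ee[p_H(G_n)^2] - (\Ee\, p_H(G_n))^2 = o(n^2)$, so Chebyshev's inequality gives $\Pee\big(p_H(G_n) < \tfrac12 \mu n\big) \to 0$, i.e.\ $G_n$ has at least $\tfrac12\mu n$ pendant copies of $H$ w.h.p., which is the assertion. I expect the only delicate point to be the bookkeeping in the second moment: one must verify that \emph{every} family of nearly-degenerate configurations (overlapping copies, nested attachment vertices, coinciding attachment vertices) is genuinely of lower order, and it is precisely the structural fact above --- that overlapping pendant copies force their union to have $O(1)$ boundary edges --- which makes the naive bound $O(n^2|\Gcal_n|)$ collapse to $O(n|\Gcal_n|)$. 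Everything else reduces to addability (to stay inside $\Gcal$ when regluing) and to the smoothness asymptotics $|\Gcal_{n-j}| \sim \rho^j n^{-j}|\Gcal_n|$.
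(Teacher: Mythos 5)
Your proposal is correct, but it takes a genuinely different route from the paper: the paper does not prove this statement at all --- it imports it verbatim as Theorem~1.7 of \cite{McDiarmid09}, where the lower bound on the number of pendant copies (``appearances'' in McDiarmid's terminology) is obtained by an injection/attachment argument that in fact yields failure probability $\upe^{-\Omega(n)}$, not merely $o(1)$. Your argument is a self-contained second-moment computation whose only external inputs are smoothness of $\Gcal$ (Theorem~\ref{thm:addissmooth}, also due to McDiarmid but logically independent of the pendant-copy result) and $\rho>0$; it is more elementary, but via Chebyshev it gives only a polynomially small failure probability --- which is all that the statement as quoted here requires. The counting itself is sound: the regluing map for the first moment is indeed a bijection (the root of a pendant copy is forced to be the endpoint of the unique cross edge, so a copy is determined by its vertex set), the separated pairs reproduce $(1+o(1))\mu^2 n^2$, and your structural observation about overlapping copies is right; applying it symmetrically to $W_2$ as well shows that $W_1\cup W_2$ in fact sends \emph{no} edge to the rest of the graph, which only strengthens your $O(n|\Gcal_n|)$ bound. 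Three small points to tidy up: (i) the diagonal terms $W_1=W_2$ in $\Ee[p_H(G_n)^2]$ should be mentioned, though they contribute only $\Ee\, p_H(G_n)=O(n)$; (ii) the reason both $W_1\setminus W_2$ and $W_2\setminus W_1$ are nonempty for distinct overlapping copies is that distinct pendant copies have distinct vertex sets of the same cardinality $v(H)$, so neither can contain the other --- worth stating explicitly; (iii) like McDiarmid's theorem itself, your argument implicitly requires $\Gcal$ to be a proper minor-closed class (otherwise $\rho=0$, smoothness degenerates, and indeed the conclusion fails for $G(n,1/2)$), an assumption the paper's wording also leaves tacit.
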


While not explicitly remarked in~\cite{McDiarmid09},
the result carries over to the
random connected graph from~$\Gcal$.

\begin{corollary}\label{cor:pendantcopyadd}
Let $\Gcal$ be an addable, minor-closed class and let $C_n \in_u\Ccal_n$ be the
random {\em connected} graph from $\Gcal$.
If $H \in \Gcal$ is any fixed, connected (rooted) graph then, w.h.p.,
$C_n$ contains $\Omega(n)$-many pendant copies of $H$.
\end{corollary}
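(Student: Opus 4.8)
The plan is to deduce Corollary~\ref{cor:pendantcopyadd} from Theorem~\ref{thm:pendantcopyadd} by a transfer argument between the random graph $G_n \in_u \Gcal_n$ and the random connected graph $C_n \in_u \Ccal_n$, exploiting the fact (a consequence of Theorem~\ref{thm:addfrag}, see~\eqref{eq:conneq}) that $\Pee(G_n \text{ is connected})$ is bounded away from $0$. Concretely, fix a connected rooted graph $H \in \Gcal$ and let $P_n(G)$ be the event ``$G$ contains at least $cn$ pendant copies of $H$'' for a suitable small constant $c>0$; Theorem~\ref{thm:pendantcopyadd} gives $\Pee(P_n(G_n)) \to 1$ for an appropriate choice of $c$.

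First I would condition on the number of vertices in the largest component. Observe that for each $m$, conditioned on $\{v(\Biggg(G_n)) = m\}$, the largest component of $G_n$ is distributed exactly as $C_m$ up to relabelling of vertices; since both the property of being connected and the property of containing $\geq cm$ pendant copies of $H$ are isomorphism-invariant, what we need is a lower bound on $\Pee(C_m \text{ has } \geq cm \text{ pendant copies of } H)$ that holds for all large $m$. By~\eqref{eq:conneq} we have $\Pee(v(\Biggg(G_n)) = n) = \Pee(G_n \text{ is connected}) \to 1/G(\rho) > 0$, and conditioned on this event $G_n$ is distributed as $C_n$. Hence
\[
\Pee\big( C_n \text{ fails } P_n \big)
= \Pee\big( G_n \text{ fails } P_n \,\big|\, G_n \text{ connected} \big)
\leq \frac{\Pee(G_n \text{ fails } P_n)}{\Pee(G_n \text{ connected})},
\]
and since the numerator tends to $0$ while the denominator tends to $1/G(\rho)>0$, the left-hand side tends to $0$. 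This is exactly the assertion that $C_n$ contains linearly many pendant copies of $H$ w.h.p.

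One technical point to watch: the constant $c$ in Theorem~\ref{thm:pendantcopyadd} is the one furnished by that theorem for the class $\Gcal$, and we simply reuse it; the conclusion ``linearly many'' is insensitive to the value of the constant, so no optimization is needed. A second point is that Theorem~\ref{thm:pendantcopyadd} applies to $\Gcal$, an addable minor-closed class, and we are not changing the class --- we are only changing the probability measure from ``uniform over $\Gcal_n$'' to ``uniform over $\Ccal_n$'', which is precisely the conditional measure given connectivity. The only input needed beyond Theorem~\ref{thm:pendantcopyadd} is that the conditioning event has probability bounded away from zero, which~\eqref{eq:conneq} supplies.

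The main obstacle is essentially bookkeeping rather than a genuine difficulty: one must make sure that ``contains linearly many pendant copies of $H$'' is genuinely an isomorphism-invariant property so that passing through the identification $\{G_n \text{ connected}\} \cong C_n$ is legitimate, and that $c$ can be taken uniform in $n$. Both are immediate from the definitions. Thus the whole argument is the short conditional-probability estimate displayed above.
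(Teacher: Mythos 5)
Your proposal is correct and follows essentially the same route as the paper: both transfer Theorem~\ref{thm:pendantcopyadd} from $G_n$ to $C_n$ by noting that $C_n$ is $G_n$ conditioned on connectivity and that $\Pee(G_n \text{ connected}) \to 1/G(\rho) > 0$ by~\eqref{eq:conneq}. The paper merely phrases the same estimate as a proof by contradiction via a $\liminf$ computation, whereas you use the direct bound $\Pee(\cdot \mid \text{connected}) \leq \Pee(\cdot)/\Pee(\text{connected})$, which is an equivalent (arguably cleaner) presentation.
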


\begin{proof}
By Theorem~\ref{thm:pendantcopyadd}, there is a constant $\alpha > 0$  such that
$\Pee( E_n ) = 1-o(1)$, where $E_n$ denotes the event that $G_n$ contains at least $\alpha n$
pendant copies of $H$. Let $F_n$ denote the event that $C_n$ contains at least
$\alpha n$ pendant copies of $H$, and let $A_n$ denote the event that $G_n$ is connected.
Aiming for a contradiction,  let us suppose that
$\liminf_{n\to\infty} \Pee( F_n ) = \beta < 1$.
Observe that if we condition on $A_n$, we find that
$G_n$ is distributed like $C_n$ ($G_n$ is now chosen uniformly at random from all
connected graphs from $\Gcal_n$).
Writing $A_n^c$ for the complement of $A_n$, we see that:
\[ \begin{array}{rcl}
\displaystyle
\liminf_{n\to\infty} \Pee( E_n )
& = &
\displaystyle \liminf_{n\to\infty}  {\Big[} \Pee( F_n ) \cdot \Pee( A_n )
+ \Pee( E_n | A_n^c ) \cdot (1 - \Pee( A_n ) ) {\Big]}  \\
& \leq &
\displaystyle \liminf_{n\to\infty} {\Big[} \Pee( F_n ) \cdot \Pee( A_n )
+ 1 \cdot (1 - \Pee( A_n  ) ) {\Big]} \\
& = &
\beta \cdot \frac{1}{G(\rho)} + (1-\frac{1}{G(\rho)}),
\end{array} \]
\noindent
using~\eqref{eq:conneq} for the last line. But this last expression is $< 1$,
a contradiction. Hence we must have $\Pee( F_n ) = 1 - o(1)$, as required.
\end{proof}

In the paper~\cite{McDiarmid08}, McDiarmid proved a result analogous
to Theorem~\ref{thm:addfrag} above for the class $\Gcal_S$ of all graphs embeddable
on some fixed surface $S$
under the additional assumption that $\Gcal_S$ is smooth. 
That $\Gcal_S$ \emph{is} indeed smooth for every surface $S$ was later established by
Bender, Canfield and Richmond~\cite{bendercanfieldrichmond}.
See also~\cite{BenderGao,chapuyfusygimenezmoharnoy}, where more detailed asymptotic information
is derived for the number of graphs on $n$ vertices from $\Gcal_S$.
By combining \cite[Theorem~3.3]{McDiarmid08} with \cite[Theorem~2]{bendercanfieldrichmond}
we obtain:

\begin{theorem}[\cite{McDiarmid08, bendercanfieldrichmond}]\label{thm:surfacefrag}
Let $S$ be any surface, let $\Gcal$ be the class of all graphs embeddable on $S$
and let $G_n \in_u \Gcal_n$. If $F_n$ denotes the isomorphism class of $\Frag(G_n)$
then $F_n \to_{\text{TV}} R$, where $R$ is the Boltzmann--Poisson random graph associated
with the class of {\em planar} graphs $\Pcal$.
\end{theorem}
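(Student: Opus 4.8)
The plan is to obtain the statement as a direct consequence of two results already available in the literature, namely \cite[Theorem~3.3]{McDiarmid08} and \cite[Theorem~2]{bendercanfieldrichmond}, so the only real work is to explain how they fit together. McDiarmid's Theorem~3.3 proves, for the class $\Gcal$ of graphs embeddable on a fixed surface $S$, exactly the total-variation convergence $\Frag(G_n)\to_{\text{TV}}R$ with $R$ the Boltzmann--Poisson random graph of the \emph{planar} class $\Pcal$ --- but only under the standing assumption that $\Gcal$ is \emph{smooth} in the sense of \eqref{eq:conditionforsmoothness}. The one ingredient missing from McDiarmid's theorem is therefore a verification of this smoothness assumption for every surface, and that is precisely the content of \cite[Theorem~2]{bendercanfieldrichmond}: for each surface $S$ the limit in \eqref{eq:conditionforsmoothness} exists (and necessarily equals the radius of convergence $\rho$ of $G(z)$). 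Substituting this into \cite[Theorem~3.3]{McDiarmid08} yields the theorem.

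It is worth recording why the \emph{planar} Boltzmann--Poisson random graph is the correct limit, even though $\Gcal$ is strictly larger than $\Pcal$ and not even decomposable. Three facts are responsible. First, the exponential growth rate $(|\Gcal_n|/n!)^{1/n}$ tends to the planar growth constant $\gamma$ for \emph{every} surface $S$, so the radius of convergence $\rho$ of $G(z)$ equals that of the planar exponential generating function, and $G_{\Pcal}(\rho)<\infty$ by Theorem~\ref{thm:addfrag} applied to the addable, minor-closed class $\Pcal$; hence the planar Boltzmann--Poisson random graph of Definition~\ref{def:Boltz} is well defined. Second, any planar graph $H$ can be placed inside a face of an embedding of an arbitrary $G\in\Gcal$, so $H\cup G\in\Gcal$; thus every connected planar graph genuinely occurs among the components of $\Frag(G_n)$ with the expected weight $\rho^{v(H)}/\aut(H)$. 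Third, a \emph{non}-planar component in the fragment would force the giant component to embed on a surface of strictly smaller Euler genus, which reduces the relevant count by a polynomial factor in $n$ and hence occurs with probability $o(1)$; this is the point where McDiarmid's more refined analysis (and the smoothness hypothesis) is actually used.

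The only genuinely substantial input, and the step I expect to be the main obstacle, is the smoothness of $\Gcal$ itself, which I do not propose to reprove: it rests on the singularity analysis of the generating function of graphs embeddable on a surface carried out in \cite{bendercanfieldrichmond} (and further refined in \cite{chapuyfusygimenezmoharnoy}). Granting that, the proof is nothing more than the combination described above, so the obstacle lies entirely in a result we are entitled to cite.
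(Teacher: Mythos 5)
Your proposal is correct and is essentially identical to the paper's own derivation: the paper obtains Theorem~\ref{thm:surfacefrag} precisely by combining \cite[Theorem~3.3]{McDiarmid08} (which gives $F_n \to_{\text{TV}} R$ under the standing smoothness hypothesis) with \cite[Theorem~2]{bendercanfieldrichmond} (which establishes that the class of graphs embeddable on any fixed surface is smooth). Your additional paragraph explaining heuristically why the \emph{planar} Boltzmann--Poisson graph is the right limit is harmless commentary beyond what the paper records, but the proof itself is the same citation-combination.
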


Let us stress that the fragment in this last case follows the
Boltzmann--Poisson distribution associated with the class of {\em planar} graphs.
Hence the asymptotic distribution of the fragment is independent of the choice of
surface $S$. Of course, the remarks following Theorem~\ref{thm:addfrag} also apply to
the case of graphs on surfaces, where $\rho, G(\rho)$ are the values for the class
of planar graphs.

Without having to assume smoothness, McDiarmid~\cite{McDiarmid08} was able to
prove the analogue of Theorem~\ref{thm:pendantcopyadd} for graphs on surfaces.

\begin{theorem}[\cite{McDiarmid08}]\label{thm:pendantcopysurfaces}
Let $S$ be a fixed surface, let $\Gcal$ be the class of
all graphs embeddable on $S$ and let $G_n \in_u \Gcal_n$.
Let $H$ be any fixed, connected (rooted) {\em planar} graph.
Then, w.h.p., $G_n$ contains linearly many pendant copies of $H$.
\end{theorem}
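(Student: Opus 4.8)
The plan is to establish this by a second moment computation. For a graph $G$, let $N(G)$ be the number of triples $(B,\varphi,v)$ with $B\subseteq V(G)$, $|B|=h:=v(H)$, $\varphi$ a rooted isomorphism of $H$ onto $G[B]$, $v\in V(G)\setminus B$, and the only edge between $B$ and $V(G)\setminus B$ being $\varphi(r)v$ (where $r$ denotes the root of $H$); write $N:=N(G_n)$. Then $N$ is a positive constant multiple, depending only on $H$, of the number of pendant copies of $H$ in $G_n$, so it suffices to prove $N\ge\alpha n$ w.h.p.\ for a suitable constant $\alpha=\alpha(H,S)>0$. Two properties of $\Gcal=\Gcal_S$ will be used. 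First, $\Gcal$ is \emph{closed under attaching a pendant planar graph}: if $G\in\Gcal$ is embedded on $S$ and $v\in V(G)$, then $v$ lies on the boundary of some face, and since $H$ is planar it embeds in an open disc, which can be placed inside that face with the image of $r$ joined to $v$ by a single edge drawn in the face. Second, $\Gcal$ is \emph{smooth} \cite{bendercanfieldrichmond}, i.e.\ $m|\Gcal_{m-1}|/|\Gcal_m|\to\rho$ with $\rho$ the radius of convergence of $G(z)$; hence $|\Gcal_{n-j}|/|\Gcal_n|=(1+o(1))(\rho/n)^j$ for every fixed $j\ge 0$.

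First I would compute $\Ee[N]$ via a bijection. Call $(B,\varphi,v)$ a \emph{slot} if $B\subseteq[n]$, $|B|=h$, $\varphi\colon V(H)\to B$ is a bijection and $v\in[n]\setminus B$; there are $(1+o(1))n^{h+1}$ slots. For a fixed slot, the graphs $G\in\Gcal_n$ in which it is \emph{realised} --- meaning $G[B]=\varphi(H)$ and the only edge between $B$ and $[n]\setminus B$ is $\varphi(r)v$ --- correspond bijectively, via $G\mapsto G-B$, to the graphs of $\Gcal$ on the vertex set $[n]\setminus B$: the map is well defined because $\Gcal$ is minor-closed, and its inverse re-attaches the pendant planar copy, which again lies in $\Gcal$ by the closure property. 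So a fixed slot is realised in $G_n$ with probability $|\Gcal_{n-h}|/|\Gcal_n|=(1+o(1))(\rho/n)^h$, and summing over all slots gives $\Ee[N]=(1+o(1))\,n^{h+1}(\rho/n)^h=(1+o(1))\rho^h n$.

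Next I would bound the second moment, writing $\Ee[N(N-1)]=\sum\Pee[\sigma_1,\sigma_2\text{ both realised in }G_n]$ for the sum over ordered pairs of distinct slots $\sigma_i=(B_i,\varphi_i,v_i)$, and splitting it according to how $\sigma_1$ and $\sigma_2$ interact. Call the pair \emph{generic} if $B_1\cap B_2=\emptyset$, $v_1\notin B_2$ and $v_2\notin B_1$. For a generic pair the same deletion bijection (now removing $B_1\cup B_2$) yields $\Pee[\text{both realised}]=|\Gcal_{n-2h}|/|\Gcal_n|=(1+o(1))(\rho/n)^{2h}$, which by smoothness equals $(1+o(1))\bigl(|\Gcal_{n-h}|/|\Gcal_n|\bigr)^2$; and since the non-generic slot pairs number only $O(n^{2h+1})$, the generic ones number $(1+o(1))$ times the square of the number of slots. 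Hence the generic pairs contribute $(1+o(1))\Ee[N]^2$ to $\Ee[N(N-1)]$. The non-generic pairs contribute only $O(n)$: the $O(n)$ pairs that encode the same pendant copy via two different isomorphisms account for all of it, while for any other non-generic pair the connectedness of $H$ together with the rigidity of pendant copies --- each is an \emph{induced} copy with \emph{exactly one} external edge --- forces $B_1\cup B_2$ to span a fixed graph on at most $2h$ vertices that is a whole component of $G_n$, and by smoothness the expected number of components of $G_n$ of any fixed isomorphism type is $O(1)$. Combining these, $\Ee[N(N-1)]\le\Ee[N]^2+o(n^2)$, so $\Var(N)=\Ee[N(N-1)]+\Ee[N]-\Ee[N]^2=o(n^2)=o\bigl(\Ee[N]^2\bigr)$. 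Chebyshev's inequality then gives $\Pee\bigl[N<\tfrac12\Ee[N]\bigr]=o(1)$, and $\tfrac12\Ee[N]\ge\tfrac14\rho^h n$ for $n$ large; since $N$ is a fixed multiple of the number of pendant copies of $H$, this yields linearly many pendant copies w.h.p.

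I expect the main obstacle to be the case analysis for the non-generic pairs: for each of the finitely many ways that two induced pendant copies of $H$ can overlap or share an attachment vertex, one must check that the joint realisability conditions confine the relevant vertices to a bounded subgraph with only $O(1)$ edges to the rest of $G_n$, so that such slot pairs are few and their joint probabilities small --- the requirement ``exactly one external edge, at the root'' is exactly what makes all these cross terms of lower order. The only other point needing care is to deduce, from the single-step smoothness of $\Gcal_S$, the two estimates $|\Gcal_{n-h}|/|\Gcal_n|=(1+o(1))(\rho/n)^h$ and $|\Gcal_{n-2h}|/|\Gcal_n|=(1+o(1))\bigl(|\Gcal_{n-h}|/|\Gcal_n|\bigr)^2$ used above.
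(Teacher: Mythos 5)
Your proposal is correct, but it is worth noting that the paper does not prove this statement at all: Theorem~\ref{thm:pendantcopysurfaces} is quoted directly from McDiarmid~\cite{McDiarmid08}, whose own proof is a combinatorial ``appearances'' argument (an injection/double-counting scheme in the spirit of McDiarmid--Steger--Welsh) that gives an \emph{exponentially} small failure probability and, as the paper explicitly remarks, does not need smoothness of $\Gcal_S$. Your route is genuinely different: a second-moment computation whose two inputs are (i) the deletion/attachment bijection, valid because $\Gcal_S$ is minor-closed and closed under attaching a pendant \emph{planar} graph inside a face, and (ii) smoothness of $\Gcal_S$ (Bender--Canfield--Richmond), from which $|\Gcal_{n-j}|/|\Gcal_n|=(1+o(1))(\rho/n)^j$ for fixed $j$, with $\rho>0$ since $\Gcal_S$ is a proper minor-closed class. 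I checked the points you flag as delicate and they do work: for a generic pair of slots the double-deletion bijection gives exactly $|\Gcal_{n-2h}|/|\Gcal_n|$; for non-generic pairs, connectedness of $H$ plus ``induced with exactly one external edge, at the root'' forces either the same pendant copy with a different root-preserving isomorphism (contributing $O(\Ee[N])=O(n)$), or that $B_1\cup B_2$ is an entire component on at most $2h$ vertices with $v_1,v_2\in B_1\cup B_2$ (expected $O(1)$ such configurations, since $\Pee[\text{a fixed }f\text{-set is a prescribed component}]\leq|\Gcal_{n-f}|/|\Gcal_n|=O(n^{-f})$), or the configuration is outright impossible (disjoint $B_i$ with only one of the attachment vertices inside the other copy). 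So $\Var(N)=o(\Ee[N]^2)$ and Chebyshev finishes it. The trade-off versus the cited proof: you rely on smoothness, an external result the paper invokes anyway for Theorem~\ref{thm:surfacefrag}, and you only get a $o(1)$ failure probability rather than an exponential bound --- but that is all the paper ever uses, so your argument would serve as a legitimate self-contained substitute for the citation.
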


\noindent
A verbatim repeat of the proof of Corollary~\ref{cor:pendantcopyadd} now also yields:

\begin{corollary}\label{cor:pendantcopysurfaces}
Let $S$ be a fixed surface, let $\Gcal$ be the class of
all graphs embeddable on $S$ and let and $C_n \in_u \Ccal_n$
be the random {\em connected} graph from $\Gcal$.
Let $H$ be any fixed, connected (rooted) {\em planar} graph.
Then, w.h.p., $C_n$ contains linearly many pendant copies of $H$.
\end{corollary}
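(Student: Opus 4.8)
The plan is to re-run the proof of Corollary~\ref{cor:pendantcopyadd} essentially verbatim, with two substitutions: the input Theorem~\ref{thm:pendantcopyadd} is replaced by its surface counterpart Theorem~\ref{thm:pendantcopysurfaces}, and the limiting connectivity constant $1/G(\rho)$ is now understood with $\rho$ and $G(\rho)$ referring to the class of \emph{planar} graphs, as licensed by Theorem~\ref{thm:surfacefrag} and the remarks following it.

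In detail, by Theorem~\ref{thm:pendantcopysurfaces} there is a constant $\alpha>0$ such that, writing $E_n$ for the event that $G_n\in_u\Gcal_n$ contains at least $\alpha n$ pendant copies of $H$, we have $\Pee(E_n)=1-o(1)$. Let $F_n$ be the event that $C_n$ contains at least $\alpha n$ pendant copies of $H$, and let $A_n$ be the event that $G_n$ is connected. The structural point, exactly as in Corollary~\ref{cor:pendantcopyadd}, is that conditionally on $A_n$ the graph $G_n$ is uniform over the connected members of $\Gcal_n$, hence has the law of $C_n$, so $\Pee(E_n\mid A_n)=\Pee(F_n)$.

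Next I would invoke the surface analogue of~\eqref{eq:conneq}: by Theorem~\ref{thm:surfacefrag} the (unlabelled) fragment $\Frag(G_n)$ converges in total variation to the Boltzmann--Poisson random graph $R$ associated with the class of planar graphs, and in particular $\lim_{n\to\infty}\Pee(A_n)=\Pee(R=\emptyset)=1/G(\rho)$ with $G(\rho)$ the generating function value for planar graphs, a constant in $(0,1)$. Suppose, for contradiction, that $\liminf_{n\to\infty}\Pee(F_n)=\beta<1$. Splitting on $A_n$ gives the identity $\Pee(E_n)=\Pee(E_n\mid A_n)\Pee(A_n)+\Pee(E_n\mid A_n^c)\bigl(1-\Pee(A_n)\bigr)\le\Pee(F_n)\Pee(A_n)+\bigl(1-\Pee(A_n)\bigr)$, so that, using $\Pee(A_n)\to 1/G(\rho)$, one gets $\liminf_{n\to\infty}\Pee(E_n)\le \beta/G(\rho)+\bigl(1-1/G(\rho)\bigr)<1$, contradicting $\Pee(E_n)=1-o(1)$. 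Hence $\Pee(F_n)=1-o(1)$, which is the claim.

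There is no substantial obstacle here; the ``hard part'' is merely to confirm that each external ingredient really does transfer to the surface case, namely: (i) Theorem~\ref{thm:pendantcopysurfaces} delivers linearly many pendant copies for the \emph{unconditioned} random graph embeddable on $S$, and importantly does so without any smoothness hypothesis; (ii) the limiting connectivity probability exists and equals $1/G(\rho)\in(0,1)$ for the planar generating function, which is exactly the content of Theorem~\ref{thm:surfacefrag} together with the remarks after it; and (iii) conditioning a uniform sample from $\Gcal_n$ on connectivity produces a uniform sample from $\Ccal_n$, which is immediate since $\Ccal_n\subseteq\Gcal_n$. Given these, the computation above is pure bookkeeping.
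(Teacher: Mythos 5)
Your argument is exactly the paper's: the paper proves this corollary by a nearly verbatim repeat of the proof of Corollary~\ref{cor:pendantcopyadd}, substituting Theorem~\ref{thm:pendantcopysurfaces} for Theorem~\ref{thm:pendantcopyadd} and using that the limiting connectivity probability equals $1/G(\rho)$ with $\rho$, $G(\rho)$ the planar values, as justified by Theorem~\ref{thm:surfacefrag} and the remarks following it. Your conditioning/contradiction bookkeeping and the checks (i)--(iii) are precisely the points that make the transfer work, so the proposal is correct and matches the paper's approach.
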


We also need another powerful result showing that the random graph embeddable
on a fixed surface is locally planar in the sense given by the next theorem.
It was essentially proved in~\cite{chapuyfusygimenezmoharnoy}, but not stated there explicitly.
For this reason we give a short sketch of
how to extract a proof from the results in \cite{chapuyfusygimenezmoharnoy}.

\begin{theorem}
\label{thm:facewidth}
Let $S$ be any fixed surface, let $\Gcal$ be the class of all graphs embeddable on $S$,
let $G_n \in_u \Gcal_n$, and let $r \in \eN$ be fixed. Then w.h.p.~$B_{G_n}(v, r)$
is planar for all $v \in V(G_n)$.
\end{theorem}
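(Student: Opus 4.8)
The plan is to deduce the statement from the structural results on random graphs embeddable on $S$ in~\cite{chapuyfusygimenezmoharnoy}, using the notion of \emph{face-width} (also called representativity). Recall that the face-width of an embedded graph on $S$ is the minimum number of times a non-contractible closed curve must meet the graph. A key classical fact (see e.g.~\cite{graphs-surfaces}) is that if $G$ is embedded in $S$ with face-width at least $2r+1$, then every ball $B_G(v,r)$ is planar: a ball of radius $r$ that is non-planar would, together with a short non-contractible curve through it, contradict the face-width bound. So it suffices to show that w.h.p.\ $G_n$ admits an embedding in $S$ of face-width at least $2r+1$ (equivalently, that the face-width tends to infinity in probability).

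The core input is that in~\cite{chapuyfusygimenezmoharnoy} the random graph $G_n\in_u\Gcal_n$ is analysed via its (random) embedding scheme: with high probability $G_n$ has a unique $2$-connected ``core'' $H_n$ carrying almost all the vertices, and this core, suitably decomposed into its $3$-connected components, has an essentially unique embedding on $S$ whose face-width grows like a power of $n$. More precisely, the singularity analysis carried out there shows that the probability that a random graph embeddable on $S$ has an embedding of face-width at most $w$ decays, for fixed $w$, like the ratio of the growth constant of ``$S$-embeddable graphs of face-width $\le w$'' to that of all $S$-embeddable graphs — and the former is strictly smaller (indeed graphs of bounded face-width on $S$ are, up to a planar piece, built from a bounded-size non-planar part, which costs an exponential factor). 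Hence $\Pee(\operatorname{fw}(G_n)\le w)\to 0$ for every fixed $w$. First I would isolate this statement from~\cite{chapuyfusygimenezmoharnoy} (it is implicit in their enumeration of graphs on a surface by genus/face-width), state it as the fact ``$\operatorname{fw}(G_n)\to\infty$ in probability'', and then combine it with the deterministic face-width$\,\Rightarrow\,$local planarity implication above, applied with $w=2r$.

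A cleaner alternative, avoiding face-width bookkeeping, is to argue directly at the level of the combinatorial description: w.h.p.\ $G_n$ is obtained from a planar graph by a bounded number of ``local'' surgeries confined to a bounded-size subgraph (the part that actually uses the topology of $S$), while the rest is attached in a planar fashion; then for all but $O(1)$ vertices $v$ the ball $B_{G_n}(v,r)$ is disjoint from that bounded ``non-planar core'' and hence planar, and the remaining $O(1)$ vertices can be handled because their $r$-balls also have bounded size and w.h.p.\ avoid the non-planar core once $n$ is large (the core has a fixed size but its attachment points are spread out). Either way, the main obstacle is purely expository: the needed statement is not written down explicitly in~\cite{chapuyfusygimenezmoharnoy}, so the real work is to point to the precise place in their generating-function analysis (the decomposition of a $2$-connected graph on $S$ into a non-planar ``hub'' of bounded face-width and planar appendages, and the comparison of exponential growth rates) and explain why it yields $\Pee(B_{G_n}(v,r)\text{ non-planar for some }v)\to 0$. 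I would therefore present the argument as a short reduction — first the deterministic lemma relating face-width to local planarity, then the probabilistic fact extracted from~\cite{chapuyfusygimenezmoharnoy} — and flag that the second ingredient is ``essentially proved there but not stated'', exactly as the theorem's preamble already promises.
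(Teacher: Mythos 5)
Your main route is the same as the paper's: a deterministic step (an embedding of face-width at least $2r$, or $2r+1$ in your normalisation, forces every $r$-ball to be planar), followed by the probabilistic input, to be extracted from \cite{chapuyfusygimenezmoharnoy}, that for every fixed $k$ the random graph $G_n$ w.h.p.\ admits an embedding in $S$ of face-width at least $k$. Two points of comparison/correction, though. First, your justification of the probabilistic input via a gap in exponential growth constants is not how it comes out of \cite{chapuyfusygimenezmoharnoy}, and the heuristic is not right as stated: the class of $S$-embeddable graphs with no embedding of large face-width contains, for instance, all planar graphs, so it has the same exponential growth constant as the whole class -- the effect is only in polynomial factors, and it is not established by comparing whole-graph classes. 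What the paper actually does is quote Lemma~4.2 of \cite{chapuyfusygimenezmoharnoy}, which gives large face-width w.h.p.\ for \emph{$3$-connected} graphs on $S$ and is valid in the edge-weighted setting, and combine it with their structural result that w.h.p.\ a random connected graph embeddable on $S$ has a unique non-planar $3$-connected component, of linear size and of the same genus as $S$ (appearing with an edge weight, whence the need for the weighted version of Lemma~4.2), all remaining $3$- and $2$-connected components being planar; large face-width of that component gives large face-width of the whole graph, and one then passes from connected to general graphs. Second, your ``cleaner alternative'' rests on a false structural picture: it is not true that w.h.p.\ $G_n$ is a planar graph modified inside a bounded-size subgraph. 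The part of $G_n$ that uses the topology of $S$ is precisely the linear-size $3$-connected component just described, and a bounded non-planar core is incompatible with face-width tending to infinity (large face-width forces, as a minor, fixed $S$-embeddable graphs of arbitrarily large planarization number), so that alternative cannot work and should be dropped; keep the face-width argument as the proof.
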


\noindent
{\bf Proof sketch:}
Let $M$ be an embedding of a graph $G$ on a surface $S$.
The face-width $\fw(M)$ of $M$ is the minimum number of intersections of $M$ with a simple non-contractible curve $C$ on $S$. It is easy to see that this minimum is achieved when $C$ meets $M$ only at vertices of $G$.
Notice that if $\fw(M) \ge 2r$, then all the balls in $G$ of radius $r$ are planar.

Fix a surface $S$. From the results in \cite{chapuyfusygimenezmoharnoy} it follows that for any fixed $k$, a random graph that can be embedded in $S$ has an embedding in $S$ with face-width at least $k$ w.h.p. This is first established for 3-connected graphs embeddable in $S$; see \cite[Lemma 4.2]{chapuyfusygimenezmoharnoy}. It is also proved that w.h.p. a random connected graph $G$ embeddable in $S$ has a unique 3-connected component $T$ of linear size (whose genus is the genus of $S$), and the remaining 3- and 2-connected components are planar. The component $T$ is not uniform among all 3-connected graphs with the same number of vertices, since it carries a weight on the edges. But since
Lemma 4.2 in~\cite{chapuyfusygimenezmoharnoy} holds for weighted graphs, the component $M$ has large face-width w.h.p. Since the remaining components are planar, this also applies to $G$.
By Theorem 2.14, this implies the same result for arbitrary graphs. Analogous results were obtained independently in \cite{BenderGao}.
\noproof 
\bigskip

Again a nearly verbatim repeat of the proof of Corollary~\ref{cor:pendantcopyadd} shows:

\begin{corollary}\label{cor:facewidth}
Let $S$ be any surface, let $\Gcal$ be the class of all graphs embeddable on $S$,
let $C_n \in_u \Ccal_n$ be the random connected graph from $\Gcal$, and let $r \in \eN$
be fixed. Then w.h.p.~$B_{C_n}(v, r)$ is planar for all $v \in V(C_n)$.
\end{corollary}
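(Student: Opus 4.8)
The plan is to imitate the proof of Corollary~\ref{cor:pendantcopyadd} almost verbatim, replacing the pendant-copy event by the ``all $r$-balls planar'' event and replacing Theorem~\ref{thm:pendantcopyadd} by Theorem~\ref{thm:facewidth}. First I would fix the surface $S$ and the integer $r$, set $\Gcal$ to be the class of graphs embeddable on $S$, and let $G_n \in_u \Gcal_n$, $C_n \in_u \Ccal_n$. By Theorem~\ref{thm:facewidth} we have $\Pee(E_n) = 1-o(1)$, where $E_n$ is the event that $B_{G_n}(v,r)$ is planar for every $v \in V(G_n)$. Let $F_n$ be the event that $B_{C_n}(v,r)$ is planar for every $v \in V(C_n)$, and let $A_n$ be the event that $G_n$ is connected. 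The key observation, exactly as before, is that conditioning $G_n$ on $A_n$ yields a uniformly random connected graph, i.e.\ $G_n \mid A_n$ is distributed as $C_n$; hence $\Pee(E_n \mid A_n) = \Pee(F_n)$.

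Next I would argue by contradiction: suppose $\liminf_{n\to\infty}\Pee(F_n) = \beta < 1$. Conditioning on $A_n$ and its complement gives
\[
\Pee(E_n) = \Pee(F_n)\Pee(A_n) + \Pee(E_n \mid A_n^c)\bigl(1-\Pee(A_n)\bigr) \leq \Pee(F_n)\Pee(A_n) + \bigl(1-\Pee(A_n)\bigr).
\]
Taking $\liminf$ along a subsequence where $\Pee(F_n)\to\beta$, and using that by Theorem~\ref{thm:surfacefrag} and the remarks following it $\lim_{n\to\infty}\Pee(A_n) = 1/G(\rho)$ with $\rho, G(\rho)$ the values for the class of planar graphs (note $G(\rho)<\infty$ there, so $1/G(\rho)>0$), we get
\[
\liminf_{n\to\infty}\Pee(E_n) \leq \beta\cdot\tfrac{1}{G(\rho)} + \bigl(1-\tfrac{1}{G(\rho)}\bigr) < 1,
\]
since $\beta < 1$ and $1/G(\rho) > 0$. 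This contradicts $\Pee(E_n) = 1-o(1)$ from Theorem~\ref{thm:facewidth}. Therefore $\Pee(F_n) = 1-o(1)$, which is exactly the claim.

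There is essentially no obstacle here: the only mild point of care is to confirm that the asymptotic connectivity probability for the surface class is a strictly positive constant (so that the conditioning argument does not degenerate), which is guaranteed by Theorem~\ref{thm:surfacefrag} together with the fact, noted after that theorem, that the relevant $\rho$ and $G(\rho)$ are finite and inherited from the planar class. Everything else is a verbatim transcription of the proof of Corollary~\ref{cor:pendantcopyadd}, as the statement itself indicates.
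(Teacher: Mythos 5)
Your proposal is correct and is essentially identical to the paper's argument: the paper simply states that Corollary~\ref{cor:facewidth} follows by a nearly verbatim repeat of the proof of Corollary~\ref{cor:pendantcopyadd}, which is exactly the conditioning-on-connectedness argument you carry out, including the use of Theorem~\ref{thm:surfacefrag} to ensure $\lim_n \Pee(A_n) = 1/G(\rho) > 0$ with the planar values of $\rho$ and $G(\rho)$.
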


As evidenced by the results we have listed here,
random graphs embeddable on a fixed surface behave rather similarly
to random planar graphs. In particular, despite not being an addable class,
the size of their largest component essentially behaves like the largest
component of a random planar graph (as the number
of vertices not in the largest component is described by the Boltzmann-Poisson random graph for
planar graphs -- cf.~Theorem~\ref{thm:surfacefrag}).

In general, however, non-addable minor-closed graph classes can display
a very different behaviour: see, for example, the recent paper \cite{arXiv:1303.3836v1},
where several non-addable graph classes are analysed in detail.
In particular, the largest component can happen to be sublinear w.h.p.,
as opposed to $n-O(1)$ w.h.p. for the special non-addable class of graphs on a fixed surface.

Therefore, we cannot expect a result like Theorem~\ref{thm:addfrag} to hold
for general smooth, decomposable, minor-closed classes.
Using another result of McDiarmid, we are however able to recover a Poisson
law for component counts under relatively general conditions.
The following lemma is a special case of Lemma 4.2 in~\cite{McDiarmid09}.

\begin{lemma}\label{lem:decompcount}
Let $\Gcal$ be a smooth, decomposable, minor-closed class of graphs
and let $G_n \in_u \Gcal_n$. Let $H_1, \dots, H_k \in \Gcal$ be non-isomorphic,
fixed, connected graphs, and let $N_i$ denote the number of components of $G_n$
isomorphic to $H_i$.Then
\[ (N_1,\dots, N_k) \to_{\text{TV}} (Z_1,\dots,Z_k), \]
\noindent
where the $Z_i$ are independent Poisson random variables with means
$\Ee Z_i = \rho^{v(H_i)}/\aut(H_i)$, and $\rho$ is the radius of convergence
of the exponential generating function for $\Gcal$.\hfill $\blacksquare$
\end{lemma}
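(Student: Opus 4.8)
Since the statement is quoted as a special case of \cite[Lemma~4.2]{McDiarmid09}, one option is simply to invoke that reference; but a direct proof is short, and the plan is to give one via the method of factorial moments, proving the (formally stronger) joint convergence of $(N_1,\dots,N_k)$ to a vector of independent Poissons, with smoothness entering only to evaluate the limiting moments. As a preliminary remark, each $H_i$ is connected, hence non-empty, so $K_1$ is a minor of $H_i$ and therefore $K_1\in\Gcal$; by decomposability $nK_1\in\Gcal$ for every $n$, so $|\Gcal_n|\geq 1$ for all $n\geq 0$ and all ratios appearing below are well defined. Fix nonnegative integers $r_1,\dots,r_k$, put $\mu_i:=\rho^{v(H_i)}/\aut(H_i)$ and $M:=\sum_{i=1}^{k}r_i\,v(H_i)$, and write $(N)_r:=N(N-1)\cdots(N-r+1)$ for the falling factorial.

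The main step is an exact formula for the joint falling-factorial moment. Expanding $\prod_i(N_i)_{r_i}$ as a sum of indicators, $\Ee\big[(N_1)_{r_1}\cdots(N_k)_{r_k}\big]$ equals $1/|\Gcal_n|$ times the number of ways to choose, for each $i$, an ordered $r_i$-tuple of pairwise vertex-disjoint labelled copies of $H_i$ and to realise all of these $M$ copies (automatically distinct across indices, since the $H_i$ are non-isomorphic and connected) as components of a graph in $\Gcal_n$. There are $\frac{n!}{(n-M)!\prod_i(v(H_i)!)^{r_i}}$ ordered choices of underlying disjoint vertex sets, $\big(v(H_i)!/\aut(H_i)\big)^{r_i}$ ways to put an $H_i$-structure on each, and --- here decomposability is essential, since the disjoint union of these connected copies with an arbitrary member of $\Gcal_{n-M}$ lies in $\Gcal$ and has precisely the prescribed components --- exactly $|\Gcal_{n-M}|$ completions on the remaining vertices. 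Multiplying and simplifying yields the identity
\[
\Ee\big[(N_1)_{r_1}\cdots(N_k)_{r_k}\big]=\frac{n!}{(n-M)!}\cdot\Big(\prod_{i=1}^{k}\frac{1}{\aut(H_i)^{r_i}}\Big)\cdot\frac{|\Gcal_{n-M}|}{|\Gcal_n|}.
\]

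Now let $n\to\infty$ with $M$ fixed: writing $\frac{n!}{(n-M)!}\cdot\frac{|\Gcal_{n-M}|}{|\Gcal_n|}=\prod_{j=0}^{M-1}\frac{(n-j)|\Gcal_{n-j-1}|}{|\Gcal_{n-j}|}$, each of the $M$ factors tends to $\rho$ by the smoothness condition \eqref{eq:conditionforsmoothness} (applied at $n-j$), so the product tends to $\rho^{M}$ and
\[
\Ee\big[(N_1)_{r_1}\cdots(N_k)_{r_k}\big]\longrightarrow\prod_{i=1}^{k}\Big(\frac{\rho^{v(H_i)}}{\aut(H_i)}\Big)^{r_i}=\prod_{i=1}^{k}\mu_i^{r_i}=\prod_{i=1}^{k}\Ee\big[(Z_i)_{r_i}\big]
\]
for independent $Z_i\isd\Po(\mu_i)$. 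By the standard method-of-moments criterion for Poisson convergence (no growth condition is needed, the Poisson law being determined by its moments), it follows that $(N_1,\dots,N_k)$ converges in distribution to $(Z_1,\dots,Z_k)$; since all these variables take values in $\{0,1,2,\dots\}$, this is equivalent, by Scheffé's lemma, to convergence of the probability mass functions, hence to convergence in total variation. The only point needing a little care is this last passage from factorial-moment convergence to total-variation convergence, but it is entirely routine --- I anticipate no genuine obstacle, the substance of the argument being the counting identity together with the smoothness hypothesis.
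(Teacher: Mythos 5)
Your proof is correct, and it is genuinely different in character from what the paper does: the paper offers no argument at all for this lemma, simply invoking Lemma~4.2 of McDiarmid's paper \cite{McDiarmid09}, of which the statement is a special case. Your factorial-moment argument is a sound self-contained substitute. The counting identity is exact (the bijection between an ordered choice of pairwise disjoint labelled copies together with a completion in $\Gcal_{n-M}$, and a pair consisting of a graph in $\Gcal_n$ with an ordered tuple of distinct components of the prescribed isomorphism types, uses decomposability in both directions and nothing more), the telescoping of $\frac{n!}{(n-M)!}\cdot\frac{|\Gcal_{n-M}|}{|\Gcal_n|}$ into $M$ smoothness ratios is exactly where \eqref{eq:conditionforsmoothness} enters (with the limit necessarily equal to $\rho$, as the paper notes), and your preliminary remark that $|\Gcal_m|\geq 1$ for all $m$ disposes of the only degeneracy worry. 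The passage from convergence of all joint factorial moments to convergence in distribution to independent Poissons is the standard method-of-moments criterion (the Poisson law is moment-determined), and for $\Zed_{\geq 0}^k$-valued vectors convergence in distribution is equivalent to pointwise convergence of the probability mass function, which by Scheff\'e gives convergence in total variation, matching the form in which the lemma is stated. What your route buys is transparency about which hypotheses do what (smoothness only to evaluate the limit, decomposability only for the counting identity, minor-closedness only marginally); what the paper's route buys is brevity and consistency with the rest of its reliance on \cite{McDiarmid09}, whose Lemma~4.2 is proved in greater generality there.
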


Two examples of minor-closed classes that are decomposable, but not addable,
are forests of paths and forests of caterpillars.
Very recently Bousquet-M{\'e}lou and Weller~\cite{arXiv:1303.3836v1} derived precise
 asymptotics for the numbers of labelled forests of paths (resp.~caterpillars).
As a direct corollary of their Propositions~23 and 26 we have:

\begin{theorem}[\cite{arXiv:1303.3836v1}]\label{thm:kerstin}
The classes $\left\{ \text{forests of paths}\right\}$
and $\left\{\text{forests of caterpillars}\right\}$ are both smooth.
\hfill
\end{theorem}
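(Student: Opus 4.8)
The plan is to read the claim off directly from the precise asymptotic enumeration of these two classes obtained by Bousquet-M\'elou and Weller~\cite{arXiv:1303.3836v1}. Writing $a_n := [z^n]G(z) = |\Gcal_n|/n!$ for the coefficients of the exponential generating function, the smoothness condition~\eqref{eq:conditionforsmoothness} simplifies to
\[
\frac{n|\Gcal_{n-1}|}{|\Gcal_n|} \;=\; \frac{n\cdot(n-1)!\,a_{n-1}}{n!\,a_n} \;=\; \frac{a_{n-1}}{a_n},
\]
so it suffices to show that $a_{n-1}/a_n$ tends to a finite limit; as remarked after~\eqref{eq:conditionforsmoothness}, that limit must then equal the radius of convergence $\rho$ of $G(z)$, which is positive and finite here because both classes contain all paths (hence $0<\rho\le 1$ by the Norine--Seymour--Thomas--Wollan bound together with the Bernardi--Noy--Welsh result~\cite{bernardinoywelsh}).

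Next I would invoke Propositions~23 and~26 of~\cite{arXiv:1303.3836v1}, which supply an asymptotic equivalence of the form $a_n \sim s(n)\,\rho^{-n}$, where $s(n)$ is a ``slowly varying'' factor built as a finite product of a constant, a power $n^{\beta}$, and a stretched-exponential term of shape $\exp(c\,n^{\theta})$ with $0<\theta<1$ (such genuine exponential singularities of the generating functions — arising from a $z^2/(2(1-z))$-type contribution to the connected generating function — are precisely why the enumeration is more delicate than in the algebraic-logarithmic case). From such an expansion,
\[
\frac{a_{n-1}}{a_n} \;\sim\; \rho\cdot\frac{s(n-1)}{s(n)},
\]
and $s(n-1)/s(n)\to 1$: the power part contributes $(1-1/n)^{\beta}\to 1$, and the stretched-exponential part contributes $\exp\bigl(c(n-1)^{\theta}-c\,n^{\theta}\bigr)\to 1$ since $n^{\theta}-(n-1)^{\theta}=O(n^{\theta-1})\to 0$. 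Hence $a_{n-1}/a_n\to\rho$, which is exactly smoothness, and this argument applies verbatim to both the forests of paths and the forests of caterpillars.

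The only genuine work is bookkeeping: extracting from Propositions~23 and~26 of~\cite{arXiv:1303.3836v1} the explicit shapes of $s(n)$ in the two cases and confirming each is a finite product of factors of the permitted types, so that the elementary estimate $s(n-1)/s(n)\to 1$ goes through. The one point that truly requires the precision of~\cite{arXiv:1303.3836v1} — rather than a crude exponential bound — is that a mere estimate $a_n = \rho^{-n(1+o(1))}$ would not pin down $\lim a_{n-1}/a_n$; one needs the honest asymptotic equivalence $a_n\sim s(n)\rho^{-n}$ with $s(n)$ of controlled growth, which is what those propositions provide. I do not expect any substantial obstacle beyond this.
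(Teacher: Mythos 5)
Your proposal is correct and matches the paper's intent: the paper gives no separate argument, deriving the theorem directly from the asymptotic enumeration in Propositions~23 and~26 of~\cite{arXiv:1303.3836v1}, exactly as you do. Your reduction of~\eqref{eq:conditionforsmoothness} to $a_{n-1}/a_n\to\rho$ and the observation that the polynomial and stretched-exponential factors $n^{\beta}\exp(c\,n^{\theta})$, $0<\theta<1$, have ratio tending to $1$ is precisely the routine verification the paper leaves implicit.
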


Bousquet-M{\'e}lou and Weller~\cite{arXiv:1303.3836v1} also analysed the class of
all graphs whose components have order at most $t$ (fixed).
This is clearly a minor-closed class that is decomposable, but not addable.
It is in fact not smooth, but a similar property follows from
Proposition 20 in~\cite{arXiv:1303.3836v1}.

\begin{corollary}[\cite{arXiv:1303.3836v1}]\label{cor:kerstinbounded}
Let $t \in \eN$ be fixed and let $\Gcal$ be the class of all graphs whose
components have at most $t$ vertices. There is a constant $c = c(t)$ such that
\[ \frac{n |\Gcal_{n-1}|}{|\Gcal_n|} \sim c \cdot n^{1/t}.  \]
\end{corollary}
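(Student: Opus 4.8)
The plan is to exploit that, since $\Gcal$ is decomposable, relation~\eqref{eq:rhodef} gives $G(z)=\exp(C(z))$, and here $C(z)$ is a \emph{polynomial}. Indeed, every connected graph of $\Gcal$ has at most $k$ vertices, so $C(z)=\sum_{j=1}^{k}\frac{|\Ccal_j|}{j!}z^j$, where $|\Ccal_j|$ denotes the number of connected graphs on $j$ labelled vertices. Its degree is exactly $k$ (the path on $k$ vertices witnesses $|\Ccal_k|\ge 1$), with strictly positive leading coefficient $a_k:=|\Ccal_k|/k!$. Thus $G(z)=\exp(C(z))$ is an entire function of the form ``exponential of a polynomial with positive leading coefficient'', which is the standard example of a Hayman-admissible function (see~\cite{FlajoletSedgewick}).

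First I would record the elementary cancellation
\[
\frac{n\,|\Gcal_{n-1}|}{|\Gcal_n|}
= \frac{n\cdot (n-1)!\,[z^{n-1}]G(z)}{n!\,[z^n]G(z)}
= \frac{[z^{n-1}]G(z)}{[z^n]G(z)},
\]
so the assertion reduces to showing that the ratio of consecutive Taylor coefficients of $G$ is asymptotic to $c\,n^{1/k}$. Then I would apply the saddle-point method (in the form of Hayman's theorem; see~\cite{FlajoletSedgewick}) to $G(z)=\exp(C(z))$: letting $r_n>0$ be the unique solution of the saddle-point equation $r\,C'(r)=n$, one gets $[z^n]G(z)\sim \frac{G(r_n)\,r_n^{-n}}{\sqrt{2\pi\,b(r_n)}}$, where $b(r)=r\,C'(r)+r^2C''(r)$ is itself a polynomial in $r$ of degree $k$ with positive leading coefficient. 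Since $C'(r)\sim k\,a_k\,r^{k-1}$ as $r\to\infty$, the equation $r_nC'(r_n)=n$ forces $r_n\sim\big(n/(k a_k)\big)^{1/k}$.

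It then remains to check that the multiplicative corrections in the ratio $[z^{n-1}]G/[z^n]G$ — namely $G(r_{n-1})/G(r_n)$, $(r_n/r_{n-1})^{n-1}$, and $\sqrt{b(r_n)/b(r_{n-1})}$ — all tend to $1$. A short Taylor expansion, using $r_nC'(r_n)=n$ to cancel the dominant terms, gives $\ln\!\big(G(r_{n-1})/G(r_n)\big)+(n-1)\ln(r_n/r_{n-1})\to 0$; moreover $r_{n-1}/r_n=1+O(1/n)$ and $b(r_{n-1})/b(r_n)\to 1$ because $b$ is a polynomial. Hence $[z^{n-1}]G(z)/[z^n]G(z)\sim r_n\sim\big(k\,|\Ccal_k|/k!\big)^{-1/k}\,n^{1/k}$, which is exactly the claim with $c=c(k)=\big(k\,|\Ccal_k|/k!\big)^{-1/k}$. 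Alternatively, one simply reads off the asymptotics of $|\Gcal_n|$ from Proposition~20 of~\cite{arXiv:1303.3836v1} and forms the ratio.

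The only genuine obstacle is the technical bookkeeping inside the saddle-point analysis: verifying Hayman-admissibility of $\exp(C(z))$ and tracking how $r_n$, $C(r_n)$ and $b(r_n)$ change under $n\mapsto n-1$ precisely enough to be sure the subleading factors wash out. This is routine but somewhat tedious, which is why in the text it is cleanest to quote~\cite{arXiv:1303.3836v1} directly.
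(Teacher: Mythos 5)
Your proof is correct, but it takes a different route from the paper: in the text this corollary is not proved at all, it is simply read off from Proposition~20 of Bousquet-M\'elou and Weller~\cite{arXiv:1303.3836v1}, which supplies the asymptotics of $|\Gcal_n|$ for this class, whereas you reconstruct the estimate from scratch. Your observation that $C(z)=\sum_{j=1}^{k}\frac{|\Ccal_j|}{j!}z^j$ is a polynomial of degree exactly $k$, so that $G(z)=\exp(C(z))$ is entire and amenable to a saddle-point (Hayman) analysis, together with the cancellation $\frac{n|\Gcal_{n-1}|}{|\Gcal_n|}=\frac{[z^{n-1}]G(z)}{[z^n]G(z)}\sim r_n$ and the saddle-point equation $r_nC'(r_n)=n$ forcing $r_n\sim\bigl(n/(k|\Ccal_k|/k!)\bigr)^{1/k}$, is exactly the kind of argument that underlies the cited Proposition~20; so in substance you are re-deriving the ingredient the paper outsources, and as a bonus you obtain the explicit constant $c=\bigl(k|\Ccal_k|/k!\bigr)^{-1/k}$, which the corollary does not need but which is nice to have. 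Your error analysis for the shift $n\mapsto n-1$ is also sound: with $r_n\asymp n^{1/k}$ one has $r_n-r_{n-1}\asymp n^{1/k-1}$ and $C''(r)+(n-1)/r^2\asymp n^{1-2/k}$, so the Taylor-expansion correction is $O(1/n)$ and the three subleading factors indeed tend to $1$. One small caveat: ``exponential of a polynomial with positive leading coefficient'' is not by itself a sufficient criterion for Hayman admissibility (e.g.\ $\upe^{z^2}$ fails, since half its coefficients vanish); the correct hypothesis also requires the coefficients of $\exp(C(z))$ to be eventually positive (equivalently, one must rule out periodicity of the support of $C$). In the present situation this is immediate, because $|\Ccal_1|=1$ gives the linear term of $C$ and every $|\Gcal_n|$ is strictly positive, but the justification should be stated rather than hidden in the phrase ``standard example''.
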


\section{The logical limit laws for the addable and {surface} case\label{sec:lawproofs}}

\subsection{The \MSO-zero-one law for addable classes}

The main logical ingredient we need is the following theorem
that is inspired by a construction of McColm~\cite{McColm2002}.

\begin{theorem}\label{thm:MSOMk}
Let $\Gcal$ be an addable, minor-closed class of graphs.
For every $k \in \eN$, there exists a connected (rooted) graph
$M_k \in \Gcal$ with the following property : for every connected $G \in \Gcal$
that contains a pendant copy of $M_k$, it holds that $G \equivMSO_k M_k$.
\end{theorem}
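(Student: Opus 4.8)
The plan is to build $M_k$ as a sufficiently deep and ``bushy'' rooted tree that absorbs Spoiler's moves in the MSO Ehrenfeucht--Fra\"iss\'e game. First I would reduce everything to a statement about rooted trees: since $\Gcal$ is minor-closed and addable, it contains all trees, so each $M_k$ we construct will indeed lie in $\Gcal$, and a pendant copy of a rooted tree $M_k$ in a connected $G\in\Gcal$ means $G$ is obtained from $M_k$ by attaching the rest of the graph via a single edge at the root. I would then set up an induction on $k$ together with a pumping argument: invoking Lemma~\ref{lem:lotsofcopies} (and the ``lots of copies'' phenomenon for rooted subtrees), there is a function $a(k)$ such that attaching $a(k)$ versus $b\ge a(k)$ isomorphic pendant subtrees at a common vertex is indistinguishable in $\equivMSO_k$. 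The graph $M_k$ is then defined recursively: $M_k$ is a root with $a(k)$ pendant copies of $M_{k-1}$ attached (with appropriate care so that the whole thing is rooted and the root is distinguishable), and $M_0$ is a single vertex. The key point to prove is: if $G$ is connected and contains a pendant copy of $M_k$ at its root, then $G\equivMSO_k M_k$.

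\textbf{Key steps.} The heart of the argument is a strategy for Duplicator in $\EHRMSO_k(G,M_k)$, built by induction. In each round Spoiler plays a vertex-move or a set-move in one of the two structures; Duplicator must answer in the other. The guiding idea, following McColm~\cite{McColm2002}, is that $M_k$ is so branchy that whatever Spoiler marks in $G$ (respectively $M_k$), Duplicator can find an ``untouched'' isomorphic pendant copy of $M_{k-1}$ in the other structure and use it to answer, thereby reducing to the game with one fewer round and the smaller graph $M_{k-1}$. For vertex-moves this is the standard back-and-forth argument on trees: Duplicator maintains a partial isomorphism on the ``used'' parts, and the high multiplicity $a(k)$ of pendant $M_{k-1}$'s guarantees that after any one round there remains a fresh copy on each side that can be matched. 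For set-moves one partitions the $a(k)$ pendant copies on one side according to the ``colour class'' (the trace of the chosen set on a copy, up to the finitely many $\equivMSO_{k-1}$-types by Lemma~\ref{lem:fixednoeq}), and uses that there are enough copies of each colour class so that Duplicator can mirror the partition on the other side; this is exactly where Lemma~\ref{lem:lotsofcopies} and Lemma~\ref{lem:disjointunion} are used, to argue that a disjoint union of many copies with a given colouring is $\equivMSO_{k-1}$-equivalent to a smaller such union. One then checks that the three winning conditions (injectivity, adjacency, membership) are preserved throughout, because the pieces Duplicator never touches contribute nothing and the touched pieces are matched isomorphically.

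\textbf{Main obstacle.} The technically delicate part is the set-move analysis: after Spoiler picks a set $X\subseteq V(G)$, the number of distinct $\equivMSO_{k-1}$-types that the restriction of $X$ can induce on the pendant $M_{k-1}$-copies is bounded (by Lemma~\ref{lem:fixednoeq}), so one must choose the multiplicity $a(k)$ large enough — roughly $a(k)$ at least the ``lots of copies'' threshold from Lemma~\ref{lem:lotsofcopies} applied at level $k-1$, times the number of such types — so that in whichever structure Spoiler played, every type that occurs at least once occurs ``enough'' times for Duplicator to replicate the count (saturating at the threshold) on the other side. Getting the bookkeeping right so that the induction hypothesis applies to each matched pair of pendant copies, and so that the root itself is handled consistently (it should be identifiable, e.g.\ as the unique vertex of a distinguished degree, so that a pendant copy of $M_k$ really looks like $M_k$ from inside), is the part that requires care; but conceptually it is a routine, if intricate, adaptation of McColm's tree argument to the present minor-closed setting, made possible because addable minor-closed classes contain all the trees $M_k$ we need.
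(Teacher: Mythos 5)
There is a genuine gap, and it lies in the construction of $M_k$ itself, not in the bookkeeping of the game. You build $M_k$ as a recursively bushy rooted \emph{tree} ($M_k$ = root with $a(k)$ pendant copies of $M_{k-1}$). But the theorem quantifies over \emph{every} connected $G\in\Gcal$ containing a pendant copy of $M_k$: writing $G'$ for the part of $G$ outside the pendant copy (together with the attachment vertex), $G'$ can be an arbitrary connected rooted graph in $\Gcal$. If $\Gcal$ is, say, the class of planar graphs, $G'$ may contain a triangle; ``there exist three pairwise adjacent vertices'' is an $\FO$ sentence of quantifier depth $3$, true in $G$ and false in any tree, so $G\not\equivMSO_3 M_k$ and no Duplicator strategy can exist. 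The failure is not limited to the non-tree setting: even for $\Gcal$ the class of forests, $G'$ could be a path of length $100k$, making $\dist(x,y)>2^{\lfloor k/2\rfloor}$ satisfiable in $G$ but not in your depth-$k$ balanced tree (such distance statements are expressible well within quantifier depth $k$), so the recursion ``root plus $a(k)$ copies of $M_{k-1}$'' is too weak even there. Your strategy sketch implicitly assumes Spoiler's moves land in (or can be mirrored into) pendant $M_{k-1}$-copies, but Spoiler will simply play inside $G'$, whose $\equivMSO_k$-type your $M_k$ does not represent.

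The missing idea is that $M_k$ must \emph{absorb every possible rooted type of the remainder $G'$}, and this is where Lemma~\ref{lem:fixednoeq} has to be applied — not to traces of set-moves on subcopies, but to the rooted equivalence relation $\equivRMSO_k$ on all connected rooted graphs in $\Gcal$. The paper's construction takes representatives $G_1,\dots,G_m$ of the finitely many $\equivRMSO_k$-classes of connected rooted graphs in $\Gcal$ and forms $M_k$ by identifying at a single root $a(k)$ copies of each $G_i$ (this lies in $\Gcal$ by addability and minor-closedness). Then, given $G$ with a pendant copy of $M_k$, the remainder $G'$ is a minor of $G$, hence in $\Gcal$, hence $\equivRMSO_k$-equivalent to some $G_i$; gluing it on amounts to passing from $a$ to $a+1$ copies of $G_i$ at the root, which is invisible at quantifier depth $k$ by rooted analogues of Lemmas~\ref{lem:disjointunion} and~\ref{lem:lotsofcopies} (gluing at roots preserves $\equivRMSO_k$, and $a$ versus $b\ge a$ root-identified copies are equivalent). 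Your use of the lots-of-copies phenomenon and of the \EF\ game is the right machinery, but without taking one representative of \emph{each} rooted equivalence class of connected graphs in $\Gcal$ (rather than a tree recursion), the statement as claimed is false for your $M_k$, so the proof cannot be repaired by tightening the set-move analysis alone.
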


Before starting the proof we need to introduce some more notation.
Recall that a rooted graph is a graph $G$ with a distinguished vertex $r \in V(G)$.
If $G, H$ are two rooted graphs then we say that a third graph $I$ is the result of
{\em identifying their roots} if $I$ can be obtained as follows.
Without loss of generality we can assume $V(G) = \{r\} \cup A, V(H) = \{r\} \cup B$
where $r$ is the root in both graphs and $A, B$ are disjoint.
Then $I = (V(G)\cup V(H), E(G)\cup E(H))$ is the graph we get by `identifying the roots'.

The {\em rooted \EF-game} $\EHRRMSO_k(G, H)$
is played on two rooted graphs $G, H$ with roots $r_G$ and $r_H$
in the same way as the unrooted version.
The only difference is that for Duplicator to win, at the end of the game the following additional conditions
have to be met in addition to conditions {\bf 1)}, {\bf 2)}, {\bf 3)} from the description
of the Ehrenfeucht--Fra{\"\i}ss{\'e}-game in Section~\ref{sec:logicalpreliminaries}:
{\bf 4)} $x_i = r_G$ if and only if $y_i = r_H$,
{\bf 5)} $x_i \sim r_G$ if and only if $y_i \sim r_H$,
and {\bf 6)} $r_G \in X_i$ if and only if $r_H \in Y_i$.
We can view it as the ordinary game with one additional move, where the first move
of both players is predetermined to be a vertex-move selecting the root.
We write $G \equivRMSO_k H$ if $\EHRRMSO_k(G, H)$  is a win for Duplicator.
Note that $\equivRMSO_k$ is an equivalence relation with finitely many equivalence classes
(using that $\equivMSO_{k+1}$ has finitely many equivalence classes and the previous remark).

The next two lemmas are the natural analogues of
Lemmas~\ref{lem:disjointunion} and~\ref{lem:lotsofcopies}
for the rooted \MSO-\EF-game.
For completeness we include self-contained proofs.

\begin{lemma}\label{lem:ident}
Suppose that $G_1\equivRMSO_k H_1, G_2 \equivRMSO H_2$, and let
$G$ be obtained by identifying the roots of $G_1, G_2$ and let
$H$ be obtained by identifying the roots of $H_1, H_2$.
Then $G \equivRMSO_k H$.
\end{lemma}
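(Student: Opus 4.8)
The plan is to argue via the rooted Ehrenfeucht--Fra\"{\i}ss\'e game. By the definition of $\equivRMSO_k$, it suffices to construct a winning strategy for Duplicator in $\EHRRMSO_k(G,H)$ out of fixed winning strategies $\sigma_1,\sigma_2$ for Duplicator in $\EHRRMSO_k(G_1,H_1)$ and $\EHRRMSO_k(G_2,H_2)$. Write $r_G$ for the common root of $G_1$ and $G_2$ (which is also the root of $G$) and $r_H$ for that of $H_1,H_2$. From the identification construction we have $V(G)=V(G_1)\cup V(G_2)$ with $V(G_1)\cap V(G_2)=\{r_G\}$, $E(G)=E(G_1)\cup E(G_2)$, and every edge of $G$ has both endpoints in $V(G_1)$ or both endpoints in $V(G_2)$; likewise for $H$.

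Duplicator uses the standard product strategy, running a play of each subgame in parallel with the main game. If Spoiler makes a vertex move selecting $x\in V(G)$ (a move played in $H$ being symmetric), Duplicator answers $r_H$ when $x=r_G$, and otherwise takes the unique $\ell\in\{1,2\}$ with $x\in V(G_\ell)\setminus\{r_G\}$, feeds $x$ to subgame $\ell$, and replies with $\sigma_\ell$'s answer $y\in V(H_\ell)\subseteq V(H)$. If Spoiler makes a set move selecting $X\subseteq V(G)$, Duplicator puts $X_\ell:=X\cap V(G_\ell)$, feeds $X_\ell$ to subgame $\ell$ for $\ell=1,2$, receives $Y_\ell\subseteq V(H_\ell)$ from $\sigma_\ell$, and replies $Y:=Y_1\cup Y_2$. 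Each round of the main game induces at most one round in each subgame, so each induced subgame play has length at most $k$; since fewer rounds only help Duplicator, the restriction of $\sigma_\ell$ wins it, and in particular the final subgame position satisfies all six winning conditions for $(G_\ell,H_\ell)$.

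It remains to check the six winning conditions for the resulting position in $\EHRRMSO_k(G,H)$. Conditions (4) and (5) are immediate from the recipe, adjacency to the root being transferred by condition (5) of the pertinent subgame. For a pair of vertex moves both landing in a single part $V(G_\ell)$, conditions (1) and (2) come from conditions (1) and (2) of subgame $\ell$ together with the fact that equality and adjacency inside a part are the same in $G$ as in $G_\ell$; for a ``cross'' pair --- one selected vertex in $V(G_1)\setminus\{r_G\}$, the other in $V(G_2)\setminus\{r_G\}$ --- there is no equality and no edge on either side, the only residual subcase being an edge to the shared root, again dispatched by condition (5). Condition (3), for a vertex move $x_i\in V(G_\ell)$ and a set answer $Y_j=Y_j^{(1)}\cup Y_j^{(2)}$, reduces to condition (3) of subgame $\ell$ once we know that $y_i\in Y_j$ forces $y_i\in Y_j^{(\ell)}$.

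The one genuinely new point --- and the reason this does not follow verbatim from Lemma~\ref{lem:disjointunion}, where the parts are vertex-disjoint and carry no cross-edges --- is the consistency of the two induced subgame plays at the shared root. For any set-move round, with main-game set $X$ and answer $Y=Y_1\cup Y_2$, we have $r_G\in X_1\iff r_G\in X\iff r_G\in X_2$ directly from $X_\ell=X\cap V(G_\ell)$, and condition (6) of each subgame (valid because $\sigma_1,\sigma_2$ are winning) then gives $r_H\in Y_1\iff r_H\in Y_2$. Hence ``$r_H\in Y$'' is unambiguous, which is precisely condition (6) of the main game, and it also closes the gap in condition (3): if $y_i=r_H$ lies in $Y_j^{(3-\ell)}$ then it lies in $Y_j^{(\ell)}$ as well. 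With this bookkeeping in hand, all six conditions hold, Duplicator wins $\EHRRMSO_k(G,H)$, and therefore $G\equivRMSO_k H$. I expect the management of the shared root to be the only real obstacle; the rest is a routine transcription of the disjoint-union composition argument.
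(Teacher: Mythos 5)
Your proposal is correct and follows essentially the same route as the paper: Duplicator composes the two subgame winning strategies, splitting set moves as $X_\ell = X\cap V(G_\ell)$ and uniting the answers, with vertex moves relayed to the appropriate subgame. Your extra bookkeeping at the shared root (using condition (6) of both subgames to make membership of $r_H$ in $Y$ unambiguous) just spells out what the paper dismisses with "no confusion can arise because of the root."
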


\begin{proof}
It is convenient to assume that $V(G_1)$ and $V(G_2)$ have exactly one element in common,
the root $r_G$ of $G$, and to identify $G_1, G_2$ with the copies in $G$. And similarly
for $H_1, H_2$ and $H$.

The winning strategy for Duplicator is as follows.
If Spoiler does a vertex move, say he selects $v$ of  $G_\ell$ with $\ell \in \{1,2\}$, then Duplicator responds by selecting a vertex of $H_\ell$ according to his winning strategy for $\EHRRMSO_k(G_\ell, H_\ell)$.
Note that no confusion can arise if Spoiler selects the root of either graph since
Duplicator must then select the root of the other graph (otherwise he loses immediately).
Similarly, Duplicator never selects the root if Spoiler did not also select the root.

If Spoiler does a set move then Duplicator responds as follows.
Suppose Spoiler selected $X \subseteq V(G)$, and let us write
$X_\ell:= X\cap V(G_\ell)$ for $\ell =1,2$.
Then Duplicator selects a set $Y_\ell \subseteq V(H_\ell)$ for each $\ell \in \{1,2\}$, according to the winning strategy for $\EHRRMSO_k(G_\ell, H_\ell)$, and then sets $Y := Y_1\cup Y_2$ as his response to Spoiler's move.
Again, no confusion can arise because of the presence or not of the root in $X$.
If Spoiler selects a subset $Y \subseteq V(H)$ then Duplicator responds analogously.
This is a winning strategy for Duplicator as every
edge of $G$ is either an edge of $G_1$ or of $G_2$ and every vertex of $G$ other than the root is either a vertex of $G_1$ or of $G_2$; and similarly for $H$.
\end{proof}

\begin{lemma}\label{lem:ab}
For every $k \in \eN$ there is an $a = a(k)$ such that the following holds.
For every rooted graph $G$ and every $b \geq a$, if
$A$ is obtained from $a$ copies of $G$ and identifying the roots,
and $B$ is obtained from $b$ copies of $G$ and identifying the roots,
then $A \equivRMSO_k B$.
\end{lemma}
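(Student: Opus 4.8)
The plan is to imitate, in the rooted setting, the standard proof of Lemma~\ref{lem:lotsofcopies}: I would exhibit a winning strategy for Duplicator in the game $\EHRRMSO_k(A,B)$, where $A$ is obtained from $a=a(k)$ copies of $G$ and $B$ from $b\geq a$ copies of $G$ by identifying all roots (to $r_A$ in $A$, to $r_B$ in $B$). Recall that the rooted game is just the ordinary $k$-round game preceded by a predetermined vertex-move pinning $r_A\leftrightarrow r_B$. After that move, $A$ consists of the pebbled vertex $r_A$ together with $a$ isomorphic rooted copies $G_1^A,\dots,G_a^A$ of $G$ sharing that vertex, and similarly $B$ has $b$ copies $G_1^B,\dots,G_b^B$.

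The strategy is governed by a counting invariant, maintained over a decreasing sequence of integer thresholds $a(k)>a(k-1)>\dots>a(0)\geq 1$ defined by downward recursion; the recursion is chosen so that $a(m)$ dwarfs $a(m-1)$ times the number of $\equivRMSO_{m-1}$-classes, which is finite and independent of the graph by Lemma~\ref{lem:fixednoeq} (and the remark that $\equivRMSO_m$ has finitely many classes). After $j\leq k$ rounds Duplicator maintains a partial matching between the copies of $A$ and those of $B$ such that: (i) each matched pair, together with the root and the pebbles lying inside it, is a winning position for Duplicator in the remaining $\EHRRMSO_{k-j}$-game; (ii) every copy holding a vertex-pebble is matched; and (iii) for every $\equivRMSO_{k-j}$-type $\theta$ of ``a rooted copy of $G$ decorated by the set-pebbles played so far'', the number of unmatched $A$-copies of type $\theta$ and the number of unmatched $B$-copies of type $\theta$ are either equal or both larger than $a(k-j)$. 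Right after the root move all copies are unmatched and have the same type, with counts $a$ and $b$, both at least $a(k)$, so (iii) holds.

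To propagate the invariant through a round: if Spoiler makes a vertex-move, it lands in a single copy; if that copy is already matched, Duplicator answers inside the paired copy via the relevant sub-strategy, and otherwise Duplicator matches it to an unmatched copy of the same decorated type on the other side — one exists because by (iii) no type can have count zero on one side and positive on the other — and then answers there; either way the counts drop by one on both sides and the strict decrease of the thresholds restores (iii). If Spoiler makes a set-move $X$ (say on $A$; the other case is symmetric since (iii) is symmetric), note that $X$ splits each current type $\theta$ into finitely many finer $\equivRMSO_{k-j-1}$-types; on matched copies and on unmatched groups whose two counts are equal, Duplicator simply transports $X$ to the other side via the sub-strategies — this parallel play of independent sub-games is exactly what Lemma~\ref{lem:ident} licenses — while for an unmatched group whose counts differ, hence both exceed $a(k-j)$, a pigeonhole argument produces one finer type receiving more than $a(k-j)$ divided by the number of finer types, hence more than $a(k-j-1)$, of the $A$-copies, and Duplicator absorbs the whole surplus of $B$-copies into that one finer type; he also sets $r_B\in Y$ iff $r_A\in X$. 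One then checks (iii) with $k-j-1$ in place of $k-j$. After $k$ rounds the matched pairs certify all of Duplicator's winning conditions of the rooted game, while unmatched copies contain no vertex-pebble and so constrain nothing; hence Duplicator wins and $A\equivRMSO_k B$ by Lemma~\ref{lem:EFgame}.

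The step I expect to be the main obstacle is arranging the counting invariant so that it closes with ``period one'', i.e.\ so that the conclusion holds for \emph{every} $b\geq a$ rather than only for $b$ in a fixed residue class modulo some period — this is precisely what forces the ``equal, or both above a threshold'' formulation and the surplus-absorbing pigeonhole step, rather than a naive bijective matching of copies. A closely related point is that the thresholds $a(m)$ must depend on $m$ only and not on $G$, which is why the invariant must be phrased via the uniformly finitely many $\equivRMSO_m$-classes rather than via colourings of the fixed graph $G-r_A$, whose number grows with $|V(G)|$.
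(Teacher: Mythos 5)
Your overall architecture --- a direct game on $a$ versus $b\geq a$ root-identified copies, with a partial matching of copies plus the invariant that, for each decorated $\equivRMSO_{k-j}$-type, the unmatched counts on the two sides are ``equal or both above a threshold'' $a(k-j)$, the thresholds recursively dwarfing (number of types)$\,\times\, a(k-j-1)$ --- is sound and genuinely different from the paper's proof (the paper first proves an $(a{+}1)$-versus-$a$ claim with the graph-dependent threshold $2^{k\cdot v(G)}$, counting trace patterns on $V(G)$ rather than logical types, iterates it, and only then makes $a(k)$ uniform in $G$ via the finitely many $\equivRMSO_k$-classes together with Lemma~\ref{lem:ident}). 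However, your propagation step for set-moves fails in one of the two cases, and the sentence ``the other case is symmetric'' is exactly where the problem hides. Suppose Spoiler plays his set-move on $B$, the side that may have vastly more unmatched copies (an unmatched group can have $n_A\approx a(k)$ copies in $A$ but $n_B\approx b$ copies in $B$, with $b$ arbitrary), and suppose his trace splits the $n_B$ copies of that group into two finer types of size roughly $n_B/2$ each. Your prescription --- exact equality of counts on every finer type except one pigeonhole type, into which all surplus is absorbed --- would force Duplicator to produce about $n_B/2\gg n_A$ copies of the non-absorbed finer type on the $A$-side, which he does not have; so the described response does not realize invariant (iii). (Even the case where Spoiler plays on $A$ tacitly uses that the group's $B$-count is at least its $A$-count, a fact your invariant does not record.)

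The repair is standard and consistent with the invariant you already formulated: Duplicator should copy Spoiler's finer-type counts exactly only for those finer types whose count on Spoiler's side is at most $a(k-j-1)$ (their total is at most (number of types)$\,\cdot\, a(k-j-1)$, which the recursion makes smaller than the number of copies available), and then distribute all his remaining copies over the finer types that are large on Spoiler's side, giving each of them more than $a(k-j-1)$; realizability of any prescribed finer type on a copy of the current type follows from the one-step game equivalence, exactly as in your matched-pair case. With that replacement (and the routine remark that finiteness of the number of ``decorated'' types --- rooted copies carrying up to $k$ extra unary predicates --- is the standard extension of Lemma~\ref{lem:fixednoeq} rather than its literal statement), your single-pass argument goes through and is a legitimate alternative to the paper's two-stage proof.
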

\begin{proof}
Before proving the full statement, we prove a seemingly weaker statement.\\

\medskip 

\noindent
\textbf{Claim}. Let $G$ and $a \geq 2^{k\cdot v(G)}$ be arbitrary, and
let a rooted graph $A$ be obtained by identifying the roots of $a+1$ copies of $G$
and a rooted graph $B$ by identifying $a$ copies of $G$. Then $A \equivRMSO_k B$.

\begin{proofof}{Claim}
To prove the claim, let $A_1, \dots, A_{a+1}$ be the copies of $G$ that make up $A$, and let
$B_1, \dots, B_a$ be the copies of $G$ that make up $B$.
For any $i,j$ and $v \in V(G)$ we denote by $v_i^A$ (resp. $v_j^B$)
the unique copy of vertex $v$ inside the copy $A_i$ of $G$
(resp. inside the copy $B_j$ of $G$).

Note that because of the demands {\bf 4)} and {\bf 5)}
for Duplicator's win, Spoiler basically `wastes a move'
when selecting the root of either graph in a vertex-move, since the root
already behaves like a marked vertex.
So if Spoiler can win $\EHRRMSO_k(A, B)$,
then winning is possible without ever making a vertex-move selecting the root of either graph.
In the sequel we thus assume that Spoiler's vertex-moves never selects the root.

We describe the situation after any move of the game $\EHRRMSO_k(A, B)$
by the graphs $A, B$, together with some `vertex-move-marks' and `set-move-marks'
on their vertices, which record the number of the move when the respective vertex
or vertex-set was selected, and whether it was a vertex- or set-move.
(The information whether it was Spoiler or Duplicator who made a choice is not recorded.)

For any move $0 \leq k' \leq k$,
we say that $A_i$ and $B_j$ are {\em marked identically} to each other
(after move $k'$)
if for every move $k'' \leq k'$ the following holds:
if $k''$ was a vertex-move then $v_i^A$ was marked
if and only if
$v_j^B$ was marked (for all $v \in V(G)$)
and if $k''$ was a set-move then
$v_i^A$ was in the selected subset of $V(A)$
if and only if
$v_j^B$ was in the selected subset of $V(B)$ (for all $v \in V(G)$).
Similarly, we also speak of $A_i$ being marked identically to $A_j$
if and only if no vertex-move occurred on either one, and every set-move
until now selected the same subset from both.

For any $0 \leq k' \leq k$, let us say that the situation of the game after move $k'$
is {\em good} (for Duplicator) if the following holds:

\begin{itemize}
\item $A_i$ and $B_i$ are marked identically for all $1\leq i \leq a$;
\item no vertex of $A_{a+1}$ was marked by a vertex-move until now;
\item there are at least $2^{(k-k')\cdot v(G)}$ indices $1\leq i\leq a$
such that $B_i$ is marked identically to $A_{a+1}$.
\end{itemize}

\noindent
Observe that if after move $k'=k$ the situation is still good, Duplicator has
won the game. Our aim is to show by induction that Duplicator can indeed achieve
this situation, up to a relabelling of $A_1, \dots, A_{a+1}$ and a relabelling of $B_1, \dots, B_a$.
Clearly, the situation is good at the beginning of the game, when no moves have been
played yet, corresponding to $k'=0$.

Now assume that after move $k'< k$, the situation is good. We show that,
no matter what Spoiler does in move $k'+1$, Duplicator can respond in such a way
that after move $k'+1$ the situation is still good, possibly after some relabelling.

To see this, let us first suppose that Spoiler does a vertex-move:
if this marks the vertex $v_i^A \in V(A_i)$ for some $v\in V(G)$ and $1\leq i \leq a$,
then Duplicator
simply responds by marking the vertex $v_i^B \in V(B_i)$. Observe that now we are
still in a good situation: $A_i$ and $B_i$ are marked identically for $i=1,\dots, a$,
and $A_{a+1}$ has none of its vertices marked by a vertex-move and is marked
identically to at least $2^{(k-k')\cdot v(G)} -1 \geq 2^{(k-k'-1)\cdot v(G)}$
of the $B_i$. Similarly, if Spoiler chooses to mark the vertex $v_i^B \in V(B_i)$
for some $v\in V(G)$ and $1\leq i \leq a$, then Duplicator can again respond by
marking $v_i^A$ and we are still good.
Assume thus that Spoiler marked a vertex of $A_{a+1}$. Since there are
$2^{(k-k')\cdot v(G)} > 1$ indices $i$ such that $A_i$ is marked identically
to $A_{a+1}$ we can just relabel $A_1,\dots, A_{a+1}$ and arrive at the situation
where Spoiler chose a vertex in an $A_i$ with $i \leq a$.
But then the induction is again done, by the above. This completes the
case when Spoiler does a vertex-move.

We now consider set-moves. In the rest of the proof, let $I \subseteq [a]$
be the set of those indices $i\in[a]$ for which $B_i$ is marked identically to $A_{a+1}$,
after move $k'$.

First suppose that Spoiler in move $k'+1$ selected a subset $X \subseteq V(A)$.
Observe that, since $G$ has $2^{v(G)}$ subsets in total, the
set $I \cup \{a+1\}$ is partitioned into $L\leq 2^{v(G)}$ subsets
$I_1, \dots, I_L$ such that, for any $1\leq \ell\leq L$,
if $i, j \in I_\ell$ then $A_i, A_j$ are marked identically
after Spoiler's $(k'+1)$-st move.
There must be some $\ell$ such that
$|I_\ell| \geq (|I|+1)/2^{v(G)} > 2^{(k-k'-1)\cdot v(G)}$.
Relabelling if necessary, we can assume without loss of generality that
$a+1 \in I_{\ell}$ for such an index $\ell$.
(So in particular $A_{a+1}$ is marked identically to at least
$2^{(k-k'-1)\cdot v(G)}$ of the other $A_i$.)
Duplicators response is simply to select $Y \subseteq V(B)$
according to the rule that $v_i^B \in Y$ if and only if $v_i^A \in X$ -- 
for all $1\leq i \leq a$ and all $v \in V(G)$. (This is of course done after any possible relabelling
of $A_1,\dots A_{a+1}$ and $B_1,\dots, B_a$.)
Note that no confusion can arise because of the root.
It is easily seen that this way, the situation is still good after move $k'+1$.

Suppose then that Spoiler selected a subset $Y \subseteq V(B)$.
Now $I$ is partitioned into $L\leq 2^{v(G)}$ subsets
$I_1, \dots, I_L$ such that, for any $1\leq \ell \leq L$,
the sets $B_i, B_j$ are marked identically after Spoiler's move whenever $i,j \in I_\ell$.
There is some $1\leq\ell\leq L$ such that $|I_\ell| \geq |I| / 2^{v(G)} \geq
2^{(k-k'-1)\cdot v(G)}$.
We fix such an $\ell$ and an $i_0 \in I_\ell$.
Duplicators response is to select $X \subseteq V(A)$ according to the
rules that $v_i^A \in X$ if and only if $v_i^B \in Y$
(for all $1\leq i \leq a$ and all $v \in V(G)$) and that
$v_{a+1}^A \in X$ if and only if $v_{i_0}^B \in Y$
(for all $v \in V(G)$).
Again it is easily seen that the situation is still good after move $k'+1$.

We have seen that indeed, no matter which move Spoiler chooses to make,
Duplicator can always respond in such a way that the situation will stay good.
Hence $A\equivRMSO_k B$, which completes the proof of the claim.
\end{proofof}

Having proved the claim, we are ready for finish the proof of the lemma.
Observe that, by repeated applications of the claim,
we also have that $A \equivRMSO_k B$ for all $a, b \geq 2^{k\cdot v(G)}$
if $A$ is obtained by identifying the roots of $a$ copies of $G$ and
$B$ by identifying the roots of $b$ copies.

Since there are finitely many equivalence classes for $\equivRMSO_k$, there is
a finite list of graphs $H_1, \dots, H_\ell$ such that every graph is equivalent
to one of them. Let us set
\[ a = a(k) := \max\left( 2^{k \cdot v(H_1)}, \dots, 2^{k\cdot v(H_\ell)} \right). \]
\noindent
Let $G$ be an arbitrary graph, and $b \geq a$ be arbitrary.
There is some $1\leq i\leq \ell$ such that $G \equivRMSO_k H_i$.
Let $A, A'$ be obtained by identifying the roots of $a$ copies of $G$ resp.~$H_i$,
and let $B, B'$ be obtained by identifying the roots of $b$ copies of $G$ resp.~$H_i$.
By Lemma~\ref{lem:ident}, we also have $A \equivRMSO_k A'$ and
$B \equivRMSO_k B'$. By the claim and the observation we made immediately after
its proof, we have $A' \equivRMSO_k B'$. Hence also $A \equivRMSO_k B$.
This proves that our choice of $a(k)$ indeed works for {\em every} graph $G$,
and concludes the proof of Lemma~\ref{lem:ab}.
\end{proof}

\begin{proofof}{Theorem~\ref{thm:MSOMk}}
The construction of $M_k$ is as follows.
Recall that $\Ccal$ denotes the set of all connected elements of $\Gcal$.
Let $\mathrm{r}\Gcal$ be the set of all rooted graphs corresponding to $\Gcal$.
That is, for each element $G \in \Gcal$ there are $v(G)$ elements in $\mathrm{r}\Gcal$, one
for each choice of the root. We define $\mathrm{r}\Ccal$ similarly.
As remarked previously, Lemma~\ref{lem:fixednoeq}, despite being about $\equivMSO_k$,
implies that the relation $\equivRMSO_k$ partitions the set of all rooted finite graphs,
and hence in particular $\mathrm{r}\mathcal{C}$, into finitely many equivalence classes.
Hence there exists a finite set of \emph{connected} rooted graphs
$G_1, \dots, G_m \in \mathrm{r}\Ccal$
such that every \emph{connected} rooted graph from $\mathrm{r}\Ccal$ is equivalent
under $\equivRMSO_k$ to one of $G_1, \dots, G_m$.

The graph $M_k$ is now constructed by taking $a$ copies of $G_i$ for each $i=1,\dots,m$,
and identifying their roots, where $a = a(k)$ is as provided by Lemma~\ref{lem:ab}.
Let us remark that $\Gcal$ being addable and minor-closed implies $M_k \in \mathrm{r}\Gcal$.

Let $G \in \Gcal$ be an (unrooted) connected graph that contains at least one
pendant copy of $M_k$. Let us fix one such pendant copy (for notational convenience
we just identify this copy with $M_k$ from now on). It is convenient to root $G$ at the root
$r$ of $M_k$. Certainly, if Duplicator wins $\EHRRMSO_k( G, M_k)$, then he also wins
the unrooted game $\EHRMSO_k( G, M_k )$. We consider the rooted version of the game
in the remainder of the proof.

Let $G_1, \dots, G_m$ denote the rooted graphs
used in the construction of $M_k$, and let
$G_1^1$, $\dots$, $G_1^a$, $\dots$, $G_m^1$, $\dots$, $G_m^a$ be the copies
of $G_1$ until $G_m$ whose roots were identified to create $M_k$. Let $G'$
denote the (rooted, connected) subgraph of $G$ induced by
$(V(G) \setminus V(M_k)) \cup \{r\}$.
That is, to obtain $G'$ we remove all vertices of $M_k$ from $G$ except the root $r$.
Observe that one can view $G$ as consisting of $G',G_1^1,\dotsc,G_m^a$,
identified along their roots.
Also note that $G' \in \Gcal$ since it is a minor of $G$.
Hence, by choice of $G_1,\dots, G_m$, there exists $1 \leq i \leq m$ such that
$G' \equivRMSO_k G_i$. Without loss of generality $i=1$. It follows from
Lemma~\ref{lem:ident} that $G \equivRMSO_k H$, where $H$ is obtained by taking
$a+1$ copies of $G_1$ and $a$ copies of each of $G_2, \dots, G_n$ and identifying
the roots. By Lemma~\ref{lem:ab} together with Lemma~\ref{lem:ident} we also
have $H \equivRMSO_k M_k$. It follows that $G \equivRMSO_k M_k$, as required.
\end{proofof}

With Theorem~\ref{thm:MSOMk} in hand, we are now ready to prove the $\MSO$-zero-one law
for the random connected graph from an addable, minor-closed class.

\begin{proofof}{Theorem~\ref{thm:MSO01add}}
Let $\varphi \in \MSO$ be arbitrary, let $k$ be its quantifier depth and let $M_k$ be as
provided by Theorem~\ref{thm:MSOMk}.
By Corollary~\ref{cor:pendantcopyadd}, w.h.p., $C_n$ has a pendant copy of $M_k$.
Thus, by Theorem~\ref{thm:MSOMk}, w.h.p., $C_n \equivMSO_k M_k$.
In particular, this implies that if $M_k \models \varphi$ then
$\displaystyle \lim_{n\to\infty} \Pee( C_n \models\varphi ) = 1$
and if, on the other hand, $M_k \models \neg\varphi$ then
$\displaystyle \lim_{n\to\infty} \Pee( C_n \models\varphi ) = 0$.
\end{proofof}

\subsection{The \MSO-convergence law for addable classes}

In this section we prove the following more explicit version
of Theorem~\ref{thm:MSOconvadd} above.

\begin{theorem}\label{thm:MSOconvdetail}
Let $\Gcal$ be an addable, minor-closed class of graphs,
let $G_n \in_u\Gcal_n$ and let $R$ be the Boltzmann--Poisson random graph
corresponding to $\Gcal$. For every $\varphi \in \MSO$ there exists a set
$\Fcal = \Fcal(\varphi) \subseteq \Ucal\Gcal$ such that
\[ \lim_{n\to\infty} \Pee( G_n \models \varphi ) = \Pee( R \in \Fcal ). \]
\end{theorem}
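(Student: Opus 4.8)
The plan is to pin down $\Fcal$ explicitly in terms of the graph $M_k$ from Theorem~\ref{thm:MSOMk} and to reduce the whole statement to the behaviour of the fragment. Write $k := \qd(\varphi)$ and let $M_k \in \Gcal$ be the connected (rooted) graph supplied by Theorem~\ref{thm:MSOMk}. I would define
\[
\Fcal \ :=\ \bigl\{\, H \in \Ucal\Gcal \ :\ M_k \cup H \models \varphi \,\bigr\},
\]
which depends only on $\varphi$ (through $k$ and $\varphi$), and claim this choice works. Note $\Fcal \subseteq \Ucal\Gcal$ by construction, and $M_k \cup H \models \varphi$ is well defined for an isomorphism class $H$ since satisfaction of $\varphi$ is isomorphism-invariant.

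The first key step is to show that, w.h.p., $\Biggg(G_n) \equivMSO_k M_k$. Since $\Gcal$ is minor-closed, $\Biggg(G_n)$ is a connected graph in $\Gcal$, so by Theorem~\ref{thm:MSOMk} it is enough to prove that $\Biggg(G_n)$ contains a pendant copy of $M_k$ w.h.p. By Theorem~\ref{thm:pendantcopyadd} there is an $\alpha>0$ such that, w.h.p., $G_n$ contains at least $\alpha n$ pendant copies of $M_k$, while by the discussion following Theorem~\ref{thm:addfrag} we have $|\Frag(G_n)| = O(1)$ w.h.p. A pendant copy of the \emph{connected} graph $M_k$ lies inside a single component of $G_n$, so any pendant copy not contained in $\Biggg(G_n)$ is contained in $\Frag(G_n)$; but a graph on $O(1)$ vertices contains at most $O(1)$ induced copies of $M_k$, hence at most $O(1)$ pendant copies of $M_k$. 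For $n$ large, $\alpha n$ exceeds this bound, so w.h.p. at least one pendant copy of $M_k$ lies in $\Biggg(G_n)$, as required.

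Next I would combine this with the disjoint-union congruence. Writing $G_n = \Biggg(G_n) \cup \Frag(G_n)$, Lemma~\ref{lem:disjointunion} together with the previous step gives that, w.h.p., $G_n \equivMSO_k M_k \cup \Frag(G_n)$. On this event, since $\qd(\varphi) \le k$, we have $G_n \models \varphi$ if and only if $M_k \cup \Frag(G_n) \models \varphi$, and the latter holds if and only if $F_n \in \Fcal$, where $F_n$ denotes the isomorphism class of $\Frag(G_n)$ (using that $M_k \cup \Frag(G_n)$ is isomorphic to $M_k$ joined with a representative of $F_n$). Hence $\bigl| \Pee(G_n \models \varphi) - \Pee(F_n \in \Fcal) \bigr| \to 0$. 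Finally, Theorem~\ref{thm:addfrag} gives $F_n \to_{\text{TV}} R$, and since $\Ucal\Gcal$ is countable, total variation convergence yields $\Pee(F_n \in \Fcal) \to \Pee(R \in \Fcal)$ for the (automatically measurable) set $\Fcal$; combining the two displays completes the argument.

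The only step that is more than bookkeeping is showing that the giant component inherits a pendant copy of $M_k$: this is where one must rule out the a~priori conceivable scenario in which all of the linearly many pendant copies guaranteed by Theorem~\ref{thm:pendantcopyadd} are hidden inside the bounded-size fragment, which the counting observation above handles. Everything else is an assembly of Theorems~\ref{thm:MSOMk}, \ref{thm:addfrag} and~\ref{thm:pendantcopyadd} with Lemma~\ref{lem:disjointunion}.
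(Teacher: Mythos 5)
Your proposal is correct and follows essentially the same route as the paper: the paper defines $\Fcal$ exactly as you do, proves via Corollary~\ref{cor:Bigpendantadd} that $\Biggg(G_n)$ has a pendant copy of $M_k$ w.h.p.\ (using Theorem~\ref{thm:pendantcopyadd} plus tightness of the fragment size from Theorem~\ref{thm:addfrag}, just as in your counting step), and then concludes with Theorem~\ref{thm:MSOMk}, Lemma~\ref{lem:disjointunion} and the total-variation convergence $F_n \to_{\text{TV}} R$. The only difference is cosmetic: the paper splits the $\alpha n$ pendant copies between $\Biggg(G_n)$ and $\Frag(G_n)$, while you bound the number of copies a bounded-size fragment can hold, which is the same idea.
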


\noindent
(We remark that the Boltzmann-Poisson distribution on $\Ucal\Gcal$ is well-defined in this case because of
Theorem~\ref{thm:addario}.)
In the proof of Theorem~\ref{thm:MSOconvdetail} we make use of the following (nearly) trivial consequence
of Theorem~\ref{thm:pendantcopyadd}.

\begin{corollary}\label{cor:Bigpendantadd}
In the situation of Theorem~\ref{thm:MSOconvdetail}: if $H \in \Gcal$ is any fixed,
connected (rooted) graph, then w.h.p.~$\Biggg(G_n)$ contains a pendant copy of $H$.
\end{corollary}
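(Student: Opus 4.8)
The plan is to derive this directly from Theorem~\ref{thm:pendantcopyadd} together with the fact, recorded in the remarks following Theorem~\ref{thm:addfrag}, that $|\Biggg(G_n)| = n - O(1)$ w.h.p.; equivalently, there is a constant $C = C(\Gcal)$ such that $v(\Frag(G_n)) \le C$ w.h.p.

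First I would invoke Theorem~\ref{thm:pendantcopyadd} for the given fixed connected rooted graph $H$, obtaining a constant $\alpha = \alpha(H) > 0$ for which, w.h.p., $G_n$ contains at least $\alpha n$ pendant copies of $H$. The key structural remark is that, since $H$ is connected and a pendant copy of $H$ is joined to the rest of $G_n$ by exactly one edge, all $v(H)$ vertices of any pendant copy of $H$ lie in a single component of $G_n$; hence each pendant copy of $H$ lies \emph{either} entirely in $\Biggg(G_n)$ \emph{or} entirely in $\Frag(G_n)$.

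Then I would work on the w.h.p.\ event obtained by intersecting the two w.h.p.\ events above, namely that $G_n$ has at least $\alpha n$ pendant copies of $H$ and simultaneously $v(\Frag(G_n)) \le C$. On this event the number of pendant copies of $H$ lying inside $\Frag(G_n)$ is at most $\binom{C}{v(H)}$, a constant not depending on $n$, so at least $\alpha n - \binom{C}{v(H)}$ of them lie inside $\Biggg(G_n)$; for $n$ large this is positive, and letting $n \to \infty$ gives the claim.

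I do not expect any real obstacle here: the only point that needs to be spelled out carefully is the observation that a pendant copy of $H$, being connected and attached to the graph by a single edge, cannot be split between $\Biggg(G_n)$ and $\Frag(G_n)$, which is exactly what forces almost all of the linearly many pendant copies to survive inside the giant component, whose co-order is bounded. Alternatively, as in the proof of Corollary~\ref{cor:pendantcopyadd}, one can avoid conditioning altogether and combine the two probability bounds via a Bonferroni inequality.
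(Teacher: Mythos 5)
Your overall route is the same as the paper's: combine Theorem~\ref{thm:pendantcopyadd} (linearly many pendant copies of $H$ in $G_n$) with Theorem~\ref{thm:addfrag} (the fragment is small), and observe that a pendant copy of $H$, being connected and attached by a single edge, lies entirely in one component, so only a bounded number of copies can be swallowed by $\Frag(G_n)$. That structural observation is fine and is implicitly used by the paper as well.

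However, one step as you state it is false and the argument hinges on it: there is \emph{no} constant $C=C(\Gcal)$ with $v(\Frag(G_n))\le C$ w.h.p. The remark following Theorem~\ref{thm:addfrag} only says that $v(\Frag(G_n))$ is bounded in probability (tight): since $v(\Frag(G_n))$ converges in distribution to $v(R)$ and $\Pee(v(R)>C)>0$ for every fixed $C$ (for instance, $R$ is a single path on $C+1$ vertices with positive probability, as every addable minor-closed class contains all forests), we have $\lim_n \Pee(v(\Frag(G_n))\le C)=\Pee(v(R)\le C)<1$. Consequently the intersection of your two events has limiting probability at most $\Pee(v(R)\le C)<1$, and your argument as written only yields $\liminf_n\Pee(\Biggg(G_n)\text{ has a pendant copy of }H)\ge \Pee(v(R)\le C)$, not a w.h.p.\ statement. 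The repair is routine and is exactly what the paper does: fix $\eps>0$, choose $K=K(\eps)$ with $\Pee(v(R)>K)<\eps$, run your counting argument on the event $\{\text{at least }\alpha n\text{ pendant copies}\}\cap\{v(\Frag(G_n))\le K\}$, and conclude that the failure probability has $\limsup$ at most $\eps$ for every $\eps>0$, hence tends to $0$. (The paper phrases this by bounding the probability that $\Frag(G_n)$ contains at least $\alpha n/2$ pendant copies by $\Pee(v(\Frag(G_n))>K)\to\Pee(v(R)>K)$, with $K$ arbitrary.) With this quantifier fix your proof coincides with the paper's.
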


\begin{proof}
By Theorem~\ref{thm:pendantcopyadd}, there exists $\alpha > 0$ such that
$G_n$ contains at least $\alpha n$ pendant copies of $H$ w.h.p.
Let $A_n$ denote the event that $\Biggg(G_n)$ contains at least $\alpha n/2$ pendant copies of $H$, and let
$B_n$ denote the event that $\Frag(G_n)$ contains at least $\alpha n/2$ copies of $H$.
We must have $\Pee( A_n ) + \Pee( B_n ) \geq \Pee( A_n \cup B_n ) = 1 - o(1)$.
Now observe that, for every fixed $K > 0$ we have
\[ \Pee( B_n ) \leq \Pee( v(\Frag(G_n)) > K )
= \Pee( v(F_n) > K )
= \Pee( v(R) > K ) + o(1), \]
\noindent
where $F_n$ denotes the isomorphism class (i.e., unlabelled version) of $\Frag(G_n)$, the first inequality holds for $n$ sufficiently large
and we use Theorem~\ref{thm:addfrag} for the last equality. 
The probability $\Pee( v(R) > K )$ can be made arbitrarily small by choosing $K$ large enough. From this it follows
that $\Pee(B_n) = o(1)$. Hence $\Pee( A_n ) = 1-o(1)$.
\end{proof}

\begin{proofof}{Theorem~\ref{thm:MSOconvdetail}}
Let $\varphi \in\MSO$ be arbitrary, let $k$ be its quantifier depth and let $M_k$ be as provided by Theorem~\ref{thm:MSOMk}.
By Corollary~\ref{cor:Bigpendantadd}, w.h.p., $\Biggg(G_n)$ contains a pendant copy of $M_k$.
Hence, by Theorem~\ref{thm:MSOMk}, we have $\Biggg(G_n) \equivMSO_k M_k$ (w.h.p.).
From this it also follows, using Lemma~\ref{lem:disjointunion}, that
\[ G_n \equivMSO_k M_k \cup \Frag( G_n ) \quad \text{ w.h.p. } \]
\noindent
(where $\cup$ denotes vertex-disjoint union).
In particular we have $\Pee( G_n \models \varphi ) = \Pee( M_k \cup \Frag(G_n) \models \varphi ) + o(1)$.
Now let $\Fcal \subseteq \Ucal\Gcal$ be the set of all unlabelled graphs $H \in\Ucal\Gcal$ such that
$M_k \cup H \models \varphi$.
It follows using Theorem~\ref{thm:addfrag} that:
\[ \lim_{n\to\infty} \Pee( G_n \models \varphi ) =
\lim_{n\to\infty} \Pee( M_k \cup \Frag( G_n ) \models \varphi ) =
\lim_{n\to\infty} \Pee( F_n \in \Fcal ) = \Pee( R \in \Fcal ), \]
\noindent
where $F_n$ again denotes the isomorphism class (i.e., unlabelled version) of $\Frag(G_n)$.
\end{proofof}

\subsection{The \FO-zero-one law for surfaces}

The proof of the \FO-zero-one law for surfaces mimics that of the
\MSO-zero-one law for the addable case.
The main ingredient is the following analogue of Theorem~\ref{thm:MSOMk}.

\begin{lemma}\label{lem:FOFk}
For every $k \in \eN$ there exists $\ell = \ell(k)$ and a connected, rooted, planar graph $L_k$
such that following holds. For every connected graph $G$ such that
\begin{enumerate}
\item\label{itm:FOFk1} the subgraph of $G$ induced by the vertices at distance
at most $\ell$ from $v$ is planar for every $v \in V(G)$,
\item\label{itm:FOFk2} $G$ contains a pendant copy of $L_k$,
\end{enumerate}
it holds that $G \equivFO_k L_k$.
\end{lemma}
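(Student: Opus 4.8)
The plan is to follow the architecture of the proof of Theorem~\ref{thm:MSOMk}: build one graph $L_k$ that is $\equivFO_k$-universal among all graphs meeting hypotheses~(i)--(ii), and then use the pendant copy to push the $\equivFO_k$-type of $L_k$ onto $G$. The ingredient replacing the rooted $\equivMSO_k$-representatives is Gaifman's locality theorem (Theorem~\ref{thm:gaifman}): in $\FO$ only local behaviour matters, and hypothesis~(i) is exactly what then lets us take $L_k$ planar. Concretely, I would fix $k$, note via Lemma~\ref{lem:fixednoeq} that there are only finitely many $\FO$-sentences of quantifier depth $\le k$ up to logical equivalence, so it suffices to make $G$ and $L_k$ agree on each of them, and apply Theorem~\ref{thm:gaifman} to these finitely many sentences to obtain finite quantities $\ell_0=\ell_0(k)$, $n_0=n_0(k)$, $k_1=k_1(k)$ (the largest radius, number of witnesses, and local-formula quantifier depth occurring in the resulting Gaifman normal forms). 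It then suffices to make $G$ and $L_k$ satisfy exactly the same basic local sentences $\exists x_1\dots x_m\,\bigl(\bigwedge_i\psi^{B(x_i,\ell')}(x_i)\bigr)\wedge\bigl(\bigwedge_{i<j}\dist(x_i,x_j)>2\ell'\bigr)$ with $\ell'\le\ell_0$, $m\le n_0$ and $\qd(\psi)\le k_1$. I would set $\ell(k):=\ell_0$ and fix $k_0=k_0(k)\ge k_1$ large enough that relativising a formula of quantifier depth $\le k_1$ to a ball of radius $\le\ell_0$ produces a formula of quantifier depth $\le k_0$.

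Whether $\psi^{B(x,\ell')}(x)$ holds at a vertex $x$ of a graph $H$ depends only on the $\equivFO_{k_0}$-type of the pointed graph $(B_H(x,\ell'),x)$, and there are only finitely many $\equivFO_{k_0}$-types of pointed graphs (Lemma~\ref{lem:fixednoeq} applied to graphs with one distinguished vertex). Let $\Scal$ be the finite set of those types realised by some pointed graph $(\beta,b)$ with $\beta$ \emph{connected and planar} of radius exactly $\ell_0$ from $b$; for $s\in\Scal$ fix a representative $(\beta_s,b_s)$ and a vertex $w_s$ with $\dist(b_s,w_s)=\ell_0$. I would assemble $L_k$ from a long path $Q$, rooted at one endpoint $r_L$, by attaching for each $s\in\Scal$ and $j\in[n_0]$ a fresh copy of $\beta_s$ to a private vertex $q_{s,j}$ of $Q$ through a fresh path of length $\ell_0+1$ joining $q_{s,j}$ to $w_s$, with the $q_{s,j}$ pairwise far apart on $Q$ and far from $r_L$. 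Then $L_k$ is connected and planar (a path with planar blocks glued at cut vertices); the $\ell_0$-ball in $L_k$ of each of the $n_0$ copies of $b_s$ is \emph{exactly} $\beta_s$ (the copy of $\beta_s$ meets the rest of $L_k$ only at $w_s$, at distance $\ell_0$ from $b_s$, so the length-$(\ell_0+1)$ connecting path keeps everything else at distance $>\ell_0$ from $b_s$), and these copies lie pairwise, and from $r_L$, at distance $>2\ell_0$. The universality rests on an \emph{extension} observation I would check along the way: every connected planar pointed graph $(B,b)$ of radius $\le\ell_0$ that has a vertex at distance exactly $\ell'\le\ell_0$ from $b$ satisfies $(B_B(b,\ell'),b)\equivFO_{k_1}(B_{\beta_s}(b_s,\ell'),b_s)$ for some $s\in\Scal$ — attach a path of length $\ell_0-\ell'$ at that vertex to land inside $\Scal$ without altering the $\ell'$-ball, and use that $\equivFO_{k_0}$-equivalent pointed graphs have $\equivFO_{k_1}$-equivalent $\ell'$-balls, by the choice of $k_0$.

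For the transfer, take $G$ as in the hypotheses: the pendant copy of $L_k$, attached by a single edge at $r_L$, embeds isometrically, so $\diam(G)\ge\diam(L_k)>2\ell_0$, and hence in both $G$ and $L_k$ every ball $B(\,\cdot\,,\ell')$ with $\ell'\le\ell_0$ is a proper subgraph and so contains a vertex at distance exactly $\ell'$ from its centre, while being planar by~(i) (and by planarity of $L_k$). By the extension observation, every $\equivFO_{k_1}$-type of an $\ell'$-ball occurring in $G$, or in $L_k$, is that of $(B_{\beta_s}(b_s,\ell'),b_s)$ for some $s\in\Scal$; conversely, for each $s\in\Scal$ the $n_0$ copies of $b_s$ lying deep inside the pendant copy supply $n_0\ge m$ vertices of $G$, pairwise at distance $>2\ell_0\ge2\ell'$, whose $\ell'$-balls equal $B_{\beta_s}(b_s,\ell')$ (such a ball cannot reach $r_L$, so it is the one computed inside $L_k$), and likewise in $L_k$ itself. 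Thus for a basic local sentence $\theta$ as above, $G\models\theta$ iff some $\beta_s$ ($s\in\Scal$) satisfies $\psi$ at $b_s$ on its $\ell'$-ball, which is also exactly the criterion for $L_k\models\theta$; so $G$ and $L_k$ agree on all the relevant basic local sentences, whence $G\equivFO_k L_k$.

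The genuine work I anticipate is twofold: the quantifier-rank bookkeeping — coordinating $\ell_0(k)$, $n_0(k)$, $k_1(k)$ and $k_0(k)$ so that Gaifman's normal form, the relativisations, and the extension observation are mutually consistent — and the routine-but-fiddly verification that the gadget construction realises each target $\equivFO_{k_0}$-type \emph{exactly} as a ball inside a \emph{connected planar} graph (the ``radius exactly $\ell_0$'' clause and the length-$(\ell_0+1)$ connecting paths exist precisely to make this go through, circumventing the fact that there are infinitely many isomorphism types of small balls). The conceptual core, that a depth-$k$ $\FO$ sentence cannot tell $G$ apart from a planar graph carrying the same finite family of local types, is what hypothesis~(i) makes available.
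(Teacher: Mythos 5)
Your proof is correct, and it shares the outer architecture of the paper's argument (Gaifman reduction to basic local sentences, a long path to make local neighbourhoods homogeneous, and the pendant copy to transfer the $\equivFO_k$-type to $G$), but the core step — certifying which basic local sentences $L_k$ satisfies — is handled by a genuinely different mechanism. The paper does not enumerate local types at all: it takes $L_k$ to be a long path with a pendant copy of $M_{k'}$, where $M_{k'}$ is the graph supplied by Theorem~\ref{thm:MSOMk} applied to the class of planar graphs, and for each basic local sentence $\varphi_i$ it splits according to whether $\exists x,y:\psi_i^{B(x,\ell_i)}(x)\wedge(\dist(x,y)=\ell_i)$ is satisfiable by some connected planar graph $H$; if so, it builds an auxiliary planar graph $G'$ (several copies of $H$ hung from a vertex carrying a pendant $L_k$) that satisfies $\varphi_i$ by construction, and then uses $G'\equivMSO_{k'}M_{k'}\equivMSO_{k'}L_k$ to conclude $L_k\models\varphi_i$, with the homogeneity of the long path pushing $\varphi_i$ into any graph containing a pendant $L_k$; if not, neither $L_k$ nor $G$ can satisfy $\varphi_i$. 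You instead enumerate the finitely many $\equivFO_{k_0}$-types of pointed connected planar balls of radius exactly $\ell_0$, realize each one \emph{exactly} as a ball of $L_k$ via path gadgets of length $\ell_0+1$, and transfer through an explicit relativisation/extension argument. What each buys: the paper's route piggybacks on the already-proved MSO machinery and thereby avoids all the type-enumeration, exact-ball-realization and quantifier-rank bookkeeping, which is precisely where the delicate work sits in your version (you have set this bookkeeping up consistently, so I see no gap); your route is self-contained within $\FO$ and makes Lemma~\ref{lem:FOFk} independent of Theorem~\ref{thm:MSOMk}, at the cost of a more elaborate construction of $L_k$.
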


\begin{proof}
Recall that, up to logical equivalence, there are only
finitely many $\FO$-sentences of quantifier depth
$\leq k$. For each such sentence we fix an arbitrary
local sentence that is logically equivalent to it
(such a local sentence exists by Gaifman's theorem).
Let $\Lcal $ be the set of local sentences thus obtained and let
$\Bcal = \{\varphi_1, \dots, \varphi_m \}$ be the set of all
basic local sequences that appear in the boolean combinations $\Lcal$.
Let us set $k' := \max(\qd(\varphi_1), \dots, \qd(\varphi_m))$ equal to
the maximum quantifier depth over all these local sentences.
For each $1\leq i \leq m$, we can write
\begin{equation}\label{eq:varphiBL} \varphi_i =
\exists x_1, \dots, x_{n_i} :
\left(\bigwedge_{1\leq a \leq n_i} \psi_i^{B(x_a, \ell_i)}(x_a) \right) \wedge
\left(\bigwedge_{1 \leq a <  b \leq n_i} \dist(x_a, x_b ) > 2\ell_i \right). \end{equation}
\noindent
We now set $\ell = \ell(k) = \max( \ell_1,\dots, \ell_m )$, and we let
$L_k$ be a path of length $1 + 1000 \cdot \max(n_1,\dots, n_m) \cdot \ell$
with a pendant copy of $M_{k'}$ attached to one of its endpoints, where
$M_{k'}$ is as provided by Theorem~\ref{thm:MSOMk} for the class of planar graphs.
We root $L_k$ at the middle vertex of the long path.

Let us now fix an arbitrary $1\leq i \leq m$.
Let us first suppose that the sentence
$\exists x, y : \psi_i^{B(x,\ell_i)}(x) \wedge (\dist(x,y) = \ell_i)$
is not satisfied by any connected planar graph.
Since $L_k$ is planar and every point is at distance exactly $\ell_i$
from some other point, the sentence $\exists x : \psi_i^{B(x,\ell_i)}(x)$
does not hold for $L_k$. Hence $\varphi_i$ cannot hold for $L_k$ either.
Let $G$ be any connected graph with the properties~\ref{itm:FOFk1} and~\ref{itm:FOFk2}
listed in the statement of the lemma.
Observe that, since $G$ contains a pendant copy of $L_k$, for every $x \in V(G)$ there is 
a vertex at distance exactly
$\ell_i$ from $x$. Moreover, for every vertex $x$, the
subgraph $B(x,\ell_i)$ is connected and planar. 
But this shows $\exists x : \psi_i^{B(x_i,\ell_i)}(x)$
cannot hold for $G$, because that would imply that $\exists x, y : \psi_i^{B(x,\ell_i)}(x) \wedge (\dist(x,y) = \ell_i)$ also holds.
Hence $\varphi_i$ does not hold for $G$ either.

Next, let us suppose that the sentence
$\exists x, y : \psi_i^{B(x,\ell_i)}(x) \wedge (\dist(x,y) = \ell_i)$ is satisfied by at
least one connected planar graph $H$.
We construct a graph $G'$ as follows. We take $n_i$ copies of $H$
and join them to an extra point $u$, via edges to their $y$-vertices. We now attach
a pendant copy of $L_k$ to $u$. (See Figure~\ref{fig:Gprime} for a depiction.)

\begin{figure}[H]
\begin{center}
\input{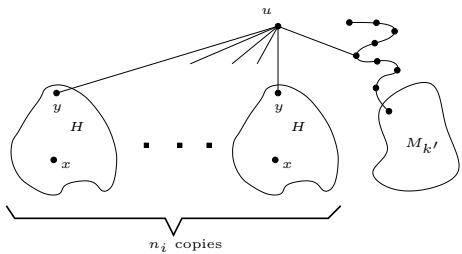}
\end{center}
\caption{The construction of $G'$.\label{fig:Gprime}}
\end{figure}

\noindent
Observe that, by construction, $G' \models \varphi_i$.
Since $G'$ and $L_k$ are planar, by Theorem~\ref{thm:MSOMk} we have
\[ G' \equivMSO_{k'} M_{k'} \equivMSO_{k'} L_k. \]
\noindent
So $L_k \models \varphi_i$ as well.
We now claim that, because of the special form~\eqref{eq:varphiBL} of $\varphi_i$ 
any graph $G$ that contains a pendant copy of $L_k$ will satisfy $\varphi_i$ as well.
To see this, we recall that $L_k$ will be hanging from the middle vertex of the long path.
This gives that, for every vertex of $L_k$ not on the path, its $\ell$-neighbourhood in $G$ will be 
identical to its $\ell$ neighbourhood in $L_k$. The same is true for the endpoint of the long path
that we did not attach $M_{k'}$ to, and for any vertex of the long path within distance
$\ell$ of one of the endpoints of the path.
Finally, we note that for every vertex $v$ on the long path of $L_k$ at distance $>\ell$ from both endpoints we can find a
vertex $u$ of $G$, also on the long path but far enough away from the middle and the endpoints
such that $B_{L_{k}}(v,\ell)$ and $B_G(u,\ell)$ are isomorphic as graphs rooted 
at $v$ resp.~$u$ (both will just be a path of length $2\ell+1$ with root the middle vertex).
So, appealing to the special form~\eqref{eq:varphiBL} of $\varphi_i$, we see that the claim holds. 
That is, $L_k \models \varphi_i$ implies 
$G \models \varphi_i$ for each $G$ that contains a pendant copy of $L_k$ (suspended from the 
middle vertex of the long path).

This proves that if $G$ is any graph satisfying the assumptions of the lemma, then
$G \models \varphi_i$ if and only if $L_k \models \varphi_i$.
Since $1\leq i \leq m$ was arbitrary, and every $\FO$ sentence of quantifier depth
at most $k$ can be written as a Boolean combination of $\varphi_1,\dots, \varphi_m$, it
follows that $G \equivFO_k L_k$ for every $G$ that satisfies the assumptions of the lemma.
\end{proof}

\begin{proofof}{Theorem~\ref{thm:surface01}}
The proof closely follows the structure of the proof of Theorem~\ref{thm:MSO01add}.
Let $\varphi \in \FO$ be arbitrary, let $k$ be its quantifier depth and let
$\ell, L_k$ be as provided by Lemma~\ref{lem:FOFk}.
By Corollary~\ref{cor:facewidth}, w.h.p., every $\ell$-neighbourhood of every vertex
of $C_n$ is planar.
By Corollary~\ref{cor:pendantcopysurfaces}, w.h.p., $C_n$ has a pendant copy of $L_k$.
It thus follows from Lemma~\ref{lem:FOFk} that, w.h.p., $C_n \equivFO_k L_k$.

This implies that if $L_k \models \varphi$ then
$\displaystyle \lim_{n\to\infty}\Pee( C_n\models \varphi ) = 1$.
And if, on the other hand, $L_k \models\neg\varphi$ then
$\displaystyle \lim_{n\to\infty}\Pee( C_n\models \varphi ) = 0$.

Since the graph $L_k$ provided by Lemma~\ref{lem:FOFk} does not
depend on the choice of the surface, the same is true for the value
of the limiting probability.
\end{proofof}

\subsection{The \FO-convergence law for surfaces}

\begin{theorem}\label{thm:surfacesconvdetail}
Fix a surface $S$ and let $\Gcal$ be the class of all graphs embeddable on $S$
and let $G_n \in_u \Gcal_n$.
Let $R$ be the Boltzmann--Poisson random graph corresponding to the class
of {\em planar} graphs $\Pcal$. For every $\varphi \in \FO$ there exists
a set $\Fcal = \Fcal(\varphi) \subseteq \Ucal\Pcal$ such that
\[ \lim_{n\to\infty} \Pee( G_n \models \varphi ) = \Pee( R \in \Fcal ). \]
\noindent
Moreover, $\Fcal(\varphi) \subseteq\Ucal\Pcal$ does not depend on the surface $S$.
\end{theorem}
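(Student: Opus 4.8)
The plan is to combine the strategy of the proof of Theorem~\ref{thm:MSOconvdetail} (how to handle the fragment) with that of the proof of Theorem~\ref{thm:surface01} (how to handle local planarity and $\FO$ in place of $\MSO$), using Lemma~\ref{lem:FOFk} in place of Theorem~\ref{thm:MSOMk}, Theorem~\ref{thm:surfacefrag} in place of Theorem~\ref{thm:addfrag}, and Theorem~\ref{thm:pendantcopysurfaces} in place of Theorem~\ref{thm:pendantcopyadd}. First I would record the surface analogue of Corollary~\ref{cor:Bigpendantadd}: if $H$ is any fixed connected planar (rooted) graph then, w.h.p., $\Biggg(G_n)$ contains a pendant copy of $H$. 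This is obtained by a verbatim repeat of the proof of Corollary~\ref{cor:Bigpendantadd} --- Theorem~\ref{thm:pendantcopysurfaces} produces linearly many pendant copies of $H$ in $G_n$, while Theorem~\ref{thm:surfacefrag} forces $v(\Frag(G_n))$ to be bounded in probability, so only $O(1)$ of those copies can lie outside the largest component.

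Now fix $\varphi \in \FO$, put $k := \qd(\varphi)$, and let $\ell = \ell(k)$ and the planar graph $L_k$ be as provided by Lemma~\ref{lem:FOFk}. I would then show that, w.h.p., $\Biggg(G_n)$ meets both hypotheses of Lemma~\ref{lem:FOFk}. For~\ref{itm:FOFk1}: by Theorem~\ref{thm:facewidth}, w.h.p.\ $B_{G_n}(v,\ell)$ is planar for every $v \in V(G_n)$; and for $v \in V(\Biggg(G_n))$ we have $B_{\Biggg(G_n)}(v,\ell) = B_{G_n}(v,\ell)$, since a ball around a vertex of a component lies entirely inside that component, hence it is planar as well. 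For~\ref{itm:FOFk2}: the surface analogue of Corollary~\ref{cor:Bigpendantadd} just noted gives a pendant copy of $L_k$ in $\Biggg(G_n)$ w.h.p. Hence Lemma~\ref{lem:FOFk} yields $\Biggg(G_n) \equivFO_k L_k$ w.h.p., and then Lemma~\ref{lem:disjointunion} gives $G_n = \Biggg(G_n) \cup \Frag(G_n) \equivFO_k L_k \cup \Frag(G_n)$ w.h.p.

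To identify the limit, set $\Fcal = \Fcal(\varphi) := \{ H \in \Ucal\Pcal : L_k \cup H \models \varphi \}$, and let $\Fcal'$ be the same set with $H$ ranging over all of $\Ucal\Gcal$. By the previous paragraph $\Pee(G_n \models \varphi) = \Pee(L_k \cup \Frag(G_n) \models \varphi) + o(1) = \Pee(F_n \in \Fcal') + o(1)$, where $F_n$ denotes the isomorphism class of $\Frag(G_n)$. By Theorem~\ref{thm:surfacefrag}, $F_n \to_{\text{TV}} R$, so $\Pee(F_n \in \Fcal') \to \Pee(R \in \Fcal')$; and since $R$ is supported on $\Ucal\Pcal$ we have $\Pee(R \in \Fcal') = \Pee(R \in \Fcal)$, giving $\lim_{n\to\infty}\Pee(G_n \models \varphi) = \Pee(R \in \Fcal)$ with $\Fcal \subseteq \Ucal\Pcal$ as required. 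For the ``moreover'' clause: Lemma~\ref{lem:FOFk} is stated purely in terms of the class of planar graphs, so $\ell$, $L_k$ and hence $\Fcal$ depend only on $\varphi$ and not on $S$; and $R$ is the Boltzmann--Poisson random graph of the planar class, which is likewise independent of $S$.

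I do not expect a genuinely hard step here: the real content has already been absorbed into Lemma~\ref{lem:FOFk} and into the quoted results of McDiarmid and of Chapuy--Fusy--Gim\'enez--Mohar--Noy on random graphs embeddable on a surface. The only two places needing a moment's care are (a) transferring the ``planar $\ell$-balls'' property from $G_n$ to $\Biggg(G_n)$, which is immediate from the component remark above, and (b) checking that restricting $\Fcal$ to $\Ucal\Pcal$ costs nothing, which holds because $F_n \to_{\text{TV}} R$ and $\Pee(R \notin \Ucal\Pcal) = 0$.
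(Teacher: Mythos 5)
Your proposal is correct and follows essentially the same route as the paper: the paper also states the surface analogue of Corollary~\ref{cor:Bigpendantadd} (its Corollary~\ref{cor:Bigpendantsurfaces}), combines Theorem~\ref{thm:facewidth} and that corollary to get $\Biggg(G_n)\equivFO_k L_k$ via Lemma~\ref{lem:FOFk}, applies Lemma~\ref{lem:disjointunion}, and then passes through the auxiliary set $\Fcal'$ and Theorem~\ref{thm:surfacefrag}, using that $R$ charges only planar graphs to replace $\Fcal'$ by $\Fcal$. Your explicit remark that $B_{\Biggg(G_n)}(v,\ell)=B_{G_n}(v,\ell)$ is a minor detail the paper leaves implicit, but it is the same argument.
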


The proof closely follows that of Theorem~\ref{thm:MSOconvdetail}.
We again separate out a (nearly) trivial consequence of in this
case Theorem~\ref{thm:pendantcopysurfaces}. The proof is completely analogous
to that of Corollary~\ref{cor:Bigpendantadd} and is left to the reader.

\begin{corollary}\label{cor:Bigpendantsurfaces}
If $H$ is any fixed, connected, planar (rooted) graph,
then w.h.p.~$\Biggg(G_n)$ contains a pendant copy of $H$. \hfill $\blacksquare$
\end{corollary}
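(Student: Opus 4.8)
The plan is to mimic the proof of Corollary~\ref{cor:Bigpendantadd} almost verbatim, replacing its two inputs by their surface analogues. First I would invoke Theorem~\ref{thm:pendantcopysurfaces}: since $H$ is a fixed connected planar (rooted) graph, there is a constant $\alpha > 0$ such that, w.h.p., $G_n$ contains at least $\alpha n$ pendant copies of $H$. The one structural point worth spelling out is that each pendant copy of $H$ lies entirely inside a single component of $G_n$: the copy of $H$ is connected and is joined to the rest of $G_n$ by a single edge, so it together with the endpoint of that edge lies in a common component; moreover a pendant copy of $H$ sitting inside a component $D$ of $G_n$ is precisely a pendant copy of $H$ in $D$, since $D$ is an induced subgraph incident with no edges of $G_n$ outside $D$. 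Consequently the number of pendant copies of $H$ in $G_n$ equals the number of those lying in $\Biggg(G_n)$ plus the number of those lying in $\Frag(G_n)$.

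Next I would set up the same dichotomy as in Corollary~\ref{cor:Bigpendantadd}. Let $A_n$ be the event that $\Biggg(G_n)$ contains at least $\alpha n / 2$ pendant copies of $H$, and let $B_n$ be the event that $\Frag(G_n)$ contains at least $\alpha n / 2$ pendant copies of $H$. By the previous paragraph, w.h.p.\ at least one of $A_n$, $B_n$ occurs, so $\Pee(A_n) + \Pee(B_n) \geq 1 - o(1)$. To rule out $B_n$, observe that a fragment containing $\alpha n / 2$ pendant copies of $H$ has at least $\alpha n / 2$ vertices; hence for every fixed $K$ and all $n$ large enough that $\alpha n / 2 > K$,
\[ \Pee(B_n) \leq \Pee( v(\Frag(G_n)) > K ) = \Pee( v(R) > K ) + o(1), \]
where now $R$ is the Boltzmann--Poisson random graph for the class of \emph{planar} graphs, and the equality uses Theorem~\ref{thm:surfacefrag} (which here plays the role that Theorem~\ref{thm:addfrag} plays in the addable case), together with the fact that total variation convergence $F_n \to_{\text{TV}} R$ forces convergence of the probability of the event that the fragment has more than $K$ vertices. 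Since $R$ takes values in $\Ucal\Pcal$ and is therefore a.s.\ finite, $\Pee(v(R) > K) \to 0$ as $K \to \infty$; letting $K \to \infty$ yields $\Pee(B_n) = o(1)$, and hence $\Pee(A_n) = 1 - o(1)$. In particular $\Biggg(G_n)$ contains a pendant copy of $H$ w.h.p., which is the assertion of the corollary.

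I do not expect a genuine obstacle: the whole content is that $\Frag(G_n)$ remains bounded in probability while $G_n$ has linearly many pendant copies, so those copies must, w.h.p., land in $\Biggg(G_n)$. The only things to verify are that Theorem~\ref{thm:pendantcopysurfaces} and Theorem~\ref{thm:surfacefrag} are the correct drop-in substitutes --- both are available, and both carry the planarity requirement on $H$ (respectively on the limiting object), which is exactly why the corollary is stated for planar $H$ --- plus the small bookkeeping remark that a pendant copy cannot straddle $\Biggg(G_n)$ and a fragment component.
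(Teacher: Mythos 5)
Your proposal is correct and is exactly the argument the paper intends: the paper explicitly leaves this corollary as a verbatim adaptation of the proof of Corollary~\ref{cor:Bigpendantadd}, with Theorem~\ref{thm:pendantcopysurfaces} replacing Theorem~\ref{thm:pendantcopyadd} and Theorem~\ref{thm:surfacefrag} (with $R$ the Boltzmann--Poisson random graph for planar graphs) replacing Theorem~\ref{thm:addfrag}, which is precisely what you do. Your extra remark that a pendant copy lies within a single component is a harmless bit of bookkeeping the paper leaves implicit.
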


\begin{proofof}{Theorem~\ref{thm:surfacesconvdetail}}
Let $\varphi \in \FO$ be arbitrary, let $k$ be its quantifier depth and
let $\ell, L_k$ be as provided by Lemma~\ref{lem:FOFk}.
By Theorem~\ref{thm:facewidth}, w.h.p.,
the $\ell$-neighbourhood of every vertex of $G_n$ is planar.
By Corollary~\ref{cor:Bigpendantsurfaces}, w.h.p.,
$\Biggg(G_n)$ has a pendant copy of $L_k$. It follows
by Lemma~\ref{lem:FOFk} that $\Biggg(G_n) \equivFO_k L_k$ (w.h.p.) and hence also,
using Lemma~\ref{lem:disjointunion}:
\[ G_n \equivFO_k L_k \cup \Frag( G_n ) \quad \text{ w.h.p. } \]
\noindent
(Here $\cup$ again denotes vertex-disjoint union).
Let $\Fcal \subseteq \Ucal\Pcal$ denote the set of all unlabelled
{\em planar} graphs $H$ such that $L_k \cup H \models \varphi$, and let $\Fcal'$
denote the set of all unlabelled graphs (not-necessarily planar)
with the same property. Using Theorem~\ref{thm:surfacefrag}, we find that:
\[
\lim_{n\to\infty} \Pee( G_n \models \varphi )
= \lim_{n\to\infty} \Pee( L_k \cup \Frag(G_n) \models \varphi )
= \lim_{n\to\infty} \Pee( F_n \in \Fcal' )
= \Pee( R \in \Fcal' ) = \Pee( R \in \Fcal ), \]
\noindent
where $F_n$ is the isomorphism class of $\Frag(G_n)$, and $R$ is
the Boltzmann--Poisson random graph associated with planar graphs.
(The last equality holds because the distribution of $R$ assigns probability zero to
non-planar graphs.)
It is clear that $\Fcal$ does not depend on the choice of surface, since $L_k$ does not
depend on the surface either.
\end{proofof}

\section{The limiting probabilities\label{sec:limitingprobabilities}}

Throughout this section $\Gcal$ will be an arbitrary addable, minor-closed class.
For notational convenience we shall write
\[ L_{\FO} :=
\left\{ \lim_{n\to\infty} \Pee( G_n \models \varphi ) : \varphi \in \FO \right\}, \quad \quad
L_{\MSO} :=
\left\{ \lim_{n\to\infty} \Pee( G_n \models \varphi ) : \varphi \in \MSO \right\},
\]
where $G_n \in_u \Gcal_n$ as usual.
Here the dependence on the class $\Gcal$ is suppressed in the notations $L_{\FO}, L_{\MSO}$ for readability. 
The class under consideration will 
always be clear from the context.

\subsection{There is always a gap in the middle}

In this section we prove the following lemma about the structure
of the logical limit sets of a general addable minor-closed class,
which together with Theorem~\ref{thm:addario} proves Proposition~\ref{prop:gap}. Notice that $1/G(\rho) \ge 1/\sqrt{e} > 1/2$
by Theorem~\ref{thm:addario}.

\begin{lemma}\label{lem:alwaysagap}
Let $\Gcal$ be an addable, minor-closed class of graphs. Then
$L_\MSO \cap \left(1-\frac{1}{G(\rho)}, \frac{1}{G(\rho)} \right) = \emptyset$.
\end{lemma}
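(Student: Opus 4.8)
The idea is that by Theorem~\ref{thm:MSOconvdetail} every limiting probability has the form $\Pee(R \in \Fcal)$ for some family $\Fcal \subseteq \Ucal\Gcal$ of (isomorphism classes of) unlabelled graphs in $\Gcal$, where $R$ is the Boltzmann--Poisson random graph. Since $\Gcal$ is decomposable, $R$ is a disjoint union of connected components, and by Lemma~\ref{lem:Boltzcomp} the numbers of components isomorphic to each fixed connected graph are independent Poissons. The key structural fact is that the event $\{R = \emptyset\}$ (equivalently $R$ has no components at all) is an atom of probability $1/G(\rho)$ sitting ``at the bottom'': whether $\emptyset \in \Fcal$ or not forces the limiting probability into one of the two tails. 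Concretely, I will show that for any $\Fcal$, either $\Pee(R \in \Fcal) \le 1 - 1/G(\rho)$ (when $\emptyset \notin \Fcal$) or $\Pee(R \in \Fcal) \ge 1/G(\rho)$ (when $\emptyset \in \Fcal$), which is exactly the claimed gap.

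\textbf{Key steps.} First, recall from \eqref{eq:conneq} that $\Pee(R = \emptyset) = 1/G(\rho)$; here I use that $G(\rho) < \infty$, guaranteed by Theorem~\ref{thm:addfrag} since $\Gcal$ is addable and minor-closed (and not the class of all graphs, as $\rho > 0$). Next, let $\varphi \in \MSO$ be arbitrary with limiting probability $p := \lim_{n\to\infty}\Pee(G_n \models \varphi)$, and apply Theorem~\ref{thm:MSOconvdetail} to get $\Fcal = \Fcal(\varphi) \subseteq \Ucal\Gcal$ with $p = \Pee(R \in \Fcal)$. Now split into two cases according to whether the empty graph $\emptyset$ lies in $\Fcal$ (note $\emptyset \in \Ucal\Gcal$ since $|\Gcal_0| = 1$). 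If $\emptyset \notin \Fcal$, then $\{R \in \Fcal\} \subseteq \{R \neq \emptyset\}$, so $p \le \Pee(R \neq \emptyset) = 1 - 1/G(\rho)$. If $\emptyset \in \Fcal$, then $\{R = \emptyset\} \subseteq \{R \in \Fcal\}$, so $p \ge \Pee(R = \emptyset) = 1/G(\rho)$. In either case $p \notin \left(1 - \frac{1}{G(\rho)}, \frac{1}{G(\rho)}\right)$, which is a genuinely nonempty open interval since $1/G(\rho) \ge 1/\sqrt{\upe} > 1/2$ by Theorem~\ref{thm:addario}. This establishes $L_\MSO \cap \left(1 - \frac{1}{G(\rho)}, \frac{1}{G(\rho)}\right) = \emptyset$.

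\textbf{Main obstacle.} The argument above is essentially a one-line observation once Theorem~\ref{thm:MSOconvdetail} is in hand, so there is no real obstacle at the level of Lemma~\ref{lem:alwaysagap} itself. The only subtlety worth double-checking is that the set $\Fcal(\varphi)$ produced in the proof of Theorem~\ref{thm:MSOconvdetail} really is an honest subset of $\Ucal\Gcal$ (it was defined there as $\{H \in \Ucal\Gcal : M_k \cup H \models \varphi\}$), so that asking whether $\emptyset \in \Fcal$ makes sense and corresponds exactly to whether $M_k \models \varphi$ — this is immediate from that construction and needs no further work. Combined with Theorem~\ref{thm:addario}, which supplies $G(\rho) \le \sqrt{\upe}$ and hence $1 - \upe^{-1/2} \le 1 - 1/G(\rho) < 1/G(\rho)$, this completes the proof of Proposition~\ref{prop:gap} with the explicit numerical bounds $1 - \upe^{-1/2} \approx 0.3935$ and $\upe^{-1/2} \approx 0.6065$.
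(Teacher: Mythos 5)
Your argument is correct, but it takes a genuinely different route from the paper's. The paper proves Lemma~\ref{lem:alwaysagap} by conditioning on the event $E$ that $G_n$ is connected: given $E$, the graph $G_n$ is distributed like $C_n$, so the $\MSO$-zero-one law for connected graphs (Theorem~\ref{thm:MSO01add}) forces $\Pee(C_n\models\varphi)$ to tend to $0$ or $1$, and combining this with $\Pee(E)\to 1/G(\rho)$ from \eqref{eq:conneq} gives $\lim_{n\to\infty}\Pee(G_n\models\varphi)\geq 1/G(\rho)$ in the first case and, applying the same bound to $\neg\varphi$, $\leq 1-1/G(\rho)$ in the second. You instead invoke the explicit form of the convergence law, Theorem~\ref{thm:MSOconvdetail}, write the limit as $\Pee(R\in\Fcal)$, and case on whether $\emptyset\in\Fcal$, using the atom $\Pee(R=\emptyset)=1/G(\rho)$ of the Boltzmann--Poisson graph. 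Both proofs are sound and both ultimately rest on the same quantity $1/G(\rho)$ from \eqref{eq:conneq}; the paper's version is lighter, needing only the zero-one law for $C_n$ rather than the full convergence machinery, and it makes the source of the gap (the connected/disconnected dichotomy) transparent, while yours proves the formally stronger statement that the entire set $\{\Pee(R\in\Fcal):\Fcal\subseteq\Ucal\Gcal\}$ --- which by Theorem~\ref{thm:finint} is the closure of $L_\MSO$ --- avoids the open interval, so it dovetails naturally with Section~\ref{sec:limitingprobabilities}. One small remark: your parenthetical ``and not the class of all graphs, as $\rho>0$'' has the implication backwards (properness of the class is what guarantees $\rho>0$, by Norine et al.); what you actually need, like the paper, is $G(\rho)<\infty$, which Theorem~\ref{thm:addfrag} supplies for addable minor-closed classes other than the class of all graphs, and your appeal to Theorem~\ref{thm:addario} for the nonemptiness of the interval is only needed for Proposition~\ref{prop:gap}, not for the lemma itself.
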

\begin{proof}
Let $\varphi \in \MSO$ be arbitrary, and let $E$ denote the event that $G_n$
is connected. Observe that if we condition on $E$, then $G_n$ is distributed
like $C_n$ ($G_n$ is then chosen uniformly at random from all connected graphs
on $n$ vertices). Since the $\MSO$-zero-one law holds for $C_n$ by
Theorem~\ref{thm:MSO01add} we have either
$\Pee( C_n \models \varphi ) = 1-o(1)$ or $\Pee( C_n \models \varphi ) = o(1)$.
Let us first assume the former is the case. Then
\[ \begin{array}{rcl}
\lim_{n\to\infty} \Pee( G_n \models \varphi )
& = &
\lim_{n\to\infty} \Pee( G_n \models \varphi | E ) \cdot \Pee( E ) + \Pee( G_n \models \varphi | E^c ) \cdot \Pee( E^c ) \\
& \geq  &
\lim_{n\to\infty} \Pee( G_n \models \varphi | E ) \cdot \Pee( E ) \\
& = & \lim_{n\to\infty} \Pee( C_n \models \varphi ) \cdot \Pee( E ) \\
& = & 1 \cdot 1/G(\rho),
\end{array} \]

\noindent
where for the last equality we used the general limit in \eqref{eq:conneq} above.
Now suppose $\Pee( C_n \models \varphi ) = o(1)$.
Then $\lim_{n\to\infty} \Pee( G_n \models \neg\varphi ) \geq 1/G(\rho)$
by the above, hence $\lim_{n\to\infty} \Pee( G_n \models \varphi ) \leq 1 - 1/G(\rho)$.
\end{proof}

\subsection{The closure is a finite union of intervals}

Here we prove the following more detailed version of Theorem~\ref{thm:finintintro}.
Note that both $L_{\mathrm{MSO}}$ and $L_{\mathrm{FO}}$ are countable sets since the
set of MSO-sentences is countable.

\begin{theorem}\label{thm:finint}
Let $\Gcal$ be an addable, minor-closed class of graphs and let
$R$ be the corresponding Boltzmann--Poisson random graph. Then
\[ \clo( L_{\MSO} ) = \clo( L_{\FO} ) = \{ \Pee( R \in \Fcal ) :
\Fcal \subseteq \Ucal\Gcal \}, \]
\noindent
is a finite union of closed intervals.
\end{theorem}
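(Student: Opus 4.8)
Write $m(H):=\Pee(R=H)=\rho^{v(H)}/(G(\rho)\aut(H))$ for $H\in\Ucal\Gcal$, and set $S:=\{\Pee(R\in\Fcal):\Fcal\subseteq\Ucal\Gcal\}$. The plan is to prove the chain
\[
S\ \subseteq\ \clo(L_{\FO})\ \subseteq\ \clo(L_{\MSO})\ \subseteq\ S
\]
and, separately, that $S$ is a finite union of closed intervals; since that makes $S$ closed, the chain then forces $\clo(L_{\FO})=\clo(L_{\MSO})=S$, which is the theorem. Two links are immediate: $\clo(L_{\FO})\subseteq\clo(L_{\MSO})$ because $\FO\subseteq\MSO$, and $\clo(L_{\MSO})\subseteq S$ because Theorem~\ref{thm:MSOconvdetail} gives $L_{\MSO}\subseteq S$ and $S$ is (once shown) closed. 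So the two substantive tasks are (I) showing $S$ is a finite union of closed intervals, and (II) showing $S\subseteq\clo(L_{\FO})$.

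For (I), I would enumerate $\Ucal\Gcal=\{H_1,H_2,\dots\}$ so that $q_i:=m(H_i)$ is nonincreasing (possible since $\sum_i q_i=1$, so $q_i\to 0$); then $S=\{\sum_{i\in A}q_i:A\subseteq\eN\}$, and by Corollary~\ref{cor:subsums} it suffices to produce an $i_0$ with $q_i\le\sum_{j>i}q_j$ for all $i>i_0$. This I would deduce from the estimate
\begin{equation}\label{eq:noconcentration}
\Pee\big(m(R)<\eps\big)\ \ge\ \eps\qquad\text{for all sufficiently small }\eps>0,
\end{equation}
where $m(R)$ denotes the value $m(\cdot)$ at the random graph $R$: taking $i_0$ so large that every $q_i$ with $i>i_0$ lies below the threshold of \eqref{eq:noconcentration}, every $H_\ell$ with $m(H_\ell)<q_i$ occurs at a position $>i$, so $\sum_{j>i}q_j\ge\Pee(m(R)<q_i)\ge q_i$. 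To prove \eqref{eq:noconcentration}: a non-trivial addable minor-closed class contains every tree, so $|\Gcal_m|\ge m^{m-2}$, whence $|\Gcal_m|/m!\to\infty$ and $\rho\le 1/e<1$. Fix small $\eps$ and put $k':=\lceil\log_\rho(\eps G(\rho))\rceil$. As $0<\rho<1$, every $H$ with $v(H)>k'$ has $m(H)\le\rho^{v(H)}/G(\rho)\le\rho^{k'+1}/G(\rho)<\eps$, so $\{v(R)>k'\}\subseteq\{m(R)<\eps\}$ and
\[
\Pee(m(R)<\eps)\ \ge\ \Pee(v(R)=k'+1)\ =\ \frac{\rho^{k'+1}|\Gcal_{k'+1}|}{G(\rho)(k'+1)!}.
\]
Since the definition of $k'$ also gives $\eps\le\rho^{k'-1}/G(\rho)$, the right-hand side is $\ge\eps$ as soon as $|\Gcal_{k'+1}|/(k'+1)!\ge\rho^{-2}$, which holds for all large $k'$ by the divergence above. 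Corollary~\ref{cor:subsums} then exhibits $S$ as a union of $2^{i_0}$ translates of $[0,\sum_{i>i_0}q_i]$.

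For (II), since $\Pee(R\in\Fcal)=\lim_{N\to\infty}\Pee(R\in\Fcal\cap\Ucal\Gcal_{\le N})$ by monotone convergence, it is enough to put $\Pee(R\in\Fcal')$ into $\clo(L_{\FO})$ for every finite $\Fcal'\subseteq\Ucal\Gcal$. Fix such an $\Fcal'$ with all members on $\le N$ vertices and fix $M\ge N$; let $D_1,\dots,D_r$ list the connected graphs of $\Gcal$ on $\le M$ vertices. For fixed connected $D$ the property ``$G$ has exactly $c$ components isomorphic to $D$'' is first order (quantify over the vertices of each of the $c$ copies and universally forbid edges from the remaining vertices into a copy; fixed counts are $\FO$), so the disjunction $\psi$, over the finite set $A:=\{(\text{multiplicity of }D_j\text{ in }H)_j:H\in\Fcal'\}$, of the sentences ``$G$ has exactly $c_j$ components $\cong D_j$ for every $j$'' is an $\FO$ sentence. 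For large $n$ the largest component of $G_n$ has more than $M$ vertices w.h.p., so the $D_j$-component counts of $G_n$ equal those of $\Frag(G_n)$; using $\Frag(G_n)\to_{\text{TV}}R$ (Theorem~\ref{thm:addfrag}) and the independence of the component counts of $R$ (Lemma~\ref{lem:Boltzcomp}),
\[
\lim_{n\to\infty}\Pee(G_n\models\psi)\ =\ \Pee\big((Z_{D_1},\dots,Z_{D_r})\in A\big)\ =\ \sum_{H\in\Fcal'}\ \prod_{j=1}^r\Pee\big(Z_{D_j}=c_j^{(H)}\big),
\]
with $Z_D$ the number of components of $R$ isomorphic to $D$, mean $\mu_D=\rho^{v(D)}/\aut(D)$, and $c_j^{(H)}$ the multiplicity of $D_j$ in $H$ (the last equality uses that each $H\in\Fcal'$ has all components among $D_1,\dots,D_r$). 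This value lies in $L_{\FO}$, and as $M\to\infty$ it tends to $\sum_{H\in\Fcal'}\prod_D\Pee(Z_D=c_D^{(H)})=\sum_{H\in\Fcal'}m(H)=\Pee(R\in\Fcal')$, the infinite product converging because $\prod_D e^{-\mu_D}=1/G(\rho)>0$. Hence $\Pee(R\in\Fcal')\in\clo(L_{\FO})$, and (II) follows.

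The hard part is the non-concentration estimate \eqref{eq:noconcentration}: it says the Boltzmann--Poisson mass is spread out enough that no Cantor-type set of limiting probabilities can arise, and it is essentially the only place where genuine structural input on addable minor-closed classes ($\rho<1$ and the superexponential lower bound on $|\Gcal_m|$) enters; everything else is manipulation of the Boltzmann--Poisson description together with convergence facts already established.
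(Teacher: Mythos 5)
Your proposal is correct, and its overall architecture (Theorem~\ref{thm:MSOconvdetail} gives $L_{\MSO}\subseteq\{\Pee(R\in\Fcal):\Fcal\subseteq\Ucal\Gcal\}$, density of $L_{\FO}$ in that set, and Corollary~\ref{cor:subsums} applied to the non-increasing masses $\Pee(R=H)$) is the same as the paper's; your density step (II) is essentially the paper's density lemma, implemented with exact small-component-profile sentences and a limit $M\to\infty$ rather than the paper's $\Frag_K$ sandwich with error $\Pee(v(R)>K)$, and both versions are fine. Where you genuinely diverge is the verification of the tail condition $p_i\le\sum_{j>i}p_j$ for large $i$: the paper introduces the events $E_k$ (no component of order $<k$, exactly one of order $k$), computes $q_k=\mu_k\upe^{-(\mu_1+\cdots+\mu_k)}$, and uses \emph{smoothness} of $\Gcal$ (Theorem~\ref{thm:addissmooth}) to get $q_{k+1}/q_k\to1$, from which the tail condition follows by a comparison argument; you instead prove the anti-concentration bound $\Pee\bigl(m(R)<\eps\bigr)\ge\eps$ for small $\eps$, using only that an addable class contains all trees (start from isolated vertices and add bridges), hence $|\Gcal_n|/n!\ge n^{n-2}/n!\to\infty$ and $\rho\le\upe^{-1}<1$, together with $\Pee(v(R)=k'+1)=\rho^{k'+1}|\Gcal_{k'+1}|/\bigl(G(\rho)(k'+1)!\bigr)$. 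I checked the details (the choice $k'=\lceil\log_\rho(\eps G(\rho))\rceil$, the inclusion $\{v(R)>k'\}\subseteq\{m(R)<\eps\}$, the deduction $\sum_{j>i}q_j\ge\Pee(m(R)<q_i)\ge q_i$ from monotonicity, and the product formula $\prod_D\Pee(Z_D=c_D^{(H)})=\Pee(R=H)$ used in (II)) and they go through. What each approach buys: the paper's route leans on McDiarmid's smoothness theorem, which is already imported for other results and whose ratio statement $q_{k+1}/q_k\to1$ is sharper information, feeding naturally into the explicit computations for forests and planar graphs later in the section; your route avoids smoothness entirely and only needs decomposability, $G(\rho)<\infty$ (via Theorem~\ref{thm:addfrag}), $\rho<1$ and $|\Gcal_n|/n!\to\infty$, so it is more elementary and would apply verbatim to some non-addable decomposable classes with these properties. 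Both arguments, like the theorem itself, implicitly exclude the class of all graphs (where $\rho=0$ and $R$ is undefined), which is harmless.
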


Before starting the proof of Theorem~\ref{thm:finint}, we will derive a number of auxiliary lemmas.
From Theorem~\ref{thm:MSOconvdetail} we see immediately that
$L_{\FO} \subseteq L_{\MSO} \subseteq \{ \Pee( R \in \Fcal ) : \Fcal \subseteq \Ucal\Gcal \}$.
The next lemma shows that $L_{\FO}$ is in fact dense in
$\{ \Pee( R \in \Fcal ) : \Fcal \subseteq \Ucal\Gcal \}$.

\begin{lemma} For every $\Fcal \subseteq \Ucal\Gcal$ and every $\eps > 0$
there is a $\varphi \in\FO$ such that
\[ | \Pee( R \in \Fcal ) - \lim_{n\to\infty} \Pee( G_n \models \varphi ) | \leq \eps . \]
\end{lemma}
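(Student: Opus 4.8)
The plan is to exploit the fact that, for a fixed finite connected graph $J$ and a fixed integer $m$, the statement ``$G$ has exactly $m$ components isomorphic to $J$'' is expressible in $\FO$, combined with Theorem~\ref{thm:addfrag} (fragment convergence $\Frag(G_n)\to_{\mathrm{TV}}R$ and $|\Biggg(G_n)|=n-O(1)$ w.h.p.). The idea is that $R$ is almost surely a finite graph, so $\Pee(R\in\Fcal)$ is, up to $\eps$, a finite sum of probabilities of the form $\Pee(R\cong H)$ for graphs $H$ of bounded order, and these small ``shapes'' of the fragment can be detected in $\FO$.

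Concretely, I would first fix $K\in\eN$ large enough that $\Pee(v(R)>K)\le \eps/2$ (possible since $R$ takes values in finite graphs a.s.), and set $\Fcal_K:=\{H\in\Fcal : v(H)\le K\}$, a finite set, so that $|\Pee(R\in\Fcal)-\Pee(R\in\Fcal_K)|\le\Pee(v(R)>K)\le\eps/2$. For each isomorphism class $J$ of connected graphs on at most $K$ vertices and each $0\le m\le K$, let $\chi_{J,m}$ be an $\FO$-sentence saying ``$G$ has exactly $m$ components isomorphic to $J$''; this is written using a formula $\iota_J(x_1,\dots,x_{v(J)})$ expressing that the $x_i$ induce a copy of $J$ with no edges leaving $\{x_1,\dots,x_{v(J)}\}$ (since $J$ is connected this picks out precisely the components isomorphic to $J$), and then asserting that there exist $m$ pairwise disjoint such tuples but not $m+1$. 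For $H\in\Fcal_K$, writing $n_J(H)$ for the number of components of $H$ isomorphic to $J$, put $\theta_H:=\bigwedge_{J:\,v(J)\le K}\chi_{J,n_J(H)}$ and finally $\varphi:=\bigvee_{H\in\Fcal_K}\theta_H$. This is a finite boolean combination of $\FO$-sentences, hence $\FO$.

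Next I would check that on the event $\{v(\Frag(G_n))\le K\}$ and for $n\ge 2K+1$ we have $G_n\models\varphi$ if and only if the (unlabelled) fragment $F_n$ lies in $\Fcal_K$. Indeed, on this event $|\Biggg(G_n)|\ge n-K>K$, so $\Biggg(G_n)$ is the unique component of $G_n$ of order exceeding $K$; hence the component counts of $G_n$ over graphs of order at most $K$ coincide with those of $\Frag(G_n)$, and $\Frag(G_n)$, having at most $K$ vertices, is determined by these counts. Therefore $|\Pee(G_n\models\varphi)-\Pee(F_n\in\Fcal_K)|\le\Pee(v(\Frag(G_n))>K)$ for $n$ large. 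By Theorem~\ref{thm:addfrag}, $\Pee(F_n\in\Fcal_K)\to\Pee(R\in\Fcal_K)$ and $\Pee(v(\Frag(G_n))>K)\to\Pee(v(R)>K)\le\eps/2$; and $\lim_n\Pee(G_n\models\varphi)$ exists because $\varphi\in\FO\subseteq\MSO$ (Theorem~\ref{thm:MSOconvdetail}). Combining, $|\lim_n\Pee(G_n\models\varphi)-\Pee(R\in\Fcal_K)|\le\Pee(v(R)>K)\le \eps/2$, and then $|\lim_n\Pee(G_n\models\varphi)-\Pee(R\in\Fcal)|\le\eps$.

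The main obstacle is this last equivalence: $\FO$ cannot refer to ``the giant component'' directly, so one must route the argument through the fixed threshold $K$ and use that, w.h.p., there is a unique component of order larger than $K$, whose removal recovers exactly $\Frag(G_n)$ — the probabilistic content of which is entirely supplied by Theorem~\ref{thm:addfrag}. The only other point needing care, though it is routine, is the explicit construction of the $\FO$-sentences $\chi_{J,m}$ for counting components isomorphic to a fixed small graph.
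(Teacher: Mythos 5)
Your proposal is correct and follows essentially the same route as the paper: fix $K$ with $\Pee(v(R)>K)$ small, reduce to the finite family of small fragments, express the relevant component counts (equivalently, the isomorphism type of the union of components of order at most $K$) by an $\FO$-sentence, and transfer via Theorem~\ref{thm:addfrag}. The only differences are cosmetic (the paper first passes to an arbitrary finite $\Fcal'\subseteq\Fcal$ and phrases the event as $\{\Frag_K(G_n)\in\Fcal\}$, while you argue the identification with $\Frag(G_n)$ via the uniqueness of the component of order exceeding $K$ for $n\ge 2K+1$), so no further comment is needed.
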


\begin{proof}
First note that it in fact suffices to consider only
{\em finite} $\Fcal \subseteq \Ucal\Gcal$.
(To see this, notice that there is always a finite $\Fcal'\subseteq\Fcal$ such that
$\Pee( R \in \Fcal' ) \geq \Pee( R \in \Fcal ) - \eps/2$).
Let us thus assume $\Fcal$ is finite.

Let us pick a $K$ such that $\Pee( v(R) > K ) < \eps$ and let $\Frag_K(G)$
denote the union of all components of $G$ of order at most $K$. Let us observe that,
for every $F \in \Fcal$, the event $\{ \Frag_K(G_n) \cong F \}$ is $\FO$-expressible.
(We simply stipulate, for each of the connected graphs $H \in \Ucal\Ccal$
on at most $K$ vertices, how many components isomorphic to $H$
the random graph $G_n$ should contain.) Since $\Fcal$ is finite,
the event $\{ \Frag_K(G_n) \in \Fcal \} = \bigcup_{F\in\Fcal} \{ \Frag_K(G_n) \cong F \}$
is therefore also $\FO$-expressible. Observe that
\[ \begin{array}{rcl}
\displaystyle \lim_{n\to\infty} \Pee{\Big[} \Frag_K(G_n) \in \Fcal  {\Big]}
& \leq &
\displaystyle \lim_{n\to\infty} \Pee{\Big[} \Frag(G_n) \in \Fcal \text{ or }
v(\Frag(G_n) ) > K {\Big]}  \\
& \leq &
\displaystyle \lim_{n\to\infty} \Pee{\Big[} \Frag(G_n) \in \Fcal {\Big]} + \lim_{n\to\infty}  \Pee{\Big[}  v(\Frag(G_n)) > K {\Big]}  \\
& < &
\Pee( R \in \Fcal ) + \eps,
\end{array} \]

\noindent
where in the last line we used that $\Pee( v(R) > K ) < \eps$ by the choice of $K$.

Similarly,
\[ \begin{array}{rcl}
\displaystyle \lim_{n\to\infty} \Pee{\Big[} \Frag_K(G_n) \in \Fcal {\Big]}
& \geq &
\displaystyle
\lim_{n\to\infty} \Pee{\Big[} \Frag(G_n) \in \Fcal \text{ and }
v(\Frag(G_n)) \leq  K {\Big]} \\
& \geq &
\displaystyle
\lim_{n\to\infty} \Pee{\Big[} \Frag(G_n) \in \Fcal {\Big]} - \lim_{n\to\infty} \Pee{\Big[}
v(\Frag(G_n)) >  K {\Big]} \\
& > &
\Pee( R \in \Fcal ) - \eps.
\end{array} \]
\noindent
This concludes the proof of the lemma.
\end{proof}

Having established that $L_{\FO}$ is a dense subset of
$\{ \Pee( R \in \Fcal ) : \Fcal \subseteq \Ucal\Gcal \}$,
to prove Theorem~\ref{thm:finint} it suffices to show that this
last set is a finite union of closed intervals.
For the remainder of this section the random graph $G_n$ will no longer play any role, and
all mention of probabilities, events etc.~are with respect to
the Boltzmann--Poisson random graph $R$.


Let us order the unlabelled graphs $G_1, G_2, \dots \in \Ucal\Gcal$ in such a way that
the probabilities $p_i := \Pee( R = G_i )$ are non-increasing.
By Corollary~\ref{cor:subsums}, to prove Theorem~\ref{thm:finint} it suffices
to show that $p_i \leq \sum_{j>i} p_j$ for all sufficiently large $i$.
For $k \in \eN$, let us write:
\[ \begin{array}{c}
E_k :=
{\Big\{}\text{$R$ contains no component with $<k$ vertices and exactly one component with $k$ vertices}{\Big\}}, \\
q_k := \Pee( E_k ).
\end{array} \]

\begin{lemma}\label{lem:trivtut}
For every $k \in \eN$, there is a set $A_k \subseteq \eN$ such that
$q_k = \sum_{i \in A_k} p_i$. Moreover, the sets $A_k$ are disjoint.
\end{lemma}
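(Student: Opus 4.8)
The plan is to recognise that this lemma is a purely measure-theoretic bookkeeping statement. Since $R$ takes values in the countable set $\Ucal\Gcal = \{G_1, G_2, \dots\}$, every event is by definition a subset of $\Ucal\Gcal$, hence a countable union of the atoms $\{G_i\}$. So I would simply set
\[ A_k := \{ i \in \eN : G_i \in E_k \}, \]
that is, $A_k$ collects the indices of those unlabelled graphs in $\Ucal\Gcal$ that have no component on fewer than $k$ vertices and exactly one component on $k$ vertices. By countable additivity of $\Pee$,
\[ q_k = \Pee( E_k ) = \sum_{i\,:\, G_i \in E_k} \Pee( R = G_i ) = \sum_{i \in A_k} p_i , \]
which is the first assertion.

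For the disjointness of the $A_k$ it suffices to show that the events $E_k$ are pairwise disjoint as subsets of $\Ucal\Gcal$. Here I would argue as follows: if $H \in E_k$ then $H$ is nonempty and a smallest component of $H$ has order exactly $k$ (it has a component on $k$ vertices, and none on fewer). Consequently, if $k < k'$ and $H \in E_{k'}$, then $H$ has no component on fewer than $k'$ vertices, in particular none on $k$ vertices, so $H \notin E_k$. Hence $E_k \cap E_{k'} = \emptyset$ for $k \neq k'$, and therefore $A_k \cap A_{k'} = \emptyset$.

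The only points that deserve a word of care — and the ``hard part'' is really just observing that they are harmless — are the degenerate cases: the empty graph has no components at all and so lies in no $E_k$ (consistent with the definitions, since each $E_k$ demands at least one component), and for $k = 1$ the clause ``no component with fewer than $1$ vertices'' is vacuously true, so that $E_1$ is simply the event that $R$ has exactly one isolated vertex. Neither affects the argument. I do not foresee any genuine obstacle; this lemma is purely preparatory, and its content (the decomposition $q_k = \sum_{i \in A_k} p_i$ with the $A_k$ disjoint) will be combined with the subsequent estimates on the $q_k$ and with Corollary~\ref{cor:subsums} to establish that the closure of the set of limiting probabilities is a finite union of intervals.
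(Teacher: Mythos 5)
Your proof is correct and follows exactly the same route as the paper, which simply observes that $E_k$ corresponds to a family $\Fcal\subseteq\Ucal\Gcal$ of unlabelled graphs (so $q_k$ is a sum of the corresponding $p_i$) and that disjointness of the $A_k$ follows from disjointness of the events $E_k$; you merely spell out the details the paper declares obvious.
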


\begin{proof}
Phrased differently, the lemma asks for a
$\Fcal \subseteq \Ucal\Gcal$ such that
we can write $\Pee( E_k ) = \sum_{H \in\Fcal} \Pee( R = H )$.
But this is obvious.
That the sets $A_k$ are disjoint follows immediately from the fact that the
events $E_k$ are disjoint.
\end{proof}

\noindent
For each $k \in \eN$, let $Z_k$ denote the number of components of $R$ of order $k$
and let us write
\[ \mu_k := \sum_{H \in \Ucal\Ccal_k} \frac{\rho^k}{\aut(H)}. \]
\noindent
(Recall that $\Ucal\Ccal_k$ denotes the set of unlabelled, connected graphs
from $\Gcal$ on exactly $k$ vertices.)
Since the sum of independent Poisson random variables is again Poisson-distributed,
it follows from Lemma~\ref{lem:Boltzcomp} that
$Z_1, \dots, Z_k$ are independent Poisson random variables with
means $\Ee Z_i = \mu_i$.
Hence we have:

\begin{equation}\label{eq:mukpk}
q_k = \Pee( \Po( \mu_1) = 0 ) \cdots \Pee( \Po(\mu_{k-1}) = 0 )\Pee( \Po(\mu_k) = 1 ) = \mu_k \upe^{-(\mu_1+\dots+\mu_k)}.
\end{equation}

\begin{lemma}\label{lem:pklim}
We have $\displaystyle \lim_{k\to\infty} q_k = 0$ and ${\displaystyle\lim_{k\to\infty}} \frac{q_{k+1}}{q_k} = 1$.
\end{lemma}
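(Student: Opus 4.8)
The plan is to reduce both assertions to elementary facts about the sequence $(\mu_k)_k$, using the closed form $q_k = \mu_k\upe^{-(\mu_1+\dots+\mu_k)}$ from~\eqref{eq:mukpk}. It is convenient to recall the orbit-counting identity $\sum_{H\in\Ucal\Ccal_k}1/\aut(H) = |\Ccal_k|/k!$, so that $\mu_k = \rho^k|\Ccal_k|/k!$ and $\sum_{k\ge1}\mu_k = C(\rho)$.

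\emph{Summability and $q_k\to0$.} Since $|\Ccal_n|\le|\Gcal_n|$ for every $n$, we have $C(\rho)\le G(\rho)$, and $G(\rho)<\infty$ by Theorem~\ref{thm:addfrag} (the class of all graphs is tacitly excluded throughout, since otherwise the Boltzmann--Poisson random graph $R$ would not even be defined). Hence $\sum_k\mu_k<\infty$, so $\mu_k\to0$; and from~\eqref{eq:mukpk} we have $0\le q_k\le\mu_k$, which gives $\lim_{k\to\infty}q_k=0$.

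\emph{The ratio $q_{k+1}/q_k$.} By~\eqref{eq:mukpk}, $q_{k+1}/q_k = (\mu_{k+1}/\mu_k)\,\upe^{-\mu_{k+1}}$, which is legitimate because $\mu_k>0$ for every $k$: an addable, minor-closed class other than the class of all graphs has only $2$-connected excluded minors, hence contains every forest, in particular every path, so $|\Ccal_k|\ge1$. As $\mu_{k+1}\to0$, the factor $\upe^{-\mu_{k+1}}$ tends to $1$, and it remains to prove $\mu_{k+1}/\mu_k\to1$. Now $\mu_{k+1}/\mu_k = \rho\,|\Ccal_{k+1}| / ((k+1)|\Ccal_k|)$, i.e. $\rho$ divided by $(k+1)|\Ccal_k|/|\Ccal_{k+1}|$, and by Theorem~\ref{thm:addissmooth} the class $\Ccal$ is smooth, so $(n+1)|\Ccal_n|/|\Ccal_{n+1}|$ converges to a finite limit; by the remark following~\eqref{eq:conditionforsmoothness} this limit equals the radius of convergence of $C(z)$. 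The last point requiring care is that this radius is exactly $\rho$, the radius of convergence of $G(z)=\exp(C(z))$: coefficientwise domination of $C$ by $G$ gives $\rho_C\ge\rho$, while $\exp$ is entire so $G=\exp(C)$ converges wherever $C$ does, giving $\rho_C\le\rho$. Since $0<\rho<\infty$ (positivity by Norine--Seymour--Thomas--Wollan, and $\rho\le1$ because $\Gcal$ contains all paths), we conclude $\mu_{k+1}/\mu_k\to\rho/\rho=1$, hence $q_{k+1}/q_k\to1$.

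The manipulations of~\eqref{eq:mukpk} are routine; the one genuinely load-bearing ingredient is the smoothness of $\Ccal$ (Theorem~\ref{thm:addissmooth}). The only points I would be careful about are checking that $\mu_k>0$ so that the ratios make sense, and the bookkeeping that identifies the radius of convergence of $C(z)$ with the quantity $\rho$ appearing in the definition of $\mu_k$ — everything else is immediate from the closed form for $q_k$.
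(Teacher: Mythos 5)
Your proof is correct and follows essentially the same route as the paper: the closed form $q_k=\mu_k\upe^{-(\mu_1+\dots+\mu_k)}$, the identity $\mu_k=\rho^k|\Ccal_k|/k!$ with $\sum_k\mu_k=C(\rho)\le G(\rho)<\infty$ for the first limit, and smoothness of $\Ccal$ (Theorem~\ref{thm:addissmooth}) for the ratio. The extra checks you add ($\mu_k>0$ via paths, and the identification of the radius of convergence of $C$ with $\rho$) are fine and only make explicit what the paper leaves implicit.
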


\begin{proof}
For $H \in \Ucal\Ccal_k$ the quantity $k! / \aut(H)$ is exactly the number
of labelled graphs $G \in \Ccal_k$ that are isomorphic to $H$.
It follows that
\[ \mu_k = \sum_{H \in \Ucal\Ccal_k} \frac{\rho^k}{\aut(H)}
= \sum_{G\in\Ccal_k} \frac{\rho^k}{k!} = \frac{|\Ccal_k|}{k!} \cdot \rho^k. \]
\noindent
We thus have that
\[ \sum_{k=1}^\infty \mu_k = C(\rho) \leq G(\rho) < \infty, \]
\noindent
(where $C(z)$ resp.~$G(z)$ denotes the exponential generating function of
$\Ccal$ resp.~$\Gcal$).
In particular we have $\mu_k \to 0$ as $k\to\infty$.
This immediately also gives that $q_k \to 0$ as $k\to\infty$.
Now recall that, according to Theorem~\ref{thm:addissmooth} and the fact that for a decomposable class
$C(.)$ and $G(.)$ have the same radius of convergence, we have
$\frac{(k+1)|\Ccal_k|}{|\Ccal_{k+1}|} \to \rho$ as $k \to\infty$.
We therefore have:
\[
\lim_{k\to\infty} \frac{q_{k+1}}{q_k}
= \lim_{k\to\infty} \frac{\mu_{k+1}}{\mu_k} \cdot \upe^{-\mu_{k+1}}
=
\lim_{k\to\infty} \frac{\rho|\Ccal_{k+1}|}{(k+1)|\Ccal_k|} \cdot \upe^{-\mu_{k+1}}  = 1,
\]
\noindent
as required.
\end{proof}

We are now ready to complete the proof of Theorem~\ref{thm:finint}.

\begin{proofof}{Theorem~\ref{thm:finint}}
As observed previously, it suffices to show that there exists some $i_0 \in \eN$ such that
$p_i \leq \sum_{j>i} p_j$ for all $i \geq i_0$.
By Lemma~\ref{lem:pklim}, there is an index $k_0$ such that $q_{k+1} \geq 0.9 \cdot q_k$
for all $k \geq k_0$.
We now fix an  index $i_0$ with the property that $p_{i_0} < q_{k_0}$.

Let $i \geq i_0$ be arbitrary and let $k \geq k_0$ be the largest index
such that $q_k \geq p_i$.
(Such a $k$ exists since $0 < p_i \leq p_{i_0} < q_{k_0}$ and $q_k\to 0$.)
By choice of $k$ we must have $p_i > q_{k+\ell}$ for all $\ell \geq 1$.
Since $k \geq k_0$ we have that
\[
q_{k+1} + q_{k+2} + \dots
\geq (0.9+(0.9)^2+\dots )p_i =
9p_i > p_i \]
\noindent
Recall that, by Lemma~\ref{lem:trivtut}, there are disjoint sets $A_m \subseteq \eN$ such that
$q_m = \sum_{j\in A_m} p_j$ for all $m \in \eN$.
Because $p_i > q_{k+\ell}$ for all $\ell \geq 1$ and
$(p_n)_n$ is non-increasing, we must have that
$i < j$ for all $j \in A := \displaystyle \bigcup_{m > k} A_m$.
It follows that
\[ p_i
< \sum_{\ell > k} q_{\ell}
= \sum_{j \in A} p_j \leq \sum_{j > i} p_j, \]
\noindent
as required. Since $i\geq i_0$ was arbitrary, this concludes the proof
of Theorem~\ref{thm:finint}.
\end{proofof}

\subsection{Obtaining the closure explicitly for forests}

Here we prove Theorem~\ref{thm:forestclosure} above.
For the class of forests $\Fcal$, it is well known that $\rho=\upe^{-1}$ and $G(\rho)=\upe^{1/2}$,
see for example \cite[Theorem~IV.8]{FlajoletSedgewick} for a framework
in which one can construct such explicit constants,
and \cite[p.~470, Section~2]{bernardinoywelsh} for an explanation of that particular
value.

Our plan for the proof of Theorem~\ref{thm:forestclosure} is of course to
apply Corollary~\ref{cor:subsums}. 
We wish to find an ordering $F_1, F_2, \dots$ of all
unlabelled forests $\Ucal\Fcal$ with the property that the probabilities
$p_i := \Pee( R = F_i )$ are non-increasing
(here, $R$ is the Boltzmann--Poisson random graph corresponding to $\Fcal$), 
and to determine exactly for which values $i$ the inequality
$p_i > \sum_{j>i} p_j$ holds.

To this end, we first `guess' the initial part of the order.
Let the graphs $F_1, \dots, F_9$ be as defined in Figure~\ref{fig:forestorder}.

\begin{figure}[H]
\input{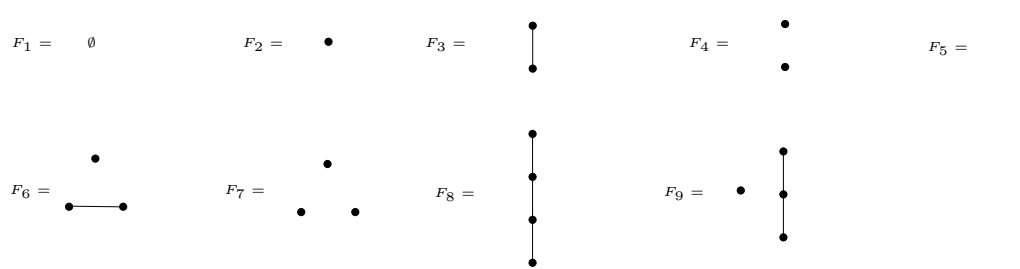}
\caption{The forests $F_1, \dots, F_{9}$.\label{fig:forestorder}}
\end{figure}

\noindent
By substituting $\rho = \upe^{-1}, G(\rho) = \upe^{1/2}$ in~\eqref{eq:Boltzeq}, we find that
the probabilities corresponding to $F_1, \dots, F_9$ are

\begin{equation}\label{eq:forestpkdef}
\begin{array}{lll}
p_1 = \upe^{-1/2}, & p_2 = \upe^{-3/2}, & p_3 = p_4 = \upe^{-5/2}/2, \\
p_5 = p_6 = \upe^{-7/2}/2, & p_7 = \upe^{-7/2}/6, &
p_8 = p_9 = \upe^{-9/2}/2.
\end{array}
\end{equation}

\noindent
It is readily seen that $p_1 \geq \dots \geq p_9$.
Let us remark that, as the reader can easily check, every forest $F$ that is not isomorphic
to one of $F_1,\dots, F_9$ has either five or more vertices or it has exactly four vertices and
$\aut(F) \geq 4$. (In the second case it is either $K_{1,3}$, or four isolated vertices, or
two vertex-disjoint edges, or an edge plus two isolated vertices.)
Hence, if $F$ is not isomorphic to one of $F_1,\dots, F_9$ then
$P( R = F ) \leq \max( \upe^{-9/2}/4, \upe^{-11/2} ) = \upe^{-11/2} < \upe^{-9/2}/2 = p_9$.
This shows that we guessed correctly, and $F_1, \dots, F_9$ are indeed the first nine forests
in our order.

\begin{lemma}\label{lem:autosforest}
$\displaystyle \sum_{F \in \Ucal\Fcal_n} \frac{1}{\aut(F)} > e$ for every $n\geq 6$.
\end{lemma}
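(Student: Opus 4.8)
\emph{The plan} is to convert the claim into one about \emph{labelled} forests, establish a simple monotonicity property of the resulting sequence, and then dispatch a single base case.

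\textbf{Step 1 (reduction).} The orbit--stabiliser principle gives that there are exactly $n!/\aut(F)$ labelled forests on $[n]$ isomorphic to a given unlabelled forest $F$ on $n$ vertices, so
\[
\sum_{F\in\Ucal\Fcal_n}\frac{1}{\aut(F)} \;=\; \frac{|\Fcal_n|}{n!},
\]
where $|\Fcal_n|$ denotes the number of labelled forests on the vertex set $[n]$. (This is the same counting used in the proof of Lemma~\ref{lem:pklim}, with $\rho$ replaced by $1$.) Thus it suffices to prove $a_n:=|\Fcal_n|/n!>\upe$ for every $n\ge 6$.

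\textbf{Step 2 (monotonicity).} Next I would show that $a_{n+1}\ge a_n$ for all $n$, by exhibiting an injection $\Phi:\Fcal_n\times\bigl([n]\cup\{\ast\}\bigr)\to\Fcal_{n+1}$. Given $F\in\Fcal_n$, let $\Phi(F,\ast)$ be $F$ with the extra isolated vertex $n+1$, and for $i\in[n]$ let $\Phi(F,i)$ be $F$ with the extra vertex $n+1$ and the single new edge $\{i,n+1\}$; in both cases the result is a forest on $[n+1]$, since the new vertex has degree at most $1$ and hence lies on no cycle. The map $\Phi$ is injective: from a forest in its image, inspect the degree of vertex $n+1$; if it is $0$ the preimage is $(F,\ast)$ with $F$ obtained by deleting the isolated vertex, and if it is $1$ the preimage is $(F,i)$ where $i$ is the unique neighbour of $n+1$ and $F$ is obtained by deleting $n+1$. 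Hence $|\Fcal_{n+1}|\ge(n+1)\,|\Fcal_n|$, i.e.\ $a_{n+1}=|\Fcal_{n+1}|/(n+1)!\ge(n+1)|\Fcal_n|/(n+1)!=a_n$.

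\textbf{Step 3 (base case).} By Step 2 it is now enough to check $a_6>\upe$, and a crude lower bound on $|\Fcal_6|$ already suffices. Counting only those labelled forests on $[6]$ with exactly one non-trivial component --- choose its vertex set of size $k\in\{2,\dots,6\}$ and any tree on it, of which there are $k^{k-2}$ by Cayley's formula --- gives
\[
|\Fcal_6| \;\ge\; \sum_{k=2}^{6}\binom{6}{k}k^{k-2} \;=\; 15+60+240+750+1296 \;=\; 2361,
\]
so $a_6\ge 2361/720>3.27>\upe$. Together with $a_n\ge a_6$ for all $n\ge6$ this yields $a_n>\upe$, which is the assertion.

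\textbf{Main obstacle.} There is no genuinely hard step here; the only point requiring a little care is the injectivity of $\Phi$ (that distinct pairs $(F,i)$ never yield the same labelled forest), which is handled by the case analysis on $\deg(n+1)$ above. The purpose of Step 2 is precisely to avoid having to inspect more than one base case or to run an asymptotic estimate: once $a_6$ is safely above $\upe$, monotonicity finishes the proof.
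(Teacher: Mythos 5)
Your proof is correct, but it takes a different route from the paper's. The paper proves this lemma by directly exhibiting, for every $n\geq 6$, seven pairwise non-isomorphic unlabelled forests on $n$ vertices with small automorphism groups (five with $\aut\leq 2$, built from paths with a pendant leaf or an isolated vertex, and two with $\aut=4$), which already gives $\sum_{F\in\Ucal\Fcal_n}1/\aut(F)\geq 5\cdot\frac12+2\cdot\frac14=3>\upe$ uniformly in $n$, with no base case or monotonicity step. You instead rewrite the sum as $|\Fcal_n|/n!$, prove that this ratio is non-decreasing via the add-a-vertex injection (attach vertex $n+1$ to one of the $n$ vertices or leave it isolated), and then verify the single base case $n=6$ using Cayley's formula, $|\Fcal_6|\geq\sum_{k=2}^{6}\binom{6}{k}k^{k-2}=2361$, so $a_6>3.27>\upe$; all three steps check out, including the injectivity argument and the arithmetic. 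Amusingly, your monotonicity step is exactly the device the paper uses for the \emph{planar} analogue (Lemma~\ref{lem:autos}), where it shows $n|\Pcal_{n-1}|\leq|\Pcal_n|$ and then bounds $|\Pcal_6|$; so your proof transfers that strategy to forests. What each approach buys: the paper's construction is self-contained, needs no enumeration formula, and gives the bound for all $n\geq 6$ in one stroke; yours avoids having to describe explicit forests and count their automorphisms, at the cost of invoking Cayley's formula and a (routine) base-case computation, and it actually yields the slightly stronger and more flexible statement that $|\Fcal_n|/n!$ is monotone.
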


\begin{proof}
Let $n \geq 6$. We will explicitly describe enough unlabelled forests on $n$ vertices with
small automorphism groups to make the sum exceed $e$. Figure~\ref{fig:forestauto} shows them
from left to right in the special case $n=6$.

For every $n\geq 6$, the following five forests all have at most two automorphisms: a path on $n$ vertices,
the union of a path on $n-1$ vertices and an isolated vertex, a path on $n-1$ vertices with a leaf
attached to its second vertex, and a path on $n-1$ vertices with a leaf attached to its third vertex,
the union of an isolated vertex and a path on $n-2$ vertices with a leaf attached to its second vertex.
The following forests both have exactly four automorphisms for every $n\geq 6$:
the union of a path on $n-2$ vertices with two isolated vertices,
the union of a path on $n-2$ vertices with a path on two vertices.

\begin{figure}[H]
\begin{center}
\input{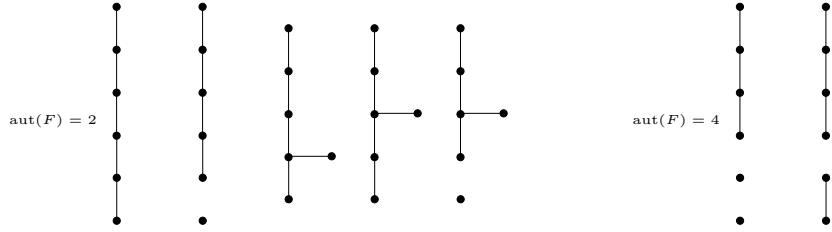}
\end{center}
\caption{Some forests with small automorphism groups.\label{fig:forestauto}}
\end{figure}

\noindent
For every $n\geq 6$, the seven forests just described are pairwise non-isomorphic.
We thus have $\sum_{F \in \Ucal\Fcal_n} \frac{1}{\aut(F)} \geq 5 \cdot (1/2) + 2\cdot (1/4) = 3 > e$.
This proves the lemma.
\end{proof}

\begin{lemma}\label{lem:foresttailfail}
The only indices $k$ for which the inequality $p_k > \sum_{j>k} p_j$ is satisfied
are $k=1,2$.
\end{lemma}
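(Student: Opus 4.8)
In view of Corollary~\ref{cor:subsums}, the lemma says precisely that $p_1 > \sum_{j>1}p_j$ and $p_2 > \sum_{j>2}p_j$, while $p_k \le \sum_{j>k}p_j$ for every $k\ge 3$. Recall from the discussion above that for forests $\rho = \upe^{-1}$ and $G(\rho) = \upe^{1/2}$, so that $p(F) := \Pee(R=F) = \upe^{-1/2}\upe^{-v(F)}/\aut(F)$ for each unlabelled forest $F$, that $\sum_F p(F) = 1$, and that the nine forests of largest probability are exactly $F_1,\dots,F_9$ from Figure~\ref{fig:forestorder}, with the values in \eqref{eq:forestpkdef}; moreover every forest not isomorphic to one of $F_1,\dots,F_9$ has at least five vertices, or exactly four vertices and at least four automorphisms. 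The plan is to split into the ranges $k\in\{1,2\}$, $3\le k\le 9$, and $k\ge 10$.

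\textbf{The finite part ($1\le k\le 9$).} The cases $k=1,2$ are immediate from $\sum_F p(F)=1$: since $\upe^{-1/2}>\tfrac12$ we get $\sum_{j>1}p_j = 1-\upe^{-1/2} < \upe^{-1/2} = p_1$, and $\sum_{j>2}p_j = 1-\upe^{-1/2}-\upe^{-3/2} < \upe^{-3/2} = p_2$ because $\upe^{-1/2}+2\upe^{-3/2}>1$. For $3\le k\le 9$ I would use that the $p_j$ are nonnegative and non-increasing, so $\sum_{j>k}p_j = 1-\sum_{j\le k}p_j \ge 1-\sum_{j\le 9}p_j$ while $p_k\le p_3 = \tfrac12\upe^{-5/2}$; substituting the values \eqref{eq:forestpkdef} into $\sum_{j\le 9}p_j = \upe^{-1/2}+\upe^{-3/2}+\upe^{-5/2}+\tfrac76\upe^{-7/2}+\upe^{-9/2}$ reduces everything to the single explicit inequality $\upe^{-1/2}+\upe^{-3/2}+\tfrac32\upe^{-5/2}+\tfrac76\upe^{-7/2}+\upe^{-9/2} < 1$, which one checks numerically (the left side is $\approx 0.9991$).

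\textbf{The tail ($k\ge 10$).} Here the point is that forests on many vertices carry very little mass, so all of them lie beyond index $k$, and their total mass is controlled by Lemma~\ref{lem:autosforest}. Write $F=F_k$, $v=v(F)$, $a=\aut(F)$, so $p_k = \upe^{-1/2}\upe^{-v}/a$; since $F\notin\{F_1,\dots,F_9\}$ we have $v\ge5$, or $v=4$ and $a\ge4$, and in either case $v+\ln a\ge5$. Put $n_0 := \lfloor v+\ln a\rfloor+1$, so $v+\ln a < n_0 \le v+\ln a+1$ and $n_0\ge6$. Every forest $F'$ with $v(F')\ge n_0$ then has $p(F')\le \upe^{-1/2}\upe^{-v(F')} < \upe^{-1/2}\upe^{-v}/a = p_k$, so it occurs strictly after index $k$; hence $\sum_{j>k}p_j \ge \sum_{n\ge n_0}P_n$ where $P_n := \sum_{F'\in\Ucal\Fcal_n}p(F') = \upe^{-1/2}\upe^{-n}\sum_{F'\in\Ucal\Fcal_n}\aut(F')^{-1}$. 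By Lemma~\ref{lem:autosforest} the inner sum exceeds $\upe$ for $n\ge6$, and since $n_0\ge6$,
\[
\sum_{j>k}p_j \;>\; \upe^{1/2}\sum_{n\ge n_0}\upe^{-n} \;=\; \frac{\upe^{1/2-n_0}}{1-\upe^{-1}} \;\ge\; \frac{\upe^{-1/2}\upe^{-v}}{a\,(1-\upe^{-1})} \;=\; \frac{p_k}{1-\upe^{-1}} \;>\; p_k ,
\]
using $n_0\le v+\ln a+1$ in the middle step. This settles $k\ge10$ and hence the lemma.

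\textbf{Main obstacle.} The conceptual content is light; the difficulty is bookkeeping. The two delicate points are: (i) identifying exactly which forests must be treated by the finite computation — these are precisely the nine forests $F_1,\dots,F_9$ already enumerated in the text, together with the classification of all other forests; and (ii) matching the threshold so that the hypothesis $n\ge6$ of Lemma~\ref{lem:autosforest} is available, which is what forces the tail argument to start at $n_0\ge6$ and thus dictates the split at $k=9$ versus $k=10$. The borderline numerical inequality for $3\le k\le9$ (left side $\approx0.999$) is what makes this split genuinely necessary rather than cosmetic.
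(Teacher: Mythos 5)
Your proof is correct and follows essentially the same route as the paper's: explicit arithmetic with the values in \eqref{eq:forestpkdef} to settle $k\le 9$, and Lemma~\ref{lem:autosforest} to show that for $k\ge 10$ all forests on sufficiently many vertices lie beyond position $k$ and already outweigh $p_k$. The only differences are cosmetic: you handle $3\le k\le 9$ by one uniform inequality rather than case by case, and in the tail you sum a geometric series over all $n\ge n_0$, whereas the paper picks the single integer $n\ge 6$ with $\upe^{-(n+1/2)}<p_k\le\upe^{-(n-1/2)}$ and uses only the forests on exactly $n$ vertices.
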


\begin{proof}
Since $\sum_{j>k} p_j = 1 - (p_1+\dots+p_k)$ we have that
$p_k > \sum_{j>k} p_j $ if and only if $p_1+\dots+p_{k-1} + 2p_k > 1$.
The reader can easily check using the expressions given in~\eqref{eq:forestpkdef}
that $k=1,2$ are the only values of $k \leq 9$ for which this inequality holds.

Let $k \geq 10$ be arbitrary, and recall that in this case, as remarked previously,
we have $p_k \leq \upe^{-11/2}$.
Let $n \geq 6$ be the unique integer such that
\[ \upe^{-(n+1/2)} < p_k \leq \upe^{-(n-1/2)}. \]
\noindent
Then $\Pee( R = F ) = \upe^{-(n+1/2)} / \aut(F) < p_k$, for every $F \in \Ucal\Fcal_n$.
In other words, for every $F \in \Ucal\Fcal_n$ there exists a $j > k$ such that $\Pee(R = F ) = p_j$.
In yet other words, every forest on $n$ vertices must come {\em after} position $k$
in our ordering of the unlabelled forests.
Using Lemma~\ref{lem:autosforest} it now follows that:
\[ \sum_{j > k} p_j \geq \sum_{F \in \Ucal\Fcal_n} \Pee( R = F ) = \upe^{-(n+1/2)} \cdot \sum_{F \in \Ucal\Fcal_n} \frac{1}{\aut(F)}
> \upe^{-(n-1/2)} \geq p_k. \]
\noindent
So the inequality $p_k > \sum_{j>k} p_j$ indeed fails for all $k\geq 10$.
This proves the lemma.
\end{proof}

\begin{proofof}{Theorem~\ref{thm:forestclosure}}
By Lemma~\ref{lem:foresttailfail} and Corollary~\ref{cor:subsums}, we see that
\[ \clo(L_\MSO) = \bigcup_{a,b \in \{0,1\}} {\Big[} ap_1+bp_2,
ap_1+bp_2 + (1-p_1-p_2){\Big]}. \]
\noindent
Filling in the values for $p_1, p_2$ from~\eqref{eq:forestpkdef}, we see that we
get exactly the four intervals shown in the statement of the theorem.
\end{proofof}

\subsection{Obtaining the closure explicitly for planar graphs\label{sec:planarlimits}}

Here we prove a more detailed version of
Theorem~\ref{thm:planarclosureprime} above.
The following result is phrased in terms of the exponential generating function for the class of planar graphs, and its radius 
of convergence. Detailed information on these quantities is available in the work of Gim{\'e}nez and
the third author~\cite{GimenezNoy}. In particular both quantities are positive and finite, which can also be seen from the results we included in 
Section~\ref{sec:preliminariesonminorclosedclasses}.

\begin{theorem}\label{thm:planarclosure}
If $\Gcal = \Pcal$ is the class of planar graphs, $\rho$ is the radius of
convergence of its exponential generating function $G$, and if
\begin{equation}\label{eq:leftendpoint}
\lambda_{a,b,c,d,e} := \frac{a+b\rho+\frac{c}{2}\rho^2+(\frac{d}{2}+\frac{e}{6})\rho^3}{G(\rho)}\quad ,
\qquad \ell := 1 - \frac{1+\rho+\rho^2+\frac43\rho^3}{G(\rho)}\qquad ,
\end{equation}
then
\[
\clo( L_\MSO ) =
\bigcup_{a, b \in \{0,1\}, \atop c,d,e \in \{0,1,2\}}
\left[ \lambda_{a,b,c,d,e},\lambda_{a,b,c,d,e} + \ell
\right].
\]
\noindent
In particular, $\clo(L_{\MSO})$ is the union of 108 disjoint intervals each of length
$1 - \frac{1}{G(\rho)}(1+\rho+\rho^2+\frac43\rho^3) \approx 5.39 \cdot 10^{-6}$.
\end{theorem}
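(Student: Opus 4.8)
The plan is to apply Corollary~\ref{cor:subsums} to the Boltzmann--Poisson random graph $R$ associated with the class $\Pcal$ of planar graphs, exactly as was done for forests in the previous subsection, the only difference being that now $i_0=8$ rather than $i_0=2$. By Theorem~\ref{thm:finint} we already know that $\clo(L_{\MSO})=\{\Pee(R\in\Fcal):\Fcal\subseteq\Ucal\Pcal\}$; writing $p_1\ge p_2\ge\cdots$ for the non-increasing rearrangement of the atom probabilities $\Pee(R=H)=\rho^{v(H)}/(G(\rho)\aut(H))$, $H\in\Ucal\Pcal$ (legitimate because $G(\rho)<\infty$ for planar graphs, by Definition~\ref{def:Boltz}), it therefore suffices to (i) identify the eight heaviest atoms and (ii) verify the ``tail condition'' $p_i\le\sum_{j>i}p_j$ for every $i\ge 9$.

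First I would pin down the eight heaviest atoms. The planar graphs on at most three vertices are exactly: $\emptyset$ and $K_1$, each with $\aut=1$; the two graphs on two vertices, $2K_1$ and $K_2$, each with $\aut=2$; and the four graphs on three vertices, namely $P_3$ and $K_1\cup K_2$ with $\aut=2$, and $K_3$ and $3K_1$ with $\aut=6$. Their probabilities are $\frac1{G(\rho)}$, $\frac{\rho}{G(\rho)}$, $\frac{\rho^2}{2G(\rho)}$ (twice), $\frac{\rho^3}{2G(\rho)}$ (twice) and $\frac{\rho^3}{6G(\rho)}$ (twice); and since $\rho<1/3$ (in fact $\rho\approx 0.0367$, see~\cite{chapuyfusygimenezmoharnoy}) every planar graph on $\ge 4$ vertices has probability at most $\frac{\rho^4}{2G(\rho)}<\frac{\rho^3}{6G(\rho)}$, so these eight are precisely $G_1,\dots,G_8$. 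The reason for listing them this carefully is that there are three pairs of equal probabilities among them, so a subset sum $\sum_{i\in A}p_i$ with $A\subseteq\{1,\dots,8\}$ depends only on whether $\emptyset$ and $K_1$ are chosen ($a,b\in\{0,1\}$) and on how many members of each pair are chosen ($c,d,e\in\{0,1,2\}$), and then equals exactly $\lambda_{a,b,c,d,e}$ as in~\eqref{eq:leftendpoint}. Thus the $2^8$ translates produced by Corollary~\ref{cor:subsums} collapse to $2\cdot 2\cdot 3\cdot 3\cdot 3=108$ distinct intervals $[\lambda_{a,b,c,d,e},\lambda_{a,b,c,d,e}+\ell]$, with $\ell=\sum_{i>8}p_i=1-(1+\rho+\rho^2+\frac43\rho^3)/G(\rho)$, the total mass of all planar graphs on $\ge 4$ vertices.

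The heart of the argument, and what I expect to be the main obstacle, is the tail condition for $i\ge 9$ (one should also note it fails at $i=8$, so that $i_0$ cannot be taken smaller); this is the planar analogue of Lemmas~\ref{lem:autosforest} and~\ref{lem:foresttailfail}. Put $n:=v(G_i)\ge 4$ and $S_m:=\sum_{v(H)=m}\Pee(R=H)=\rho^m|\Pcal_m|/(G(\rho)\,m!)$, so $p_i\le \rho^n/G(\rho)$ and $\sum_m S_m=1$. If $\aut(G_i)<1/\rho$ then every planar graph on $\ge n+1$ vertices is strictly lighter than $G_i$, hence occupies a position $>i$, so $\sum_{j>i}p_j\ge S_{n+1}$, and it suffices that $\rho\,|\Pcal_{n+1}|/(n+1)!\ge 1$. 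Since $|\Pcal_m|/m!=[z^m]G(z)$ exceeds $1/\rho$ for every $m\ge 6$ and, being the $m$-th Taylor coefficient of $G$, grows roughly like $\rho^{-m}m^{-7/2}$, the mass $S_m$ decays only polynomially in $m$ whereas $\rho^n/G(\rho)$ decays exponentially in $n$; so this inequality holds for all $n\ge 5$. If instead $\aut(G_i)\ge 1/\rho$ (which forces $p_i$ to be tiny) one argues in the same way with $S_{n+2}$ in place of $S_{n+1}$. The only vertex-count left uncovered is then $n=4$, which one dispatches by the explicit enumeration of the $11$ four-vertex planar graphs by automorphism group, using the tabulated small values $|\Pcal_4|=64$, $|\Pcal_5|=1023$, $|\Pcal_6|=32071$. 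Assembling this finite case-check cleanly, and keeping every inequality pointing the right way, is where the real work lies.

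Finally, with $i_0=8$ established, Corollary~\ref{cor:subsums} gives the displayed union. To conclude, it remains to see that the $108$ intervals are pairwise disjoint: the smallest nonzero difference between two of the numbers $\lambda_{a,b,c,d,e}$ is $\rho^3/(6G(\rho))$ (the increment produced by raising $e$ by one; the increments governed by $d$, $c$, $b$, $a$ are all strictly larger because $\rho$ is so small that each ``lower-order block'' is shorter than its governing unit), and one checks numerically that $\ell<\rho^3/(6G(\rho))$. Substituting the known constants $\rho\approx 0.0367$ and $G(\rho)\approx 1.04$ gives $\ell\approx 5\cdot 10^{-6}$, as claimed.
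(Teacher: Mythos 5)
Your overall strategy is exactly the paper's: reduce to subset sums of the Boltzmann--Poisson atom probabilities, identify the eight heaviest atoms as the planar graphs on at most three vertices, apply Corollary~\ref{cor:subsums} with $i_0=8$, collapse the $2^8$ translates to $108$ intervals via the three equal pairs, and deduce disjointness from $\ell<p_8$. Two steps, however, need repair. The first is your case split for the tail condition when $n=v(G_i)\ge 5$: the step ``if $\aut(G_i)\ge 1/\rho$, argue in the same way with $S_{n+2}$'' requires every planar graph on $\ge n+2$ vertices to be strictly lighter than $G_i$, i.e.\ $\aut(G_i)<\rho^{-2}\approx 741$, and once $n\ge 7$ this can fail (take $G_i=nK_1$ with $\aut(G_i)=n!$; then an asymmetric graph on $n+2$ vertices is \emph{heavier} than $G_i$). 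The fix is short and in your spirit: choose $t\ge1$ with $\rho^{-(t-1)}\le\aut(G_i)<\rho^{-t}$ and compare with $S_{n+t}$, which again reduces to $|\Pcal_m|/m!\ge 1/\rho$ for all $m\ge6$ (a claim you should justify, e.g.\ by the monotonicity of $|\Pcal_m|/m!$ as in Lemma~\ref{lem:autos} together with $|\Pcal_6|=32071$); the paper avoids the issue altogether by choosing the comparison scale according to the size of $p_i$ rather than according to $v(G_i)$.

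The second issue is numerical precision. For the set identity itself only the tail condition at $i\ge9$ matters, and there your plan (the eleven four-vertex graphs plus $|\Pcal_5|=1023$) indeed goes through with crude bounds on $\rho$. But the assertions that the condition fails at $i=8$, that there are exactly $108$ intervals, and that they are pairwise disjoint all amount to the single inequality $\ell<p_8=\rho^3/(6G(\rho))$, equivalently $\frac{1}{G(\rho)}\bigl(1+\rho+\rho^2+\tfrac32\rho^3\bigr)>1$, which is a comparison of two quantities of order $10^{-6}$ produced by near-cancellation; the margin is about $2.6\cdot10^{-6}$. The three-figure constants you invoke at the end ($\rho\approx0.0367$, $G(\rho)\approx1.04$) cannot decide it, and the paper stresses that even perturbing the sixth significant digit changes the number of intervals; this is precisely why it proves Lemma~\ref{lem:approx}, giving certified bounds on $\rho$ and $1/G(\rho)$ to roughly nine decimal places (verified in a separate supporting document). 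So ``one checks numerically'' must be backed by such high-precision certified bounds; as written, this final step is a genuine gap, while the rest of your argument matches the paper's proof.
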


The proof closely follows the structure of the proof from the previous section.
Our plan is again to find (the initial part of) an ordering $G_1, G_2, \dots$ of
$\Ucal\Pcal$ such that the sequence of probabilities $p_k := \Pee( R = G_k )$
is non-increasing, and to determine precisely for which indices $k$ the condition
$p_k > \sum_{j>k} p_j$ holds.
Again we start by `guessing' the first few graphs in the ordering.
Let the graphs $G_1, \dots, G_{19}$ be as defined in Figure~\ref{fig:planarorder}.

\begin{figure}[H]
\begin{center}
\input{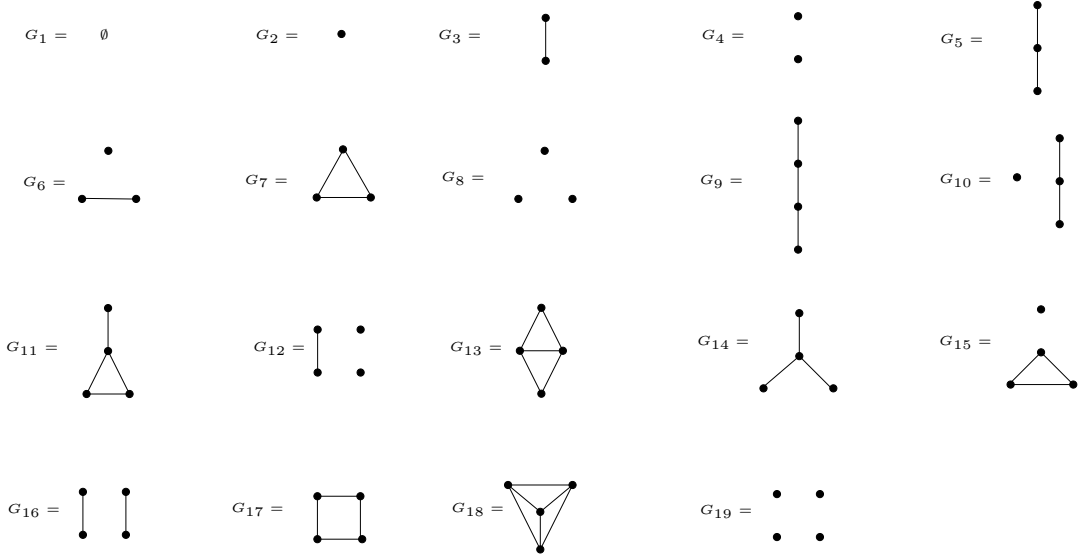}
\end{center}
\caption{The graphs $G_1, \dots, G_{19}$.\label{fig:planarorder}}
\end{figure}

\noindent
(Observe that these are precisely all unlabelled graphs on at most four vertices -- including
the empty graph $G_1$.)
The corresponding probabilities are 

\begin{equation}\label{eq:planarpkdef}
\begin{array}{llll}
p_1 = \frac{1}{G(\rho)},  &
p_2 = \frac{\rho}{G(\rho)},  &
p_3 = p_4 = \frac{\rho^2}{2G(\rho)}, &
p_5 = p_6 = \frac{\rho^3}{2G(\rho)}, \\
p_7 = p_8 = \frac{\rho^3}{6G(\rho)}, &
p_9 = p_{10} = p_{11} = \frac{\rho^4}{2G(\rho)}, &
p_{12} = p_{13} = \frac{\rho^4}{4G(\rho)}, &
p_{14} = p_{15} = \frac{\rho^4}{6G(\rho)}, \\
p_{16} = p_{17} = \frac{\rho^4}{8G(\rho)}, &
p_{18} = p_{19} = \frac{\rho^4}{24G(\rho)}.
\end{array}
\end{equation}

\noindent
To decide for which indices the tail-exceeds-term condition $p_i \leq \sum_{j>i} p_j$ holds
(and to check that $p_1, \dots, p_{19}$ are in non-increasing order and that all graphs on at least five vertices satisfy $p_i \leq p_{19}$),
we need more detailed information on the values of $\rho$ and $G(\rho)$ for planar graphs.
As mentioned above, such information is available form the work of Gim{\'e}nez and
the third author~\cite{GimenezNoy}, who determined both quantities precisely
as the solution of a (non-polynomial) system of equations.
This system in particular enables us to compute
the numbers $\rho$ and $G(\rho)$ to any desired degree of accuracy.
The following approximations suffice for the present purpose:

\begin{lemma}\label{lem:approx}
With $G$ the exponential generating function of labelled planar graphs,
and $\rho$ its radius of convergence,
\[
0.03672841251 \leq \rho \leq 0.03672841266, \quad
0.96325282112 \leq 1/G(\rho) \leq 0.96325282254.
\]
\end{lemma}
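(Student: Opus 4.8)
The plan is to invoke the explicit analytic characterisation of $\rho$ and $G(\rho)$ obtained by Gim{\'e}nez and Noy~\cite{GimenezNoy}, and then to carry out the resulting numerical computation with rigorous error control. Recall that in~\cite{GimenezNoy} the bivariate exponential generating function of labelled planar graphs, with one variable marking vertices and one marking edges, is built up in stages: first the series counting $3$-connected planar graphs (obtained from Tutte's enumeration of rooted triangulations via an explicit rational parametrisation), then the series $B$ counting $2$-connected planar graphs, then the series $C$ counting connected planar graphs through the block-decomposition functional equation, and finally $G = \exp(C)$ as in~\eqref{eq:rhodef}. Specialising the edge-variable to $1$, the radius of convergence $\rho$ of $C$ (equivalently of $G=\exp(C)$) is characterised in~\cite{GimenezNoy} as the solution of a finite system of analytic equations: the block-decomposition equation together with the vanishing of an appropriate Jacobian determinant, coupled with the explicit closed forms (involving square roots and a logarithm) for the $2$- and $3$-connected series at the dominant singularity. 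From the same system one reads off $G(\rho)$.

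First I would write out this system explicitly as it appears in~\cite{GimenezNoy}, reducing it to finitely many scalar unknowns, namely the values at the dominant singularity of the $3$-connected, $2$-connected and connected series together with the relevant derivatives. Next I would fix a small explicit box around the approximate solution and verify that on this box the interval Newton (Krawczyk) operator associated with the system is a contraction and that the Jacobian is invertible throughout; this simultaneously shows that the box contains a unique solution and that iterating the operator produces a nested sequence of intervals converging to it. Carrying out this iteration with enough digits of working precision then yields certified enclosures for $\rho$ and for $G(\rho)$, and one checks that these enclosures lie inside $[0.03672841251,\,0.03672841266]$ and $[0.96325282112,\,0.96325282254]$ respectively, which is exactly what is asserted.

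The only genuine obstacle is rigour: a plain floating-point Newton iteration instantly delivers the decimal digits, but a proof requires guaranteed bounds. Interval arithmetic as above handles this cleanly; alternatively, since all of the functions entering the system are monotone on the narrow ranges in question, each of the two one-sided inequalities can be certified separately by evaluating the defining equations at the interval endpoints using directed rounding and inspecting signs. In either case the computation is short and elementary once the Gim{\'e}nez--Noy system is written down, so we omit the routine arithmetic.
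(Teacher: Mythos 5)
Your proposal matches the paper's approach: the paper likewise obtains $\rho$ and $G(\rho)$ from the non-polynomial system of equations of Gim\'enez and Noy \cite{GimenezNoy} and then certifies the stated decimal bounds numerically, deferring the verifiable computation (done by computer algebra, with a hand-checkable version) to a supporting document \cite{support}. Your interval-Newton/directed-rounding certification is just a slightly different, equally valid way of supplying that same rigorous numerical step.
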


Our proof of Theorem~\ref{thm:planarclosure} depends in a delicate way
on the numerical values of $\rho$ and $G(\rho)$.
For example, one can choose approximations of $\rho$ and $1/G(\rho)$ that
agree with Lemma~\ref{lem:approx} in the first five digits,
but differ in the sixth, and that if plugged into the expressions~\eqref{eq:planarpkdef} would
result in a different conclusion for Lemma~\ref{lem:planarplanar} below. This would then suggest a different number of intervals
in the statement Theorem~\ref{thm:planarclosureprime}.
The approximations in Lemma~\ref{lem:approx} can be computed easily using a computer algebra package.
For completeness we provide a proof 
that can be checked by hand, in a supporting document~\cite{support}.

Getting back to the current proof, let us first observe that,
now that we know $\rho < 1/24$, it is indeed true that $p_1 \geq \dots \geq p_{19}$ and that for $i > 19$ we have
$p_i \leq \frac{\rho^5}{G(\rho)} < \frac{\rho^4}{24 G(\rho)} = p_{19}$,
as $G_i$ must have at least five vertices.
Thus $G_1, \dots, G_{19}$ are indeed the first nineteen unlabelled planar graphs,
when the unlabelled planar graphs are ordered by non-increasing value of $\Pee( R = G_i )$.

The following lower bound will be sufficient for our purposes:

\begin{lemma}\label{lem:autos}
For $n \geq 6$ it holds that $\displaystyle \sum_{H \in \Ucal\Pcal_n} \frac{1}{\aut(H)} > 30$.
\end{lemma}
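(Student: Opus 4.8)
The plan is to rewrite the weighted sum as an ordinary labelled count and then to bound that count from below, securing a handful of small cases explicitly and handling large $n$ by a crude exponential lower bound. The starting observation is that for every $n$
\[
\sum_{H \in \Ucal\Pcal_n} \frac{1}{\aut(H)} \;=\; \frac{|\Pcal_n|}{n!},
\]
since an unlabelled graph $H$ on $n$ vertices has exactly $n!/\aut(H)$ distinct labellings with vertex set $[n]$, and such a labelled graph is planar precisely when $H$ is. So it suffices to prove $|\Pcal_n| > 30\cdot n!$ for all $n \ge 6$.

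The first tool is a recursive lower bound. For $m \ge 2$, sending a pair $(H,v)$ with $H \in \Pcal_{m-1}$ a planar graph on $[m-1]$ and $v \in [m-1]$ to the graph on $[m]$ obtained by adding a new vertex $m$ joined only to $v$ is an injection into $\Pcal_m$: adding a pendant vertex preserves planarity, and from the image one recovers $v$ as the unique neighbour of $m$ and $H$ by deleting $m$. Hence $|\Pcal_m| \ge (m-1)\,|\Pcal_{m-1}|$, and iterating, $|\Pcal_n| \ge \frac{(n-1)!}{(n_0-1)!}\,|\Pcal_{n_0}|$, i.e.\ $|\Pcal_n|/n! \ge |\Pcal_{n_0}|/\bigl((n_0-1)!\,n\bigr)$ for all $n \ge n_0$.

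Now split into two regimes. For $n \ge 11$: every labelled tree on $[n]$ is planar, so $|\Pcal_n| \ge n^{n-2}$ by Cayley's formula; moreover $n \mapsto n^{n-2}/n!$ is non-decreasing for $n \ge 2$ (the ratio of consecutive terms equals $(1+1/n)^{n-2}\ge 1$), and $11^{9}/11! > 30$, so $|\Pcal_n|/n! > 30$ for all $n \ge 11$. For $6 \le n \le 10$ one checks $|\Pcal_n| > 30\cdot n!$ by hand: applying the previous paragraph with $n_0 = 7$ reduces the cases $8 \le n \le 10$ (indeed all $n \le 84$) to the single inequality $|\Pcal_7| > 216000$, and this together with $|\Pcal_6| > 21600$ follows from the elementary enumeration of planar graphs on at most seven vertices ($|\Pcal_6| = 32071$, $|\Pcal_7| = 1823707$) — a finite verification that can be relegated to \cite{support}.

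The main obstacle is precisely this last point. The asymptotics $|\Pcal_n|/n! \sim c\,\rho^{-n}n^{-7/2}$ with $\rho^{-1} \approx 27.2$ (see Lemma~\ref{lem:approx}) would make the bound trivial for large $n$, but the growth of $|\Pcal_n|$ is only available ineffectively through the smoothness statements quoted above, so the base cases $n = 6,7$ have to be pinned down by an explicit count rather than a soft argument; this is the only genuinely computational ingredient, and the pendant-vertex injection is what keeps the number of such explicit counts down to two.
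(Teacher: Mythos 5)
Your proof is correct: the identity $\sum_{H\in\Ucal\Pcal_n}1/\aut(H)=\lvert\Pcal_n\rvert/n!$, the pendant-vertex injection giving $\lvert\Pcal_m\rvert\ge (m-1)\lvert\Pcal_{m-1}\rvert$, the Cayley bound $\lvert\Pcal_n\rvert\ge n^{n-2}$ together with the monotonicity of $n^{n-2}/n!$ and $11^{9}/11!>30$, and the two explicit values $\lvert\Pcal_6\rvert=32071$ and $\lvert\Pcal_7\rvert=1823707$ do cover all $n\ge 6$. The paper starts from the same identity but uses a slightly stronger injection: the new vertex $n$ is either joined to exactly one of the $n-1$ old vertices \emph{or left isolated} (planarity is preserved under adding an isolated vertex), which gives $\lvert\Pcal_n\rvert\ge n\lvert\Pcal_{n-1}\rvert$ and hence that $\varphi(n)=\lvert\Pcal_n\rvert/n!$ is non-decreasing in $n$; everything then reduces to the single check $\varphi(6)=32071/720>30$, with $\lvert\Pcal_6\rvert$ read off from the Bodirsky--Kang--Gr\"opl table. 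The factor you give away by omitting the isolated-vertex option is exactly what forces your two extra ingredients (the tree count for $n\ge 11$, and the value of $\lvert\Pcal_7\rvert$ for the middle range $7\le n\le 10$), so your route proves the same bound at the cost of a heavier computational input: $\lvert\Pcal_7\rvert=1823707$ is a much larger enumeration than $\lvert\Pcal_6\rvert$ and is not contained in the source the paper cites. If you wish to keep your structure, you can at least soften that input: by Kuratowski every graph with at most $8$ edges is planar (a subdivision of $K_{3,3}$ or $K_5$ has at least $9$ edges), so $\lvert\Pcal_7\rvert\ge\binom{21}{8}+\binom{21}{7}=203490+116280>216000$, which is all your argument needs.
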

\begin{proof}
For notational convenience let us define
$\varphi(n) :=  \sum_{H \in \Ucal\Pcal_n} \frac{1}{\aut(H)}$. Because of
\[ |\Pcal_n| = \sum_{H \in \Ucal\Pcal_n} \frac{n!}{\aut(H)} = n!\cdot\varphi(n), \]
\noindent
we have $\varphi(n)=\lvert\mathcal{P}_n\rvert/n!$.
Let us observe that
$n\lvert\mathcal{P}_{n-1}\rvert \leq \lvert\mathcal{P}_n\rvert$ for every $n\geq 2$.
This is because given an arbitrary element of $\mathcal{P}_{n-1}$,
already the possibility to add the vertex $n$ and then either join it to exactly
one existing vertex, or leave it isolated, creates $n-1 + 1 = n$ distinct planar labelled
graphs on $[n]$, and all $n\lvert\mathcal{P}_{n-1}\rvert$ elements of $\mathcal{P}_n$ thus
created are distinct.
From this it follows $\varphi$ is monotone non-decreasing in $n$,
as $\varphi(n)/\varphi(n-1) = |\Pcal_n| / (n|\Pcal_{n-1}|) \geq 1$.
It therefore suffices to prove $\varphi(6)>30$, or equivalently, $\lvert\mathcal{P}_6\rvert > 21600$.
Now we use work of Bodirsky, Kang and Gr{\"o}pl:
the number of all labelled planar graphs on six vertices and with $m$
edges is given, for all possible values $0\leq m\leq 12$, in the fifth row of the
table in \cite[Fig.~1]{BodirskyKangGroepl} (the notation $G^{(0)}(n,m)$ is defined
on p.~379). The sum of these numbers is $\lvert\mathcal{P}_6\rvert$, and equals
$32071$, which is strictly greater than $21600$. 
\end{proof}

\begin{lemma}\label{lem:planarplanar}
The only indices $k$ for which the inequality $p_k > \sum_{j>k} p_j$ holds
are $k=1, 2, 4, 6, 8$.
\end{lemma}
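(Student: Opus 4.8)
The plan is to mirror the proof of Lemma~\ref{lem:foresttailfail}, splitting into a finite computation for small~$k$ and a uniform tail estimate for large~$k$. As there, since $\sum_{j>k}p_j = 1-(p_1+\dots+p_k)$, the condition $p_k > \sum_{j>k}p_j$ is equivalent to $p_1+\dots+p_{k-1}+2p_k > 1$. Observe also that this strict inequality can only hold at an index $k$ with $p_k > p_{k+1}$: if $p_k = p_{k+1}$ then $\sum_{j>k}p_j \ge p_{k+1} = p_k$ (strictly, in fact, since infinitely many graphs have positive probability under $R$). By~\eqref{eq:planarpkdef} and the ordering established just before it, the candidate indices with $k\le 19$ are therefore $k\in\{1,2,4,6,8,11,13,15,17,19\}$. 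For each of these one substitutes the closed forms from~\eqref{eq:planarpkdef} into $p_1+\dots+p_{k-1}+2p_k$; each such quantity is a polynomial of degree $\le 4$ in $\rho$ with small rational coefficients, divided by $G(\rho)$, so the bounds of Lemma~\ref{lem:approx} on $\rho$ and $1/G(\rho)$ decide every inequality with room to spare. This yields that the inequality holds exactly for $k=1,2,4,6,8$ and fails for $k=11,13,15,17,19$. (A version of this finite check that can be carried out by hand is in the supporting document~\cite{support}.)

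It then remains to rule out all $k\ge 20$. Each $G_i$ with $i\ge 20$ has at least five vertices, so $p_k \le \rho^5/G(\rho)$ for every $k\ge 20$. Fix such a $k$ and let $n$ be the smallest integer with $\rho^{n}/G(\rho) < p_k$ (which exists since $\rho^n\to 0$); because $\rho^5/G(\rho)\ge p_k$ we have $n\ge 6$, and by minimality $p_k \le \rho^{n-1}/G(\rho)$. Now every $H\in\Ucal\Pcal_n$ satisfies $\Pee(R=H) = \rho^n/(G(\rho)\aut(H)) \le \rho^n/G(\rho) < p_k$, so in the ordering $G_1,G_2,\dots$ every planar graph on exactly $n$ vertices occurs strictly after position $k$. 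Hence, using Lemma~\ref{lem:autos} (here $n\ge 6$),
\begin{align*}
\sum_{j>k} p_j &\ \ge\ \sum_{H\in\Ucal\Pcal_n}\Pee(R=H)
\ =\ \frac{\rho^n}{G(\rho)}\sum_{H\in\Ucal\Pcal_n}\frac{1}{\aut(H)}
\ >\ \frac{30\,\rho^n}{G(\rho)} \\
&=\ 30\rho\cdot\frac{\rho^{n-1}}{G(\rho)} \ \ge\ 30\rho\cdot p_k \ >\ p_k,
\end{align*}
where the final step uses $30\rho > 1$, valid by Lemma~\ref{lem:approx} since $\rho > 0.0367 > 1/30$. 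Thus the inequality $p_k > \sum_{j>k}p_j$ fails for all $k\ge 20$, and the lemma follows.

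The only genuinely delicate point is the finite check at $k\le 19$. The outcome of the five inequalities at $k=11,13,15,17,19$ is what fixes the endpoints and the count of $108$ intervals in Theorem~\ref{thm:planarclosure}, and (as remarked after Lemma~\ref{lem:approx}) it is sensitive to the sixth significant digit of $\rho$ and $G(\rho)$; so the main obstacle is making sure the numerical bounds of Lemma~\ref{lem:approx} are sharp enough and are invoked carefully in each case. Everything else is a direct transcription of the forest argument, with the role of $e$ played by $1/\rho$ and Lemma~\ref{lem:autosforest} replaced by Lemma~\ref{lem:autos}.
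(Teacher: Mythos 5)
Your proof is correct and follows essentially the same route as the paper's: the equivalence with $p_1+\dots+p_{k-1}+2p_k>1$, the remark that equal consecutive probabilities rule out an index, the numerical check via Lemma~\ref{lem:approx} for $k\le 19$, and for $k\ge 20$ the choice of $n\ge 6$ with $\rho^n/G(\rho)<p_k\le\rho^{n-1}/G(\rho)$ combined with Lemma~\ref{lem:autos} and $\rho>1/30$. The only tiny slip is attributing the finite check for $k\le19$ to the supporting document~\cite{support}, which in fact only verifies the bounds of Lemma~\ref{lem:approx}; the arithmetic for the nineteen candidate indices is left to the reader in the paper as well.
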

\begin{proof}
Recall that $p_k > \sum_{j>k} p_j $ if and only if $p_1+\dots+p_{k-1} + 2p_k > 1$.
As it happens, the estimates on $\rho$ and $1/G(\rho)$ provided by
Lemma~\ref{lem:approx}, together with the expressions~\eqref{eq:planarpkdef}
suffice to determine for which $k \leq 19$ the inequality holds.
We leave the routine arithmetic computations verifying this to the reader.
To see that the inequality holds for $k=1,2,4,6,8$,
it suffices to do explicit calculations with the lower bounds,
and to see that it fails for $k=11,13,15,17,19$, it suffices to do
explicit calculations with the upper bounds provided by Lemma~\ref{lem:approx}.
Observe that $p_k = p_{k+1}$ implies that
$p_k \leq \sum_{j>k} p_j$, so that the inequality automatically fails for
$k=3,5,7,9,10,12,14,16,18$.

To complete the proof of Lemma~\ref{lem:planarplanar}, we are now left with $k \geq 20$.
Let $k \geq 20$ be arbitrary.
Since $G_1,\dots, G_{19}$ are all the unlabelled graphs on at most four vertices,
we must have $v(G_k) \geq 5$, so the formula in Definition~\ref{def:Boltz}
implies $p_k \leq \rho^5 / G(\rho)$. Let $n \geq 6$ be the unique integer such that
\[ \frac{\rho^n}{G(\rho)} < p_k \leq \frac{\rho^{n-1}}{G(\rho)}. \]
\noindent
Then $\Pee( R = H ) = \frac{\rho^n}{\aut(H) G(\rho)} < p_k$ for every $H \in \Ucal\Pcal_n$.
Hence, every graph on $n$ vertices must come {\em after} position $k$ in the ordering.
By Lemma~\ref{lem:autos} and the bound $\rho >  1/30$ from Lemma~\ref{lem:approx},
\[ \sum_{j > k} p_j \geq \sum_{H \in \Ucal\Pcal_n} \Pee( R = H )
= \frac{\rho^{n}}{G(\rho)} \cdot \sum_{H\in\Ucal\Pcal_n} \frac{1}{\aut(H)}
> \frac{\rho^{n-1}}{G(\rho)} \geq p_k, \]
\noindent
completing the proof.
\end{proof}

\begin{proofof}{Theorem~\ref{thm:planarclosure}}
The result follows immediately from Lemma~\ref{lem:planarplanar} via an application of Corollary~\ref{cor:subsums}.
Note that $\sum_{j>8} p_j = 1 - \frac{1}{G(\rho)}(1+\rho+\rho^2+\frac43\rho^3)$ and that $c,d,e$
in the expression given in the theorem take values in $\{0,1,2\}$ because $p_3=p_4$ and $p_5=p_6$ and $p_7=p_8$.

That the $2\cdot 2\cdot 3\cdot 3 \cdot 3 = 108$ intervals thus defined  are all disjoint follows from the
fact that their left endpoints always differ by at least $p_8$ while each interval
has length $\sum_{j>8} p_j < p_8$. (For clarity, we remark here that no additional numerical evaluations are necesary.
All that is needed to deduce the disjointness of the intervals is the statement of Lemma~\ref{lem:planarplanar}. 
See also the remark just after Corollary~\ref{cor:subsums}.)
\end{proofof}

\section{The non-addable case\label{sec:nonaddable}}

\subsection{The $\MSO$-zero-one law for bounded component size \\ (proof of Theorem~\ref{thm:boundedcompsize})}

Here we prove Theorem~\ref{thm:boundedcompsize}.
In this subsection, we fix $t\in\eN$ and $\Gcal$ will be the class of all graphs
whose components have at most $t$ vertices.
We need the following lemma on the number of components of $G_n \in_u \Gcal_n$ isomorphic to a given graph.

\begin{lemma}\label{lem:prut}
Let $H$ be a fixed, connected graph from $\Gcal$, let $K$ be an arbitrary constant and
let $Z_n$ denote the number of components of $G_n$ that are isomorphic to $H$.
Then $Z_n > K$ w.h.p.
\end{lemma}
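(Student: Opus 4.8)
The plan is to apply the \textbf{second moment method}. Write $j := v(H)$ and set $a_m := |\Gcal_m|/m!$, so that $G(z) = \sum_{m \ge 0} a_m z^m$ is the exponential generating function of $\Gcal$. Since $\Gcal$ is decomposable, with its connected members being exactly the connected graphs on at most $k$ vertices, the exponential formula (as used for~\eqref{eq:rhodef}; see also \cite[Chapter~II]{FlajoletSedgewick}) gives $G(z) = \exp\big(C(z)\big)$ with $C(z) = \sum_{i=1}^{k} \frac{|\Ccal_i|}{i!}\, z^i$ a polynomial. Marking by a variable $u$ those components of a graph that are isomorphic to $H$, the same (bivariate) exponential formula yields
\[
G(z,u) \;=\; \exp\!\Big(C(z) + (u-1)\,\frac{z^{j}}{\aut(H)}\Big),
\qquad\text{so that}\qquad
[z^n]\,G(z,u) \;=\; a_n\, \Ee\big[u^{Z_n}\big].
\]

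First I would extract the first two factorial moments of $Z_n$. Differentiating $G(z,u)$ once, respectively twice, with respect to $u$ and then substituting $u=1$ gives $\frac{z^{j}}{\aut(H)}\, G(z)$, respectively $\frac{z^{2j}}{\aut(H)^{2}}\, G(z)$; extracting the coefficient of $z^n$ (and recalling that $Z_n$ is bounded, so there are no convergence issues) yields
\[
\Ee\, Z_n \;=\; \frac{1}{\aut(H)}\cdot\frac{a_{n-j}}{a_n},
\qquad
\Ee\big[Z_n(Z_n-1)\big] \;=\; \frac{1}{\aut(H)^{2}}\cdot\frac{a_{n-2j}}{a_n}.
\]
To control the ratios $a_{n-j}/a_n$ and $a_{n-2j}/a_n$ I would invoke Corollary~\ref{cor:kerstinbounded}: observing that $a_{m-1}/a_m = \frac{m\,|\Gcal_{m-1}|}{|\Gcal_m|} \sim c\, m^{1/k}$, a telescoping product over the (boundedly many) integers $m$ lying between $n-2j$ and $n$ gives $a_{n-j}/a_n \sim c^{j}\, n^{j/k}$ and $a_{n-2j}/a_n \sim c^{2j}\, n^{2j/k}$. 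Hence $\Ee\, Z_n \sim \frac{c^{j}}{\aut(H)}\, n^{j/k} \to \infty$ (using $j \ge 1$), while $\Ee[Z_n(Z_n-1)] \sim (\Ee\, Z_n)^2$, so that $\Var(Z_n) = \Ee[Z_n(Z_n-1)] + \Ee\, Z_n - (\Ee\, Z_n)^2 = o\big((\Ee\, Z_n)^2\big)$.

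Finally, Chebyshev's inequality gives $\Pee\big(|Z_n - \Ee\, Z_n| > \tfrac12 \Ee\, Z_n\big) \to 0$, and since $\Ee\, Z_n \to \infty$ this forces $\Pee(Z_n \le K) \to 0$ for every fixed $K$, which is the assertion of the lemma. The only point needing any care is the telescoping estimate for $a_{n-j}/a_n$ and $a_{n-2j}/a_n$, but since $j$ is a fixed constant and each factor $a_{m-1}/a_m$ with $m \in \{n-2j+1,\dots,n\}$ satisfies $a_{m-1}/a_m \sim c\, m^{1/k} \sim c\, n^{1/k}$, this is routine; everything else is the bivariate exponential formula together with a standard second-moment argument.
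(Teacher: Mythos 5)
Your proposal is correct and follows essentially the same route as the paper: compute the first two factorial moments of $Z_n$, use Corollary~\ref{cor:kerstinbounded} to get $\Ee Z_n \sim c^{j}n^{j/k}/\aut(H)\to\infty$ and $\Ee[Z_n(Z_n-1)]\sim(\Ee Z_n)^2$, and conclude by Chebyshev. The only (cosmetic) difference is that you obtain the factorial moments from the bivariate exponential formula, while the paper counts directly via $\Ee Z_n = \binom{n}{j}\frac{j!}{\aut(H)}|\Gcal_{n-j}|/|\Gcal_n|$, which is the same expression $\frac{1}{\aut(H)}a_{n-j}/a_n$ in your notation.
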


\begin{proof}
Let us write $r = v(H)$. Using Corollary~\ref{cor:kerstinbounded}, we see that
\[ \Ee Z_n = \frac{{n \choose r} \cdot \frac{r!}{\aut(H)} \cdot |\Gcal_{n-r}| }{ |\Gcal_n| }
\sim \frac{c^r n^{r/t}}{\aut(H)} . \]
\noindent
Similarly, we have
\[ \Ee Z_n(Z_n-1) =
\frac{
{n \choose r} \cdot {n-r \choose r} \cdot \left( \frac{r!}{\aut(H)} \right)^2 \cdot |\Gcal_{n-2r}|
}{
|\Gcal_n|
}
\sim
\left(\frac{c^r n^{r/t}}{\aut(H)}\right)^2.
\]
\noindent
It follows that $\Var(Z_n) = \Ee Z_n^2 - (\Ee Z_n)^2 = o( (\Ee Z_n)^2 )$.
We can thus write
\[ \Pee( Z_n \leq K ) \leq
\Pee( |Z_n - \Ee Z_n | \geq \Ee Z_n / 2 )
\leq 4 \Var Z_n / (\Ee Z_n)^2 = o(1), \]
\noindent
where the first inequality holds for $n$ sufficiently large (as $\Ee Z_n \to \infty$) and we
have used Chebychev's inequality for the second inequality.
Hence $Z_n > K$ w.h.p., as required.
\end{proof}

\begin{proofof}{Theorem~\ref{thm:boundedcompsize}}
Let us fix a $\varphi \in \MSO$ and let $k$ be its quantifier depth.
Let $a = a(k)$ be as provided by Lemma~\ref{lem:lotsofcopies},
let $H_1, \dots, H_m$ be all unlabelled, connected graphs on at most $t$ vertices
and  set $H := aH_1 \cup \dots \cup aH_m$.
(So $H$ is the vertex disjoint union of $a$ copies of $H_i$, for every $i$.)
By Lemma~\ref{lem:prut}, w.h.p., $G_n$ has at least $a$ components isomorphic to $H_i$
for each $i$ (and no other components).
Using Lemmas~\ref{lem:disjointunion} and~\ref{lem:lotsofcopies} it thus follows
that, w.h.p., $G_n \equivMSO_k H$.
So if $H \models \varphi$ then $\lim_{n\to\infty} \Pee( G_n \models \varphi ) = 1$ and
otherwise $\lim_{n\to\infty} \Pee( G_n \models \varphi ) = 0$.
\end{proofof}

\subsection{A $\MSO$-sentence without a limiting probability, for paths \\
(proof of part~\ref{itm:paths1} of Theorem~\ref{thm:paths})\label{sec:nolimpath}}

Note that it is possible to ask,
in a $\MSO$-sentence, for a proper two-colouring of the graph,
such that there are two vertices of degree one with the same colour.
If this sentence is true of a path, the path must have odd order. Otherwise it fails.
Thus we have:

\begin{corollary}\label{cor:onepath}
There exists a $\varphi\in\MSO$ such that
$\displaystyle \Pee( C_n \models\varphi ) =
\left\{ \begin{array}{cl}
1 & \text{ if $n$ is odd, } \\
0 & \text{ if $n$ is even.}
\end{array}\right.$
\end{corollary}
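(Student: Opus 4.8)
The plan is to prove this directly, by writing down an explicit $\MSO$ sentence $\varphi$ with the required property; no probability estimate is needed, because a path carries no randomness beyond the labelling of its vertices and the sentence we construct will not refer to labels, so $\Pee(C_n \models \varphi)$ is automatically either $0$ or $1$. The structural fact driving everything is that a path $v_1 v_2 \cdots v_n$ on $n \geq 2$ vertices has a unique proper $2$-colouring up to swapping colours, under which $v_i$ and $v_j$ receive the same colour if and only if $i \equiv j \pmod 2$; in particular its two endpoints $v_1, v_n$ — which are precisely the vertices of degree $1$ — get the same colour exactly when $n$ is odd. This is the informal sentence indicated in the text just before the corollary, and the task is to formalise it (and to deal with the degenerate value $n=1$).

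First I would record the relevant expressibility facts. The property ``$\deg(x)=1$'' is $\FO$-expressible by the formula $\delta(x) := \exists z\,(z\sim x) \wedge \forall z_1\forall z_2\,\big((z_1\sim x \wedge z_2\sim x)\Rightarrow z_1=z_2\big)$. The property ``$A$ is one colour class of a proper $2$-colouring'' is $\MSO$-expressible by $\pi(A) := \forall x\forall y\,\big(x\sim y \Rightarrow (x\in A \Leftrightarrow \neg(y\in A))\big)$, the other class being the complement. Finally ``$G$ has exactly one vertex'' is $\FO$-expressible by $\eta := \exists x\,\forall y\,(y=x)$. I then set
\[ \varphi \;:=\; \eta \;\vee\; \exists A\,\Big[\pi(A) \wedge \exists x\exists y\,\big(x\neq y \wedge \delta(x) \wedge \delta(y) \wedge (x\in A \Leftrightarrow y\in A)\big)\Big], \]
which is an $\MSO$ sentence.

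Next I would verify the behaviour of $\varphi$ on the path $P_n$ by cases. If $n$ is odd and $n=1$, then $\eta$ holds, so $P_n\models\varphi$. If $n$ is odd and $n\geq 3$, take $A$ to be the set of odd-indexed vertices: $\pi(A)$ holds, the only vertices satisfying $\delta$ are the endpoints $v_1,v_n$, they are distinct, and both lie in $A$ since $n$ is odd, so $P_n\models\varphi$. If $n$ is even, then $\eta$ fails since $n\geq 2$; and in any proper $2$-colouring of $P_n$ the two classes are forced to be the odd- and even-indexed vertices (in one order or the other), the only vertices satisfying $\delta$ are $v_1$ and $v_n$, and since $n$ is even these lie in different classes, so no witnessing pair $x,y$ exists and the second disjunct fails. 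Hence $P_n\models\varphi$ if and only if $n$ is odd, and since $\varphi$ is invariant under relabelling, $\Pee(C_n\models\varphi)$ equals $1$ for odd $n$ and $0$ for even $n$.

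There is no serious obstacle; the only points needing a little care are the degenerate case $n=1$, handled by the disjunct $\eta$ (note $P_1$ has \emph{no} vertex of degree $1$, so the second disjunct is vacuously false there), and the observation that on a path ``degree exactly $1$'' picks out precisely the two endpoints, so that the colour comparison inside $\varphi$ genuinely tests the parity of $n$. Let me also note that this corollary immediately gives part~\ref{itm:paths1} of Theorem~\ref{thm:paths}: the $\MSO$-convergence law fails for the random path, since $\Pee(C_n\models\varphi)$ oscillates between $0$ and $1$.
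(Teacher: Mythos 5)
Your proof is correct and takes essentially the same route as the paper, whose proof is just a sketch of exactly this sentence: ``there is a proper two-colouring in which two (distinct) vertices of degree one receive the same colour.'' Your formalisation, including the extra disjunct $\eta$ covering the degenerate path on one vertex (a case the paper's sketch silently glosses over), is simply a careful writing-out of that argument.
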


\subsection{The $\FO$-zero-one law fails for caterpillars \\ 
(proof of part~\ref{itm:cat1} of Theorem~\ref{thm:caterpillars})\label{sec:catcounter}}

Let $\varphi$ be the $\FO$ sentence which formalizes
`there are two distinct vertices that have degree five and exactly one neighbour of degree at least two'.
Then clearly a caterpillar satisfies $\varphi$ if and only if
both ends of its spine have degree five.
(Here and elsewhere, we define the spine of a caterpillar
as the path that consists of all vertices of degree at least two.)
The following lemma shows the $\FO$-zero-one law fails for $C_n$, the random caterpillar.

\begin{proposition}\label{prop:caterpillarlimit}
If $\varphi$ is as above, then $\displaystyle \lim_{n\to\infty} \Pee( C_n \models \varphi ) = \left(\frac{\rho^4}{4!(e^\rho-1)}\right)^2$, where $\rho$ is the unique real root of $x \upe^x = 1$.
\end{proposition}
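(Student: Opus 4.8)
The plan is to reduce the statement to a computation involving the asymptotic distribution of the degrees of the two spine-endpoints of a random caterpillar on $n$ vertices. A caterpillar is determined (up to the labelling of its vertices) by the length of its spine together with, for each spine vertex, the number of pendant leaves attached to it; so the natural first step is to pass from the labelled class to this combinatorial encoding and identify the generating function. Writing $L(x) = \sum_{k\geq 0} x^k/k! \cdot(\text{leaf-multiplicities})$, one finds that the number of leaves hanging off a given spine vertex is, in the Boltzmann/limiting sense, governed by the factor $\upe^{x}$ (any number of leaves, unordered), and the condition that a spine vertex actually has degree at least two when it is an \emph{internal} spine vertex is automatic, while for the two \emph{endpoints} of the spine, degree~$d$ means exactly $d-1$ pendant leaves. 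I would set up the exponential generating function for caterpillars, read off that the relevant ``number of leaves at an endpoint'' has, after the standard singularity analysis, a limiting distribution proportional to $\rho^{j}/j!$ for $j\ge 1$ (the $j\ge 1$ because an endpoint must have at least one leaf to have degree $\ge 2$, i.e.\ to be an endpoint of the spine rather than an isolated-looking extremity), normalised by $\sum_{j\ge1}\rho^j/j! = \upe^{\rho}-1$. Here $\rho$ is the radius of convergence of the caterpillar generating function, which works out to be the unique positive root of $x\upe^{x}=1$.

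The second step is to argue that the two endpoints of the spine become \emph{independent} in the limit, each with the above distribution, so that the probability that both have degree exactly five (i.e.\ each has exactly four pendant leaves) converges to
\[
\left(\frac{\rho^{4}/4!}{\upe^{\rho}-1}\right)^{2}
= \left(\frac{\rho^{4}}{4!(\upe^{\rho}-1)}\right)^{2}.
\]
The standard tool here is singularity analysis à la Flajolet--Sedgewick (cf.~\cite{FlajoletSedgewick}): one writes the bivariate (or trivariate) generating function marking the leaf-counts at the two ends, observes that marking a bounded quantity at an endpoint only multiplies the singular part by an analytic factor evaluated at $\rho$, and concludes that the joint limiting probabilities factor. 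The key point to check carefully is that conditioning on the two end-multiplicities does not shift the dominant singularity, which holds because we are fixing only $O(1)$ of the $\Theta(n)$ spine vertices.

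The third and final step is to connect this back to the sentence $\varphi$. Recall $\varphi$ asserts the existence of two distinct vertices of degree five each having exactly one neighbour of degree at least two. In a caterpillar, a vertex of degree $\ge 2$ lies on the spine; a vertex of degree five with exactly one spine-neighbour is precisely an endpoint of the spine of degree five (an internal spine vertex has two spine-neighbours, and a leaf has degree one). Hence a caterpillar satisfies $\varphi$ if and only if both ends of its spine have degree five — provided the spine has length $\ge 2$, which happens with probability $1-o(1)$, and provided there genuinely are two \emph{distinct} such vertices, again guaranteed once the spine is long. Therefore $\Pee(C_n\models\varphi) = \Pee(\text{both spine-ends have degree }5) + o(1)$, and combining with the previous steps gives the claimed limit. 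I expect the main obstacle to be the bookkeeping in the singularity analysis that simultaneously establishes (a) the value of $\rho$ as the root of $x\upe^x=1$, (b) the $(\upe^{\rho}-1)^{-1}$ normalisation coming from the ``at least one leaf'' constraint at an endpoint, and (c) the asymptotic independence of the two ends; none of these is deep, but getting the generating-function setup exactly right — in particular distinguishing the role of endpoint vertices from internal spine vertices — is where care is needed.
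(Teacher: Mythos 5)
Your proposal is correct and follows essentially the same route as the paper: encode a caterpillar as a sequence of stars with endpoint factors $x(\upe^{ux}-1)$, locate the simple pole at the root $\rho$ of $x\upe^x=1$, and use singularity analysis to get the asymptotically independent endpoint leaf-counts with law $\rho^j/\bigl(j!(\upe^{\rho}-1)\bigr)$, $j\geq 1$, then square the degree-five probability. The paper merely makes the bookkeeping explicit by passing to \emph{oriented} caterpillars (each unoriented caterpillar with spine length at least two corresponds to exactly two oriented ones) and by discarding the $o(1)$-probability star case, points you acknowledge implicitly.
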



\begin{proof}
An oriented (labelled) caterpillar is a caterpillar, with $\geq 2$ vertices on its spine, on which we choose a
``direction" for the spine.
In other words, an oriented caterpillar is a sequence of at least two stars, such that
the first and the last star have at least two vertices.
Let $a$ and $b$ be the endpoints of the spine. We compute the joint distribution
of the number of leaves attached to $a$ and $b$.
It follows (cf. \cite[Chapter~II]{FlajoletSedgewick}) by basic theory
of exponential generating functions (EGF) that, with number of vertices as size function,
the EGF of stars is $x\upe^x$, of stars with at least two vertices is $xe^x - x$ and of oriented caterpillars is
$$
  C(x)= { (x\upe^x-x)^2\over 1-x\upe^x}.
$$
The numerator encodes the first and last stars,
and the denominator the sequence (possibly empty) of intermediate stars.

Since $C(x)$ is a meromorphic function with a simple pole at $\rho$, we can apply the results from \cite[Section IV.5]{FlajoletSedgewick}). It follows that
the number of oriented caterpillars on $n$ vertices satisfies
\begin{equation}\label{eq:asymptoticsforcaterpillars}
    [x^n] C(x) \sim c \cdot \rho^{-n} n!,
\end{equation}
for some constant $c>0$.

We introduce variables $u$ and $w$ marking, respectively, the number of leaves attached to $a$ and $b$.
The associated EGF is
$$
  C(x,u,w)= { x(\upe^{ux}-1) x(\upe^{wx}-1)\over 1-x\upe^x}.
$$
The probability that $\deg(a)=i+1$ and $\deg(b)=j+1$ is given by
$$
    {[x^n u^i w^j] C(x,u,w) \over [x^n]C(x)}=
    {1\over i!j!}{[x^n] x^{i+j} (\upe^{x}-1)^{-2} C(x) \over [x^n]C(x)}
    \sim {\rho^i \over i! (\upe^\rho-1)}{\rho^j \over j!(\upe^\rho-1)}.
$$
This is because $x^{i+j}(e^x-1)^2$ is analytic, so that
$[x^n] x^{i+j} (\upe^{x}-1)^{-2} C(x) \sim
\rho^{i+j} (\upe^{\rho}-1)^{-2} [x^n] C(x)$.

We see that asymptotically the number of leaves attached to $a$ and $b$ are independent
random variables, each distributed like one plus a Poisson-variable that is conditioned to be positive.
In other words, if~$O_n$ is chosen uniformly at random from all
oriented caterpillars on $n$ vertices then:
\[  \lim_{n\to\infty} \Pee( O_n \models \varphi ) = \left(\frac{\rho^{4}}{4! (e^\rho-1)}\right)^2. \]
Let $E_n$ denote the event that $C_n$, the random (unoriented) caterpillar, is 
not a star (i.e.~the spine has at least two vertices).
Then we have that
\[ \Pee( C_n \models \varphi | E_n ) = \Pee( O_n \models \varphi ),  \]
\noindent
since every unoriented, labelled caterpillar with at least two vertices in the spine
corresponds to exactly two oriented, labelled caterpillars.
Since there are $n$ stars on $n$ vertices, and
the total number of caterpillars is at least $n!/2$ (the number of paths), we have
$\Pee( E_n^c ) = o(1)$.
It follows that
\[ \lim_{n\to\infty} \Pee( C_n \models \varphi ) =
\lim_{n\to\infty} \Pee( C_n \models \varphi | E_n ) \Pee(E_n)
+ \lim_{n\to\infty}\Pee( C_n \models \varphi | E_n^c ) \Pee(E_n^c)
= \left(\frac{\rho^{4}}{4! (e^\rho-1)}\right)^2, \]
\noindent
as claimed.
\end{proof}

\subsection{The $\MSO$-convergence law for forests of paths \\
(proof of part~\ref{itm:paths2} of Theorem~\ref{thm:paths})}

For forests of paths one can easily see that
$C(z) = z + \frac12 \sum_{n=2}^\infty z^n$
(since there is one path on $n=1$ vertex, and there are $n!/2$ paths on $n$ vertices
for all $n\geq 2$).
So the radius of convergence must be $\rho = 1$ and we have $C(\rho) = G(\rho) = \infty$.
In this section and in the following sections we will repeatedly make use of the following
corollary to Lemma~\ref{lem:decompcount}.

\begin{corollary}\label{cor:decompcountinf}
With $\Gcal$, $G_n$ and $\rho$ as in Lemma~\ref{lem:decompcount},
$\Hcal \subseteq \Ucal\Gcal$ any set of (unlabelled) connected graphs from $\Gcal$,
we set $\mu(H) := \rho^{v(H)} / \aut(H)$ for any $H\in\Hcal$
and $\mu(\Hcal) := \sum_{H\in\Hcal} \mu(H)$.
If $\mu(\mathcal{H})=\infty$ then, for any constant $K > 0$,
w.h.p. $G_n$ has at least $K$ components isomorphic to members of $\Hcal$.
\end{corollary}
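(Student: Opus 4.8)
The plan is to truncate $\Hcal$ to a finite subfamily and apply Lemma~\ref{lem:decompcount}. Fix $\eps>0$. Since $\mu(\Hcal)=\sum_{H\in\Hcal}\mu(H)=\infty$ while each $\mu(H)=\rho^{v(H)}/\aut(H)$ is a finite nonnegative number (recall $0<\rho<\infty$ by~\cite{norineseymourthomaswollan}), we may choose a finite set $\Hcal_0=\{H_1,\dots,H_m\}\subseteq\Hcal$ of pairwise non-isomorphic connected graphs with $\lambda:=\mu(H_1)+\dots+\mu(H_m)$ as large as we wish; in particular large enough that $\Pee(\Po(\lambda)<K)<\eps$. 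This is possible because a Poisson variable of mean $\lambda$ concentrates around $\lambda$: the Chernoff bound of Lemma~\ref{lem:chernoff} gives $\Pee(\Po(\lambda)\le K)\le\upe^{-\lambda H(K/\lambda)}$ whenever $\lambda\ge K$, and the right-hand side tends to $0$ as $\lambda\to\infty$ with $K$ fixed.

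Next I would invoke Lemma~\ref{lem:decompcount} with the non-isomorphic connected graphs $H_1,\dots,H_m\in\Gcal$. Letting $N_i$ be the number of components of $G_n$ isomorphic to $H_i$, it gives $(N_1,\dots,N_m)\to_{\text{TV}}(Z_1,\dots,Z_m)$, where the $Z_i$ are independent with $Z_i\isd\Po(\mu(H_i))$. Total variation distance cannot increase under applying a fixed (measurable) map to both sides, so applying the summation map yields $N_1+\dots+N_m\to_{\text{TV}}Z_1+\dots+Z_m$; and a sum of independent Poisson variables is again Poisson, here with mean $\lambda$. Consequently $\Pee(N_1+\dots+N_m<K)\to\Pee(\Po(\lambda)<K)<\eps$, so $\Pee(N_1+\dots+N_m<K)<\eps$ for all sufficiently large $n$.

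Finally, since $H_1,\dots,H_m$ are pairwise non-isomorphic and all belong to $\Hcal$, the components of $G_n$ counted by the various $N_i$ are distinct and each is isomorphic to a member of $\Hcal$; hence the number of components of $G_n$ isomorphic to a member of $\Hcal$ is at least $N_1+\dots+N_m$. Therefore the probability that $G_n$ has fewer than $K$ components isomorphic to members of $\Hcal$ is at most $\Pee(N_1+\dots+N_m<K)<\eps$ for $n$ large, and since $\eps>0$ was arbitrary this proves the claimed w.h.p.\ statement. I do not anticipate a genuine obstacle: the only step needing a word of care is that convergence in total variation of the vector $(N_1,\dots,N_m)$ passes to its sum, which is the data-processing property of total variation distance; everything else is bookkeeping around the divergence of $\mu(\Hcal)$.
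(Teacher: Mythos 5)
Your proposal is correct and follows essentially the same route as the paper's own proof: truncate $\Hcal$ to a finite subfamily of large total mean, apply Lemma~\ref{lem:decompcount} together with the fact that a sum of independent Poisson variables is Poisson, and finish with the lower-tail Chernoff bound of Lemma~\ref{lem:chernoff} and monotonicity of the component count. The only difference is that you spell out the data-processing step for total variation explicitly, which the paper leaves implicit.
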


\begin{proof}
For any finite subset $\Hcal' \subseteq \Hcal$ and any $H\in\Hcal'$
we denote by $N_n(H)$ the number of components of $G_n$ that are
isomorphic to $H$, and by $N_n(\Hcal') = \sum_{H\in\mathcal{H}'} N_n(H)$ the number of
components of $G_n$ isomorphic to some member of $\Hcal'$. A sum of independent
Poisson random variables being Poisson, it follows from Lemma~\ref{lem:decompcount}
that $N_n(\Hcal') \to_{\text{TV}} Z$, where $Z$ is a Poisson random variable with
mean $\Ee Z = \mu(\Hcal') = \sum_{H\in\Hcal'}\mu(H)$. Observe that we can make
$\Ee Z = \mu(\Hcal')$ as large as we wish by taking larger and larger subsets
of $\Hcal$. Using the Chernoff bound (Lemma~\ref{lem:chernoff}) it thus follows that
\[ \limsup_{n\to\infty} \Pee( N_n( \Hcal ) < K )
\leq  \lim_{n\to\infty} \Pee( N_n(\Hcal') < K )
= \Pee( Z < K ) \leq \upe^{ - \Ee Z \cdot H( K/\Ee Z)}, \]
\noindent
(provided $\Ee Z > K$ is sufficiently large), where $H(x) := x \ln x - x + 1$.
In particular, we can make $\Pee( Z < K )$ arbitrarily small
by taking $\Ee Z = \mu(\Hcal')$ sufficiently large.
\end{proof}

The following statement follows immediately from Corollary~\ref{cor:decompcountinf}, since
every path on at least two vertices has exactly two automorphisms and $\rho=1$.

\begin{corollary}\label{cor:AAP}
Let $\Fcal \subseteq \Ucal\Ccal$ be an infinite family of (unlabelled) paths, and let $K > 0$ be an arbitrary
constant.
W.h.p.~the random forest of paths $G_n$ contains at least $K$ components isomorphic
to members of $\Fcal$. \hfill $\blacksquare$
\end{corollary}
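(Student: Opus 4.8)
The plan is to apply Corollary~\ref{cor:decompcountinf} directly with $\Hcal = \Fcal$. First I would verify that the class $\Gcal$ of forests of paths satisfies the hypotheses of Lemma~\ref{lem:decompcount} (and hence of Corollary~\ref{cor:decompcountinf}): it is minor-closed and decomposable by inspection, and it is smooth by Theorem~\ref{thm:kerstin}. Moreover, as recorded in the text immediately preceding this corollary, $C(z) = z + \tfrac12\sum_{n\geq 2} z^n$, so the radius of convergence is $\rho = 1$.

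Next I would compute $\mu(\Fcal) := \sum_{H \in \Fcal} \rho^{v(H)}/\aut(H)$. Every path on at least two vertices has exactly two automorphisms (the identity and the reflection that reverses the path), and $\rho = 1$, so each summand equals $\tfrac12$ (the one-vertex path, should it lie in $\Fcal$, contributes $1$ instead, which only helps). Since $\Fcal$ is an infinite family, the series has infinitely many terms each at least $\tfrac12$, whence $\mu(\Fcal) = \infty$.

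Finally, Corollary~\ref{cor:decompcountinf} applied with $\Hcal = \Fcal$ and this divergent $\mu(\Fcal)$ gives exactly the statement: for any constant $K > 0$, w.h.p.~$G_n$ has at least $K$ components isomorphic to members of $\Fcal$. There is essentially no obstacle here; the only point requiring a moment's care is the automorphism count (a path on $\geq 2$ vertices has $\aut = 2$, not $1$), but since $\mu(H) \geq \tfrac12$ for every path $H$ on at least two vertices regardless, divergence of the sum over an infinite family is immediate, and the conclusion follows at once.
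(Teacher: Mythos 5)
Your proposal is correct and matches the paper's argument exactly: the corollary is stated there as an immediate consequence of Corollary~\ref{cor:decompcountinf}, using that $\rho=1$ and every path on at least two vertices has exactly two automorphisms, so $\mu(\Fcal)=\infty$ for any infinite family of paths. Nothing further is needed.
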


Recall that $\equivMSO_k$ is an equivalence relation with finitely many classes.
Let $m$ denote the number of classes that contain at least one path, and let $\Ccal_1, \dots, \Ccal_m$ be a partition
of all (unlabelled) paths according to their $\equivMSO_k$-type.
For each $1 \leq i \leq m$, let us pick an arbitrary representative $H_i \in \Ccal_i$, and let
us denote
\[ \Gamma_k( a_1, \dots, a_m ) := a_1H_1 \cup \dots \cup a_mH_m. \]
\noindent
That is, $\Gamma_k( a_1, \dots, a_m )$ is the vertex-disjoint union of $a_i$ copies of $H_i$, for each $i$.

\begin{proofof}{part~\ref{itm:paths2} of Theorem~\ref{thm:paths}}
Let $\varphi \in \MSO$ be arbitrary and let $k$ be its quantifier depth.

Recall that for $\Hcal \subseteq \Ccal$ we denote by $N_n(\Hcal)$ the
number of components of $G_n$ that are isomorphic to a member of $\Hcal$.
By Lemma~\ref{lem:disjointunion} and the construction of $\Gamma_k(a_1,\dots, a_m)$
we have that
\[ G_n \equivMSO_k \Gamma_k( N_n(\Ccal_1), \dots, N_n(\Ccal_m) ). \]
\noindent
Let us assume (without loss of generality) that the classes
$\Ccal_1, \dots, \Ccal_{m'}$ are finite and $\Ccal_{m'+1}, \dots, \Ccal_m$ are infinite
for some $m' < m$; and let $a = a(k)$ be as provided by Lemma~\ref{lem:lotsofcopies}.
We know from Corollary~\ref{cor:AAP} that $N_n(\Ccal_i) > a$ w.h.p.~for all $i \geq m'$.
Hence, applying Lemma~\ref{lem:disjointunion} and Lemma~\ref{lem:lotsofcopies}, we find that
\[ G_n \equivMSO_k \Gamma_k( N_n(\Ccal_1), \dots, N_n(\Ccal_{m'}), a, \dots, a ) \quad \text{ w.h.p. }
\]
\noindent
Now define
\[ \Acal :=
\{ (a_1, \dots, a_{m'}) \in (\{0\}\cup\eN)^{m'} :
\Gamma_k(a_1, \dots, a_{m'}, a, \dots, a) \models \varphi \}. \]
\noindent
Since a sum of independent Poisson random variables has again a Poisson distribution,
it follows that $(N_n(\Ccal_1), \dots, N_n(\Ccal_{m'}) )\to_{\text{TV}} (Z_1, \dots, Z_{m'})$
where the $Z_i$ are independent Poisson random variables with means
$\Ee Z_i = \sum_{H \in \Ccal_i} 1/\aut(H)$. It follows that
\[
\lim_{n\to\infty} \Pee( G_n \models \varphi )
=
\lim_{n\to\infty} \Pee( (N_n(\Ccal_1), \dots, N_n(\Ccal_{m'})) \in \Acal )
=
\Pee( (Z_1, \dots, Z_{m'}) \in \Acal ). \]
\noindent
This proves the convergence law (since the rightmost expression does not
depend on $n$).
\end{proofof}

\subsection{The $\FO$-convergence law for forests of caterpillars \\
(proof of part~\ref{itm:cat2} of Theorem~\ref{thm:caterpillars})}

For forests of caterpillars we have that $\rho\approx 0.567$,
where $\rho$ is the unique real root of $z\upe^z = 1$
(\cite[Proposition~26]{arXiv:1303.3836v1}),
and $G(\rho) = \infty$ (\cite[Table~1]{arXiv:1303.3836v1}).

Recall that the spine of a caterpillar is the path consisting of all vertices of degree at least two.
For $G$ a forest of caterpillars, let $\Long_\ell(G)$ denote the union of
all components whose spine has $>\ell$ vertices, let
$\Short_{\ell, K}(G)$ denote the union of all components
whose spine has at most $\ell$ vertices and whose degrees
are all at most $K$, and
$\Bush_{\ell, K}(G) := G \setminus (\Long_\ell(G) \cup \Short_{\ell, K}(G) )$
denote the union of all remaining components.
Before starting the proof of the convergence law for forests of caterpillars, we prove some
lemmas on the subgraphs we just defined.

\begin{lemma}\label{lem:bush}
For every $\ell \in \eN$ and $\eps > 0$ there is a $K = K(\ell, \eps)$ such that
$\Pee( \Bush_{\ell, K}(G_n) \neq \emptyset ) \leq \eps$ for $n$ sufficiently large.
\end{lemma}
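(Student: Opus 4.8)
The plan is to estimate $\Pee(\Bush_{\ell,K}(G_n)\neq\emptyset)$ by a straightforward first moment computation over an infinite family of forbidden components, and then to make that first moment small by choosing $K=K(\ell,\eps)$ large. Write $\Hcal=\Hcal_{\ell,K}$ for the set of all unlabelled caterpillars whose spine has at most $\ell$ vertices and which contain at least one vertex of degree larger than $K$. Directly from the definitions, $\Bush_{\ell,K}(G_n)\neq\emptyset$ holds exactly when $G_n$ has a component isomorphic to some $H\in\Hcal$; letting $N_n$ be the number of such components, Markov's inequality gives $\Pee(\Bush_{\ell,K}(G_n)\neq\emptyset)=\Pee(N_n\geq 1)\leq \Ee N_n=\sum_{H\in\Hcal}\Ee[\#\{\text{components of }G_n\text{ isomorphic to }H\}]$. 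So it suffices to bound this sum, uniformly in $n$, by something that tends to $0$ as $K\to\infty$.

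For the individual expectations I would use that the class of forests of caterpillars is decomposable and smooth. Decomposability gives, for any fixed connected $H$ with $v:=v(H)\leq n$, the exact identity $\Ee[\#\{\text{components}\cong H\}]=\binom{n}{v}\frac{v!}{\aut(H)}\frac{|\Gcal_{n-v}|}{|\Gcal_n|}=\frac{1}{\aut(H)}\prod_{m=n-v+1}^{n}\frac{m\,|\Gcal_{m-1}|}{|\Gcal_m|}$ (and the expectation is $0$ when $v>n$). By Theorem~\ref{thm:kerstin} the class is smooth, so the sequence $m\,|\Gcal_{m-1}|/|\Gcal_m|$ converges and is in particular bounded by some constant $M\geq 1$; hence $\Ee[\#\{\text{components}\cong H\}]\leq M^{v(H)}/\aut(H)$ for every $H$ and every $n$. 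Plugging this in yields the uniform bound $\Pee(\Bush_{\ell,K}(G_n)\neq\emptyset)\leq \sum_{H\in\Hcal}M^{v(H)}/\aut(H)$.

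It remains to see that $\sum_{H\in\Hcal}M^{v(H)}/\aut(H)\to 0$ as $K\to\infty$. First I would check the crude bound that the number of \emph{labelled} caterpillars on $v$ vertices whose spine has at most $\ell$ vertices is at most $(\ell+1)\,v^{\ell}\,\ell^{v}$: choose an ordered sequence of at most $\ell$ spine vertices (at most $v^{\ell}$ choices, up to the $\le\ell+1$ possible spine sizes) and attach each of the remaining at most $v$ leaves to one of the at most $\ell$ spine vertices (at most $\ell^{v}$ choices); this over-counts, which is harmless. Dividing by $v!$ and summing over $v$, $\sum_{H:\,\text{spine}\le\ell\ \text{vertices}}M^{v(H)}/\aut(H)\leq(\ell+1)\sum_{v\geq 1}\frac{v^{\ell}(\ell M)^{v}}{v!}<\infty$, the series converging for every value of $\ell M$. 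Since $\Hcal_{\ell,K}\subseteq\{H:\text{spine has}\le\ell\text{ vertices}\}$ and the sets $\Hcal_{\ell,K}$ decrease to $\emptyset$ as $K\to\infty$ (a finite caterpillar with a bounded spine has bounded maximum degree), continuity of the finite measure $H\mapsto M^{v(H)}/\aut(H)$ forces $\sum_{H\in\Hcal_{\ell,K}}M^{v(H)}/\aut(H)\to 0$; choosing $K=K(\ell,\eps)$ so that this quantity is at most $\eps$ completes the argument (in fact the resulting bound holds for all $n$, which is stronger than what the lemma asks).

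The one place that needs care is getting a bound on $\Ee[\#\{\text{components}\cong H\}]$ that can be summed over the infinite family $\Hcal_{\ell,K}$. Applying Lemma~\ref{lem:decompcount} only gives, for each fixed $H$, that this expectation converges to $\rho^{v(H)}/\aut(H)$, and exchanging the limit with an infinite sum would require a uniform-integrability argument; the telescoping-product estimate $M^{v(H)}/\aut(H)$ avoids this entirely, and the exponential growth of $M^{v}$ is irrelevant because caterpillars of bounded spine length are so rare — only $\mathrm{poly}(v)\cdot\ell^{v}$ of them on $v$ vertices — that the relevant series converges against any exponential weight. The rest is routine.
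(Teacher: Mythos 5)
Your proof is correct, and while it rests on the same basic strategy as the paper's proof --- a first-moment bound over the ``bad'' components (spine of at most $\ell$ vertices, some vertex of degree exceeding $K$), with smoothness of the class of forests of caterpillars controlling the ratio $|\Gcal_{n-v}|/|\Gcal_n|$ --- the execution is genuinely different and in some respects cleaner. The paper bounds, for each pair $(s,t)$ with $s\le\ell$, the probability of the event that some caterpillar component with $s$ spine vertices and $t$ attached leaves occurs, and since the smoothness estimate $m|\Gcal_{m-1}|/|\Gcal_m|\le\rho+\eps$ is only available for $m\ge n_0$, it must treat the boundary regime $n-(s+t)<n_0$ separately by a crude count plus Stirling's formula. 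You instead write the expected number of components isomorphic to a fixed unlabelled $H$ exactly (your identity is valid precisely because the class is decomposable and every caterpillar lies in it), telescope $\binom{n}{v}v!\,|\Gcal_{n-v}|/|\Gcal_n|$ into $\prod_{m=n-v+1}^{n} m|\Gcal_{m-1}|/|\Gcal_m|$, and use only that a convergent sequence is bounded by some $M$; this yields $M^{v(H)}/\aut(H)$ for every $H$ and every $n$, so no case analysis is needed and your final bound is uniform in $n$, slightly stronger than the ``$n$ sufficiently large'' in the statement. The price is a weaker per-term constant ($M$ in place of $s(\rho+\eps)$), but that is irrelevant since your series $(\ell+1)\sum_{v\ge1} v^{\ell}(\ell M)^{v}/v!$ converges for any $M$, thanks to your correct count of at most $(\ell+1)v^{\ell}\ell^{v}$ labelled caterpillars on $v$ vertices with spine at most $\ell$; the concluding continuity-from-above argument as $K\to\infty$ is also fine, and you rightly avoid appealing to Lemma~\ref{lem:decompcount}, which gives only per-$H$ limits and would indeed require a uniform-integrability step to be summed over an infinite family.
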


\begin{proof}
For $s \leq \ell$ and $t$ arbitrary,  let $E_{s,t}$ denote the event
that $G_n$ contains a caterpillar on $s+t$ vertices with $s$ vertices on the spine.
Observe that
\begin{equation}\label{eq:zhuk} \Pee( E_{s,t} ) \leq
\frac{
{n \choose s+t } \cdot {s+t \choose s} \cdot \frac{s!}{2} \cdot s^t \cdot |\Gcal_{n-(s+t)}|
}{
|\Gcal_n|
}.
\end{equation}
\noindent
(We choose $s+t$ vertices and construct a caterpillar of the required kind on them
and a forest of caterpillars on the remaining vertices.
This results in some over-counting, but that is fine for an upper bound.
To construct the caterpillar on the $s+t$ chosen vertices, we first choose $s$ vertices for the spine,
we arrange these $s$ vertices in one of $s!/2$ ways on a path and each of the remaining
$t$ vertices then chooses one of the vertices of the spine to attach to.)

Since the class of forests of caterpillars is smooth (Theorem~\ref{thm:kerstin}),
for every $\eps>0$ there is an $n_0=n_0(\eps)$
such that  $(\rho-\eps) \leq k |\Gcal_{k-1}| / |\Gcal_k| \leq (\rho+\eps)$ for all $ k \geq n_0$.
Let us observe that this also implies that 

\begin{equation}\label{eq:zhuk2}
\frac{|\Gcal_{n-m}|}{|\Gcal_n|} = O\left( \frac{(\rho+\eps)^m}{(n)_m} \right) \quad \quad (\text{for all $n$ and $m \leq n$.})  
\end{equation}

It follows that if $s \leq \ell$ and $n \geq s$ and $t \leq n-s$ are arbitrary then
\[ \begin{array}{rcl}
\Pee( E_{s,t} )
& \leq &
\frac{(n)_{s+t} s^t}{2t!} \cdot \frac{|\Gcal_{n-(s+t)}|}{|\Gcal_n|} \\
& = &
O\left( \frac{s^t (\rho+\eps)^{s+t}}{t!} \right) 
 = O\left( \frac{\left(\ell (\rho+\eps)\right)^{t}}{t!} \right).
\end{array} \]
\noindent
(The first equality holds by rewriting~\eqref{eq:zhuk}, and the second line by filling in~\eqref{eq:zhuk2}.)
We thus have, for every $T$:
\[ 
\Pee\left( \bigcup_{1 \leq s \leq \ell, \atop t \geq T} E_{s,t} \right)
= O\left( 
 \sum_{1\leq s\leq \ell, \atop t \geq T} \frac{\left(\ell (\rho+\eps)\right)^{t}}{t!} \right)
= 
O\left( 
 \sum_{t \geq T} \frac{\left(\ell (\rho+\eps)\right)^{t}}{t!} \right). \]
\noindent
Since $\sum_t \frac{(\ell(\rho+\eps))^t}{t!} < \infty$, we can choose $T$ such that
$\Pee( \bigcup_{1 \leq s \leq \ell, \atop t \geq T} E_{s,t} )  < \eps$ for all $n$.

To conclude the proof of the lemma, we simply set $K(\ell, \eps) = T+2$ and observe that
whenever $\Bush_{\ell, K}(G_n) \neq \emptyset$ then $E_{s,t}$ must hold for some
$s \leq \ell$ and $t \geq T$.
\end{proof}

\begin{lemma}\label{lem:long}
For every $k \in \eN$ there is an $\ell = \ell(k)$ and a forest of caterpillars
$Q_k$ such that $\Long_\ell( G_n ) \equivFO_k Q_k$ w.h.p.
\end{lemma}

The proof of this lemma makes use of an observation that we state as a separate lemma.
Note that the isomorphism type of a caterpillar is described completely be a sequence
of numbers $d_1, \dots, d_\ell$ where $\ell$ is the number of vertices of the spine
and $d_i$ is the number of vertices of degree one attached to the $i$-th vertex of the spine.
We shall call these numbers simply the {\em sequence} of the caterpillar.
Let us say that a caterpillar has {\em begin sequence}
$\overline{d} = (d_1, \dots, d_{\ell})$ (where always $d_1 \geq 1$)
if its sequence either starts with $\overline{d}$ or it ends with
$d_{\ell}, d_{\ell-1}, \dots, d_1$. For every sequence $\overline{d}$ with $d_1 \geq 1$, let
$\Hcal_{\overline{d}} \subseteq \Ucal\Ccal$ denote the set of unlabelled
caterpillars with begin sequence $\overline{d}$.
Throughout this section, $\mu(.)$ will be as defined by Corollary~\ref{cor:decompcountinf} (applied to the class of caterpillars).

\begin{lemma}\label{lem:beginseq}
For any $\ell$ and every sequence $\overline{d} = (d_1,\dots, d_\ell)$ (with $d_1 \geq 1$), we have $\mu(\Hcal_{\overline{d}}) = \infty$.
\end{lemma}

\begin{proof}
Let $\Fcal :=  \Ucal\Ccal \setminus \Hcal_{\overline{d}}$ be the set of those caterpillars that do not have
begin sequence $\overline{d}$.
Recall that $C(\rho) = \sum_{H \in\Ucal\Ccal} \mu(H)$.
Since $\mu(\Fcal) + \mu(\Hcal_{\overline{d}}) = C(\rho) = \infty$, we are done
if $\mu(\Fcal) < \infty$.
Let us thus assume $\mu(\Fcal) = \infty$.

For each $H \in \Fcal$ let us define a graph $H' \in \Hcal_{\overline{d}}$ by adding $\ell+\sum_{i=1}^\ell d_i$ vertices
in the obvious manner.
This clearly defines an injection mapping $\Fcal$ to $\Hcal_{\overline{d}}$.
Also observe that for a caterpillar $H$ with sequence $t_1, \dots, t_k$, the number of automorphisms $\aut(H)$ is either
simply equal to $t_1! \cdots t_k!$ or to
$2 t_1! \cdots t_k!$ (the latter case only occurs if the sequence is symmetric).
Hence, it follows that
\[ \mu(H') \geq \frac{\rho^{\ell+d_1+\dots+d_\ell}}{d_1! \cdots d_\ell!} \cdot \mu(H), \]
\noindent
for every $H \in \Fcal$. It follows that $\mu(\Hcal_{\overline{d}})
\geq \frac{\rho^{\ell+d_1+\dots+d_\ell}}{d_1! \cdots d_\ell!} \cdot \mu(\Fcal) = \infty$, as required.
\end{proof}

Combining this last lemma with Corollary~\ref{cor:decompcountinf}, we immediately get that

\begin{corollary}\label{cor:beginseqprob}
For any $\ell$ and every sequence $\overline{d} = (d_1,\dots, d_\ell)$ (with $d_1 \geq 1$) and every constant $K>0$,
$G_n$ contains at least $K$ components with begin sequence $\overline{d}$, w.h.p.
\end{corollary}

\begin{proofof}{Lemma~\ref{lem:long}}
Let $k \in \eN$ be arbitrary. Recall that, up to logical equivalence,
there are only finitely many
$\FO$-sentences of quantifier depth at most $k$.
Thus, by Gaifman's theorem (Theorem~\ref{thm:gaifman}), there is a finite set
$\Bcal = \{\varphi_1, \dots, \varphi_m \}$ such that every  sentence of
quantifier depth at most $k$ is equivalent to a boolean combination
of sentences in $\Bcal$, and for each $i$ we can write:
\[ \varphi_i =
\exists x_1, \dots, x_{n_i} : \left(\bigwedge_{1\leq a \leq n_i} \psi_i^{B(x_a, \ell_i)}(x_a) \right) \wedge
\left(\bigwedge_{1 \leq a <  b \leq n_i} \dist(x_a, x_b ) > 2\ell_i \right). \]
\noindent
Now let us set $\ell = 1000 \cdot \max_i \ell_i$.
For each $i$, let us fix a caterpillar $H_i$ with a spine of at least $\ell$ vertices that
satisfies $H_i \models  \exists x : \psi_i^{B(x, \ell_i)}(x)$, if such a
caterpillar exists.
Without loss of generality we can assume there exists a $m' \leq m$ such that the sought caterpillar $H_i$ exists
for all $i \leq m'$ and it does not exist for $i > m'$.
Let us now set
\[ Q_k := n_1H_1 \cup \dots \cup n_{m'}H_{m'}. \]
\noindent
That is, $Q_k$ is the vertex disjoint union of $n_i$ copies of $H_i$, for each $i \leq m'$.

For $i > m'$ we have that both $Q_k \models \neg\varphi_i$ and
$\Long_\ell( G_n ) \models \neg\varphi_i$.
(Since there is no caterpillar with
a spine of at least $\ell$ vertices that satisfies $\exists x : \psi_i^{B(x, \ell_i)}(x)$.)
On the other hand, for $i \leq m'$, we have that $Q_k \models \varphi_i$.
Now note that, by the choice of $\ell$ and $H_i$ it is either the case
that {\bf 1)} every caterpillar whose begin sequence is equal to the sequence of $H_i$ satisfies $\exists x : \psi_i^{B(x, \ell_i)}(x)$,
or {\bf 2)} every caterpillar whose begin sequence is equal to the reverse of the sequence of $H_i$ satisfies $\exists x : \psi_i^{B(x, \ell_i)}(x)$.
By Corollary~\ref{cor:beginseqprob}, w.h.p., $G_n$ contains at least $n_i$ components with either begin sequence.
Hence, w.h.p., $\Long_\ell(G_n) \models \varphi_i$.

We have seen that $Q_k \models \varphi_i$ if and only if $\Long_\ell(G_n) \models \varphi_i$ w.h.p. (for all $1\leq i \leq m$).
Since every $\FO$-sentence of quantifier depth at most $k$ is a boolean combination of
$\varphi_1, \dots, \varphi_m$, it follows that
$Q_k \equivFO_k \Long_\ell( G_n )$ w.h.p.
\end{proofof}

\begin{proofof}{part~\ref{itm:cat2} of Theorem~\ref{thm:caterpillars}}
Let us fix a sentence $\varphi \in \FO$, let $k$ be its quantifier depth and let $\ell, Q_k$ be as provided
by Lemma~\ref{lem:long}.
Let $\eps > 0$ be arbitrary, and let $K = K(\ell, \eps)$ be as provided by Lemma~\ref{lem:bush}.
Let $H_1, \dots, H_m \in\Ucal\Ccal$ be all (unlabelled) caterpillars whose spines have
at most $\ell$ vertices and whose sequence has only numbers less than $K$; and let
$N_n(H_i)$ denote the number of components of $G_n$ isomorphic to $H_i$.
By Lemma~\ref{lem:decompcount}, we have that
$(N(H_1), \dots, N(H_m)) \to_{\text{TV}} (Z_1, \dots, Z_m)$, where
the $Z_i$ are independent Poisson random variables with means
$\Ee Z_i = \mu(H_i) = \rho^{v(H_i)}/\aut(H_i)$.
Let us set
\[ \Lambda_k(a_1, \dots, a_m) :=
a_1H_1\cup\dots\cup a_mH_m \cup Q_k. \]
\noindent
(That is, $\Lambda_k(a_1, \dots, a_m)$ is the vertex disjoint union of $Q_k$
with $a_i$ copies of $H_i$ for every $i$.) By Lemma~\ref{lem:long}, we have
that $\Long_\ell( G_n ) \equivFO_k Q_k$ w.h.p.
Since $G_n \setminus \Bush_{\ell, K}(G_n)$ is exactly the union of 
all $\Long_\ell(G_n)$ with all component of $G_n$ that have spine length at most $\ell$
and all degrees at most $K$, it follows using Lemma~\ref{lem:disjointunion} that also
$G_n \setminus \Bush_{\ell, K}(G_n) \equivFO_k \Lambda_k( N(H_1), \dots, N(H_m) )$ w.h.p.
Now let $\Acal := \{ (a_1,\dots, a_m) \in (\{0\}\cup\eN)^m : \Lambda_k(a_1,\dots,a_m) \models \varphi \}$.
We see that
\[ \begin{array}{rcl}
\Pee( G_n \models \varphi )
& \leq &
 \Pee( (N(H_1), \dots, N(H_m)) \in \Acal ) + \Pee( \Bush_{\ell, K}(G_n)\neq\emptyset )
 + \Pee( \Long_\ell(G_n) \not\equivFO_k Q_k ) \\
&  \leq &
 \Pee( (Z_1, \dots, Z_m) \in \Acal ) + \eps + o(1),
 \end{array} \]
\noindent
and, similarly
\[ \begin{array}{rcl}
\Pee( G_n \models \varphi )
& \geq &
 \Pee( (N(H_1), \dots, N(H_m)) \in \Acal ) - \Pee( \Bush_{\ell, K}(G_n)\neq\emptyset ) - \Pee( \Long_\ell(G_n) \not\equivFO_k Q_k )  \\
& \geq &
 \Pee( (Z_1, \dots, Z_m) \in \Acal ) - \eps + o(1).
 \end{array} \]
\noindent
These bounds show that $\limsup_{n\to\infty} \Pee( G_n \models\varphi )$
and $\liminf_{n\to\infty} \Pee( G_n \models \varphi )$ differ by at most
$2\eps$. Sending $\eps\downarrow 0$ then proves that the
limit $\lim_{n\to\infty}\Pee( G_n \models\varphi)$ exists.
\end{proofof}

\subsection{The proof of parts~\ref{itm:paths3} of Theorems~\ref{thm:paths} and~\ref{thm:caterpillars}}

The following general result proves parts~\ref{itm:paths3} of
Theorems~\ref{thm:paths} and~\ref{thm:caterpillars} in a single stroke.
We recall that for decomposable classes, we have the `exponential formula'
$G(z) = \exp(C(z))$, where $C(z)$ is the exponential generating function
for the connected graphs $\Ccal \subseteq \Gcal$.
Moreover, both forests of paths and forests of caterpillars
are smooth by Theorem~\ref{thm:kerstin}.
Both classes are decomposable
and they satisfy $G(\rho)=\infty$
(in view of $C(\rho)=\infty$ \cite[Table~1]{arXiv:1303.3836v1} and $G(z)=\upe^{C(z)}$).
Let us also note that both $C$ and $G$ have the same radius of convergence.

\begin{theorem}\label{thm:divergingegf}
Let $\Gcal$ be a decomposable, smooth, minor-closed class
satisfying $G(\rho) = \infty$ and let $G_n \in_u \Gcal_n$.
Then there is a family of $\FO$ properties $\Phi$ such that their
limiting probabilities exist, and
\[ \clo{\Big(} {\Big\{} \lim_{n\to\infty}\Pee( G_n\models\varphi) :
\varphi \in \Phi{\Big\} } {\Big)} = [0,1].
\]
\end{theorem}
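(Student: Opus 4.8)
The plan is to exploit the divergence $G(\rho)=\infty$ to produce, for any target value $t\in[0,1]$, a sequence of $\FO$ sentences whose limiting probabilities approach $t$. The key tool is Lemma~\ref{lem:decompcount}: for fixed non-isomorphic connected graphs $H_1,\dots,H_k\in\Gcal$, the component counts $(N_1,\dots,N_k)$ of $G_n$ converge in total variation to independent Poisson variables $(Z_1,\dots,Z_k)$ with means $\mu_i=\rho^{v(H_i)}/\aut(H_i)$. Since for each $i$ the event ``$G_n$ has exactly $m_i$ components isomorphic to $H_i$'' is $\FO$-expressible (say ``there exist $m_i$ pairwise non-adjacent vertices each lying in a copy of $H_i$, and no $(m_i+1)$-th such'', using that $v(H_i)$ is bounded), any Boolean combination of such events over $i\in\{1,\dots,k\}$ is also $\FO$-expressible, and its limiting probability equals the corresponding probability for the independent Poisson vector $(Z_1,\dots,Z_k)$.

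The second ingredient is that the individual ``singleton'' probabilities $r_j:=\Pee(Z_j=1)\cdot\prod_{i\neq j,\,i\le k}\Pee(Z_i=0)$ — or more simply, the probabilities $s_j:=\mu_j\upe^{-\mu_j}\prod_{i<j}\upe^{-\mu_i}$ of the event ``no component of order $<v(H_j)$-type smaller, exactly one copy of $H_j$, and nothing from $H_1,\dots,H_{j-1}$'' (mirroring the $q_k$ of Section~\ref{sec:limitingprobabilities}) — can be chosen to form a sequence whose partial sums fill $[0,1]$ densely. Concretely: order the connected graphs $H_1,H_2,\dots$ of $\Gcal$ so that the means $\mu_i$ are non-increasing. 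Because $\sum_i \mu_i = C(\rho) = \infty$, the tail sums $\sum_{i>N}\mu_i$ are infinite for every $N$, and $\mu_i\to 0$. One then checks, as in Lemma~\ref{lem:pklim} but using $C(\rho)=\infty$, that the family of probabilities of the disjoint events $E_k$ (= ``exactly one component isomorphic to the $k$-th graph in the order, and none earlier'') satisfies $q_k\to 0$ while $\sum_k q_k = 1$ (since $\sum_k \mu_k\upe^{-(\mu_1+\dots+\mu_k)}$ telescopes against $1-\upe^{-(\mu_1+\dots+\mu_N)}\to 1$). Moreover $q_{k+1}/q_k\to 1$ by smoothness exactly as in Lemma~\ref{lem:pklim}. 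Hence by Lemma~\ref{lem:subsums} (Kakeya), once $q_k\le\sum_{j>k}q_j$ holds for all $k$ — which follows from $q_{k+1}/q_k\to 1$ together with $\sum_j q_j=1$, since $q_k\le\sum_{j>k}q_j \iff q_1+\dots+q_{k-1}+2q_k\le 1$, and the left side tends to $1$ from below as the $q_k\to0$ — we get $\{\sum_{i\in A} q_i : A\subseteq\eN\} = [0,1]$.

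Finally I would assemble the family $\Phi$: for each finite $A\subseteq\eN$, let $\varphi_A$ be the $\FO$ sentence asserting that the event $\bigcup_{k\in A}E_k$ holds (a finite disjunction of $\FO$-expressible disjoint events, each involving only boundedly many component-types, hence $\FO$), so that $\lim_n\Pee(G_n\models\varphi_A)=\sum_{k\in A}q_k$ by Lemma~\ref{lem:decompcount}. Taking $\Phi=\{\varphi_A : A\subseteq\eN \text{ finite}\}$ gives $\{\lim_n\Pee(G_n\models\varphi):\varphi\in\Phi\}=\{\sum_{k\in A}q_k : A\subseteq\eN \text{ finite}\}$, which is dense in $[0,1]$ by the previous paragraph, so its closure is $[0,1]$. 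The main obstacle I anticipate is the bookkeeping in establishing $q_k\le\sum_{j>k}q_j$ for \emph{all} $k$ (not just large $k$), since for small $k$ one might have $q_k$ relatively large; if this fails for finitely many small $k$ one instead invokes Corollary~\ref{cor:subsums}, which yields a finite union of translates of $[0,\sum_{i>i_0}q_i]=[0,\text{something}]$ — but here, crucially, because $\sum_j q_j=1$ and the ``large-index'' tail already has full mass $\sum_{i>i_0}q_i$ equal to $1$ minus a fixed finite sum, one must verify the translates still cover all of $[0,1]$; the cleanest route is to note $q_k\to 0$ forces $q_1+\dots+q_{k-1}+2q_k<1$ for all $k$ large, and handle the (finitely many) small $k$ by the observation that whenever $q_k=q_{k+1}$ or more generally $q_k$ is not a strict maximum of the tail the inequality holds — or simply to drop the first few graphs from the ordering and re-index, absorbing a constant factor, since removing finitely many components only removes a closed sub-family and does not shrink the closure below $[0,1]$.
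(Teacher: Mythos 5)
Your overall strategy --- FO-expressible component-count events, Lemma~\ref{lem:decompcount} to turn their probabilities into Poisson probabilities, and Kakeya's lemma to fill an interval --- is the same as the paper's, but the specific family of events you construct does not work, and the step where it breaks is the claim $\sum_k q_k = 1$. With $E_k$ being ``no component isomorphic to $H_1,\dots,H_{k-1}$ and exactly one isomorphic to $H_k$'' you get $q_k=\mu_k\upe^{-(\mu_1+\cdots+\mu_k)}$, and this series does not telescope: the telescoping identity is $\sum_{k\leq N}(1-\upe^{-\mu_k})\prod_{j<k}\upe^{-\mu_j}=1-\upe^{-(\mu_1+\cdots+\mu_N)}$, i.e.\ it needs the factor $1-\upe^{-\mu_k}$ (``at least one copy''), whereas ``exactly one copy'' contributes only $\mu_k\upe^{-\mu_k}<1-\upe^{-\mu_k}$. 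Hence $\sum_k q_k<1$ strictly; for forests of paths ($\rho=1$, $\mu_1=1$ for the isolated vertex, $\mu_k=1/2$ for every longer path) one computes $\sum_k q_k=\upe^{-1}+\tfrac12\upe^{-1}\upe^{-1/2}/(1-\upe^{-1/2})\approx 0.65$. Since your $\Phi$ only yields finite subsums of the $q_k$, the closure you obtain lies inside $[0,\sum_k q_k]\subsetneq[0,1]$, and neither fall-back repairs this: Corollary~\ref{cor:subsums} and dropping/re-indexing finitely many graphs only rearrange the same total mass and cannot create the missing $1-\sum_k q_k$. A secondary problem is the claim $q_{k+1}/q_k\to1$ ``by smoothness'': smoothness controls the aggregated means $|\Ccal_n|\rho^n/n!$ indexed by the number of vertices, not your per-graph means $\rho^{v(H_i)}/\aut(H_i)$ ordered non-increasingly (which, when $\rho=1$, need not even tend to $0$ --- again paths, where $\mu_k\equiv 1/2$ for $k\geq 2$ --- and for which a non-increasing enumeration need not even exist in general).

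The paper sidesteps all of this with a blocking trick: since $C(\rho)=\infty$ and each $\mu(H)=\rho^{v(H)}/\aut(H)\leq\rho\leq1$, one can partition $\Ucal\Ccal$ into finite disjoint blocks $\Hcal_1,\Hcal_2,\dots$ with $1000\leq\mu_i:=\sum_{H\in\Hcal_i}\mu(H)<1001$, and take $F_i$ to be ``at least one component isomorphic to a member of each of $\Hcal_1,\dots,\Hcal_{i-1}$, and none isomorphic to a member of $\Hcal_i$''. These events are FO-expressible and disjoint, and by Lemma~\ref{lem:decompcount} their limiting probabilities are $p_i=(1-\upe^{-\mu_1})\cdots(1-\upe^{-\mu_{i-1}})\upe^{-\mu_i}$, which genuinely sum to $1$ (the partial sums equal $1-\prod_{j\leq i}(1-\upe^{-\mu_j})\geq 1-(1-\upe^{-1001})^i$) and satisfy $p_i\leq\sum_{j>i}p_j$ for \emph{every} $i$, simply because $\upe^{-\mu_i}<1-\upe^{-\mu_i}$ when $\mu_i>1000$. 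With these $p_i$ in place of your $q_k$, Lemma~\ref{lem:subsums} applies with no exceptional indices, and the rest of your assembly (the finite unions $F_A$ and the density of finite subsums) goes through essentially verbatim.
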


\begin{proofof}{Theorem~\ref{thm:divergingegf}}
By the exponential formula, we also have $C(\rho) = \infty$.
Since $n!/\aut(H)$ is exactly the number of labelled graphs isomorphic to $H$
when $v(H) = n$, we have $\sum_{H \in \Ucal\Ccal_n} \frac{1}{\aut(H)} = |\Ccal_n|/n!$.
With $\mu(H)$ as in Corollary~\ref{cor:decompcountinf},
we get the following alternative expression for $C(\rho)$:
\[ C(\rho) = \sum_n \frac{|\Ccal_n|}{n!} \rho^n = \sum_{H \in \Ucal\Ccal} \mu(H). \]
\noindent
Since $C(\rho) = \infty$, we can define an infinite sequence $\Hcal_1, \Hcal_2, \dots$
of finite, disjoint subsets of $\Ucal\Ccal$ with the property that
$1000 \leq \mu_i := \sum_{H \in \Hcal_i} \mu(H) < 1001$ for each $i$.
To see why the upper bound can be made to hold,
keep in mind the definition of $\mu$,
and also that $\rho\leq 1$
(by a result of Bernardi, Noy and Welsh~\cite{bernardinoywelsh}).

Let us set
\[
E_i := {\Big\{}\text{no component of $G_n$ is isomorphic to an element of $\Hcal_i$}{\Big\}} \quad (i=1,2, \dots),  \]
\noindent
and
\[
F_1 := E_1, \quad \quad F_i := E_1^c \cap \dots \cap E_{i-1}^c \cap E_i \quad (i=2,3,\dots).
\]
\noindent
The events $F_i$ are clearly $\FO$-expressible, and disjoint.

Let us now fix some index $i$.
For $1\leq j \leq i$, let $N_j$ denote the number of components isomorphic
to a graph in $\Hcal_j$.
Since the sum of independent Poisson-distributed random variables is again Poisson-distributed,
it follows from Lemma~\ref{lem:decompcount} that
\[ (N_1,\dots, N_i) \to_{\text{TV}} (Z_1,\dots, Z_i), \]
\noindent
where the $Z_j$ are independent Poisson random variables with $\Ee Z_j = \mu_j$.
We find that
\[ \begin{array}{rcl}
p_i
& := &
\lim_{n\to\infty} \Pee( F_i )  \\
& = &
\Pee( Z_1 > 0 ) \cdots \Pee( Z_{i-1} > 0 ) \cdot \Pee( Z_i = 0 ) \\
& = &
(1-\upe^{-\mu_1}) \cdots (1-\upe^{-\mu_{i-1}}) \cdot \upe^{-\mu_i}.
\end{array} \]
\noindent
Observe that
\[
1 \geq \sum_{j=1}^{i} p_j
= 1 - \lim_{n\to\infty} \Pee( E_1^c \cap \dots \cap E_i^c )
= 1 - (1-\upe^{-\mu_1} )\cdots (1-\upe^{-\mu_i})
\geq 1 - (1-\upe^{-1001})^i. \]
\noindent
By sending $i\to\infty$ we see that $\sum_{i=1}^\infty p_i = 1$.
It follows that, for every $i$: 

\[ \sum_{j > i} p_j = 1 - (p_1+\dots+p_i) = 
 \Pee( Z_1 > 0, \dots, Z_i > 0 ) = (1-\upe^{-\mu_1}) \cdots (1-\upe^{-\mu_{i}})
\]

\noindent
Next, observe that since $\mu_i > 1000$ we have
$\upe^{-\mu_i} < 1 - \upe^{-\mu_i}$. Hence:
\[
p_i = (1-\upe^{-\mu_1}) \cdots (1-\upe^{-\mu_{i-1}}) \cdot \upe^{-\mu_i}
< (1-\upe^{-\mu_1}) \cdots (1-\upe^{-\mu_{i}})
= \sum_{j>i} p_j . \]
\noindent
We can thus apply Lemma~\ref{lem:subsums} to derive that
\[ \left\{ \sum_{i \in A} p_i : A \subseteq \eN \right\} = [0,1]. \]
\noindent
For $A \subseteq \eN$ a {\em finite} set, let us write
$F_A := \bigcup_{i\in A} F_i$.
Then $F_A$ is clearly $\FO$-expressible, and
$\lim_{n\to\infty} \Pee( F_A ) = \sum_{i\in A} p_i$.
Let $\Phi \subseteq \FO$ be all corresponding $\FO$-sentences (i.e., every $\varphi\in\Phi$
defines $F_A$ for some finite $A$).
We have:
\[ {\Big\{} \lim_{n\to\infty} \Pee( G_n \models \varphi ) : \varphi \in \Phi {\Big\}}
=
\left\{ \sum_{i\in A} p_i : A \subseteq \eN \text{ finite }\right\}. \]
\noindent
Finally observe that for every $A \subseteq \eN, \eps > 0$ there is a {\em finite}
$A' \subseteq A$ such that $|\sum_{i\in A} p_i - \sum_{i\in A'} p_i| < \eps$.
In other words, the limiting
probabilities of $\Phi$ are dense in $[0,1]$, as required.
\end{proofof}

\section{Discussion and further work\label{sec:conc}}

Here we mention some additional considerations and open questions that arise naturally from our work.

\medskip 
{\bf $\MSO$ limit laws for surfaces.}
While we were not able to extend our proofs of Theorem~\ref{thm:surface01} and~\ref{thm:surfaceconv} to work for \MSO,
we believe that the results should generalize.

\begin{conjecture}
Let $\Gcal$ be the class of all graphs embeddable on a fixed surface $S$.
Then $C_n \in_u \Ccal_n$ obeys the $\MSO$-zero-one law and $G_n \in_u \Gcal_n$ obeys the $\MSO$-convergence law.
\end{conjecture}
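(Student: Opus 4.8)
Write $\Gcal=\Gcal_S$ for the class of graphs embeddable on $S$ and $\Pcal$ for the planar graphs. The plan is to imitate the proofs of Theorems~\ref{thm:MSO01add} and~\ref{thm:MSOconvdetail}, reducing everything to a single statement about the ``genus-carrying core'' of the random connected graph. As a first step, the $\MSO$-convergence law for $G_n\in_u\Gcal_n$ would follow from the $\MSO$-zero-one law for $C_n\in_u\Ccal_n$ exactly as Theorem~\ref{thm:MSOconvdetail} follows from Theorem~\ref{thm:MSO01add}: if for every $k$ there is a fixed graph $M_k$ with $\Biggg(G_n)\equivMSO_k M_k$ w.h.p., then Lemma~\ref{lem:disjointunion} gives $G_n\equivMSO_k M_k\cup\Frag(G_n)$ w.h.p., and Theorem~\ref{thm:surfacefrag} ($\Frag(G_n)\to_{\text{TV}}R$, with $R$ the Boltzmann--Poisson random graph of $\Pcal$) then yields $\lim_{n}\Pee(G_n\models\varphi)=\Pee(R\in\Fcal)$ for a suitable $\Fcal\subseteq\Ucal\Pcal$. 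For the zero-one law for $C_n$ itself one again needs only a connected graph $M_k$ embeddable on $S$ with $C_n\equivMSO_k M_k$ w.h.p.; the argument for $\Biggg(G_n)$ is the same, with Corollary~\ref{cor:Bigpendantsurfaces} in place of Corollary~\ref{cor:pendantcopysurfaces}. (For the convergence law alone it would in fact be enough that the $\equivMSO_k$-type of $\Biggg(G_n)$ converges in distribution, rather than concentrates.)

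\textbf{Splitting off the planar decorations.} By the structure theory behind Theorem~\ref{thm:facewidth}, w.h.p.\ $C_n$ has face-width tending to infinity and decomposes, through its tree of $3$-connected components, into a unique non-planar $3$-connected component $T$ -- of linear size and of genus $g(S)$ -- together with planar pieces. Everything outside $T$ can be handled with the tools already in the paper: by Corollary~\ref{cor:pendantcopysurfaces} w.h.p.\ $C_n$ contains linearly many pendant copies of every fixed connected planar rooted graph, and, by the Poisson-type reasoning behind Lemma~\ref{lem:decompcount}, the number of decorations of each given rooted $\equivMSO_k$-type concentrates at a value tending to infinity. Feeding this into an $\MSO$ composition theorem along the $3$-connected decomposition -- a Feferman--Vaught/Shelah-style statement, of which Lemma~\ref{lem:disjointunion} and the root-identification Lemmas~\ref{lem:ident} and~\ref{lem:ab} are special cases -- one would show, as in the proof of Theorem~\ref{thm:MSOMk} applied to $\Pcal$, that the contribution of everything outside $T$ to the $\equivMSO_k$-type of $C_n$ stabilises w.h.p. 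Thus $C_n$ is, w.h.p., $\equivMSO_k$-equivalent to a fixed enrichment of the core $T$, and the conjecture comes down to showing that the $\equivMSO_k$-type of the random core $T$ is asymptotically deterministic.

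\textbf{The main obstacle.} This last point is exactly where the $\MSO$ case parts ways with the $\FO$ case, and it is the hard one. In the $\FO$ argument (Lemma~\ref{lem:FOFk}) the whole problem evaporated because one could attach a pendant copy of a \emph{fixed planar} graph and then invoke Gaifman's locality theorem; there is no Gaifman theorem for $\MSO$, and -- more fundamentally -- the genus of $S$ cannot be pushed into a pendant gadget at all. Indeed, if $C_n$ contained a pendant copy of a fixed graph $W$ of genus $g(S)$, then, the genus of a graph being the sum of the genera of its blocks, $C_n$ minus that copy of $W$ would be planar, forcing all non-planar $3$-connected components of $C_n$ into the bounded-size graph $W$ -- contradicting the structural fact that the unique such component has linear size. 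So $T$ must be faced directly. Since $T$ is neither uniform among $3$-connected graphs on $S$ (it carries an edge weight, cf.~\cite{chapuyfusygimenezmoharnoy}) nor, without Gaifman, reducible to its local structure, what one really wants is an ``$\MSO$-locality for embeddings of large face-width'': a theorem that the $\equivMSO_k$-type of a graph embedded in $S$ with face-width $\gg k$ is determined by its multiset of bounded-radius neighbourhood types together with a bounded amount of topological data (the genus, pinned at $g(S)$, being the prototypical datum), plus a proof that the random core realises a fixed value of that data w.h.p. This amounts to an $\MSO$ zero-one law for random $3$-connected graphs on a fixed surface; establishing it -- conceivably by adapting the techniques developed for random \emph{maps} on a surface in~\cite{01maps} -- is the crux, and is the reason the statement is, for now, only a conjecture. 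A reasonable first target is the projective plane or the torus, where the topology is simplest.
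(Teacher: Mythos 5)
You were asked to prove a statement that the paper itself does not prove: it is precisely the conjecture stated in Section~\ref{sec:conc}, and the authors' only treatment of it is a discussion of why the methods behind Theorems~\ref{thm:surface01} and~\ref{thm:surfaceconv} break down for $\MSO$. Your submission is accordingly a program rather than a proof, and you say so yourself: everything reduces to the unproved claim that the $\equivMSO_k$-type of the (edge-weighted) non-planar $3$-connected core is asymptotically deterministic -- an $\MSO$ zero-one law for the random genus-carrying core, or the ``$\MSO$-locality for large face-width'' you postulate. That is a genuine gap, and it coincides with the obstruction the authors themselves name: there is no analogue of Gaifman's theorem for $\MSO$, set moves let Spoiler exploit global data such as genus or chromatic number, and since ``contains an $H$-minor'' is $\MSO$-expressible and the random graph has the correct genus w.h.p., some $\MSO$ limiting probabilities must depend on $S$, so no surface-independent gadget argument in the style of Lemma~\ref{lem:FOFk} can work.

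The parts you do carry out are sound and consistent with the paper's machinery: deducing the convergence law for $G_n$ from type-concentration of $\Biggg(G_n)$ via Lemma~\ref{lem:disjointunion} and Theorem~\ref{thm:surfacefrag} is exactly the argument of Theorem~\ref{thm:surfacesconvdetail}, and your observation that w.h.p.\ $C_n$ cannot contain a pendant copy of any fixed graph of genus $g(S)$ (additivity of genus over blocks plus the linear-size non-planar $3$-connected component) correctly explains why the construction of $M_k$ from Theorem~\ref{thm:MSOMk} cannot be transplanted. Still, even granting the crux, several intermediate steps are asserted rather than proved: the ``fixed enrichment of the core'' is really a colouring of $T$ by rooted $\equivMSO_k$-types at unboundedly many $2$-cuts, so the Feferman--Vaught-style composition would have to be formulated and proved for such coloured structures, and the claimed concentration of the numbers of decorations of each rooted type does not follow from Lemma~\ref{lem:decompcount}, which concerns components of the fragment, not pieces hanging off the giant component. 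In sum: your route matches the authors' own assessment of where the difficulty lies and is a reasonable plan of attack, but it does not establish the statement, which remains -- in the paper and in your write-up alike -- a conjecture.
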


Let us mention that the proof of this conjecture is likely to be much more involved than the proofs of Theorems~\ref{thm:surface01} and~\ref{thm:surfaceconv}.
The proof will probably have to take into account detailed information on the global structure of the largest component,
and it may have to treat different surfaces separately.
In $\MSO$ one can for instance express the property ``$G_n$ has an $H$-minor'' (for any $H$) and by the results in~\cite{chapuyfusygimenezmoharnoy}, the random graph on a surface $S$ will have the ``correct" genus (i.e.~$G_n$ and $C_n$ will not be
embeddable on any ``simpler" surface) with high probability. 
Hence, with high probability, at least one forbidden minor
for embeddability on each simpler surface will have to occur.
In particular, if the $\MSO$-zero-one law/convergence law holds, then the value of the limiting probabilities of some
$\MSO$ sentences will depend on the surface $S$.
Let us also mention that no sensible analogue of Gaifman's locality theorem (Theorem~\ref{thm:gaifman}) for $\MSO$ seems
possible and that, related to this, in the $\MSO$-Ehrenfeucht-Fra\"{\i}ss\'e game the set moves allow Spoiler to
exploit global information. (For instance, if $G$ is $4$-colourable and $H$ is not, then Spoiler
can start by exhibiting a proper 4-colouring of $G$ in four set-moves, and then catch Duplicator by either
exhibiting a monochromatic edge or an uncoloured vertex in $H$. Since the chromatic number is a global characteristic -- there are graphs that
are locally tree-like yet have high chromatic number -- this suggests that if we wish to prove the $\MSO$-zero-one and/or convergence laws
for random graphs on surfaces by considering the Ehrenfeucht-Fra\"{\i}ss\'e game, we may have to come up with a rather
involved strategy for Duplicator.)

 An attractive conjecture of Chapuy et al.~\cite{chapuyfusygimenezmoharnoy} states that for every surface $S$, the random
 graph embeddable on $S$ will have chromatic number equal to four with high probability.
 Since being $k$-colourable is expressible in $\MSO$ for every   fixed $k$, establishing
 our conjecture above can be seen as a step in the direction of the Chapuy et al.~conjecture.
 In~\cite{chapuyfusygimenezmoharnoy} it was already shown that the chromatic number is $\in \{4,5\}$ with high probability.
 Proving the $\MSO$-zero-one law will  imply that the chromatic number of the random graph is either four w.h.p. or five w.h.p. 
 (as opposed to some probability mass being on 4 and some on 5, or oscillating between the two values).


\medskip 
{\bf The limiting probabilities.}
In all the cases where we have established the convergence law in this paper, it turned
out that the closure of the set of limiting probabilities is always a union of finitely many
closed intervals. 
A natural question is whether there are choices of a model of random graphs for which the
convergence law holds, but we end up with a more exotic set. 
The answers happens to be yes.
For instance, if in the binomial random graph model $G(n,p)$ we take $p = \frac{\ln n}{n} + \frac{c}{n}$
then the $\FO$-convergence law holds and we get (see~\cite{logicofrandomgraphs}, Section 3.6.4)
that
\begin{equation}\label{eq:Nymann}
\clo\left(\left\{ \lim_{n\to\infty} \Pee( G(n,p) \models \varphi ) : \varphi \in\FO ) \right\}\right)
=
\left\{ \sum_{i\in A} p_i : A \subseteq \{0\} \cup \eN \right\},
\end{equation}
\noindent
where $p_i := \mu^i e^{-\mu}/ i!$ with $\mu = e^{-c}$.
Observe that $p_i > \sum_{j>i} p_j$ for all sufficiently large $i$.
From this it follows that
the right hand side of~\eqref{eq:Nymann} is homeomorphic to the Cantor set (see~\cite{NymannSaenz}).

Returning to random graphs from minor-closed classes, let us recall that  for every addable, minor-closed
class we have that $G(\rho) \leq e^{1/2}$ by a result of Addario et al.~\cite{addarioberrymcdiarmidreed} and
independently Kang and Panagiotou~\cite{KangPanagiotou}.
In the notation of Section~\ref{sec:limitingprobabilities} we have $p_1 = 1/G(\rho)$, so that
this gives that $p_1 > 1-p_1 = \sum_{j>1} p_j$.
Corollary~\ref{cor:subsums}  allows us to deduce from this that there is at least one ``gap" in the closure of the limiting probabilities.
In the case of forests there are in fact three gaps in total.
We believe that every addable, minor-closed class has at least three gaps, and moreover that the reason is the following.

\begin{conjecture}
If $\Gcal$ is an addable minor-closed class, $G(z)$ its exponential generating function and $\rho$ the radius of convergence, then
$$
    G(\rho) < 1+2\rho.
$$
\end{conjecture}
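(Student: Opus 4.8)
The plan is to pass to the equivalent statement for $C(\rho)$ and combine the known bound $G(\rho)\le\sqrt{\upe}$ with a quantitative refinement in the regime where $\rho$ is small. Assume throughout that $\Gcal$ is a proper minor-closed class containing at least one edge (the remaining cases being vacuous or degenerate); then $\rho>0$ by~\cite{norineseymourthomaswollan}, and since $\Gcal$ is addable and minor-closed it contains every forest (build any forest from isolated vertices by adding one forest-edge at a time, each step joining two components), so $|\Gcal_n|\ge|\Fcal_n|$ for the class $\Fcal$ of forests; as the exponential generating function of $\Fcal$ has radius of convergence $\upe^{-1}$, this forces $0<\rho\le\upe^{-1}$, and moreover $G(\rho)<\infty$ by Theorem~\ref{thm:addfrag}. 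By the exponential formula~\eqref{eq:rhodef} we have $G(\rho)=\exp(C(\rho))$, and by Theorem~\ref{thm:addario}, $0\le C(\rho)\le\tfrac12$.

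First I would dispose of the range $\rho\ge\rho_0:=\tfrac12(\sqrt{\upe}-1)\approx0.3243$, which is chosen so that $1+2\rho_0=\sqrt{\upe}$. For such $\rho$, Theorem~\ref{thm:addario} gives at once $G(\rho)=\upe^{C(\rho)}\le\upe^{1/2}=\sqrt{\upe}=1+2\rho_0<1+2\rho$. In particular this covers the class of forests, for which $\rho=\upe^{-1}>\rho_0$; and since $\rho\le\upe^{-1}$ there is no ``$\rho$ close to $1$'' regime to consider. For the remaining range $\rho<\rho_0$ I would invoke the elementary convexity bound $\upe^{x}\le1+\tfrac{13}{10}x$ for $x\in[0,\tfrac12]$, valid because $\upe^{x}$ is convex and the inequality holds at $x=0$ (with equality) and at $x=\tfrac12$, where $\sqrt{\upe}=1.6487\ldots<1.65$. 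Taking $x=C(\rho)$ gives $G(\rho)\le1+\tfrac{13}{10}C(\rho)$, so it suffices to prove the quantitative estimate $C(\rho)<\tfrac{20}{13}\,\rho$ for $\rho<\rho_0$ (at $\rho=\rho_0$ this would ask for $C(\rho_0)<0.4989\ldots$, strictly sharper than but consistent with Theorem~\ref{thm:addario}).

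Everything thus reduces to that quantitative bound. Writing $C(\rho)=\sum_{n\ge1}c_n\rho^n$ with $c_n=|\Ccal_n|/n!$, one has $c_1=1$ and $c_2=\tfrac12$ (the only connected graph on two vertices is $K_2$, which lies in $\Gcal$), so $C(\rho)=\rho+\tfrac12\rho^2+\sum_{n\ge3}c_n\rho^n$ and the estimate is equivalent to $\sum_{n\ge3}c_n\rho^n<\tfrac{7}{13}\rho-\tfrac12\rho^2$ --- essentially a tail bound of the shape $\sum_{n\ge3}c_n\rho^n=O(\rho)$ with a good constant. The only bound on this tail available in full generality is the trivial $\sum_{n\ge3}c_n\rho^n\le C(\rho)\le\tfrac12$, which is worthless once $\rho$ is only moderately small. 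What one really wants is a \emph{strengthening of Theorem~\ref{thm:addario} that makes the dependence on $\rho$ explicit} --- for example the clean inequality $C(\rho)\le\tfrac{\upe}{2}\rho$ (note $\tfrac{\upe}{2}<\tfrac{20}{13}$), for which the class of forests should again be the unique extremal case; this alone would finish the proof.

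The \emph{main obstacle}, accordingly, is to establish a bound of the form $C(\rho)=O(\rho)$ with the correct constant, uniformly over all proper addable minor-closed classes. The route I would attempt is a $\rho$-sensitive refinement of the fragment-reattachment arguments of~\cite{addarioberrymcdiarmidreed} and~\cite{KangPanagiotou}: those bound the number of disconnected graphs in $\Gcal_n$ against the connected ones by reconnecting the fragment to the largest component, and the bound $C(\rho)\le\tfrac12$ reflects that each fragment vertex beyond the first is ``paid for'' by a factor of order $\rho$ while buying only boundedly much extra labelling freedom. Converting this heuristic into an inequality that degrades continuously to the forest value as $\rho\uparrow\upe^{-1}$ and satisfies $C(\rho)/\rho\to1$ as $\rho\to0^{+}$ seems to call for genuinely new work; a purely first-moment or crude-counting argument cannot suffice, since the target inequality is tight for forests. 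I note finally that, combined with Theorem~\ref{thm:addario} and Corollary~\ref{cor:subsums}, such a bound would also show that $\clo(L_{\MSO})$ is a union of at least four disjoint intervals for every addable, minor-closed class, as announced in the introduction.
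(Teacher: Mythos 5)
You have not proved the statement, and in fact the paper does not prove it either: it is stated there as an open conjecture, with the authors remarking that ``the difficulty lies on relating the values of $\rho$ and $G(\rho)$''. Your preliminary reductions are fine as far as they go: a non-degenerate addable minor-closed class contains all forests, whence $0<\rho\le\upe^{-1}$ and $G(\rho)<\infty$; for $\rho>\rho_0:=\tfrac12(\sqrt{\upe}-1)$ the bound $G(\rho)\le\sqrt{\upe}$ of Theorem~\ref{thm:addario} settles the claim (though note your chain yields only the non-strict inequality at $\rho=\rho_0$ itself, a boundary case you would still need to patch, say by showing equality in Theorem~\ref{thm:addario} forces $\rho=\upe^{-1}$); and the convexity estimate $\upe^{x}\le 1+\tfrac{13}{10}x$ on $[0,\tfrac12]$ correctly reduces the regime $\rho<\rho_0$ to the inequality $C(\rho)<\tfrac{20}{13}\,\rho$.

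But that last inequality is exactly where the entire content of the conjecture sits, and you give no argument for it --- you explicitly defer it to ``genuinely new work'', namely a $\rho$-sensitive refinement of the fragment-reattachment arguments of \cite{addarioberrymcdiarmidreed} and \cite{KangPanagiotou}. No such bound is available in the paper or, to my knowledge, in the literature: the only uniform facts at hand are $C(\rho)\le\tfrac12$ and $\rho\le\upe^{-1}$, which for a class with small $\rho$ (e.g.\ planar graphs, $\rho\approx 0.037$) only bound $C(\rho)/\rho$ by a quantity far exceeding $\tfrac{20}{13}$; your target $C(\rho)\le\tfrac{\upe}{2}\rho$ would moreover be tight for forests, so no crude counting or first-moment argument can deliver it, as you yourself observe. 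So the proposal is a valid sufficient-condition reduction plus an acknowledged open problem, not a proof; the statement remains open, consistent with its status in the paper as a conjecture whose consequence (at least four intervals in $\clo(L_{\MSO})$, via Corollary~\ref{cor:subsums}) is likewise only conjectural.
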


In the notation of Section~\ref{sec:limitingprobabilities} we have $p_1 = 1/G(\rho)$ and $p_2=\rho/G(\rho)$.
So the conjecture is equivalent to $p_2 > 1-p_1-p_2$, which together with Corollary~\ref{cor:subsums} will
indeed imply that
the closure of the set of limiting probabilities consists of at least four intervals (that is, there are at least three gaps). The difficulty lies on relating the values of $\rho$ and $G(\rho)$.

\medskip 
{\bf $\MSO$-convergence law for smooth and decomposable graph classes.}
We were able to show that for forests of paths, the $\MSO$-convergence law holds. For forests of caterpillars
we were only able to show the $\FO$-convergence law, but we believe the $\MSO$-convergence law should hold too.
What is more, we conjecture that this should be true in general for every minor-closed class that is both smooth and
decomposable.

\begin{conjecture}
For every decomposable, smooth, minor-closed class, the MSO-convergence law holds.
\end{conjecture}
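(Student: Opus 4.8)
The plan is to imitate the $\MSO$-convergence proof for forests of paths (part~\ref{itm:paths2} of Theorem~\ref{thm:paths}) and the $\FO$-convergence proof for forests of caterpillars (part~\ref{itm:cat2} of Theorem~\ref{thm:caterpillars}), replacing the class-specific structural analysis used there by Lemma~\ref{lem:decompcount} and Corollary~\ref{cor:decompcountinf}. Fix $\varphi \in \MSO$ and put $k := \qd(\varphi)$. By Lemma~\ref{lem:fixednoeq} the relation $\equivMSO_k$ has only finitely many classes; let $\Ccal_1, \dots, \Ccal_m$ be the classes into which it partitions the unlabelled connected graphs of $\Gcal$, fix a representative $H_i \in \Ccal_i$, and for $G_n \in_u \Gcal_n$ let $N_n(\Ccal_i)$ be the number of components of $G_n$ whose $\equivMSO_k$-type is $\Ccal_i$. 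Repeated application of Lemma~\ref{lem:disjointunion} gives
\[
G_n \;\equivMSO_k\; \bigcup_{i=1}^m N_n(\Ccal_i)\, H_i .
\]
Write $\mu(H) := \rho^{v(H)}/\aut(H)$ and $\mu(\Ccal_i) := \sum_{H \in \Ccal_i} \mu(H)$, and split the index set $\{1,\dots,m\}$ into $I_{<\infty} := \{ i : \mu(\Ccal_i) < \infty \}$ and its complement $I_\infty$.

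For $i \in I_\infty$, Corollary~\ref{cor:decompcountinf} applied with $\Hcal = \Ccal_i$ shows that, w.h.p., $N_n(\Ccal_i) \geq a(k)$, where $a(k)$ is the constant of Lemma~\ref{lem:lotsofcopies}; so, by Lemmas~\ref{lem:disjointunion} and~\ref{lem:lotsofcopies}, the components of $G_n$ of type $\Ccal_i$ are together $\equivMSO_k$-equivalent to $a(k) H_i$. For $i \in I_{<\infty}$ the goal is to prove that the random vector $\big( N_n(\Ccal_i) \big)_{i \in I_{<\infty}}$ converges in total variation (equivalently, being a vector of discrete random variables, in distribution) to some random vector $\vec Z = (Z_i)_{i \in I_{<\infty}}$ — one expects the $Z_i$ to be independent with $Z_i \sim \Po(\mu(\Ccal_i))$, exactly as for forests of paths. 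Granting this, one has $G_n \equivMSO_k \Gamma(\vec N)$ w.h.p., where $\vec N := \big( N_n(\Ccal_i) \big)_{i \in I_{<\infty}}$ and $\Gamma(\vec a) := \bigcup_{i \in I_{<\infty}} a_i H_i \cup \bigcup_{i \in I_\infty} a(k) H_i$; hence, with $\Acal := \{ \vec a : \Gamma(\vec a) \models \varphi \}$, one obtains $\Pee(G_n \models \varphi) = \Pee(\vec N \in \Acal) + o(1) \to \Pee(\vec Z \in \Acal)$, a quantity not depending on $n$. This would establish the $\MSO$-convergence law — and, as in Theorem~\ref{thm:MSOconvdetail}, an explicit formula for the limit in terms of the component law of $G_n$.

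The missing ingredient, and where I expect the real difficulty to lie, is the behaviour of the components of $G_n$ whose size grows with $n$. Lemma~\ref{lem:decompcount} controls only finitely many \emph{fixed} connected graphs, so for $i \in I_{<\infty}$ one would truncate $\Ccal_i$ to the finite set $\Ccal_i'$ of its members on at most $R$ vertices, apply Lemma~\ref{lem:decompcount} to the finitely many graphs in $\bigcup_i \Ccal_i'$, couple the limiting Poissons, and estimate the error $\sum_i \Ee[ N_n(\Ccal_i \setminus \Ccal_i') ]$. Writing $g_m := |\Gcal_m|/m!$ one has $\Ee[N_n(H)] = g_{n - v(H)}/(\aut(H)\, g_n)$, but smoothness only yields $g_{m-1}/g_m \to \rho$, so when $v(H)$ is allowed to grow the natural bound degrades to $\Ee[N_n(H)] \lesssim (\rho+\eps)^{v(H)}/\aut(H)$, and since $C(\rho+\eps) = \infty$ the tail $\sum_{v(H) > R} (\rho+\eps)^{v(H)}/\aut(H)$ does not tend to $0$ as $R \to \infty$ — so the truncation estimate breaks down. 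This is precisely the role played by Lemma~\ref{lem:bush} for caterpillars: there the first-moment bound is salvaged using the explicit enumerative asymptotics of~\cite{arXiv:1303.3836v1}, not from smoothness alone. Even with such a bound in hand one is still left with the large components that do occur, and here $\FO$ and $\MSO$ part ways: in $\FO$, Gaifman's theorem (Theorem~\ref{thm:gaifman}) makes every sufficiently large component $\FO_k$-equivalent to one of finitely many ``generic'' graphs, which is how $\Long_\ell(G_n)$ is handled for caterpillars, whereas no locality principle of this kind is available in $\MSO$ — as the paper's discussion of the surface conjectures already notes. Controlling the $\equivMSO_k$-type of the large components of $G_n$ without recourse to locality is, I believe, the crux.
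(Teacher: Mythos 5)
You have not proved this statement, and in fact it is not proved in the paper either: it appears there only as a conjecture in the concluding discussion (Section~\ref{sec:conc}), where the authors sketch essentially the same plan you describe — partition the unlabelled connected graphs into $\equivMSO_k$-classes, show a dichotomy between classes whose component counts grow without bound (handled by Lemma~\ref{lem:lotsofcopies} via Corollary~\ref{cor:decompcountinf}) and classes whose counts converge to a Poisson law, and then conclude via Lemma~\ref{lem:disjointunion}. So your strategy coincides with the authors' intended route, and your reduction of the problem to the distributional control of $\big(N_n(\Ccal_i)\big)_{i\in I_{<\infty}}$ is correctly argued as far as it goes.

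The gap you flag, however, is genuine and is precisely why the statement remains open. Lemma~\ref{lem:decompcount} only governs counts of finitely many \emph{fixed} connected graphs, and smoothness alone gives no summable tail bound on $\Ee[N_n(H)]$ once $v(H)$ is allowed to grow with $n$ (your observation that the natural bound $(\rho+\eps)^{v(H)}/\aut(H)$ is not summable when $C(\rho+\eps)=\infty$ is exactly the failure mode; for caterpillars the paper repairs this only through the class-specific first-moment computation of Lemma~\ref{lem:bush}, and even then only reaches $\FO$ because the large components are tamed by Gaifman locality, which has no $\MSO$ analogue). Your proposal also silently assumes that for $i\in I_{<\infty}$ with $\Ccal_i$ infinite the limit $Z_i\sim\Po(\mu(\Ccal_i))$ exists and that the $Z_i$ are independent; this is plausible but is not a consequence of the cited lemmas and is part of what would need to be proved. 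In short: the write-up is a reasonable research plan that matches the authors' own outlook, but it does not constitute a proof of the conjecture, and its self-identified missing ingredients are the real content of the open problem.
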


In the proof of the $\MSO$-convergence law for forests of paths, we considered the partition $\Ccal_1, \dots, \Ccal_m$ of
all unlabelled paths $\Ucal\Ccal$ into $\equivMSO_k$-equivalence classes.
We used that whenever $|\Ccal_i|=\infty$, then $\Ncal_n(\Ccal_i)$, the number of components isomorphic to elements of $\Ccal_i$, will
grow without bounds, whereas if $|\Ccal_i| < \infty$, then $\Ncal_n(\Ccal_i)$ will tend to a Poisson distribution.
This essentially relied on the fact that all paths on at least two vertices have precisely two automorphisms.

One would hope that for other smooth and decomposable minor-closed classes some variation of the proof for forests of paths, i.e.~considering
the number of components belonging to each $\equivMSO_k$-equivalence class separately, might lead to a proof of our conjecture.
For paths it was very easy to show a dichotomy between $\Ncal_n(\Ccal_i)$ growing without bounds or $\Ncal_n(\Ccal_i)$
following a Poisson law.
For general smooth, decomposable, minor-closed classes one might still expect a similar dichotomy although the proof,
even for forests of caterpillars, is likely to be more technically involved than in the case of forests of paths.

\medskip 
{\bf Extensions to the $2$-addable and rooted case.}
Possible extensions of our results are to classes of graphs with higher connectivity properties. Call a  minor-closed class  $\Gcal$ \emph{2-addable} if it is addable and it is closed under the operation of gluing two graphs through a common edge. This is equivalent to the fact that the minimal excluded minors  are 3-connected. An MSO-zero-one law should hold for \emph{2-connected} graphs in $\Gcal$, by proving an analogous of Theorem 2.12 for pendant copies of
a 2-connected graph overlapping with the host graph in exactly one specified edge.
Moreover, we believe an MSO-convergence law should hold for \emph{rooted} connected planar graphs, adapting the proof by Woods \cite{woods} for rooted trees (the zero-one law does not hold since, for example, the probability that the root vertex has degree $k$ tends to a constant strictly between 0 and 1). We leave both problems for future research.

\medskip 
{\bf Unlabelled graphs.}
In enumerative combinatorics, unlabelled objects are typically much harder to deal with than labelled ones.
We strongly believe that our results on addable classes will extend to the unlabelled case.

\begin{conjecture}
Let $\Gcal$ be an addable, minor-closed class and let $\Ucal\Gcal$ be the corresponding collection of unlabelled graphs.
The MSO-zero-one law holds for $C_n \in_u \Ucal\Ccal_n$, the random \emph{connected}, unlabelled graph from $\Gcal$.
The MSO-convergence law holds for $G_n \in_u \Ucal\Gcal_n$.
\end{conjecture}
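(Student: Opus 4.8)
The plan is to mirror the proofs of Theorems~\ref{thm:MSO01add} and~\ref{thm:MSOconvdetail}, whose logical skeleton is label-free. Indeed Theorem~\ref{thm:MSOMk} produces, for each $k$, a connected $M_k\in\Gcal$ such that every connected $G\in\Gcal$ carrying a pendant copy of $M_k$ satisfies $G\equivMSO_k M_k$; this statement, together with Lemmas~\ref{lem:disjointunion} and~\ref{lem:lotsofcopies}, makes no reference to labellings. So the conjecture reduces to establishing the \emph{unlabelled} analogues of the two probabilistic inputs of Section~\ref{sec:lawproofs}: \textbf{(I)} w.h.p.\ the random connected unlabelled graph $C_n\in_u\Ucal\Ccal_n$, as well as the largest component $\Biggg(G_n)$ of $G_n\in_u\Ucal\Gcal_n$, each contain a pendant copy of any fixed connected rooted graph from $\Gcal$; and \textbf{(II)} for $G_n\in_u\Ucal\Gcal_n$ one has $|\Biggg(G_n)|=n-O(1)$ w.h.p.\ and the isomorphism class of $\Frag(G_n)$ converges in total variation to some probability distribution $\tilde{R}$ on $\Ucal\Gcal$. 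Granting (I) and (II), the proof of Theorem~\ref{thm:MSO01add} yields the $\MSO$-zero-one law for $C_n$ verbatim, and the proof of Theorem~\ref{thm:MSOconvdetail} yields $\lim_{n\to\infty}\Pee(G_n\models\varphi)=\Pee(\tilde{R}\in\Fcal)$ for a suitable $\Fcal\subseteq\Ucal\Gcal$, hence the convergence law.

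The difficulty is that (I) and (II) cannot be obtained by sampling a labelled graph and forgetting the labels: unlike a random \emph{labelled} graph from $\Gcal$, a random \emph{unlabelled} one is typically far from asymmetric --- for forests, $|\Ucal\Ccal_n|$ exceeds $|\Ccal_n|/n!$ exponentially, so almost every unlabelled tree has a nontrivial automorphism (coming from duplicated pendant subtrees) --- and consequently the distribution on $\Ucal\Gcal_n$ induced from the uniform labelled measure is \emph{not} close in total variation to the uniform distribution on $\Ucal\Gcal_n$. One must therefore work with the unlabelled enumeration directly. The approach I would take uses the singularity-analysis toolkit for unlabelled structures: via the block decomposition of $\Gcal$ (recall that an addable minor-closed class can be characterised by a list of $2$-connected excluded minors) and the P\'olya/Euler-product passage from labelled to unlabelled counting, establish the dominant-singularity behaviour of the ordinary generating functions $\tilde{G}(z)=\sum_n|\Ucal\Gcal_n|z^n$ and $\tilde{C}(z)=\sum_n|\Ucal\Ccal_n|z^n$, with common radius $\tilde{\rho}$ and $\tilde{G}(\tilde{\rho})<\infty$. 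From this one reads off a unique giant of order $n-O(1)$ w.h.p.\ and a limiting fragment distribution $\Pee(\tilde{R}\cong F)\propto\tilde{\rho}^{\,v(F)}$ --- the automorphism weights of Definition~\ref{def:Boltz} now being replaced by $1$, since each isomorphism class is counted once in the unlabelled world --- which gives (II); and a local-weak-limit (Benjamini--Schramm/fringe) description of the giant which, combined with a second-moment concentration in the spirit of Lemma~\ref{lem:decompcount}, shows that the giant w.h.p.\ contains linearly many pendant copies of every fixed connected rooted $H\in\Gcal$, giving (I) for $G_n$; the statement for $C_n\in_u\Ucal\Ccal_n$ then follows by conditioning on connectedness, exactly as Corollary~\ref{cor:pendantcopyadd} was deduced from Theorem~\ref{thm:pendantcopyadd}.

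The main obstacle is carrying this out uniformly over \emph{all} addable minor-closed classes. For planar, outerplanar, series-parallel and bounded-treewidth graphs the relevant unlabelled asymptotics and Boltzmann-sampler descriptions are already available, so the conjecture should be accessible there first. In general, however, $\Gcal$ is specified only through an arbitrary finite list of $2$-connected excluded minors, and one needs an unlabelled counterpart of the smoothness theorem (Theorem~\ref{thm:addissmooth}): that $n|\Ucal\Gcal_{n-1}|/|\Ucal\Gcal_n|$ tends to a finite limit, and likewise for $\tilde{C}$, together with the polynomial sub-exponential correction that makes $\Frag(G_n)$ tight. Proving this most likely requires extending McDiarmid's edge-rerouting/appendage arguments to the unlabelled setting, or a P\'olya-theoretic comparison of $\tilde{G}$, $G$ and their pointed versions; this smoothness-and-tightness step is the crux, and once it is in hand the remainder --- giant structure, fringe convergence, abundance of pendant copies, and the verbatim repetition of the $\MSO$ arguments of Section~\ref{sec:lawproofs} --- is routine.
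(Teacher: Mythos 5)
Be aware that the statement you were asked to prove is stated in the paper only as a conjecture: the authors do not prove it, and they explicitly remark that it ``would follow with the same proofs as in Theorems~\ref{thm:MSO01add} and~\ref{thm:MSOconvadd}, provided the analogue of Theorem~\ref{thm:pendantcopyadd} on pendant copies holds'' in the unlabelled setting -- which at present is known only for subcritical classes (forests, outerplanar, series-parallel graphs), not for planar graphs. Your proposal correctly rediscovers exactly this reduction: the logical ingredients (Theorem~\ref{thm:MSOMk}, Lemmas~\ref{lem:disjointunion} and~\ref{lem:lotsofcopies}) are label-free, so everything hinges on your inputs \textbf{(I)} (pendant copies w.h.p.\ for the unlabelled random graph and its giant component) and \textbf{(II)} (an unlabelled analogue of Theorem~\ref{thm:addfrag}, i.e.\ a giant of order $n-O(1)$ and a limiting fragment distribution). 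You are also right, and it is a genuinely important observation, that one cannot simply forget labels, since almost all unlabelled graphs in such classes have many automorphisms, so the induced and uniform measures on $\Ucal\Gcal_n$ differ substantially.

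The genuine gap is that (I) and (II) are never established: your second and third paragraphs replace them by a programme (P\'olya-theoretic singularity analysis of $\tilde G$ and $\tilde C$, an unlabelled smoothness theorem analogous to Theorem~\ref{thm:addissmooth}, fringe convergence of the giant), and you yourself concede that the ``smoothness-and-tightness step is the crux.'' A reduction plus an unproved crux is not a proof; these missing steps are precisely why the statement is a conjecture. Moreover, one factual claim you lean on is wrong: the relevant unlabelled asymptotics and Boltzmann-sampler descriptions are \emph{not} available for planar graphs (asymptotic enumeration of unlabelled planar graphs is a well-known open problem); the unlabelled pendant-copy result has been proved only for subcritical classes, which exclude planar graphs. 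So even the ``accessible first cases'' you list overstate what is known, and for a general addable minor-closed class, specified only by its $2$-connected excluded minors, no route to the unlabelled smoothness, fragment tightness, or pendant-copy abundance is currently in place.
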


The previous conjecture would follow with the same proofs as in Theorems~\ref{thm:MSO01add} and~\ref{thm:MSOconvadd}, provided the
analogue of Theorem~\ref{thm:pendantcopyadd} on pendant copies holds.
It is believed that this is the case, but so far it has been proved only for so-called subcritical classes \cite{subcritical}.
These include forests, outerplanar and series-parallel graphs, but not the class of planar graphs.

\medskip 
{\bf Analogues of the Rado graph.}
Although we do not formulate any research question in this direction, we cannot resist mentioning
some of our thoughts concerning analogues of the Rado graph, a beautiful mathematical object that is associated with the $\FO$-zero-one law
for the binomial random graph $G(n,1/2)$.
If $\Tscr$ is the set of all $\FO$-sentences $\varphi$ such that $\lim_{n\to\infty}\Pee( G(n,1/2) \models \varphi) = 1$, then
as it happens there is (up to isomorphism) exactly one countable graph that satisfies all sentences in $\Tscr$, namely the Rado graph.
The Rado graph has several remarkable properties, and surprisingly, it is connected to seemingly far-removed branches
of mathematics such as number theory and topology. See~\cite{CameronRado} and the references therein for more
background on the Rado graph.

One might wonder whether, for the cases where we have proved the zero-one law, a similar object might exist.
It follows in fact from general arguments from logic that there will always be at least one countable graph that
satisfies every sentence that has limiting probability one.
What is more, carefully re-examining the proof of Theorem~\ref{thm:MSOMk}, we find that we can construct
such a graph by identifying the roots of $M_1, M_2, \dots$ with $M_k$ as in Theorem~\ref{thm:MSOMk}.
One might hope that, similarly to the case of the Rado graph, our graph is the unique (up to isomorphism) countable graph
that satisfies precisely those $\MSO$ sentences that have limiting probability one.
By some straightforward variations on the construction (for instance by not attaching the consecutive $M_k$-s directly to the root, but
rather by hanging them from the root using paths of varying length) we can however produce an {\em uncountable} family
of non-isomorphic graphs with this property.

\section*{Acknowledgements}

We thank Peter Rossmanith for suggesting the $\MSO$-property without a limiting probability for paths.
We thank Ross Kang, Colin McDiarmid,  Peter Rossmanith and Gilles Schaefer
for helpful discussions related to the paper. 
We also thank the anonymous referees for comments that have improved the paper.

%
%
%

\bibliographystyle{plain}
\bibliography{ReferencesMSO_Resubmit}

\end{document}